\newtheorem{thm}{Theorem}[section]
\newtheorem{prop}[thm]{Proposition}
\newtheorem{lemma}[thm]{Lemma}
\newtheorem{cor}[thm]{Corollary}
\newtheorem{defn}[thm]{Definition}%[section]
\newtheorem{example}[thm]{Example}
\newtheorem{remark}[thm]{Remark}
\newtheorem{qus}[thm]{Question}%[section]
\newtheorem{conjecture}[thm]{Conjecture}
 \numberwithin{equation}{section}
\newcommand{\comment}[1]{}
\newcommand{\bull}{{\scriptscriptstyle \bullet}}
\newcommand{\PP}{{\mathbb P}}
\newcommand{\Fl}{\mathbb{ F}\ell}
\newcommand{\Flndot}{\Fl(\ndot)}
\newcommand{\ndot}{n_\bull}
\renewcommand{\b}{\hat b}
\renewcommand{\u}{\hat u}
\renewcommand{\v}{\hat v}
\renewcommand{\j}{{\mathrm j}}
\newcommand{\inv}{^{-1}}
\begin{document}{\allowdisplaybreaks[4]

\title[A Pl\"ucker coordinate mirror for partial flag varieties]{A Pl\"ucker coordinate mirror for partial flag varieties and quantum Schubert calculus}
% or?(shorter! but doesn't emphasise the nice new superpotential) Mirror symmetry for partial flag varieties and quantum Schubert calculus

%    Information for first author

\author{Changzheng Li}
 \address{School of Mathematics, Sun Yat-sen University, Guangzhou 510275, P.R. China}
\email{lichangzh@mail.sysu.edu.cn}

\author{Konstanze Rietsch}
\address{ Department of Mathematics, King's College London, Strand, London WC2R 2LS, UK}
\email{konstanze.rietsch@kcl.ac.uk}

\author{Mingzhi Yang}
 \address{School of Mathematics, Sun Yat-sen University, Guangzhou 510275, P.R. China}
\email{yangmzh8@mail2.sysu.edu.cn}

\author{Chi Zhang}
 \address{Department of Mathematics, Caltech, 1200 East California Boulevard, Pasadena, CA 91125}
\email{czhang5@caltech.edu}
%    \thanks will become a 1st page footnote.

%    \thanks will become a 1st page footnote.
\thanks{}

\date{%January 1, 2001 and, in revised form, June 22, 2001.
      }

%\dedicatory{This paper is dedicated to our advisors.}

%\keywords{Lie superalgebra, spin  representations}

%\subtitle{}

\begin{abstract}
We construct a Pl\"ucker coordinate superpotential $\mathcal{F}_-$ 
that is mirror to a partial flag variety $\Flndot$. Its Jacobi ring recovers the small quantum cohomology of $\Flndot$, and we prove a folklore conjecture in mirror symmetry. Namely, we show that the eigenvalues for the action of the first Chern class  $c_1(\Flndot)$ on quantum cohomology are equal to the critical values of $\mathcal{F}_-$. We achieve this by proving new identities in quantum Schubert calculus that are inspired by our formula for $\mathcal{F}_-$ and the mirror symmetry conjecture.
 \end{abstract}

\maketitle

\tableofcontents
\section{Introduction}

Mirror symmetry is a fascinating   phenomenon arising in string theory:  two  apparently completely  different objects on  A-model and B-model give rise to equivalent physics. 
 Mathematical descriptions of mirror symmetry, in terms of equivalence of mathematical structures, were first
made for pairs of Calabi-Yau manifolds   in early 1990s (see e.g. \cite{HKKMS}). The (closed string) mirror symmetry was extended to Fano manifolds $X$ on the  topological A-model soon after by Givental \cite{Givental95, Givental98} and Eguchi-Hori-Xiong \cite{EHX}.
In this case, the topological B-model is given by a Landau-Ginzburg model $(\check X, W)$,  consisting of
a non-compact K\"ahler manifold $\check X$ and a holomorphic function $W: \check X\to \mathbb{C}$ called the superpotential.
 Mirror symmetry predicts equivalences between both sides on various levels. For instance on one level,  the (small) quantum cohomology ring $QH^*(X)$ should be isomorphic to the Jacobi ring $Jac(W)$ of $W$.

Studying mirror symmetry for $X$ a priori requires a good construction of the mirror superpotential $W$. However, this is only known for certain Fano manifolds, with toric Fano manifolds and complete intersections inside toric manifolds being typical examples, following work of Givental \cite{Givental95, Givental98} and Hori-Vafa \cite{HV}.
In this article, we will focus on the case when $X=\Flndot$ is a partial flag variety parameterizing  flags of quotient vector subspaces of $\mathbb{C}^n$.
          Special cases include complex Grassmannians $Gr(k, n)$ and complete flag variety $\mathbb{F}\ell_n$.
     Candidate Landau-Ginzburg models  for  $Gr(k, n)$ and   $\mathbb{F}\ell_n$ were constructed by Eguchi-Hori-Xiong \cite{EHX} and Givental \cite{Givental97} respectively. They were later generalized to $\Flndot$ by Batyrev-Ciocan-Fontanine-Kim-van Straten \cite{BCFKS}. See also \cite{NNU} for a construction using  holomorphic disk counts.  Here different approaches turned out to result in identical versions of the superpotential, namely arriving at a particular Laurent polynomial  $W_{\rm tor}$ defined on a complex torus of dimension $\dim \Flndot$. It turned out that there is a toric degeneration of  $\Flndot$ with the central fiber a singular toric variety $X_{0}$, and the superpotential $W_{\rm tor}$ coincides with the superpotential mirror to $X_0$ as constructed by Givental and Hori-Vafa. This superpotential, however, has a disadvantage as mirror for $\Flndot$ in that its Jacobi ring does not fully recover quantum cohomology. For example, for $Gr(2, 4)$ the mirror superpotential should ideally have $6=\dim H^*(Gr(2,4))$ critical points, but $W_{\rm tor}$ only has $4$.

     In \cite{Rie08}, the second-named author wrote down a Lie theoretical superpotential, namely a function $\mathcal{F}_{\rm Lie}: Z_P\to \mathbb{C}$ defined on a subvariety $Z_P$ of $B_-$.
     Here $G$ is a connected complex reductive Lie group, and $P$ is a parabolic subgroup of $G$ containing a Borel subgroup $B_-$. This function had appeared separately earlier in a different context, as part of a theory of geometric crystals \cite{BK:GeometricCrystalsII}.
It is shown in \cite{Rie08} that the fiberwise critical locus of $\mathcal{F}_{\rm Lie}$ is isomorphic to (an open dense part of) the so-called Peterson variety stratum $Y_P$ in the flag variety $G/B_-$.  This relates the superpotential $\mathcal{F}_{\rm Lie}$ to quantum cohomology via the remarkable isomorphism  of Dale Peterson's, described in his unpublished lecture notes \cite{Pert}, between $\mathbb{C}[Y_P]$ and the small quantum cohomology ring $QH^*(G^\vee/P^\vee)$ of the Langlands dual flag variety. A proof of Peterson's isomorphism for the type $A$ case, that is for  $G^\vee/P^\vee=\Flndot$, was given in \cite{Rie03}. Some other cases were covered in \cite{Cheo, LaSh}, and the general case was proved in a recent preprint \cite{Chow22}. The combination of both isomorphisms leads to mirror symmetry for flag varieties on the level of small quantum cohomology. Namely, the ring $QH^*(G^\vee/P^\vee)$, with inverse quantum parameters adjoined, is isomorphic to the (fiberwise) Jacobi ring of $\mathcal F_{\rm Lie}$.

    This is not the end of the story, but only the end  of the beginning.
The function $\mathcal{F}_{\rm Lie}$ is defined quite indirectly, and it is desirable to find a compact expression in terms of coordinates on the mirror space $Z_P$. In the special case of  $\Flndot=Gr(n-k, n)$, a natural isomorphic interpretation of the mirror space was given in \cite{MaRi}. There, $Z_P$ was identified with a trivial family over $\mathbb{C}^*$ with fiber a particular open log Calabi-Yau subvariety in the Langlands dual Grassmannian $Gr(k, n)$. Moreover, \cite{MaRi} gave a very compact and clean expression for  $\mathcal{F}_{\rm Lie}$ using the Pl\"ucker coordinates of $Gr(k, n)$. This also led to an improved mirror symmetry result on the higher level of $D$-modules.

         One generalization of this $Gr(n-k, n)$ construction is to  cominuscule Grassmannians of other types. The fiber of the mirror space is then inside the Langlands dual minuscule Grassmannian, which has (generalized)  Pl\"ucker coordinates, due to its embedding into the projective space of a minuscule representation. Corresponding coordinate presentation of $\mathcal{F}_{\rm Lie}$ have been individually obtained for
          quadrics \cite{PRW},  Lagrangian Grassmannians \cite{PeRi13},  the Cayley plane and the Freudenthal variety \cite{SpWa}.

            The generalization of $Gr(n-k, n)$ of interest to us here is the partial flag variety $X=\Flndot$. As the first main result of this paper, we provide a Pl\"ucker coordinate formula version $\mathcal{F}_-$ of the superpotential $\mathcal{F}_{\rm Lie}$ for this case. To construct the domain we consider the Langlands dual partial flag variety $F\ell_{\ndot}=F\ell_{n_1,  \cdots, n_r; n}=P\!\setminus\! G$ that parameterizes flags of vector subspaces $V_{n_j}$ in the dual vector space of $\mathbb{C}^n$.
  Let $(P\!\setminus\! G)^\circ$ denote the complement of the Knutson-Lam-Speyer anti-canonical divisor $-K_{F\ell_{\ndot}}$ \cite{KLS}, which consists of $(n-1+r)$ irreducible components (see Proposition \ref{propanti}).
\begin{thm}\label{intro:Fminus}
   There is an isomorphism
    \[
\psi_-:Z_P   \longrightarrow (P\backslash G)^\circ \times  \prod\limits_{i\in I^P} \mathbb{C}^*_q,\qquad \mbox{ where }  I^P:=\{n_1, \cdots, n_r\},
\]
constructed in \eqref{e:psi-}. The superpotential $
\mathcal F_-:=\mathcal F_{\operatorname{Lie}}\circ \psi_-\inv:(P\backslash G)^\circ\times  \prod\limits_{i\in I^P} \mathbb{C}^*_q \to \mathbb C$ consists of $(n-1+r)$ summands, 
 \begin{align*}\mathcal F_-(Pz,\mathbf q)=   \sum\limits_{i\in I^P} q_i v_{i,i+1}+ \sum\limits_{i=1}^{n-1}u_{i,i+1}.
 \end{align*}
The summands satisfy
   \begin{enumerate}
     \item the $v_{i, i+1}$ are all of the form ${p_{J'}\over p_J}$ for some Pl\"ucker coordinates;
     \item  the $u_{i, i+1}$ are of the form ${p_{J'}\over p_J}$ if $i\in I^P$, or if $1\le i\le n_1$ or $n_r\le i\le n-1$.  Otherwise, if $n_j<i<n_{j+1}$ for some $j\in\{1, \cdots, r-1\}$, then
 $u_{i, i+1}$ is of the form ${f_1\over f_2}$ with each $f_i$ a quadratic polynomial in the Pl\"ucker coordinates;
     \item all $v_{i, i+1}$  and $u_{i, i+1}$ have pole of order 1 along a (unique) irreducible component   of  $-K_{F\ell_{\ndot}}$.
   \end{enumerate}
\end{thm}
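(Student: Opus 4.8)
Throughout I would work in an explicit matrix model. The first step is to recall from \cite{Rie03, Rie08} (compare \cite{BK:GeometricCrystalsII}) the concrete shape of $\mathcal F_{\operatorname{Lie}}$ on $Z_P$: it is a sum of $n-1$ principal characters, one attached to each simple root, together with $r$ further $q$-twisted characters, one for each jump index $n_j$, $j=1,\dots,r$. Realising $z\in Z_P$ in type $A$ as an honest matrix, each of these characters becomes a ratio of minors of $z$, so that $\mathcal F_{\operatorname{Lie}}$ already displays the expected $n-1+r$ summands. The map $\psi_-$ of \eqref{e:psi-} is designed so that the coset $Pz\in P\backslash G$ records a partial flag in the dual of $\mathbb{C}^n$, i.e.\ a point of $F\ell_{\ndot}$, while the remaining torus freedom of $z$ records $\mathbf q\in\prod_{i\in I^P}\mathbb{C}^*_q$; the dimensions match, since $\dim Z_P=\dim F\ell_{\ndot}+r$.

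The second step is to prove that $\psi_-$ is a biregular isomorphism onto $(P\backslash G)^\circ\times\prod_{i\in I^P}\mathbb{C}^*_q$. Here I would choose, for each coset $Pz$, a canonical (pinned) matrix representative in which prescribed minors equal $1$, and use it together with $\mathbf q$ to write out an explicit inverse $(Pz,\mathbf q)\mapsto z$. Injectivity and regularity in both directions are then formal; the real content is that the image is exactly the complement of $-K_{F\ell_{\ndot}}$. Using the description of the $n-1+r$ irreducible components of the anticanonical divisor from Proposition \ref{propanti} (following \cite{KLS}), I would identify them with the Pl\"ucker coordinates that occur as denominators in the pinned representative, show that non-vanishing of exactly these is what is needed to reconstruct $z$, and close with a dimension count.

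The computational heart is the third step: substitute $\psi_-\inv$ into $\mathcal F_{\operatorname{Lie}}$, so that the $r$ $q$-twisted characters become the terms $q_i v_{i,i+1}$ and the $n-1$ principal ones the terms $u_{i,i+1}$, and then read each off in Pl\"ucker coordinates. Expanding the minors of the pinned representative by cofactors, a minor whose row set has an admissible size $n_j$ equals, up to the pinning normalisation, a single Pl\"ucker coordinate $p_J$ of $F\ell_{\ndot}$; since the pinning factors cancel in ratios, this shows at once that every $v_{i,i+1}$, and every $u_{i,i+1}$ with $i\in I^P$ or $1\le i\le n_1$ or $n_r\le i\le n-1$, is of the form $p_{J'}/p_J$ --- which gives (1) and the first case of (2). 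For $i$ strictly inside a block, $n_j<i<n_{j+1}$, the governing minor has no admissible size: a partial flag variety embeds not into a single minuscule projective space but into a product of Grassmannians, one factor per jump, and the simple reflection $s_i$ acts within a flag block that is invisible to the Pl\"ucker coordinates of either bounding factor. Its character must therefore be assembled from coordinates of two different factors, so that expanding the minor and clearing the pinning denominators yields a polynomial of degree exactly two in the $p_J$ --- a sum of products of pairs of Pl\"ucker coordinates --- which is the $f_1/f_2$ form of the remaining $u_{i,i+1}$. This phenomenon is absent in the Grassmannian case \cite{MaRi} and the minuscule cases \cite{PRW, PeRi13, SpWa}, where a single minuscule representation already suffices.

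Finally, for (3) I would examine each summand at the generic point of each irreducible component of $-K_{F\ell_{\ndot}}$. The formulas of the third step show that the polar divisor of each summand is a union of reduced Pl\"ucker hypersurfaces; a direct local expansion there --- or, more conceptually, a matching of the $n-1+r$ summands with the $n-1+r$ components of $-K_{F\ell_{\ndot}}$ --- then shows that each summand has a pole of order exactly $1$ along a single component and no other poles. \textbf{Main obstacle.} I expect the decisive difficulty to be the interior-block computation in the third step: faithfully translating the Lie-theoretic character into Pl\"ucker data, and certifying that after clearing the pinning denominators one genuinely obtains a ratio of quadratics --- rather than something of higher degree --- with first-order poles. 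The second step, matching the image of $\psi_-$ with the KLS anticanonical complement, is the other delicate point, since it requires reconciling the divisor combinatorics of Proposition \ref{propanti} with the vanishing loci of the Pl\"ucker coordinates that arise in the parametrisation.
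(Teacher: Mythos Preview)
Your outline has a genuine gap in the first step that propagates through the rest. You claim that $\mathcal F_{\operatorname{Lie}}$ on $Z_P$ ``already displays the expected $n-1+r$ summands,'' but this is false: by definition (Section~\ref{s:FLie}), $\mathcal F_{\operatorname{Lie}}(u_1 t\dot w_P^{-1}\dot w_0 u_2)=-\sum_i e_i^*(u_1)-\sum_i e_i^*(u_2)$ has $2(n-1)$ terms. After separating out $t$ via $\gamma$ the formula~\eqref{e:FLieviagamma} still carries $(n-1)$ terms in $\hat v$ and $(n-1)$ in $\hat u$; the $q_i$ merely rescale the $\hat v_{i,i+1}$ for $i\in I^P$, and the remaining $\hat v_{i,i+1}$ for $i\in I_P$ do \emph{not} vanish in general. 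Indeed this is exactly what goes wrong for the naive generalisation $\mathcal F_+$ (Definition~\ref{defF+}, Example~\ref{ex:FullFlagF+}), where these extra terms survive and ruin the Pl\"ucker shape. A generic ``pinned representative'' will not make them disappear.

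The paper's missing idea is the auxiliary variety $\mathcal Z=U_+\dot w_P^{-1}\dot w_0\cap \dot w_P^{-1}\dot w_0 U_-\cap B_-B_+$ of~\eqref{e:Z} and the map $\zeta$ of Lemma~\ref{zeta}. Choosing the representative $z\in\mathcal Z$ forces $v=\dot w_P^{-1}\dot w_0 z^{-1}$ to lie in $U_+^P=U_+\cap\dot w_P^{-1}U_+\dot w_P$, whence $v_{i,i+1}=0$ for all $i\in I_P$ (Lemma~\ref{l:vanishing}); this is the step that cuts $2(n-1)$ down to $n-1+r$, and it is a structural consequence of the intersection with $\dot w_P^{-1}\dot w_0 U_-$, not of normalising minors. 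The block anti-diagonal form~\eqref{Eq:OlocalCoords} of $z\in\mathcal Z$ (Lemma~\ref{shape-lemma}) then drives everything else: Proposition~\ref{uvprop} writes each $u_{i,i+1},v_{i,i+1}$ as a ratio of minors of $z$, and Theorem~\ref{fminus} converts these to Pl\"ucker coordinates by Laplace expansion along the identity blocks --- which is precisely what yields the quadratic numerator and denominator when $n_j<i<n_{j+1}$. Your intuition for the quadratic shape is correct, but the mechanism is this concrete expansion, and property~(3) is read off by matching the resulting denominators with the divisor equations of Definition~\ref{divisoreqn} from~\cite{LSZ}.
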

\noindent In fact, the denominators in the summands of $\mathcal{F}_-$ are precisely the defining equations of the irreducible components of $-K_{F\ell_{\ndot}}$ \cite{LSZ}.

The isomorphism $\Psi_-$ will be constructed explicitly in Definition \ref{defF-}. Explicit  expressions for $v_{i, i+1}$  and $u_{i, i+1}$ will be given in \textbf{Theorem \ref{fminus}}.
 Here we provide an example to give a first impression.
\begin{example}\label{ex:F247}
   For $F\ell_{n_\bullet}=F\ell_{2, 4; 7}\hookrightarrow \mathbb{P}(\bigwedge^2\mathbb{C}^7)\times \mathbb{P}(\bigwedge^4\mathbb{C}^7)$, we have
   \begin{align*}
     \mathcal{F}_-&=q_2{p_{46}\over p_{67}}+q_4{p_{1467}\over p_{4567}}+{p_{27}\over p_{17}}+{p_{24}p_{1567}-p_{14}p_{2567}+p_{12}p_{4567}\over p_{23}p_{1567}-p_{13}p_{2567}+p_{12}p_{3567}}+{p_{2346}\over p_{2345}}+{p_{3457}\over p_{3456}}
             +{p_{13}\over p_{12}}+{p_{1235}\over p_{1234}}.
       \end{align*}
 \end{example}
 \begin{remark}
     A straightforward generalization of the superpotential in \cite{MaRi} leads to another superpotential $\mathcal{F}_+$ defined on $(P_+\backslash G)^\circ\times  \prod_{i\in I^P} \mathbb{C}^*_q$. The Pl\"ucker coordinate expression of  $\mathcal{F}_+$, however, appears to be complicated and does not have the similar good properties (especially property (3) above). We refer to Examples \ref{ex:FullFlagF+} and \ref{ex:FullFlagF-} for a comparison of $\mathcal{F}_+$ and $\mathcal{F}_-$ in the case of the complete flag variety $F\ell_3$.
  \end{remark}

\begin{remark} There is a special embedding of the domain of  the \cite{BCFKS} Laurent polynomial mirror, $W_{tor}$, into $\Fl_{n_\bullet}$ constructed in \cite{Rie:Nagoya}. Conjecturally, $W_{tor}$ should be the pullback of $\mathcal F_{\rm{Lie}}$ under this embedding. This conjecture is known to hold in the complete flag variety and the Grassmannian cases \cite{Rie08,MaRi}. For more general partial flag varieties the conjecture is not proved, but it is shown in \cite{Rie:Nagoya} that $W_{tor}$ and the pullback of $\mathcal F_{\rm{Lie}}$ have the same fiberwise critical points. This conjecture now translates into a conjecture about $\mathcal F_-$ via the isomorphism between $\mathcal F_-$ and $\mathcal F_{\rm {Lie}}$. 
\end{remark}

  \begin{remark}
 In \cite{GuSh},  Gu and Sharpe proposed a construction of `non-abelian' mirrors, examples of which included $\Flndot$. Their approach is closely related to \cite{HV} but involves more variables than the dimension of $\Flndot$ (see also \cite{GuKa}). Their mirror diverges already in the Grassmannian case from the mirror constructions \cite{EHX,Rie08,MaRi} that are related to ours here, see \cite[Section~4.9]{GuSh}.

     Another mirror construction inspired by viewing flag varieties as non-abelian GIT quotients was given by Kalashnikov \cite{Kala}.  Namely, Kalashnikov proposed a generalization of the superpotential from \cite{MaRi} for  $Gr(n-k, n)$  to partial flag varieties $\Flndot$ in the form of a rational function on a product of Grassmannians, expressed explicitly in terms of Pl\"ucker coordinates. The Kalashnikov superpotential has the advantage that it recovers the aforementioned Laurent polynomial $W_{\rm tor}$ in a cluster chart, and is directly related to a toric degeneration of $\Flndot$. Kalashnikov also described a relation (on the level of critical points) between her superpotential and Gu-Sharpe's superpotential in a special case.

To compare the Kalashnikov formula with our $\mathcal F_-$, consider the partial flag variety $\Flndot=\mathbb{F}\ell(4; 2, 1)$. Kalashnikov's superpotential is a rational function on $Gr(2, 4)\times Gr(1, 2)\times (\mathbb{C}^*)^2$ described in terms of Pl\"ucker coordinates $[p_{ij}; \hat p_k]$ by
       \[
W_{\rm Kal}=\frac{p_{13}}{p_{12}}+ \frac{p_{24}+q_2}{p_{23}}+ \frac{p_{24}}{p_{14}}+ \frac{q_2 p_{13} \hat p_2}{p_{34}}+ \frac{\hat p_2}{\hat p_1}+ \frac{q_1}{\hat p_2}.
\]
 Our superpotential $\mathcal{F}_-$ is a rational function on  $F\ell_{1,2; 4}\times (\mathbb{C}^*)^2$, given in  terms of  $[p_k; p_{ij}]$ by
   \[
\mathcal{F}_{-}=q_1 \frac{p_{3}}{p_{4}}+ q_2 \frac{p_{14}}{p_{34}}+\frac{p_{2}}{p_{1}}+\frac{p_{13}}{p_{12}}+\frac{p_{24}}{p_{23}}.
\]
The superpotential $W_{\rm Kal}$ is always a positive Laurent polynomial in Pl\"ucker coordinates, while ours in general is not (neither Laurent, nor positive). However, for certain special values of the parameters $q_i$ the superpotential $W_{\rm Kal}$ does not have the full set of critical points, as was also pointed out in \cite{Kala}.
Namely, in the above example, $\mathcal{F}_{-}$ has $12$ critical points along its $q_1=q_2=1$ fiber, in agreement with $\dim H^*(\mathbb{F}\ell(4; 2, 1))=12$. One of these critical points, the one with critical value $-3$, is not visible for $W_{\rm Kal}|_{\mathbf{q}=(1, 1)}$.
  \end{remark}

Let us now recall that, on the A-side, the (small) quantum cohomology ring $QH^*(X)=(H^*(X,\mathbb C)\otimes \mathbb{C}[\mathbf{q}], \cdot)$ of the Fano manifold $X$ is a deformation of the classical cohomology ring $H^*(X, \mathbb{C})$ by incorporating genus zero, 3-point Gromov-Witten invariants. The quantum multiplication by the first Chern class of $X$ induces a linear operator
          $$\hat c_1(\mathbf{q}): QH^*(X)\longrightarrow QH^*(X); \,\, \beta\mapsto c_1(X)\cdot \beta$$
          depending on the values of the deformation parameters $\mathbf q=(q_i)_i$, also called quantum parameters.
  Here we treat the $q_i$ as nonzero complex numbers, so that $QH^*(X)=H^*(X)$ as vector spaces. On the B-side, we consider the superpotential $W=W_{\mathbf{q}}$
   with the quantum parameters fixed correspondingly.
  Now let us state a celebrated folklore conjecture in mirror symmetry.
    \begin{conjecture}\label{conjms}
  The eigenvalues of the first Chern class operator $\hat c_1(\mathbf{q})$ coincide with the critical values of the mirror  superpotential $W_{\mathbf{q}}$.
  \end{conjecture}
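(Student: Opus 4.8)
The plan is to prove Conjecture \ref{conjms} for $X=\Flndot$ by transporting it to the mirror side via the ring isomorphism $QH^*(\Flndot)\cong Jac(\mathcal F_-)$ (with inverse quantum parameters adjoined) that results from combining Peterson's isomorphism $\mathbb C[Y_P]\cong QH^*(\Flndot)$ \cite{Pert,Rie03}, the identification in \cite{Rie08} of the fiberwise critical locus of $\mathcal F_{\operatorname{Lie}}$ with an open dense subset of $Y_P$, and the Pl\"ucker coordinate model $\mathcal F_-=\mathcal F_{\operatorname{Lie}}\circ\psi_-\inv$ of Theorem \ref{intro:Fminus}. For a fixed $\mathbf q$ with all $q_i\neq 0$, the specialized Jacobi ring $Jac(\mathcal F_-|_{\mathbf q})$ is a finite-dimensional $\mathbb C$-algebra whose spectrum is the critical locus of $\mathcal F_-|_{\mathbf q}$, and it splits as a product $\prod_p\bigl(Jac(\mathcal F_-|_{\mathbf q})\bigr)_p$ of local Artinian rings indexed by the critical points $p$. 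On each local factor, multiplication by any regular function $g$ has the single eigenvalue $g(p)$, with multiplicity $\dim_{\mathbb C}\bigl(Jac(\mathcal F_-|_{\mathbf q})\bigr)_p$. Hence the multiset of eigenvalues of the operator ``multiplication by $[\mathcal F_-]$'' on $Jac(\mathcal F_-|_{\mathbf q})$ is exactly the multiset of critical values of $\mathcal F_-|_{\mathbf q}$, and once we know that this operator is conjugate to $\hat c_1(\mathbf q)$ the conjecture follows.

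So the real content reduces to a single algebraic identity: under the isomorphism above, the class $[\mathcal F_-]\in Jac(\mathcal F_-)$ must correspond to the first Chern class $c_1(\Flndot)\in QH^*(\Flndot)$ (equivalently, the operators of quantum multiplication by $c_1$ and of multiplication by $[\mathcal F_-]$ are conjugate). First I would expand $c_1(\Flndot)=\sum_{i\in I^P}a_i\,\sigma_{s_i}$ in terms of the quantum Schubert divisor classes, and then, using the explicit formulas for the $v_{i,i+1}$ and $u_{i,i+1}$ from Theorem \ref{fminus}, extract from the critical point equations $\partial\mathcal F_-=0$ a Pl\"ucker coordinate representative for the image of each $\sigma_{s_i}$ in the coordinate ring of $(P\backslash G)^\circ$ modulo the Jacobian ideal. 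The goal is then to verify that these representatives reassemble, with the weights $a_i$, into precisely $\sum_{i\in I^P}q_i v_{i,i+1}+\sum_{i=1}^{n-1}u_{i,i+1}=\mathcal F_-$. This verification is exactly where the new identities in quantum Schubert calculus enter: they are the relations in $QH^*(\Flndot)$ --- encoded geometrically in the denominators of $\mathcal F_-$, which are the defining equations of the irreducible components of $-K_{F\ell_{\ndot}}$ --- that make the Pl\"ucker ratios making up $\mathcal F_-$ recognizable as quantum multiplication by $c_1$.

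The main obstacle is this second step: establishing the quantum Schubert calculus identities and hence the identification $[\mathcal F_-]=c_1$. The rest is either formal --- the Artinian-algebra argument of the first paragraph --- or already in place: finiteness and (generic) non-degeneracy of the critical locus for $q_i\neq 0$ follow from the mirror isomorphism with $QH^*(\Flndot)$ together with the Fano condition, so that the critical points of $\mathcal F_-|_{\mathbf q}$, counted with multiplicity, number $\dim_{\mathbb C}H^*(\Flndot)$, as illustrated in the introduction. One technical point inside the second step needs care: the isomorphism of \cite{Rie08} a priori only matches the critical locus of $\mathcal F_{\operatorname{Lie}}$ with an open dense part of $Y_P$, so I must check that no critical point --- and hence no critical value --- is lost ``at infinity''; this is again controlled by the critical point count. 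A more conceptual alternative would be to lift the identity $[\mathcal F_-]=c_1$ to an isomorphism between the quantum $\mathcal D$-module of $\Flndot$ and the $\mathcal D$-module attached to the Landau--Ginzburg model $\mathcal F_-$, and then read off the coincidence of spectra from the leading term of the irregular connection at $z=0$ versus the stationary-phase asymptotics of $\int e^{\mathcal F_-/z}$; but the Schubert-calculus route is the one that yields the sharper combinatorial identities promised in the abstract.
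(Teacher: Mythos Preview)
Your proposal is correct and follows essentially the same route as the paper: reduce to the identity $\Theta([\mathcal F_-])=c_1(X)$ via the Peterson/Jacobi ring isomorphism, then deduce the eigenvalue statement from the local Artinian decomposition of the specialized Jacobi ring. The paper carries out the key identification $[\mathcal F_-]=c_1$ term by term, using that at critical points the element $t\hat b$ is a Toeplitz matrix (via the Peterson variety description), which lets each $u_{i,i+1}$ and $q_iv_{i,i+1}$ be rewritten as a $G_1^{n_j}$-function and hence as $\sigma_{s_{n_j}}$; the hardest case ($n-n_{j+1}<i<n-n_j$) is exactly where the new quantum Schubert calculus identity (Theorem~\ref{keyIdentity}) is needed, matching your diagnosis of the main obstacle.
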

\noindent There has been very  little progress on this conjecture in the past two decades.  The case of toric Fano manifolds was   first proved by Auroux \cite{Auroux}, which was also known to Kontsevich and Seidel.   Recently, Yuan \cite{Yuan} proved that  the critical values of the family Floer mirror Landau-Ginzburg superpotential are the eigenvalues of the first Chern class, under certain assumptions. The  cases of complex Grassmannians and quadrics were proved implicitly in \cite{MaRi} and \cite{Hu} respectively.

  As a central result of this paper, we prove a theorem that implies this conjecture for any partial flag variety $X=\Flndot$.

Let us write  $\mathbf{q}=(q_{n_1}, \cdots, q_{n_r})$ for the quantum parameters associated to $\Flndot$, and view them as coordinates on an algebraic torus that we denote by $ \prod_{i\in I^P} \mathbb{C}^*_q$. Let us consider the (fiberwise) Jacobi ring,
\begin{equation}\label{e:Jacin}
Jac(\mathcal F_-):=\mathcal O\left((P\backslash G)^\circ\times \prod_{i\in I^P} \mathbb{C}^*_q\right)/(\partial_{(P\backslash G)^\circ} \mathcal{F}_-),
\end{equation}
where  we are taking partial derivatives of $\mathcal F_-$ in the $(P\backslash G)^\circ$ directions only. Using Theorem~\ref{intro:Fminus}, and the isomorphism between fiberwise Jacobi ring of $\mathcal F_{\rm Lie}$ and quantum cohomology resulting from \cite{Pert,Rie03,Rie08}, we obtain an isomorphism of rings
\begin{equation}\label{e:Theta}\Theta: Jac(\mathcal{F}_-)\overset{\sim}\longrightarrow QH^*(X)[q_{n_1}^{-1},\cdots,q_{n_r}^{-1}].
\end{equation}
See Section~\ref{subseccrit} for a more detailed description. We can now state our second main theorem.

 \begin{thm} \label{intro:firstchernclass}
  For the class $[\mathcal{F}_-]$    of   $\mathcal{F}_-$ in the Jacobi  ring $Jac(\mathcal{F}_-)$, we have
  $$\Theta([\mathcal{F}_-])={c_1(X)},$$
where $c_1(X)$ is the first Chern class of $X=\Flndot$, as element of the small quantum cohomology ring.
\end{thm}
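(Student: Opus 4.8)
The plan is to convert the assertion $\Theta([\mathcal F_-])=c_1(X)$ into an explicit identity inside $QH^*(X)[q_{n_1}^{-1},\dots,q_{n_r}^{-1}]$ and then prove that identity by quantum Schubert calculus. Since $\Theta$ is a ring isomorphism carrying the coordinate functions $q_i$ on $\prod_{i\in I^P}\mathbb C^*_q$ to the quantum parameters of $X$, applying it to the decomposition $\mathcal F_-=\sum_{i\in I^P}q_i v_{i,i+1}+\sum_{i=1}^{n-1}u_{i,i+1}$ of Theorem~\ref{intro:Fminus} reduces everything to (i) identifying the images $\Theta(v_{i,i+1})$ and $\Theta(u_{i,i+1})$ of the individual summands, and (ii) establishing that
\[
\sum_{i\in I^P}q_i\,\Theta(v_{i,i+1})\;+\;\sum_{i=1}^{n-1}\Theta(u_{i,i+1})\;=\;c_1(X)\;=\;\sum_{j=1}^{r}(n_{j+1}-n_{j-1})\,\sigma_{s_{n_j}}
\]
in $QH^*(X)[q^{-1}]$, where $n_0:=0$, $n_{r+1}:=n$, and $\sigma_{s_{n_j}}$ denotes the Schubert divisor class indexed by $n_j\in I^P$.

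For step (i) I would use the factorization of $\Theta$ through $\psi_-$, through the restriction of $\mathcal F_{\operatorname{Lie}}$ to the Peterson stratum $Y_P$, and through Peterson's isomorphism in the type $A$ form of \cite{Rie03}. The outcome should be a dictionary translating the Pl\"ucker-coordinate ratios appearing on $(P\backslash G)^\circ$ into Schubert classes of $X$ with monomial quantum-parameter prefactors, extending the Grassmannian dictionary of \cite{MaRi}. Concretely, I expect each ``honest'' ratio $p_{J'}/p_J$ --- all the $v_{i,i+1}$, and those $u_{i,i+1}$ with $i\in I^P$ or $1\le i\le n_1$ or $n_r\le i\le n-1$ --- to map under $\Theta$ to $\pm\,q^{\mathbf a}\,\sigma_w$ for an explicit $w$ and an explicit exponent $\mathbf a$, with property~(3) of Theorem~\ref{intro:Fminus} (a simple pole along a single anti-canonical component) dictating which $w$ and $\mathbf a$ occur. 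The quadratic ratios $u_{i,i+1}=f_1/f_2$ for $n_j<i<n_{j+1}$ are the summands with no obvious single-Schubert-class meaning; these I would treat by writing $f_1/f_2$ as a telescoping combination of honest Pl\"ucker ratios on $(P\backslash G)^\circ$, interpolating between the steps $n_j$ and $n_{j+1}$, so as to reduce them to the already understood case at the cost of auxiliary terms that cancel in the total sum.

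For step (ii), once the summands have been named, what remains is a short list of identities among quantum Schubert structure constants of $X=\Flndot$ --- the ``new identities in quantum Schubert calculus'' of the abstract. The classical ($q^0$) part should follow from a Chern class computation reflecting the structure of $\Flndot$ as an iterated Grassmann bundle, which is precisely what produces the coefficients $n_{j+1}-n_{j-1}$ and matches them with the number and type of summands $u_{i,i+1}$ (together with $v_{n_j,n_j+1}$) attached to the step $n_j$. For the quantum corrections I would test the identity against the Schubert basis using the quantum Chevalley--Monk formula --- either directly on $\Flndot$ or, more comfortably, after lifting to the complete flag variety $\mathbb F\ell_n$ via the known functorial relations between $QH^*(\mathbb F\ell_n)$ and $QH^*(\Flndot)$ under the projection $\mathbb F\ell_n\to\Flndot$ (with the appropriate substitution of quantum parameters) --- and then conclude by induction on the number $r$ of steps, peeling off one step at a time. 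The key structural feature making this tractable is that every summand of $\mathcal F_-$ moves one index by one, which both makes the Chevalley--Monk formula applicable and sharply restricts the Gromov--Witten invariants that can contribute.

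The main obstacle is the quantum bookkeeping for the quadratic summands $u_{i,i+1}$ with $n_j<i<n_{j+1}$: in contrast with the Grassmannian and quadric cases of \cite{MaRi} and \cite{Hu}, these have genuinely non-monomial Pl\"ucker presentations, so determining $\Theta(u_{i,i+1})$ --- and especially its quantum part --- and arranging that the telescoping auxiliary terms cancel so that the total collapses onto $\sum_j(n_{j+1}-n_{j-1})\sigma_{s_{n_j}}$ is where the real effort lies. A subsidiary but unavoidable preliminary is to fix once and for all the normalizations (of quantum parameters and of signs) relating $Jac(\mathcal F_-)$, $\mathbb C[Y_P]$, and $QH^*(\Flndot)$, since the identity holds on the nose only after these conventions have been aligned.
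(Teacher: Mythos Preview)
Your overall architecture---decompose $\mathcal F_-$ into its $n{-}1{+}r$ summands, identify the image of each under $\Theta$, then sum---matches the paper. But the mechanism you propose for step~(i) has a gap. The paper does not build a general ``dictionary'' sending Pl\"ucker ratios to Schubert classes; instead it uses that the critical locus of $\mathcal F_{\operatorname{Lie}}$ in $Z_P$ consists of \emph{Toeplitz} matrices (via the identification with the Peterson variety). This Toeplitz property is what allows each $u_{i,i+1}$ and $v_{i,i+1}$, once restricted to the critical locus, to be rewritten as one of the standard Peterson-variety functions $G_1^{n_j}$ (which correspond to the divisor classes $\sigma_{s_{n_j}}$ under Peterson's isomorphism). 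In particular your expectation about the quadratic summands is off: there is no telescoping. Each ``middle'' term $u_{i,i+1}$ with $n{-}n_{j+1}<i<n{-}n_j$ restricts on the critical locus to exactly $\sigma_{s_{n_j}}+\sigma_{s_{n_{j+1}}}$, and it is this clean identification, together with the analogous ones for the monomial summands, that produces the coefficients $n_{j+1}-n_{j-1}$ directly.

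The genuine difficulty is proving that middle identification (the paper's Lemma~\ref{uinthemiddle}), and here your plan of ``test against the Schubert basis via Chevalley--Monk and induct on $r$'' would not succeed as stated. The paper first shows the statement is equivalent to a specific quantum Schubert identity $\sum_{J}(-1)^{|J|}\sigma_{w_J}\sigma_{[1,n_j+d]\setminus J}=0$ in $QH^*(X)$; the decisive observation is that the permutations $w_J$ are all $321$-avoiding, so their quantum Schubert polynomials have a determinantal (Jacobi--Trudi type) expression in the $H$-polynomials. The identity in $QH^*(\mathbb F\ell_n)$ then reduces to a Laplace expansion of a matrix with repeated or zero rows. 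One passes from $QH^*(\mathbb F\ell_n)$ to $QH^*(X)$ via Peterson's extension property (Proposition~\ref{prop:extension}), not via quantum-parameter substitution under the projection. Finally, this direct argument only covers the range $n_j+n_{j+1}\le n$; the complementary range is obtained from it by applying the involution $g\mapsto \dot w_0(g^{-1})^T\dot w_0^{-1}$, which swaps $P$ with the parabolic $Q$ having $I^Q=n-I^P$. None of the three key ingredients---Toeplitz restriction, $321$-avoidance plus determinantal formula, and the $\tau$-symmetry---appears in your plan.
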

\noindent The above theorem is stated again in an isomorphic form in \textbf{Theorem \ref{firstchernclass}}, using a version $\mathcal F_R$ of the superpotential whose domain relates more directly to the Peterson variety.
By interpreting the critical values of  $\mathcal{F}_-$ as eigenvalues  for the operator of multiplication by $[\mathcal{F}_-]$ on $Jac(\mathcal{F}_-)$, we obtain the following corollary.
  \begin{cor}
     Conjecture \ref{conjms} holds for $X=\Flndot$ and the mirror superpotential $\mathcal{F}_{-}$.
  \end{cor}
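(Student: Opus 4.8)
The plan is to deduce the Corollary from Theorem \ref{intro:firstchernclass} by the standard linear-algebra dictionary between multiplication operators on a finite-dimensional algebra and eigenvalues at critical points. First I would recall that the fiberwise Jacobi ring $Jac(\mathcal{F}_-)$, after restricting to a fiber $\mathbf{q} = \mathbf{q}_0$ with all $q_{n_i} \in \mathbb{C}^*$, is a finite-dimensional commutative $\mathbb{C}$-algebra, since via $\Theta$ it is isomorphic to $QH^*(X)|_{\mathbf{q}=\mathbf{q}_0}$, which has dimension $\dim H^*(X, \mathbb{C})$. For such an algebra $A = \mathcal{O}(\mathrm{Crit}(\mathcal{F}_{-,\mathbf{q}_0}))$, the eigenvalues of the operator ``multiplication by $f$'' on $A$ are exactly the values $f(p)$ as $p$ ranges over $\mathrm{Spec}(A)$, i.e. over the critical points, each value occurring with multiplicity equal to the length of the local ring at that point. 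Applying this with $f = [\mathcal{F}_-]$: the eigenvalues of multiplication by $[\mathcal{F}_-]$ on $Jac(\mathcal{F}_-)$ are precisely the critical values of $\mathcal{F}_-$ (the value of the function $\mathcal{F}_-$ at each of its critical points).

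Next I would transport this through the isomorphism $\Theta$ of \eqref{e:Theta}. Since $\Theta$ is a ring isomorphism, multiplication by $[\mathcal{F}_-]$ on $Jac(\mathcal{F}_-)$ is conjugate to multiplication by $\Theta([\mathcal{F}_-])$ on $QH^*(X)[q_{n_1}^{-1}, \dots, q_{n_r}^{-1}]$, hence the two operators have the same eigenvalues with multiplicities. By Theorem \ref{intro:firstchernclass}, $\Theta([\mathcal{F}_-]) = c_1(X)$, so multiplication by $[\mathcal{F}_-]$ is conjugate to the first Chern class operator $\hat c_1(\mathbf{q})$. Combining the two paragraphs: the critical values of $\mathcal{F}_{-,\mathbf{q}}$ coincide (as multisets) with the eigenvalues of $\hat c_1(\mathbf{q})$, which is exactly the assertion of Conjecture \ref{conjms} for $X = \Fl_{n_\bullet}$ with mirror superpotential $\mathcal{F}_-$.

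One point deserving care, and the only real obstacle, is the reduction from the fiberwise Jacobi ring over the whole torus $\prod_{i \in I^P} \mathbb{C}^*_q$ to a single fiber, and the verification that the critical scheme of $\mathcal{F}_{-,\mathbf{q}}$ on $(P \backslash G)^\circ$ is indeed finite (equivalently, that $Jac(\mathcal{F}_{-,\mathbf{q}})$ is finite-dimensional) so that the eigenvalue/critical-value correspondence is literally applicable. This is precisely guaranteed by the isomorphism $\Theta$ coming from \cite{Pert, Rie03, Rie08} together with Theorem \ref{intro:Fminus}, since $QH^*(X)$ is free of finite rank over $\mathbb{C}[\mathbf{q}^{\pm}]$; I would simply invoke that specialization is compatible with $\Theta$ and with taking the multiplication operator by $[\mathcal{F}_-]$, which holds because $\mathcal{F}_-$ is a regular function on the total space and $\Theta$ is defined over $\mathbb{C}[\mathbf{q}^{\pm}]$. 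Apart from spelling this out, the Corollary is a formal consequence of Theorem \ref{intro:firstchernclass}.
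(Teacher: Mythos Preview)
Your proposal is correct and follows essentially the same approach as the paper: the paper's argument is the single sentence preceding the Corollary, namely that the critical values of $\mathcal{F}_-$ are the eigenvalues of multiplication by $[\mathcal{F}_-]$ on $Jac(\mathcal{F}_-)$, which via $\Theta$ and Theorem~\ref{intro:firstchernclass} become the eigenvalues of $\hat c_1(\mathbf q)$. Your write-up simply spells out the standard linear-algebra justification and the finiteness of the fiberwise Jacobi ring, which the paper leaves implicit.
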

 We note that isomorphically changing the domain of the superpotential does not affect the critical values. Therefore the same corollary holds for $\mathcal{F}_{\rm Lie}$, and $\mathcal F_R$. We also note that $\mathcal{F}_R$ and $\mathcal{F}_-$ look to be related by the chiral map in \cite{GaLa}.

  An  exciting aspect of this part of our paper is the interaction between the Conjecture~\ref{conjms} in mirror symmetry and identities in quantum Schubert calculus. The quantum cohomology ring $QH^*(\Flndot)$ has a $\mathbb{C}[\mathbf{q}]$-basis of Schubert classes $\sigma_w$, that is indexed by permutations in $S_n$ with descents at most in $n_j$, for $j\in\{1, \cdots, r\}$. The study of the ring structure of $QH^*(\Flndot)$ in terms of this basis, referred to as (type $A$) quantum Schubert calculus, is an area of great independent interest from the viewpoint of enumerative geometry. One of the  most central problems is to find a manifestly positive formula for the Schubert structure constants in the quantum product of two Schubert classes. Another topic of interest is the study of identities among quantum products of Schubert classes. For example, the quantum Schubert polynomials \cite{FominGelfandPostnikov, Fontanine} are expressions for general Schubert classes as polynomials in special Schubert classes. The quantum Giambelli formula \cite{Bert} for complex Grassmannians is another example. It turns out, that mirror symmetry also helps us find identities of this kind. Let us illustrate this from the perspective of the following natural question. Consider the isomorphism $\Theta$ from \eqref{e:Theta}
   \begin{qus}
     What are the preimages of the Schubert classes in $QH^*(\Flndot)$ under $\Theta$?
   \end{qus}
   Assuming the answer, one may expect to find relations in  $QH^*(\Flndot)$ simply by studying the mirror superpotential. Indeed, in the special case of $\Flndot=Gr(n-k, n)$,
our $\mathcal{F}_-$ turns out to coincide with the superpotential $\mathcal{F}_+$ of \cite{MaRi} (see Example \ref{ex:F-Gr}). Therefore, we have  $\Theta\inv(\sigma_w)=[p_w]$, where  $p_w$ denotes the (suitably normalised) Pl\"ucker coordinate corresponding to the  Grassmannian permutation $w$, as described in \cite{MaRi}. Each term in $\mathcal{F}_-$ turns out to reveal a quantum cohomology relation, see \cite[Remark 6.2]{MaRi}, recovering an instance of the known quantum Pieri-Chevalley formula in this case. For more general partial flag varieties the question above can be answered for certain Schubert classes using work of Peterson, see Section~\ref{s:peterson}. This means that quantum Schubert calculus relations involving these classes can be viewed as relations in the Jacobi ring.

The approach of using the superpotential for understanding quantum cohomology was used also in \cite{ChKa}. Namely, one can consider partial derivatives of the superpotential which naturally represent the zero class in the Jacobi ring, and translate these into quantum cohomology relations via the mirror isomorphism. Following this approach, \cite{ChKa} obtained certain relations involving `quantum hooks' via $W_{\rm Kal}$.

Our final result is a set of quantum Schubert calculus identities related to our formula for $\mathcal F_-$. The proof of Theorem~\ref{intro:firstchernclass},
turns out to involve showing each term in $\mathcal{F}_-$ corresponds to specific class in quantum cohomology. This requires certain relations to be proved in $QH^*(\Flndot)$.
    For instance, the term ${p_{24}p_{1567}-p_{14}p_{2567}+p_{12}p_{4567}\over p_{23}p_{1567}-p_{13}p_{2567}+p_{12}p_{3567}}$ in Example~\ref{ex:F247} relates to an identity
    %in $QH^*(F\ell_{2, 4; 7})$ that is
      $$ \sigma_{24}\cdot \sigma_{1567}-\sigma_{14}\cdot \sigma_{2567}+\sigma_{12}\cdot \sigma_{4567}= (\sigma_{23}\cdot \sigma_{1567}-\sigma_{13}\cdot \sigma_{2567}+\sigma_{12} \sigma_{3567}) \cdot (\sigma_{13}+\sigma_{1235})$$
      in quantum Schubert calculus.  Note that we have simplified the notations, for instance  by $\sigma_{24}$ above we mean the Schubert class $\sigma_{2413567}$ indexed by the Grassmannian permutation $2413567$ in one-line notation.
The above identity is equivalent to the following simpler one by using  quantum Chevalley-Monk formula \cite{Fontanine, Buch, Miha},
   \begin{align}\label{ex:iden}
    \sigma_{1526347}\cdot \sigma_{2314567}-\sigma_{2516347}\cdot\sigma_{1324567}+\sigma_{3516247}\cdot\sigma_{1234567}=0.
  \end{align}

%We warn that the image of Schubert classes of Grassmannian permutations become complicated, and we were careless with the isomorphism $\mathbb{F}\ell(7; 4, 2)\cong F\ell_{2, 4; 7}$.  Luckily,

Our final theorem, %Conjecture \ref{conjms}
 that we prove concurrently with Theorem~\ref{intro:firstchernclass}, gives new relations in quantum Schubert calculus of $QH^*(\Flndot)$, including  identity \eqref{ex:iden} as one example.

\begin{thm}
   In $QH^*(\Flndot)$, there are quantum relations of the form
       $$
  \sum\limits_{J}(-1)^{|J|} \sigma_{w_J}\sigma_{[1,n_j+d]\backslash J}=0.
$$
\end{thm}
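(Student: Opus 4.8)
The plan is to establish the relations by lifting them from a mirror-symmetric statement: each summand of $\mathcal F_-$ is a ratio $p_{J'}/p_J$ (or a ratio of quadratics) whose denominator cuts out an irreducible component of $-K_{F\ell_{\ndot}}$, and the content of Theorem~\ref{intro:firstchernclass} is that the class of $\mathcal F_-$ in $Jac(\mathcal F_-)$ maps under $\Theta$ to $c_1(X)$. I would begin by recalling, via Peterson's work (Section~\ref{s:peterson}) and the isomorphism $\Theta$ of \eqref{e:Theta}, the identification $\Theta\inv(\sigma_w)=[p_w]$ for the distinguished Schubert classes that show up as numerators and denominators of the summands; this is already known for the Grassmannian-type classes $\sigma_{[1,n_j+d]\backslash J}$ and $\sigma_{w_J}$ appearing in the statement. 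The relation $\sum_J (-1)^{|J|}\sigma_{w_J}\sigma_{[1,n_j+d]\backslash J}=0$ is then exactly the assertion that the corresponding alternating sum of products of Pl\"ucker coordinates represents $0$ in $Jac(\mathcal F_-)$.

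Next I would identify the correct mechanism producing these zero-classes. The natural candidate is a Pl\"ucker (straightening / generalized Garnir) relation in the homogeneous coordinate ring of $F\ell_{\ndot}$: for a suitable choice of the index set and the parameter $d$, the alternating sum $\sum_J (-1)^{|J|} p_{w_J} p_{[1,n_j+d]\backslash J}$ is literally a quadratic Pl\"ucker relation, hence vanishes identically on $F\ell_{\ndot}$, not just in the Jacobi ring. The key step is therefore a combinatorial bookkeeping argument: (i) write down the precise form of the denominators of the summands $u_{i,i+1}$ with $n_j<i<n_{j+1}$ (these are the quadratic-polynomial cases from Theorem~\ref{intro:Fminus}(2)), matching them to the $f_2$ in Theorem~\ref{fminus}; (ii) recognize that $f_2$, and the analogous numerator $f_1$, are each expansions of a single (generalized) Pl\"ucker coordinate along a flag step, i.e. an alternating sum of the shape above; (iii) apply the quantum Chevalley--Monk formula \cite{Fontanine, Buch, Miha} to pass from the "geometric" form with $\sigma_{13}+\sigma_{1235}$-type factors to the reduced form displayed in \eqref{ex:iden}, which is the clean statement the theorem records.

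The place where genuine work is needed — and what I expect to be the main obstacle — is showing that these alternating sums of products of Schubert classes vanish \emph{in the quantum ring}, not merely classically. Classically, the vanishing is a Pl\"ucker relation pulled back under the Borel--Weil embedding and the Giambelli-type identification of $\sigma_w$ with $p_w$; but $QH^*(\Flndot)$ has extra $q$-terms, and a priori the quantum corrections to the two sides of the identity need not cancel. The strategy to control this is twofold. First, use the degree/grading constraint: both sides are homogeneous of the same degree, and the set $[1,n_j+d]$ is chosen so that the only Schubert classes that can appear in the product expansions have codimension too small to admit a nonzero power of $q$ — i.e. a dimension count forces all Gromov--Witten contributions with $\beta\neq 0$ to vanish, so the quantum product agrees with the classical one on the nose for these particular classes. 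Second, where the dimension count is not by itself conclusive, invoke the mirror isomorphism $\Theta$ in the reverse direction: since $\Theta$ is a ring isomorphism and the Pl\"ucker relation holds on $(P\backslash G)^\circ$ (hence in $Jac(\mathcal F_-)$ after clearing the denominator, which is a unit there by Theorem~\ref{intro:Fminus}(3)), its image $\sum_J (-1)^{|J|}\sigma_{w_J}\sigma_{[1,n_j+d]\backslash J}$ must be $0$ in $QH^*(\Flndot)[q^{-1}]$, and then a clearing-of-denominators argument returns a relation in $QH^*(\Flndot)$ itself. I would run the dimension-count argument first since it gives the relation unconditionally and independently of the mirror machinery, and fall back on $\Theta$ only to handle the boundary cases and to verify consistency; reconciling the two — in particular checking that the $q$-exponents match exactly as in the statement — is the delicate bookkeeping I would budget the most time for.
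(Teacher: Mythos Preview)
Your proposal has a genuine gap at its core. The permutations $w_J$ are \emph{not} Grassmannian: they have descents at $n_j$, $n_{j+1}$, and $n_{j+2}$ (look at the explicit description in Definition~\ref{d:wJ}, or at the example $w_{\{1\}}=1526347$). Hence there is no Pl\"ucker coordinate $p_{w_J}$, and no ``$\Theta\inv(\sigma_{w_J})=[p_{w_J}]$'' result available --- Peterson's identification of Schubert classes with minors $G_w$ is stated only for Grassmannian $w$. So your step~(ii), claiming that $\sum_J(-1)^{|J|}p_{w_J}p_{[1,n_j+d]\backslash J}$ is a quadratic Pl\"ucker relation on $F\ell_{\ndot}$, does not even type-check. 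The alternating sum in the theorem is not a straightening relation in the coordinate ring of the flag variety; it is an identity among Schubert classes of genuinely non-Grassmannian shape, and it does involve quantum terms (the $\sigma_{w_J}$ themselves arise as the $q_{n_{j+1}}$-coefficient in $\sigma_{J\cup[i+1,n]}\cdot\sigma_{s_{n_{j+1}}}$, see the proof of Lemma~\ref{equivalence}). Your dimension-count fallback is also unjustified: these products are not of low enough degree to exclude quantum contributions a priori.

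The paper's actual argument is quite different and has two ingredients you are missing. First, the combinatorial key is that although the $w_J$ are not Grassmannian, they are $321$-avoiding (Lemma~\ref{321permutation}); this gives a determinantal formula $\mathfrak S^q_{w_J}=\det(H_{\lambda_r-\mu_s-r+s}(X_{\phi_r}))$ in quantum Schubert polynomials (Theorem~\ref{321equalityquantum}, the quantum analogue of Billey--Jockusch--Stanley). The identity is then proved in $QH^*(\mathbb F\ell_n)$ by Laplace-expanding these determinants and recognizing the full alternating sum as the expansion of a larger determinant $A_R$ with a repeated row or a zero row, hence $\det A_R=0$. Second, Peterson's extension property (Proposition~\ref{prop:extension}) is used to descend the identity from $QH^*(\mathbb F\ell_n)$ to $QH^*(\Flndot)$, since all the $w_J$ lie in $W^P$. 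This handles the case $n_j+n_{j+1}\le n$; the complementary case is obtained via an involution $\tau$ on $GL_n$ that swaps $P$ with the parabolic $Q$ having $I^Q=n-I^P$, reducing to the already-proved case for $Q$. Your proposal does not touch any of this machinery.
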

\noindent We will postpone the explanations of the relevant notations to Section 5, and will restate the identity fully in \textbf{Theorem \ref{keyIdentity}}. The proof of the above theorem goes via the complete flag variety $\mathbb{F}\ell_n$ using Peterson's remarkable extension property (see Proposition~\ref{prop:extension}).
 The proof of Theorem \ref{intro:firstchernclass} is closely linked to to the above result.

\subsection*{Remarks for further directions}
Closed string mirror symmetry in full generality at genus zero predicts an isomorphism on the level of Frobenius manifolds. The notion of a \textit{Frobenius manifold} was first introduced by B. Dubrovin in 1990s \cite{Dubr}, while the first   construction of a Frobenius manifold could date back to K. Saito \cite{Saito} in early 1980s in the name of  \textit{flat structures} using his primitive form theory. Mirror symmetry predicts that the Frobenius manifold associated to the Gromov-Witten theory of a Fano manifold $X$ (the \textit{big} quantum cohomology ring of $X$) should be isomorphic to the Frobenius manifold of the mirror Landau-Ginzburg model $(\check X, W)$ associated to an appropriate Saito's primitive form.
This was   indirectly proved for complex Grassmannians in \cite{KiSa, CFKS} by a reduction to the case of projective spaces \cite{Bara}.
The case of quadrics was recently proved in \cite{Hu}, where the verification of Conjecture \ref{conjms} is an important step.
We expect that our Theorem \ref{intro:firstchernclass} will play an important role in studying mirror symmetry $\Flndot$ on such level as well.

 For
 the mirror symmetry on the intermediate level of $D$-modules, the Pl\"ucker coordinate versions of the superpotential of $\mathcal{F}_{\rm Lie}$ play a very important role in proving an explicit injective morphism of $D$-modules for complex Grassmannians and quadrics \cite{MaRi, PRW}. An implicit isomorphism of $D$-modules for minuscule Grassmannians was proved in \cite{LaTe}.
 A proof for an equivariant $D$-module isomorphism for general $G^\vee/P^\vee$ was recently given in \cite{Chow23}. However, the isomorphism therein seems not explicit enough either. Moreover,  verification of the Gauss-Manin connection along $z$-direction was missing, which is an indispensable piece in the mirror symmetry on the level of Frobenius manifolds. We believe that our Theorem \ref{intro:firstchernclass} will be helpful towards getting a better understanding of the $D$-module mirror symmetry for $\Flndot$.

Conjecture \ref{conjms} also appeared in the context of Kontsevich's \textit{homological mirror symmetry} \cite{Kont}, which is one main approach to (open string) mirror symmetry (in addition to another main approach by  Strominger-Yau-Zaslow \cite{SYZ}). For $G^\vee/P^\vee$, homological mirror symmetry was so far only proved for complex Grassmannians $G(n-k, n)$ with $n$ prime \cite{Cast20}, beyond the projective space case covered earlier \cite{Abou}. Here it is important to understand the superpotential $\mathcal{F}_-$ in a Floer theoretical way,  which has only been achieved for very few cases including $Gr(2, n)$ \cite{HKL}. It will be desirable to understand the superpotential more deeply.

Another   closely related direction is about  the Gamma conjecture I  and its underlying conjecture $\mathcal{O}$ proposed by Galkin-Golyshev-Iritani \cite{GGI}.  Here conjecture $\mathcal{O}$ concerns  the eigenvalues  of the aforementioned linear operator
    $\hat c_1|_{\mathbf{q}=\mathbf{1}}$. For flag varieties   $G^\vee/P^\vee$, conjecture $\mathcal{O}$  has already been proved in \cite{ChLi}, while the Gamma conjecture I was only known for very few cases including complex Grassmannians and quadrics.
    One main approach to Gamma conjecture I in \cite{GaIr} relies on a B-side analogy of conjecture $\mathcal{O}$ and a conifold condition. Our Theorem~\ref{firstchernclass}, together with \cite{ChLi}, ensures the  B-side analogy of conjecture $\mathcal{O}$.
    Therefore it will play an important role in the study of the Gamma conjecture I for $\Flndot$ via this approach.

 Finally, we would propose a deeper interaction between mirror symmetry and quantum Schubert calculus for $G^\vee/P^\vee$.  Indeed, for the type $C$ case, some conjectural quantum relations  in the quantum cohomology of a Lagrangian Grassmannian were given in \cite[Conjecture 4.1]{PeRi13}, inspired by Conjecture \ref{conjms}. Even in type $A$, we would ask which quantum relations arise from the natural partial derivatives of the mirror superpotential $\mathcal{F}_-$ via the mirror isomorphism. We also note that some new quantum relations in $QH^*(\Flndot)$ related with a cluster algebra structure were discovered in \cite{HeZh}. It will be interesting to investigate whether these relations could also be revealed using cluster charts in the domain of $\mathcal{F}_-$.

The paper is organized as follows. In Section 2, we introduce the basic notions. In Section 3, we construct two superpotentials $\mathcal{F}_\pm$, and provide the Pl\"ucker coordinate expression of $\mathcal{F}_-$. In Section 4, we prove Theorem \ref{intro:firstchernclass} by assuming Lemma \ref{uinthemiddle} first. Section 5 is devoted to a proof of  Lemma \ref{uinthemiddle} in terms of equivalent  identities on quantum product of Schubert classes. Finally, in the Appendix
we provide a description of $\mathcal{F}_-$ using Young diagrams.

\subsection*{Acknowledgements}
The authors thank   Kentaro Hori, Xiaowen Hu, Elana Kalashnikov, Tony Yue Yu, and Hang Yuan for helpful discussions.  C. Li  is  supported   in part by the National Key Research
and Development Program of China No. 2023YFA100980001 and NSFC Grant   12271529. K. Rietsch is supported by  EPSRC grant EP/V002546/1.

\section{Preliminaries}
We review some background in Lie theory (see e. g. \cite{Borel} for details).

\subsection{Notation}Let $G=GL_n(\mathbb{C})$. %, which is of rank $n-1$.
%Let $B\subset G$ consist of upper triangular matrices in $G$. Consider
Let $B_+$ and $B_-$ denote the upper-triangular and lower-triangular Borel subgroups of $G$ with unipotent radicals $U_+$ and $U_-$, respectively. Then $T=B_-\cap B_+$ is the  maximal torus of diagonal matrices in $G$.

Let  $\mathfrak{b}_-, \mathfrak{b}_+, \mathfrak{u}_-, \mathfrak{u}_+, \mathfrak{h}$ be the Lie algebras of $B_-, B_+, U_-, U_+$
and $T$ respectively. %They are Lie subalgebras of $\mathfrak{gl}(n, \mathbb{C})=\mathfrak{sl}(n, \mathbb{C})\oplus \mathbb{C}$.
 Let $\Delta=\{\alpha_1, \cdots, \alpha_{n-1}\}$ be the standard base of simple roots, and $R$ (resp. $R_+$) be the set of (positive) roots. That is, we have the Cartan decomposition $$\mathfrak{gl}(n, \mathbb{C})=\mathfrak{h}\oplus\bigoplus_{\alpha\in R} \mathfrak{g}_\alpha\quad \mbox{with}\quad  \mathfrak{g}_{\alpha_i+\alpha_{i+1}+\cdots+\alpha_{j-1}}=\mathbb{C} E_{ij}\quad\forall 1\leq i<j\leq n,$$
where $E_{i,j}$ is the matrix with entry $1$ in row $i$ and column $j$ and zeros elsewhere.
 We will view elements of $\Delta$ as lying in  the  character group     of $T$, so that %abusing the notation for
    $$\alpha_i: T\to \mathbb{C}^*:= {\mathbb{C}\backslash \{0\}}; \quad t=\mbox{diag}(t_1, \cdots, t_n) \mapsto \alpha_i(t)={t_i\over t_{i+1}}.$$

 For any positive integers $k, m$ with $k<m$, we denote the integral interval $[k, m]:=\{k, k+1, \cdots, m\}$, and simply denote  $[m]:=[1, m]$.  Set $I=[n-1]$.   The Weyl group   $W$ of $\mathfrak{gl}(n, \mathbb{C})$ is generated by simple  reflections  $s_i=s_{\alpha_i}$, $i\in I$.
   We will freely identify $W$ with    the Weyl group $N_G(T)/T$ of $G$ as well as the symmetric group $S_n$, by using the isomorphisms
\[\!   
    \begin{array}{lcccccl}
        & \qquad S_n&\overset{\cong}{\longrightarrow}&W &\overset{\cong}{\longrightarrow} &N_G(T)/T,&\\
    \end{array}
    \]
    \[
   \begin{array}{lcccccl} \quad\mbox{where}& (i, i+1)&\mapsto &s_i &\mapsto &\dot s_i T&\mbox{ for}\quad \dot s_i=\exp(E_{i, i+1})\exp(-E_{i+1, i})\exp(E_{i, i+1}).
    \end{array}
    \]
% where  $(i, i+1)$ is a transposition and  $\dot s_i$ is given by
%%$$\dot s_i=\exp(E_{i, i+1})\exp(-E_{i+1, i})\exp(E_{i, i+1})\in N_{G}(T).$$
 Moreover, we let  $\ell: W\to \mathbb{Z}_{\geq 0}$ be    the standard length function.
For $w= s_{i_1}\cdots s_{i_m}$ with $m=\ell(w)$,  the element  $\dot w:=\dot s_{i_1}\cdots \dot s_{i_m} \in N_G(T)$ is independent of   the reduced expression chosen.

\iffalse  Each $s_i$ is associated to an element $\dot s_i$ given by  $$\dot s_i:=\exp(E_{i, i+1})\exp(-E_{i+1, i})\exp(E_{i, i+1})\in N_{G}(T).$$
\noindent For any $w\in W$, we write $w=s_{i_1}\cdots s_{i_m}$ with $m=\ell(w)$, where
   $\ell: W\to \mathbb{Z}_{\geq 0}$ denotes the standard length function.
   Set  $\dot w=\dot s_{i_1}\cdots \dot s_{i_m}$. Then $w\mapsto \dot w T$ defines an isomorphism between    $W$ and the Weyl group $N_G(T)/T$ of $G$.
  We also identify $W$ with  the symmetric group $S_n$ by  mapping $s_i$ to the transposition $(i, i+1)$.
  \fi

   We let $P\supseteq B_-$ be a parabolic subgroup of $G$. Set $I_P=\{i\in I\mid \dot s_i\in P\}$ and $I^P=I\setminus I_P$.
   We write
   $$I^{P}=\{n_1, \cdots, n_r\}, $$
   where  $1\leq n_1<n_2<\dots<n_r\leq n-1$.

 Let $W_P$ be the Weyl subgroup of $W$ associated to $P$, and $W^P$ be the set of minimal length coset representatives in $W/W_P$. Precisely, %we have
   $$W_P=\langle s_i\mid i\in I_P\rangle,\quad W^P=\{u\in W\mid \ell(us_i)>\ell(u),\,\forall i\in I_P\}.$$
 Denote by $w_P$ (resp. $w^P$, $w_0$)  the longest element in $W_P$ (resp. $W^P$, $W$). %We have $w_0=w^Pw_P$.

The Langlands dual group $G^\vee$ to $G$ is again $GL(n, \mathbb{C})$, but plays a different role.  Let $B^\vee_{\pm}, P^\vee_{\pm}, T^\vee, \Delta^\vee$  be the Langlands dual  versions of $B_{\pm}, P_{\pm}, T, \Delta$, respectively.  The base $\Delta^\vee$ for $G^\vee$ are canonically identified with the set $\{\alpha_1^\vee, \cdots, \alpha^\vee_{n-1}\}$ of simple coroots for $G$.
 In particular, we have $s_{\alpha_i}=s_{\alpha_i^\vee}$. The Weyl group for $G^\vee$ is again $W$, and $I_{P^\vee}=I_P$ for any parabolic subgroup $P$ of $G$ containing $B_+$ or $B_-$. %\color{purple}
 The deeper relationship between the original group $GL_n(\mathbb C)$ and its Langlands dual group is described by the geometric Satake correspondence \cite{lusztig,Gin:GS,MV}.%\color{black}

\subsection{Langlands dual flag varieties}
A partial flag variety is a quotient of $GL(n, \mathbb{C})$ by a parabolic subgroup on the left or right. We can think of it as parameterizing flags of subspaces (of row vectors) in  {$Hom(\mathbb{C}^n, \mathbb{C})$ }or flags of quotients of the space $\mathbb{C}^n$ (in column vectors), to be precisely described below.
  Since we will focus more on the B-side of mirror symmetry, we will use $G$ there, and leave $G^\vee$ for the A-side of mirror symmetry.

On the B-side, recall that  $P\supseteq B_-$ is the parabolic subgroup of $G$ with $I^{P}=\{n_1, \cdots, n_r\}$.  Denote $n_0:=0$ and $n_{r+1}:=n$, and set
$$a_j:=n_j-n_{j-1},\quad \forall j\in [r+1].$$
Then $P$ consists of block-lower-triangular matrices in $G$ with block-diagonal matrices of the form $\mbox{diag}\{M_{1},M_2, \cdots, M_{r+1}\}$, where
 each $M_j$ is an $a_j\times a_j$ invertible matrix.

  Consider   the partial flag variety $F\ell_{\ndot}=F\ell_{n_1,  \cdots, n_r; n}$ that parameterizes flag of vector subspaces $V_{n_j}$ in $Hom(\mathbb{C}^n,\mathbb{C})$, namely
    $$F\ell_{\ndot}= \{  V_{n_1}   \subset  \dotsb  \subset V_{n_r}  \subset Hom (\mathbb{C}^n,\mathbb{C})\mid
    \dim V_{n_j}=n_j\,, 1\leq j\leq r\}.$$
The Lie group $G$   transitively acts on $F\ell_{\ndot}$ on the right,    inducing an isomorphism
$$\tau_{P}: P\!\setminus\! G\overset{\cong}{\longrightarrow} F\ell_{\ndot}.$$
The isomorphism $\tau_{P}$  sends $Pb$  to the partial flag $V_\bull$ such that $V_{n_j}$ is spanned by the first $n_j$ row vectors of the matrix $b$  for all $j\in [r]$.

On the A-side, we consider the partial flag variety $\Flndot=\mathbb{F}\ell(n; n_r, \cdots, n_1)$ that parameterizes flag of quotients, namely
$$\Flndot= \{ \mathbb{C}^n\twoheadrightarrow \Lambda_{n_r} \twoheadrightarrow\cdots \twoheadrightarrow \Lambda_{n_1} \to 0\mid  \dim \Lambda_{n_j}=n_j,\,\,\forall j\in [r]\}/\sim .$$
Here $\Lambda_\bullet \sim \Lambda'_\bullet$ if and only if there are isomorphisms $\Lambda_{n_j}\to \Lambda_{n_j}'$   making the diagram of the two flags of quotients commutative.
There is a canonical  isomorphism
    $$ G^\vee/P^\vee\overset{\cong}{\longrightarrow} \Flndot, $$
which sends $gP^\vee$ to the class of a flag   $\Lambda_\bullet$ of quotients such that the kernel  of $\mathbb{C}^n\twoheadrightarrow \Lambda_{n_j}$ is the vector subspace spanned by the last $n-n_j$ column vectors of the matrix $g$ for all $j$.

We remark that there are many canonical isomorphisms between group quotients and different parameterizations of flag varieties floating around. Here we are taking the above isomorphisms, to fit the philosophy of Langlands dual as well as the word ``mirror". It does not matter much if different isomorphisms are taken.

\subsection{Open Richardson varieties}
For   $v, w\in W$, with $v\leq w$ with respect to the Bruhat order, we have the \textit{open Richardson subvarieties},
          \begin{align*}
          \mathcal{R}_{v, w}^-&:=(B_-\!\setminus\! B_-\dot v^{-1} B_+)\cap (B_-\!\setminus\! B_-\dot w^{-1} B_-)\subset B_-\!\setminus\! G,\\
             \mathcal{R}^{+}_{v, w}&:=(B_+\!\setminus\! B_+\dot v^{-1} B_-)\cap (B_+\!\setminus\! B_+\dot w^{-1} B_+)\subset B_+\!\setminus\! G.
         \end{align*}
 These are smooth and irreducible of dimension $\ell(w)-\ell(v)$ \cite{KaLu}.Note that the intersections above are empty if $v\not\le w$.
  The  Zariski closures are called a (closed) Richardson varieties and denoted by $\overline{\mathcal{R}}_{v, w}^-$ and $\overline{\mathcal{R}}_{v, w}^+$, respectively.
 We will also consider the following (open) Richardson varieties in  $G/B_-$.
      \begin{align*}
               %  \mathcal{R}_{v, w}^{R^+}&:=(B_-\dot v B_+/B_+)\cap (B_+\dot w B_+/B_+)\subset G/B_+;\\
          \mathcal{R}_{v, w}^{R^-}&:=(B_+\dot v B_-/B_-)\cap (B_-\dot w B_-/B_-)\subset G/B_-,\\
            \mathcal{R}_{v, w}^{R^+}&:=(B_-\dot v B_+/B_+)\cap (B_+\dot w B_+/B_+)\subset G/B_+..
       \end{align*}
      Open Richardson  varieties  and their projections  will be our main target spaces on the B-side. %, while  $\mathcal{R}_{v, w}^{R^-}$ will be involved in Remark \ref{rmkRief} only.  It is obvious that  $B_-g\mapsto g^{-1}B_-$ defines an isomorphism  $\mathcal{R}_{v, w}^-\overset{\cong}{\longrightarrow}\mathcal{R}_{v, w}^{R^-}$.
   %%         $$\mathcal{R}_{v, w}^{R^-}:=(B_+\dot v B_-/B_-)\cap (B_-\dot w B_-/B_-)$$

% Recall $Q_-=\dot w_0P_+\dot w_0^{-1}$ and hence $w_{Q_-}=w_0w_{P_+}w_0$.  We simply denote $P:=P_+$ and $Q:=Q_-$. We will be particularly interested in the partial flag varieties $P\backslash G$ and $Q\backslash G$.

We use the notation $(P\backslash G)^\circ$  for the top-dimensional projected open Richardson variety inside $P\backslash G$, namely $(P\backslash G)^\circ={\rm pr}_P(\mathcal{R}_{{\rm{id}},w_0 w_P}^-)$ under the projection
${\rm pr}_P:B_-\!\setminus\! G \longrightarrow P\!\setminus\! G$.
% \text{ and } {\rm pr}_Q:B_-\!\setminus\! G \longrightarrow Q\!\setminus\! G.$$

\subsubsection{Schubert varieties}
On the A-side, we consider the Bruhat decompositions
     $$X=G^\vee/P^\vee= \bigsqcup_{v\in W^{P}} B_+^\vee\dot{v} P^\vee/P^\vee=\bigsqcup_{w\in W^{P}} B_-^\vee\dot{w} P^\vee/P^\vee.$$
The  Zariski closures of the (opposite) Schubert cells $B_+^\vee\dot v P^\vee/P^\vee$ and $B_-^\vee\dot w P^\vee/P^\vee$,
    $$ X^v=\overline{B_+^\vee\dot v P^\vee/P^\vee},\quad X_w:=\overline{B_-^\vee\dot w P^\vee/P^\vee}$$
 are (opposite) Schubert varieties in $X$. They are of codimension $\ell(v)$ and dimension $\ell(w)$ respectively.
It is well-known that the classical cohomology ring $H^{*}(X, \mathbb{Z})$ has a  $\mathbb{Z}$-basis of Schubert classes $\sigma_v$:
   $$ H^{*}(X, \mathbb{Z})=\bigoplus_{v\in W^{P}} \mathbb{Z}\sigma_v, \mbox{ where } \sigma_v:=\mbox{P.D.}[X^v]\in H^{2\ell(v)}(X, \mathbb{Z}),$$
  and $\mbox{P.D.}[X^v]$ stands for  the Poincar\'e dual of the fundamental homology class of~$X^v$.

  \section{The Pl\"ucker coordinate superpotentials}
 In this section, we will  construct two versions of a superpotential  for $X=G^\vee/P^\vee$ defined on projected open Richardson varieties for $G$.
The first superpotential $\mathcal{F}_+$ is a straightforward extension of a construction for complex Grassmannians given in \cite{MaRi}. It has a natural formula in terms of Pl\"ucker coordinates in the Grassmannian case, as was shown in \cite{MaRi}, but this formula does not generalise well to more general partial flag varieties. The construction of the second superpotential $\mathcal{F}_-$, which has a natural Pl\"ucker coordinate presentation in general, is the main outcome of this section.

\subsection{The superpotential $\mathcal{F}_+$ generalising \cite{MaRi}}\label{s:F+}

Recall we have  the following (open) Richardson variety in $B_+\!\setminus\! G$,
      \begin{align*}
          \mathcal{R}^{+}_{{\rm {id}}, w_0 w_P}&:=(B_+\!\setminus\! B_+ B_-)\cap (B_+\!\setminus\! B_+\dot w_P^{-1}w_0^{-1} B_+)\subset B_+\!\setminus\! G       \end{align*}
and the projection map $\mbox{pr}_{P_+}:B_+\!\setminus G\to P_+\!\setminus G$,  {where $P_+$ is upper-triangular parabolic subgroup with $I^{P_+}=\{n_1,\cdots,n_r\}$}. It is shown in \cite[Lemma 3.1]{KLS} that $\mbox{pr}_{P_+}:\mathcal{R}^{+}_{{\rm id}, w_0w_P}\to(P_{+}\backslash G)^\circ=  \mbox{pr}_{P_+}(\mathcal{R}^{+}_{{\rm id}, w_0w_P})$ is an isomorphism. Moreover, implicitly from \cite{KLS}, the projected open Richardson variety $\mbox{pr}_{P_+}(\mathcal{R}^{+}_{{\rm id}, w_0w_P})$ is the complement of an anti-canonical divisor in $P_+\!\setminus G$.

  As in \cite{MaRi}, we use $GL(n, \mathbb{C})$ as the starting point instead of $PSL(n, \mathbb{C})$ used in \cite{Rie08}, and thus  need to cut down to a codimension one subtorus in $T$. The torus $T$ has an  adjoint action by $W$. Consider the invariant subtorus $$\mathcal T^{W_P}=\{ {t}\in T\mid t_{n}=1,\,\,\dot w {t}\dot w^{-1}= {t}, \forall w\in W_P\}.$$

We define a map
\[
\begin{array}{rccc}
\psi_+:& B_-\cap U_+\mathcal T^{W_P}\dot w_P\dot w_0\inv U_+ &\longrightarrow& (P_+\backslash G)^\circ \times \prod\limits_{i\in I^P} \mathbb{C}^*_q\\
 &b_-=u_1t\dot w_P\dot w_0\inv u_2 &\mapsto & (P_+b_-, \mathbf q(t))
\end{array}
\]
where
 \begin{equation}\label{e:qmap}
\begin{array}{ccl}
{{\mathcal T}}^{W_P}&\overset\sim\longrightarrow & \prod\limits_{i\in I^{P}} \mathbb{C}^*_q\\
t &\mapsto &\mathbf q(t\,):= (\alpha_{n_1}(t\,),\dotsc,\alpha_{n_r}(t\,)).
\end{array}
\end{equation}
It follows from \cite[Section~4]{Rie08} and \cite{KLS} that $\psi_+$ is an isomorphism.

\begin{defn}\label{defF+}
   We define the superpotential $\mathcal{F}_+$ by
\[
  \begin{array}{rcccl}
 \mathcal F_+:    (P_+\backslash G)^\circ  \times \prod\limits_{i\in I^P} \mathbb{C}^*_q &\overset{ \psi_+\inv} \longrightarrow& B_-\cap U_+\mathcal T^{W_P}\dot w_P\dot w_0\inv U_+ &\longrightarrow & \mathbb{C}\\
       (P_+b_-,\mathbf q(t))&\mapsto & b_-=u_1 t\dot w_P\dot w_0\inv u_2 &\mapsto &\sum\limits_{i=1}^{n-1} e_i^*(u_1)+ e_i^*(u_2).
\end{array}
\]
Where $e_k^*: U_+\to \mathbb C$ is the map that sends $u=(u_{ij})$  in $U_+$ to its $(k,k+1)$-entry, namely $e_k^*(u)=u_{k,k+1}$. This is well-defined by  \cite[Lemma 5.2]{Rie08}.
\end{defn}
This definition is a direct translation of the Lie-theoretic superpotential defined in \cite{Rie08} via the isomorphism $\psi_+$.  If $P_+$ is a maximal parabolic, then this definition agrees with the one used to give a Pl\"ucker coordinate superpotential for Grassmannians in \cite{MaRi}. In general, viewing $\mathcal F_+$, as a rational function on the partial flag variety $P_+\!\setminus G$ (depending additionally on parameters $q_i$), there will be a Pl\"ucker coordinate formula also in the partial flag setting. However, it turns out that this construction gives a superpotential  that is not as well-suited for being expressed in terms of Pl\"ucker coordinates as we would like.

\begin{example}\label{ex:FullFlagF+}
Consider the complete flag variety $G^\vee/B^\vee_-$ for $GL_3(\mathbb C)$ and the associated superpotential $\mathcal F_+$. Fix a representative $b_-$ of $P_+b_-$. For a subset $I$ of $\{1,2,3\}$ let $p_{I}$ denote the minor of $b_-$ with column set $I$ and row set the  {last} $| I |$ many rows. Then
\[
\mathcal F_+(P_+b_-, (q_1,q_2))={p_2\over p_1}+{p_{13}\over p_{12}}+ {q_2}{p_1p_{13}\over p_3p_{12}}+ {q_1}{p_2 p_{12}\over p_1 p_{23}}.
\]
This example will be useful for comparison between $\mathcal F_+$ and our alternative version of the superpotential that we construct in Section~\ref{s:F-} (see Example~\ref{ex:FullFlagF-}).
\end{example}

\subsection{Superpotential $\mathcal F_{\operatorname {Lie}}$}\label{s:FLie}  We now give the construction of the original Lie theoretic  superpotential $\mathcal F_{\operatorname{Lie}}$ in a form that is suitable for our applications.  %, following \cite{Rie08} and \cite[section 6.2]{MaRi}.
The following definition is a slight change of conventions on \cite[Definition 6.3]{MaRi} and \cite{Rie08}.
 %, and the  formulation \cite[Equation (4.4)]{Rie08} by   \cite[Lemma 5.2]{Rie08}.

We recall the definition of the torus $\mathcal T^{W_P}$ and the isomorphism from \eqref{e:qmap}, as well as the maps $e_k^*: U_+\to \mathbb C$. We also recall that  $P\supseteq B_-$ is the parabolic subgroup of $G$ with $I^{P}=\{n_1, \cdots, n_r\}$.

 \begin{defn} [The Lie-theoretic superpotential] Let $Z_P:=  B_-\cap U_+\mathcal T^{W_P}\dot w_P\inv \dot w_0  U_+$.
  Define the map $\mathcal F_{\operatorname {Lie}}:Z_P\longrightarrow  \mathbb{C}$ by
$$
  \begin{array}{ccc}
                            b_-=u_1t \dot w_P\inv \dot w_0 u_2 &\mapsto& -\left(\sum\limits_{i=1}^{n-1}e_i^*(u_1)+\sum\limits_{i=1}^{n-1}e_i^*(u_2)\right).
  \end{array}
$$
 \end{defn}
\noindent It is an important fact that the  map $\mathcal F_{\operatorname {Lie}}$ is well-defined even though $u_1$ and $u_2$ are not uniquely determined by $b_-$, see \cite[Lemma 5.2]{Rie08}.
% which also shows that the expression in \cite[Equation (4.4)]{Rie08} depends only on $b$.

\subsection{The superpotential $\mathcal{F}_-$}\label{s:F-}

In this section we give a non-standard isomorphism from $Z_P$ to the product of the projected open Richardson variety $(P\backslash G)^\circ=\mbox{pr}_P(\mathcal{R}^{-}_{{\rm id}, w_0w_P})$ with $\prod_{i\in I^P} \mathbb{C}^*_q $. The superpotential $\mathcal F_-$ will then be defined as a function on $(P\backslash G)^\circ\times \prod_{i\in I^P} \mathbb{C}^*_q $.

We start by considering the isomorphism
\[
\begin{array}{cccc}
\gamma:&Z_P=   B_-\cap U_+{{\mathcal T}}^{W_P} \dot w_P\inv\dot w_0 U_+ &\longrightarrow & (B_-\cap U_+ \dot w_P\inv\dot w_0U_+)\times\prod_{i\in I^P} \mathbb{C}^*_q \\
   &b_-=u_1t \dot w_P\inv\dot w_0 u_2 &\mapsto& (\b=t\inv b_-,\mathbf q(t)),
\end{array}
\]
where $\mathbf q(t)$ is as in \eqref{e:qmap}.
Here the first factor of the right-hand side may be considered as fiber of $Z_P$ where $t$ equals to the identity element. The $q_{n_i}=\alpha_{n_i}(t)$ can also be obtained directly using minors of $b_-$.

We translate the superpotential $\mathcal F_{\operatorname{Lie}}$ to a function on the right-hand side above, and write down a formula for it for future reference. Namely, we have
$\b=\v\dot w_P^{-1}\dot w_0 \, \u$ for $\b\in B_-\cap U_+ \dot w_P^{-1}\dot w_0U_+$. Then
\begin{equation}\label{e:FLieviagamma}
 \mathcal F_{\operatorname{Lie}}\circ  \gamma^{-1} \,(\b, (q_{i})_{i\in I^P})=-\left( \sum\limits_{i\in I_P}\v_{i,i+1} + \sum\limits_{i\in I^P} q_i \v_{i,i+1}+ \sum\limits_{i=1}^{n-1}\u_{i,i+1}\right).
\end{equation}
The main step in the construction now is to make a choice for $\v$ for which only the quantum terms in the formula above will appear, and the other $\v_{i,i+1}$ vanish.
}
Recall that $\mathcal R^{R^+}_{{\rm id},w_P w_0}=B_- B_+\cap B_+w_P w_0B_+/B_+$.
 We define the variety
\begin{equation}\label{e:Z}
\mathcal Z:=U_+\dot w_P^{-1}\dot w_0 \cap \dot w_P^{-1}\dot w_0 U_- \cap B_-B_+,
\end{equation}
which will play a central role in our constructions.

\begin{lemma}\label{zeta}
Consider the intersection $B_-\cap U_+ \dot w_P\inv  \dot w_0U_+$ and the variety $\mathcal Z$ as defined above. We have the following isomorphisms
\[
\begin{array}{lrccc}
&\zeta_1:&B_-\cap U_+ \dot w_P\inv\dot w_0 U_+ &\longrightarrow& \mathcal R^{R^+}_{{\rm id}, w_P w_0}\\
&&\b&\mapsto &\b B_+,\\
\text{\rm }&&&&\\
&\zeta_2:&\mathcal Z  &\longrightarrow& \mathcal R^{R^+}_{{\rm id},w_P w_0}\\
&& z &\mapsto & zB_+.
\end{array}
\]
We consider the composition $\zeta=\zeta_2\inv\circ\zeta_1$,
\[
\begin{array}{cccc}
\zeta:&B_-\cap U_+ \dot w_P\inv\dot w_0U_+ &\longrightarrow&\mathcal Z\\
&\b&\mapsto &z,
\end{array}
\]
where $z\in \mathcal Z$ is the unique representative with  $zB_+=\b B_+$.
\end{lemma}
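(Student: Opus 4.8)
The goal is to establish the two isomorphisms $\zeta_1$ and $\zeta_2$ and hence the composite $\zeta$. I will treat each separately.

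\textbf{The map $\zeta_1$.} Both sides live naturally inside $G/B_+$: the source $B_-\cap U_+\dot w_P^{-1}\dot w_0 U_+$ maps in via $\b\mapsto \b B_+$, and the target $\mathcal R^{R^+}_{\mathrm{id},w_Pw_0}=(B_-B_+/B_+)\cap(B_+\dot w_P\dot w_0 B_+/B_+)$. First I would check the image of $\b\mapsto \b B_+$ lands in the Richardson variety. Since $\b\in B_-$ we have $\b B_+\in B_-B_+/B_+$, the big cell; and since $\b\in U_+\dot w_P^{-1}\dot w_0 U_+\subseteq B_+\dot w_P^{-1}\dot w_0 B_+$, we get $\b B_+\in B_+\dot w_P^{-1}\dot w_0 B_+/B_+$. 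But $B_+\dot w_P^{-1}\dot w_0 B_+/B_+ = B_+\dot{(w_Pw_0)}^{-1}... $ — here one must be a little careful: $w_P^{-1}w_0 = w_0 w_0 w_P^{-1}w_0 = w_0 w_{P'}$ for the conjugate parabolic, or one simply notes $B_+ \dot w_P^{-1}\dot w_0 B_+ = B_+ \dot w B_+$ where $w = w_P^{-1}w_0$, and $\ell(w) = \ell(w_0)-\ell(w_P) = \ell(w_0 w_P)$, so the Schubert cell $B_+\dot w B_+/B_+$ coincides with $B_+\dot{(w_Pw_0)}B_+/B_+$ as a \emph{cell}, hence has the same closure. (Alternatively I would just match conventions so that $w_P^{-1}w_0$ and $w_Pw_0$ index the same cell.) For injectivity and surjectivity of $\zeta_1$: the map $U_+\cap B_+\dot w B_+/B_+ \to B_+\dot w B_+/B_+$ parametrising a Schubert cell by a slice in $U_+$ is the classical fact; intersecting with the big cell $B_-B_+/B_+$ on the target corresponds exactly to the condition $\b\in B_-$ on the source, and the stabiliser computation $\b B_+ = \b'B_+ \Rightarrow \b^{-1}\b'\in B_+\cap U_+\dot w^{-1}(\ldots)$ forces $\b=\b'$ because $\b,\b'\in U_+$ and the $U_+$-representative in a Schubert cell is unique. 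This gives $\zeta_1$ as an isomorphism onto $\mathcal R^{R^+}_{\mathrm{id},w_Pw_0}$.

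\textbf{The map $\zeta_2$.} This is the substantive part. I need to show $z\mapsto zB_+$ is an isomorphism $\mathcal Z = U_+\dot w_P^{-1}\dot w_0 \cap \dot w_P^{-1}\dot w_0 U_- \cap B_-B_+ \;\xrightarrow{\sim}\; \mathcal R^{R^+}_{\mathrm{id},w_Pw_0}$. The three defining conditions correspond to: $U_+\dot w_P^{-1}\dot w_0$ gives the $U_+$-slice through the point $\dot w_P^{-1}\dot w_0 B_+$ of the Schubert cell $B_+\dot w_P\dot w_0B_+/B_+$ (the left $U_+$-orbit of $\dot w B_+$ sweeps out exactly the open cell, cf. the normal form $U_+\cap \dot w U_-\dot w^{-1}$ times $\dot w$); $\cap\, B_-B_+$ cuts this down to the part lying in the big cell, i.e. to $\mathcal R^{R^+}_{\mathrm{id},w_Pw_0}$; and the extra condition $z\in\dot w_P^{-1}\dot w_0 U_-$ is a \emph{choice of section}: among all $g$ with $gB_+$ a given point of the cell, there is a unique one of the form $\dot w_P^{-1}\dot w_0 u_-$ with $u_-$ in the appropriate unipotent subgroup $U_-\cap \dot{(w_Pw_0)}^{-1}... $. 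Concretely I would argue: the map $B_+\dot w B_+/B_+ \to $ point, every element of $B_+\dot w B_+$ is uniquely $u\cdot \dot w\cdot b$ with $u\in U_+\cap \dot w U_- \dot w^{-1}$ and $b\in B_+$; rewriting $\dot w b = \dot w b\dot w^{-1}\dot w$ and absorbing — better, use that $B_+\dot w B_+ = (U_+\cap \dot w U_-\dot w^{-1})\dot w B_+$, so each coset has the $U_+$-form (that's $\zeta_1$'s slice) and, symmetrically, $B_+\dot w B_+ = \dot w (U_-\cap \dot w^{-1}U_+\dot w) B_+$ wait that needs $\dot w^{-1}B_+\dot w$... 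I will instead use $\dot w B_+ = \dot w U_+ T$, and $\dot w U_+ = (\dot w U_+\dot w^{-1})\dot w$; splitting $\dot w U_+\dot w^{-1} = (U_+\cap \dot w U_+\dot w^{-1})(U_-\cap \dot w U_+\dot w^{-1})$ and noting the $U_+$-part can be pushed across to the $B_+$ on the right, one gets that every point of the cell has a unique representative $\dot w u_-$ with $u_-\in U_-\cap \dot w^{-1}U_-\dot w$ ... the bookkeeping of which unipotent subgroup appears is exactly what I would nail down carefully. The upshot is a commuting triangle: both $\zeta_1$ and $\zeta_2$ are the two standard slices of the same Schubert cell restricted to its intersection with the big cell, hence both are isomorphisms onto $\mathcal R^{R^+}_{\mathrm{id},w_Pw_0}$, and $\zeta = \zeta_2^{-1}\circ\zeta_1$ is then automatically an isomorphism $B_-\cap U_+\dot w_P^{-1}\dot w_0U_+ \xrightarrow{\sim}\mathcal Z$ characterised by $\zeta(\b)B_+ = \b B_+$.

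\textbf{Main obstacle.} The genuinely delicate point is $\zeta_2$: verifying that the intersection with $\dot w_P^{-1}\dot w_0 U_-$ picks out \emph{exactly one} representative in each fibre of $g\mapsto gB_+$, i.e. that $\dot w_P^{-1}\dot w_0 U_- \cap gB_+$ is a single point for each relevant $g$, and that the resulting parametrisation is algebraic with algebraic inverse. This is a Bruhat-normal-form bookkeeping argument — I would reduce it to the statement that $U_- \cap \dot w B_+ \dot w^{-1}$ (for $w=w_P^{-1}w_0$, or its correct variant) together with $U_-\cap \dot w^{-1}B_+\dot w$ factor $U_-$ uniquely, and that $B_-B_+$ is precisely the locus where the relevant component is invertible. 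Everything else is the standard theory of Schubert cells and the big cell, assembled into the commuting triangle; no deep input beyond \cite{KaLu}-type facts on (open) Richardson varieties is needed.
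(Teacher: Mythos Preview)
Your overall strategy is right, and for $\zeta_2$ you are circling the key point, but there are two concrete issues.

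For $\zeta_1$, your injectivity argument is wrong: you write ``forces $\b=\b'$ because $\b,\b'\in U_+$'', but $\b\in B_-$, not $U_+$. The fibre of $\b\mapsto \b B_+$ restricted to $B_-$ is $\b T$ (since $B_-\cap B_+=T$), and one must argue that $\b$ and $\b t$ cannot both lie in $U_+\dot w_P^{-1}\dot w_0 U_+$ unless $t=e$; this uses uniqueness of the torus factor in the Bruhat form $B_+\dot w B_+ = U_+\, T\, \dot w\, U_+$. The paper bypasses this entirely by citing \cite[Section~4.1]{Rie08}, where the isomorphism (for the whole family over $T^{W_P}$) is already established; $\zeta_1$ is just its fibre over $e$.

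For $\zeta_2$, your view of the condition $z\in\dot w_P^{-1}\dot w_0 U_-$ as a ``choice of section'' is not wrong, but you never complete the bookkeeping you yourself flag as the main obstacle. The paper's argument is a one-line simplification that dissolves it: writing $z=u\,\dot w_P^{-1}\dot w_0$ with $u\in U_+$, the condition $z\in \dot w_P^{-1}\dot w_0 U_-$ is equivalent to $u\in \dot w_P^{-1}\dot w_0 U_-\dot w_0^{-1}\dot w_P = \dot w_P^{-1}U_+\dot w_P$. Hence the first two defining conditions of $\mathcal Z$ combine to give
\[
\mathcal Z \;=\; U_+^P\,\dot w_P^{-1}\dot w_0 \;\cap\; B_-B_+,\qquad U_+^P:=U_+\cap \dot w_P^{-1}U_+\dot w_P.
\]
Now $u\mapsto u\,\dot w_P^{-1}\dot w_0 B_+$ is precisely the standard isomorphism $U_+^P\overset\sim\to B_+w_Pw_0B_+/B_+$ (this is exactly your ``normal form $U_+\cap \dot w U_-\dot w^{-1}$'', with $w=w_P^{-1}w_0$). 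Intersecting both sides with the big cell $B_-B_+/B_+$ gives $\zeta_2$ immediately. No separate section argument or factorisation of $U_-$ is needed: the two left/right conditions on $z$ are not ``slice'' plus ``section'' but together \emph{are} the standard Schubert-cell slice.
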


\begin{proof}
The map $\zeta_1$ is just the restriction to the fiber over $e\in T^{W_P}$ of the isomorphism $B_-\cap U_+ T^{W_P}\dot w_P\inv\dot w_0 U_+\cong \mathcal R^{R^+}_{{\rm id},w_P w_0}\times T^{W_P}$ from \cite[Section~4.1]{Rie08}. We now show that $\zeta_2$ is an isomorphism.

Note that $\mathcal R^{R^+}_{{\rm id},w_P w_0}$ is the open dense subset of the Bruhat cell $B_+w_P w_0B_+/B_+$ obtained by intersecting with opposite big cell $B_-B_+/B_+$. We have the factorisation $U_+=U_+^P\, U_{+,P}$, where
\begin{eqnarray}\label{e:U+P}
U_+^P&:=U_+\cap \dot w_P\inv U_+\dot w_P,\\
U_{+,P}&:=U_+\cap \dot w_P\inv U_-\dot w_P,\nonumber
\end{eqnarray}
and the map $u\mapsto u\dot w_P\inv\dot w_0 B_+$  from $U_+$ restricts to an isomorphism $U_+^P\to B_+w_P w_0B_+/B_+$. Equivalently, the projection map $g\mapsto gB_+$ restricts to an isomorphism
\begin{equation}\label{e:projiso}
U_+^P\dot w_P^{-1}\dot w_0\overset\sim\longrightarrow B_+w_P w_0B_+/B_+.
\end{equation}
We now rewrite the definition of $\mathcal Z$ as follows,
\begin{eqnarray*}
\mathcal Z&=&U_+\dot w_P^{-1}\dot w_0 \cap \dot w_P^{-1}\dot w_0 U_- \cap B_-B_+  \\
&=&\left(U_+ \cap \dot w_P^{-1} U_+\dot w_P\right)\dot w_P^{-1}\dot w_0 \cap B_-B_+
\\
&=& U_+^P\dot w_P^{-1}\dot w_0 \cap B_-B_+.
\end{eqnarray*}
It follows that \eqref{e:projiso} restricts to an isomorphism $\mathcal Z\to \mathcal R^{R^+}_{{\rm id},w_P w_0}$, and this is precisely the map~$\zeta_2$.
\end{proof}

\begin{lemma}
Recall that  $(P\backslash G)^\circ={\rm pr_P}(\mathcal R^-_{{\rm id},w_0w_P })$ and let $\mathcal Z$ be as defined in \eqref{e:Z}. We have an isomorphism
\[
\begin{array}{cccc}
\pi:&\mathcal Z &\longrightarrow& (P\backslash G)^\circ,
\\
& z &\mapsto & Pz.
\end{array}
\]
\end{lemma}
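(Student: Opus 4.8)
The plan is to factor $\pi$ through the same kind of intermediate Bruhat-cell picture used for $\zeta_2$, now on the \emph{left-quotient} side. First I would unwind the definition of $\mathcal Z$ using the factorisation $\mathcal Z = U_+^P\dot w_P^{-1}\dot w_0 \cap B_-B_+$ established at the end of the previous lemma. Since $U_+^P\subset U_+$ and $U_+\dot w_P^{-1}\dot w_0 = U_+\dot w_P^{-1}\dot w_0$, each $z\in\mathcal Z$ can be written $z = u\,\dot w_P^{-1}\dot w_0$ with $u\in U_+^P$, and the condition $z\in B_-B_+$ is an open dense condition. The key geometric input is that the map $g\mapsto Pg$ restricts to an isomorphism from a suitable slice onto the Schubert cell $P\dot w_0 B_+/\!\sim$ inside $P\backslash G$ (equivalently, inside $B_-\backslash G$ after passing through $\mathrm{pr}_P$): concretely, $\mathrm{pr}_P$ identifies $\mathcal R^-_{{\rm id},w_0w_P}\subset B_-\backslash G$ isomorphically with $(P\backslash G)^\circ$ by \cite{KLS}, so it suffices to produce an isomorphism $\mathcal Z\to\mathcal R^-_{{\rm id},w_0w_P}$ compatible with this projection.

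Next I would set up the correspondence explicitly. Write $\mathcal R^-_{{\rm id},w_0w_P} = (B_-\backslash B_-B_+)\cap(B_-\backslash B_-\dot w_P^{-1}\dot w_0^{-1} B_-)$. For $z\in\mathcal Z$, note $z\in \dot w_P^{-1}\dot w_0 U_-$ forces $B_- z \subset B_-\dot w_P^{-1}\dot w_0 U_- = B_-\dot w_P^{-1}\dot w_0 B_-$, which places $B_-z$ in the correct opposite Schubert cell (after inverting/transposing conventions appropriately — one must check the convention $B_-\dot w^{-1}B_-$ versus $B_-\dot wB_-$ carefully here); and $z\in B_-B_+$ places $B_-z$ in the opposite big cell. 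So $B_-z\in\mathcal R^-_{{\rm id},w_0w_P}$, and the assignment $z\mapsto B_-z$ is a morphism $\mathcal Z\to\mathcal R^-_{{\rm id},w_0w_P}$. Composing with the isomorphism $\mathrm{pr}_P$ of \cite{KLS} gives the map $\pi: z\mapsto Pz$. For the inverse, I would argue that both $\mathcal Z$ and $\mathcal R^-_{{\rm id},w_0w_P}$ are smooth irreducible of the same dimension $\ell(w_0w_P)$ (for $\mathcal Z$: it is an open dense subvariety of $U_+^P\dot w_P^{-1}\dot w_0\cong \mathbb A^{\dim U_+^P}$, and $\dim U_+^P = \ell(w_0) - \ell(w_P) = \ell(w_0 w_P)$ since $w_P$ is the longest element of $W_P$ and $w_0w_P\in W^P$ has the right length), and that the map is injective: if $Pz = Pz'$ with $z,z'\in\mathcal Z$, then $z' = pz$ for $p\in P$; since $z,z'$ both lie in $U_+\dot w_P^{-1}\dot w_0$ and $z'z^{-1}=p\in P\cap (U_+\dot w_P^{-1}\dot w_0)(\dot w_0^{-1}\dot w_P U_+) = P\cap \dot w_P^{-1}\dot w_0 U_+ \dot w_0^{-1}\dot w_P$, a dimension/parabolic-weight count forces $p\in T$-normalised unipotent and then $p = e$. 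Alternatively, and more cleanly, I would just invoke \cite[Section~4.1]{Rie08} and \cite{KLS}: the composite $B_-\cap U_+\mathcal T^{W_P}\dot w_P^{-1}\dot w_0 U_+ \to (P\backslash G)^\circ\times\prod\mathbb C^*_q$ is an isomorphism there, and restricting to the $t=e$ fiber together with the identification $\mathcal Z\cong\mathcal R^{R^+}_{{\rm id},w_Pw_0}\cong B_-\cap U_+\dot w_P^{-1}\dot w_0 U_+$ from Lemma~\ref{zeta} transports that isomorphism into the claimed $\pi$.

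Concretely I would carry out the steps in this order: (i) recall $\mathcal Z = U_+^P\dot w_P^{-1}\dot w_0\cap B_-B_+$ and $\dim\mathcal Z = \ell(w_0w_P)$; (ii) check $z\in\mathcal Z\Rightarrow B_-z\in\mathcal R^-_{{\rm id},w_0w_P}$, using $z\in\dot w_P^{-1}\dot w_0 U_-$ for the Schubert-cell membership and $z\in B_-B_+$ for the big-cell membership; (iii) conclude $\pi = \mathrm{pr}_P(B_-(-))$ is a well-defined morphism $\mathcal Z\to(P\backslash G)^\circ$ by \cite[Lemma~3.1]{KLS}; (iv) prove bijectivity either by the direct injectivity argument above plus dimension count and normality of $(P\backslash G)^\circ$ (so a bijective morphism of smooth varieties is an isomorphism), or by identifying $\pi$ with the restriction of the Rietsch isomorphism of \cite[Section~4.1]{Rie08} to the fiber $t=e$ under the identifications of Lemma~\ref{zeta}. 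The main obstacle I anticipate is purely bookkeeping: matching the various Borel/Weyl conventions (the $\dot w^{-1}$ versus $\dot w$, and $B_-\backslash$ versus $/B_-$ on the two sides of the Richardson varieties) so that $z\in\dot w_P^{-1}\dot w_0 U_-$ really does land $B_-z$ in $B_-\backslash B_-\dot w_P^{-1}\dot w_0^{-1}B_-$ and not in the wrong cell; once the conventions are pinned down, the isomorphism statement follows formally from \cite{KLS} and \cite{Rie08} together with Lemma~\ref{zeta}.
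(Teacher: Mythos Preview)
Your strategy matches the paper's: factor $\pi$ as $z\mapsto B_-z$ into $\mathcal R^-_{{\rm id},w_0w_P}$, then compose with the \cite{KLS} isomorphism $\mathrm{pr}_P:\mathcal R^-_{{\rm id},w_0w_P}\overset\sim\to(P\backslash G)^\circ$. The difference is in how the intermediate isomorphism $\pi':\mathcal Z\to\mathcal R^-_{{\rm id},w_0w_P}$ is established. The paper does it in one stroke: it first observes $\mathcal R^-_{{\rm id},w_0w_P}=\mathcal R^-_{w_P,w_0}\dot w_0$, then uses the standard Bruhat-cell parametrisation $U_+^P\overset\sim\to B_-\backslash B_-\dot w_P^{-1}B_+$ to identify $\mathcal R^-_{w_P,w_0}$ with $\dot w_P^{-1}U_+^P\cap B_-\dot w_0^{-1}B_-$, and separately rewrites $\mathcal Z=(\dot w_P^{-1}U_+^P\cap B_-\dot w_0^{-1}B_-)\dot w_0$; the map $z\mapsto B_-z$ is then visibly an isomorphism because both sides have been identified with the \emph{same} subvariety of $G$ (up to the translation by $\dot w_0$). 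Your route---check the target, then prove injectivity plus equal dimension---is sound in principle, and your step~(ii) is exactly what the paper's rewriting accomplishes, but your injectivity sketch contains a slip: from $z,z'\in U_+\dot w_P^{-1}\dot w_0$ one gets $z^{-1}\in \dot w_0^{-1}\dot w_P U_-$ (not $U_+$), so the product $z'z^{-1}$ does not land in the set you wrote. The fix is immediate using the description you already recalled: $z,z'\in U_+^P\dot w_P^{-1}\dot w_0$ gives $z'z^{-1}\in U_+^P$, and $P\cap U_+^P=\{e\}$ since the positive roots of the Levi of $P$ are exactly those inverted by $w_P$, disjoint from those of $U_+^P$. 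Your ``alternative'' path through \cite{Rie08} and Lemma~\ref{zeta} would also work but is more circuitous than either argument above; the paper's direct rewriting avoids both the injectivity computation and the appeal to the critical-point machinery.
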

\begin{proof} Note that
\[\mathcal R^-_{{\rm id},w_0w_P }=\mathcal R^-_{w_P,w_0}\dot w_0=B_-\backslash( B_-\dot w_P\inv B_+\cap B_-\dot w_0\inv B_-)\dot w_0.
\]
Consider $U_+^P$ and $U_{+,P}$ as defined in \eqref{e:U+P} and the factorisation $U_+=U_{+,P}U_+^P$. The Bruhat cell $B_-\backslash B_-\dot w_P\inv B_+$ is isomorphic to $U_+^P$ via the map $u\mapsto B_-\dot w_P\inv u$. It follows that
\begin{equation}\label{e:piiso}
\begin{array}{ccc}
 \dot w_P\inv U_+^P\cap B_-\dot w_0\inv B_-&\to&
 B_-\backslash( B_-\dot w_P\inv B_+\cap B_-\dot w_0\inv B_-)\\
 \dot w_P\inv u&\mapsto& B_-\dot w_P\inv u.
 \end{array}
 \end{equation}
is an isomorphism.
We can rewrite the definition of $\mathcal Z$ as follows,
\begin{eqnarray*}
\mathcal Z&=&U_+\dot w_P^{-1}\dot w_0 \cap \dot w_P^{-1}\dot w_0 U_- \cap B_-B_+  \\
&=&\dot w_P\inv\left(\dot w_P U_+\dot w_P\inv  \cap \dot w_0 U_-\dot w_0\inv \right) \dot w_0\cap (B_-\dot w_0\inv B_-)\dot w_0.
\\
&=&\left (\dot w_P\inv U_+^P \cap B_-\dot w_0\inv B_-\right)\dot w_0.
\end{eqnarray*}
We now translate both sides of the isomorphism from \eqref{e:piiso} by $\dot w_0$ and obtain an isomorphism
\begin{equation*}
\begin{array}{rccl}
 \pi':&\mathcal Z&\to& \mathcal R^-_{{\rm id},w_0w_P },\\
& z&\mapsto& B_-z.
 \end{array}
 \end{equation*}
 The composition of $\pi'$ above with the isomorphism $\mathcal R^-_{{\rm id},w_0w_P }\overset\sim\to (P\backslash G)^\circ$ from \cite{KLS} is the map $\pi:\mathcal Z \to (P\backslash G)^\circ$, which proves that $\pi$ is an isomorphism.
\end{proof}

\begin{defn}[The superpotential $\mathcal F_-$]\label{defF-}
We denote by $\psi_-$ the composition of isomorphisms, where $\mathcal B= B_-\cap U_+ \dot w_P\inv\dot w_0U_+ $,
\begin{equation}\label{e:psi-}
\psi_-:Z_P\overset{\gamma}\longrightarrow\ \mathcal B\times \prod\limits_{i\in I^P} \mathbb{C}^*_q\ \overset{\zeta\times{\rm {id}}}\longrightarrow\ \mathcal Z\times  \prod\limits_{i\in I^P} \mathbb{C}^*_q\  \overset{\pi\times{\rm {id}}}\longrightarrow  \ (P\backslash G)^\circ \times  \prod\limits_{i\in I^P} \mathbb{C}^*_q.
\end{equation}
We now define the superpotential $\mathcal F_-$ by
\[
\mathcal F_-:=\mathcal F_{\operatorname{Lie}}\circ \psi_-\inv:(P\backslash G)^\circ\times  \prod\limits_{i\in I^P} \mathbb{C}^*_q \to \mathbb C.
\]
\end{defn}

\subsection{Notations for $\mathcal F_-$}\label{Notations}
In summary, we have shown above that we may write any element of $(P\backslash G)^\circ$ uniquely as $Pz$ for some $z\in \mathcal Z$. We can then write
 \begin{equation}\label{v}
 z=v\inv \dot w_P\inv\dot w_0
 \end{equation}
 for a unique $v\in U_+$. Next let $\b:=\zeta\inv(z)\in B_-\cap U_+\dot w_P\inv\dot w_0 U_+ $. We can  write
 \begin{equation}\label{u}
 \b=z u\inv=v\inv \dot w_P\inv \dot w_0  u\inv,
 \end{equation}
for a unique $u\in U_+$. Finally, the superpotential $\mathcal F_-$ is computed by
 \begin{equation}\label{e:F-viauv}\mathcal F_-(Pz,q)=\mathcal F_{\operatorname{Lie}}\circ  \gamma^{-1} \,(\b, (q_{i})_{i\in I^P})=  \sum\limits_{i\in I_P}v_{i,i+1} + \sum\limits_{i\in I^P} q_i v_{i,i+1}+ \sum\limits_{i=1}^{n-1}u_{i,i+1} ,
 \end{equation}
 using the description of $\mathcal F_{\operatorname{Lie}}$ in \eqref{e:FLieviagamma}.

\begin{defn}\label{uv}
Given $z\in\mathcal Z$ we will always use the notations above for the related elements $\b,v,u$ with $\b\in B_-\cap U_+\dot w_P\inv\dot w_0 U_+$ and $u,v\in U^+$, satisfying
\begin{equation*}
\begin{array}{rcl}
\b B_+ & =& z B_+,\\
z\quad  &=&v\inv \dot w_P\inv \dot w_0,\\
\b\quad &= & z u\inv = v\inv \dot w_P\inv\dot w_0 u\inv.
\end{array}
\end{equation*}
We will also let $ b :=  \hat b\inv=u \dot w_0\inv \dot w_P v$.
\end{defn}
We can immediately simplify the formula~\eqref{e:F-viauv} using the following lemma.

\begin{lemma}\label{l:vanishing}
Let $z\in\mathcal Z$. For $v\in U_+$ as in Definition~\ref{uv}, we have that $v\in \dot w_P\inv U_+\dot w_P$, and therefore the entry $v_{i,i+1}=0$ for all $i\in I_P$.
\end{lemma}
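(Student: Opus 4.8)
The plan is to unwind Definition~\ref{uv} and exploit the fact that $z\in\mathcal Z\subseteq U_+^P\dot w_P\inv\dot w_0$, as was established during the proof of Lemma~\ref{zeta}. Indeed, in that proof we rewrote $\mathcal Z=U_+^P\dot w_P^{-1}\dot w_0\cap B_-B_+$, where $U_+^P=U_+\cap\dot w_P\inv U_+\dot w_P$ from \eqref{e:U+P}. So for $z\in\mathcal Z$ there is a unique $u'\in U_+^P$ with $z=u'\dot w_P\inv\dot w_0$. On the other hand, by \eqref{v} we have $z=v\inv\dot w_P\inv\dot w_0$. Comparing these two expressions and cancelling $\dot w_P\inv\dot w_0$ on the right gives $v\inv=u'$, hence $v=(u')\inv\in U_+^P$ as well (since $U_+^P$ is a subgroup of $U_+$).

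Once $v\in U_+^P=U_+\cap\dot w_P\inv U_+\dot w_P$, the first assertion is immediate, and the second follows from the root-space description of $U_+^P$. First I would recall that $U_+^P$ is the unipotent radical direction spanned by the positive root subgroups $U_\alpha$ with $\alpha\in R_+$ and $w_P\inv(\alpha)\in R_+$; equivalently, $\mathrm{Lie}(U_+^P)=\bigoplus_{\alpha\in R_+\cap w_P(R_+)}\mathfrak g_\alpha$. For $i\in I_P$ the simple root $\alpha_i$ satisfies $w_P(\alpha_i)\in R_-$ (since $w_P$ is the longest element of $W_P$ and $s_i\in W_P$), so $\alpha_i\notin R_+\cap w_P(R_+)$, and the corresponding root subgroup $U_{\alpha_i}=\exp(\mathbb C E_{i,i+1})$ does not appear in $U_+^P$. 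Concretely, writing $v=(v_{k\ell})$, the $(i,i+1)$-entry $v_{i,i+1}$ is precisely the coordinate of $v$ along the simple root $\alpha_i$, and membership $v\in U_+^P$ forces this coordinate to vanish for every $i\in I_P$. I would phrase this last point carefully: while $v_{i,i+1}$ for a general unipotent $v$ is not exactly the one-parameter-subgroup coordinate along $\alpha_i$, for the \emph{simple} roots the $(i,i+1)$-entry does coincide with that coordinate, which is all that is needed here.

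An alternative, perhaps cleaner, route avoids coordinates entirely: since $v\in\dot w_P\inv U_+\dot w_P$, we have $\dot w_P v\dot w_P\inv\in U_+$, so $\log v\in\mathrm{Ad}(\dot w_P\inv)\mathfrak u_+$. Now $\mathfrak u_+=\bigoplus_{\alpha\in R_+}\mathfrak g_\alpha$ and $\mathrm{Ad}(\dot w_P\inv)\mathfrak g_\alpha=\mathfrak g_{w_P\inv\alpha}$, so $\log v$ lives in $\bigoplus_{\alpha\in R_+,\ w_P\alpha\in R_+}\mathfrak g_\alpha$, which does not contain $\mathfrak g_{\alpha_i}=\mathbb C E_{i,i+1}$ for $i\in I_P$. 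Taking the $E_{i,i+1}$-component of $\log v$ for $i\in I_P$ gives zero, and since $v=\exp(\log v)$ and exponentiation only adds higher-degree (strictly-upper) corrections that do not affect the simple-root entries, we conclude $v_{i,i+1}=0$ for $i\in I_P$.

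The main obstacle is really just bookkeeping: one has to be confident that the $(i,i+1)$-entry of $v$, as a matrix, equals the simple-root coordinate of $v$ in the root-subgroup parametrisation of $U_+$, so that the Lie-algebra vanishing statement transfers to the matrix entry. This is elementary for $\mathrm{GL}_n$ — the simple roots correspond exactly to the superdiagonal — but it must be stated, since the lemma is phrased in terms of matrix entries while the natural argument is Lie-theoretic. Beyond that, the only thing to double-check is that the identification $v\inv=u'$ used above is legitimate, i.e. that the representative of $zB_+$-type coset data in \eqref{v} really is inverse to the $U_+^P$-representative coming from the proof of Lemma~\ref{zeta}; this is forced by uniqueness of the factorisation $z=(\,\cdot\,)\dot w_P\inv\dot w_0$ with the first factor in $U_+$, which is part of the isomorphism~\eqref{e:projiso}.
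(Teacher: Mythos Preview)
Your proof is correct and takes essentially the same approach as the paper. The paper's one-line argument uses the defining condition $z\in\dot w_P\inv\dot w_0 U_-$ from \eqref{e:Z} directly, computing $v=\dot w_P\inv\dot w_0 z\inv\in\dot w_P\inv\dot w_0 U_-\dot w_0\inv\dot w_P=\dot w_P\inv U_+\dot w_P$, whereas you invoke the equivalent rewriting $\mathcal Z\subseteq U_+^P\dot w_P\inv\dot w_0$ established in the proof of Lemma~\ref{zeta}; both routes yield $v\in U_+^P$ and the vanishing of $v_{i,i+1}$ for $i\in I_P$ follows.
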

\begin{remark}
This lemma says that $v\in U_+^P$, in the notation  \eqref{e:U+P} from Lemma~\ref{zeta}.
\end{remark}

\begin{proof}[Proof of Lemma~\ref{l:vanishing}]
By \eqref{e:Z} we have that $z\in \dot w_P^{-1}\dot w_0 U_-$. The lemma follows from this and the fact that $v=\dot w_P\inv \dot w_0z\inv$.
\end{proof}

As a consequence of Lemma~\ref{l:vanishing} we have the formula,
 \begin{equation}\label{e:F-viauv-simple}\mathcal F_-(Pz,\mathbf q)=   \sum\limits_{i\in I^P} q_i v_{i,i+1}+ \sum\limits_{i=1}^{n-1}u_{i,i+1},
 \end{equation}
for $\mathcal F_-$ in terms of $u,v$.

We can now use the formula \eqref{e:F-viauv-simple} as our starting point for studying $\mathcal F_-$. The rest of this section will be devoted to giving a compact description of $\mathcal F_-$ in terms of  Pl\"ucker coordinates.

\subsection{The description of $\mathcal Z$}
We first give a concrete description of our variety \begin{equation}\label{e:Zagain}
\mathcal Z=
U_+\dot w_P^{-1}\dot w_0 \cap \dot w_P^{-1}\dot w_0 U_- \cap B_-B_+.
\end{equation}
Recall that $a_j=n_j-n_{j-1}$ where $I^P=\{n_1,\dotsc, n_r\}$.

%We now consider $B_-\dot w_0\cap U_+\dot w_P U_-$.

\begin{lemma}\label{shape-lemma}
Let $I_{a_j}$ denote the $a_j\times a_j$ identity matrix. If $z\in\mathcal Z$ then $z$ is of the following form,
 \begin{equation}\label{Eq:OlocalCoords}
  \left(\begin{matrix}
      * &   *    &\dotsc &   *   &    *   & I_{a_1}\\
      * &   *    &\dotsc &   *   & (-1)^{n_1}I_{a_2} &   0  \vspace{-4pt} \\
      * &   *    &\iddots&\iddots&   0    &   0  \vspace{-6pt}  \\
  \vdots& \vdots &\iddots&   \iddots   &   \vdots    &  \vdots   \\
      * & (-1)^{n_{r-1}}I_{a_r} &   0   & \dotsb&   0    &   0\\
      (-1)^{n_r} I_{a_{r+1}} & 0 &   0   & \dotsb&   0    &   0
   \end{matrix}\right)_{n\times n}.
 \end{equation}
We write $z=v\inv \dot w_P\inv \dot w_0$ as in Definition~\ref{uv}. Then the matrix  $v\in U_+$ has its non-zero above-diagonal entries given by  %lying in $U_+^Q$,
% and we have
 %where $n_i=a_1+\dotsc +a_i$.
 %Moreover, the upper-triangular matrix $v=\dot w_0\inv y \dot w_0$ lies in $U_+^Q$, and we have
 $$v_{n_j, n_j+1}=(-1)^{n_j+1}z_{n_j,n-n_{j+1}+1}.
 $$
\end{lemma}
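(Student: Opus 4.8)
The plan is to establish the two claims in Lemma~\ref{shape-lemma} by unwinding the three defining conditions in \eqref{e:Zagain}, working with the permutation matrices $\dot w_P$ and $\dot w_0$ explicitly. First I would record exactly what $\dot w_P\inv\dot w_0$ is as a matrix. Since $w_0$ is the longest element of $S_n$ and $w_P$ is the longest element of $W_P=S_{a_1}\times\cdots\times S_{a_{r+1}}$ (acting on the consecutive blocks of sizes $a_1,\dots,a_{r+1}$), the product $w_P\inv w_0=w_Pw_0$ is the permutation that sends the $j$-th block of size $a_j$ to the $(r+2-j)$-th block \emph{without} reversing the order within the block. Hence $\dot w_P\inv\dot w_0$ is, up to the signs coming from the $\dot s_i$-convention $\dot s_i=\exp(E_{i,i+1})\exp(-E_{i+1,i})\exp(E_{i,i+1})$, the block-antidiagonal matrix whose anti-diagonal blocks are $\pm I_{a_j}$; a short computation with the cocycle property of $w\mapsto\dot w$ pins the sign on the block landing in the top-right corner to $+1$ (normalising $I_{a_1}$) and the remaining blocks to $(-1)^{n_1},(-1)^{n_2},\dots,(-1)^{n_r}$ as displayed. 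I would do this sign bookkeeping once, carefully, since it is the only genuinely fiddly ingredient.

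Next I would derive the shape \eqref{Eq:OlocalCoords}. The condition $z\in U_+\dot w_P\inv\dot w_0$ says $z(\dot w_P\inv\dot w_0)\inv=zu'\in U_+$ for the permutation $u'=(\dot w_P\inv\dot w_0)\inv$, i.e.\ $z$ equals (upper unitriangular)$\times(\dot w_P\inv\dot w_0)$; right-multiplying an upper unitriangular matrix by the block-antidiagonal permutation $\dot w_P\inv\dot w_0$ produces exactly the ``staircase'' pattern in \eqref{Eq:OlocalCoords}: the anti-diagonal blocks become the normalised $\pm I_{a_j}$, everything strictly below the block-antidiagonal becomes $0$, and everything strictly above it is free ($*$). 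Symmetrically, the condition $z\in\dot w_P\inv\dot w_0 U_-$ gives the mirror-image constraint; intersecting the two, together with membership in the big cell $B_-B_+$, confirms the asserted form (the big-cell condition being an open nonvanishing condition on the relevant minors rather than a shape constraint). Since $v\inv=z(\dot w_P\inv\dot w_0)\inv = zu'$, reading off $v$ amounts to computing this product and then inverting in $U_+$.

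Finally, for the formula $v_{n_j,n_j+1}=(-1)^{n_j+1}z_{n_j,\,n-n_{j+1}+1}$: by Lemma~\ref{l:vanishing} we already know $v\in U_+^P$, so the only candidate nonzero superdiagonal entries of $v$ are precisely the $v_{n_j,n_j+1}$ (the superdiagonal entries straddling two different $W_P$-blocks). To extract $v_{n_j,n_j+1}$ I would use $v\inv = z\,(\dot w_P\inv\dot w_0)\inv$ and note that for a superdiagonal entry, inversion in $U_+$ only flips the sign: $(v\inv)_{n_j,n_j+1}=-v_{n_j,n_j+1}$. Then $(v\inv)_{n_j,n_j+1}$ is the $(n_j,n_j+1)$ entry of $z$ times the permutation matrix $(\dot w_P\inv\dot w_0)\inv$, which just relocates column $n_j+1$ of $z$ up to a sign; tracking which column of $z$ lands in position $n_j+1$ under this permutation gives column $n-n_{j+1}+1$ (the start of the appropriate anti-diagonal block), and collecting the sign from $(\dot w_P\inv\dot w_0)\inv$ together with the inversion sign yields $(-1)^{n_j+1}$. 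The main obstacle I anticipate is purely the sign/indexing bookkeeping in identifying $\dot w_P\inv\dot w_0$ and its inverse as signed permutation matrices under the $\dot s_i$ convention and then chasing a single matrix entry through that permutation; the structural statements about the staircase shape and about which entries of $v$ can be nonzero are comparatively immediate given Lemma~\ref{l:vanishing} and the Bruhat-cell descriptions already set up in Lemma~\ref{zeta}.
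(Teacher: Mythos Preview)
Your proposal is correct and follows essentially the same approach as the paper: compute $\dot w_P\inv\dot w_0$ explicitly as a signed block-antidiagonal matrix, read off the staircase shape of $z$ from the two containment conditions $z\in U_+\dot w_P\inv\dot w_0$ and $z\in\dot w_P\inv\dot w_0 U_-$, and extract $v_{n_j,n_j+1}$ via $v\inv=z(\dot w_P\inv\dot w_0)\inv$ together with $(v\inv)_{n_j,n_j+1}=-v_{n_j,n_j+1}$. The paper carries out the sign computation by writing $\dot w_0$ and $\dot w_P\inv$ explicitly and then applying $v\inv$ to the standard basis vector $\delta_{n_j+1}^t$, which is exactly the column-tracking you describe.
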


\begin{proof}
Given square matrices $A_i$, we let $\operatorname{diag}\{A_1, \cdots, A_m\}$ denote the block-diagonal matrix with diagonal blocks $A_i$. Similarly we write
  $$
   \mbox{antidiag}\{A_1, \cdots, A_m\}:=
  \left(\begin{matrix}
              & &     &       & A_1\\
              & &     &  {A_2} &     \vspace{-4pt} \\
              &&&       &     \vspace{-6pt}  \\
   &\iddots&      &       &     \\
       A_m &      & &      &   \\
   \end{matrix}\right).
 $$
for the anti block-diagonal matrix with blocks $A_i$.

We have that $\mathcal Z\subset U_+\dot w_P^{-1}\dot w_0 \cap \dot w_P^{-1}\dot w_0 U_- $. It follows that $z\in \mathcal Z$ has anti-diagonal blocks according to $\dot w_P\inv\dot w_0$, and all other non-zero entries must lie above and to the left of these blocks.

By direct calculation we have
  $\dot w_0=\mbox{antidiag}\{1,-1,  \cdots, (-1)^{n-1}\}$. Moreover, $\dot w_P$ is block diagonal with $j$-th diagonal block given by  $\operatorname{antidiag}\{1,-1, \cdots, (-1)^{a_j-1}\}$.
Therefore  $\dot w_P\inv=\mbox{diag}\{I^{(1)}_\pm, \cdots, I^{(r+1)}_\pm\}$ where $I^{(j)}_\pm :=\mbox{antidiag}\{(-1)^{a_j-1}\cdots, -1,1\}$. It follows that $\dot w_P\inv \dot w_0$ is an anti-diagonal block matrix with top right-hand block given by $ I_{a_1}$, and $(j+1)$-st block given by $(-1)^{n_{j}}I_{a_{j+1}}$,
\begin{equation}\label{e:wPw0}
\dot w_P\inv \dot w_0=\mbox{antidiag}\{I_{a_{1}},(-1)^{n_1}I_{a_2}, \cdots, (-1)^{n_r}I_{a_{r+1}}\}.
     \end{equation}
Here the overall signs of the blocks follow from the fact that the $n_{j+1}$-st row of $\dot w_P\inv$ is the row vector $\delta^{n_{j}+1}_t$, and the $(n-n_{j})$-th column of $\dot w_0$ is $(-1)^{n_{j}}\delta_{n_{j}+1}^t$.

This finishes the proof that the matrix $z$ has the form indicated in \eqref{Eq:OlocalCoords}. We can now check the formula for $v_{n_j,n_j+1}$.
We apply $v\inv$ to $\delta_{n_j+1}^t$, giving
\begin{equation}\label{e:vinv}
v\inv\cdot\delta_{n_j+1}^t=z\dot w_0\inv\dot w_P\cdot \delta_{n_j+1}^t=(-1)^{n_{j}}z\cdot\delta_{n-n_{j+1}+1}^t
\end{equation}
Here we used the inverse of \eqref{e:wPw0},
\begin{equation}\label{e:w0wP}
\dot w_0\inv \dot w_P=\mbox{antidiag}\{(-1)^{n_r}I_{a_{r+1}}, \cdots, (-1)^{n_1}I_{a_2},I_{a_{1}}\}.
\end{equation}
From \eqref{e:vinv} we get
\[
(v\inv)_{n_j,n_j+1}=(-1)^{n_{j}}z_{n_j,n-n_{j+1}+1}.
\]
The formula now follows, since $(v\inv)_{n_j,n_j+1}=-v_{n_j,n_j+1}$.
\end{proof}

\begin{remark}
The complete description of $\mathcal Z$ is that it consists of those matrices of the form \eqref{Eq:OlocalCoords} for which the upper left-hand corner minors are all non-vanishing. This final condition encodes the intersection with $B_-B_+$ in \eqref{e:Zagain}.
\end{remark}

%%%%%%%%%%%%%%%%%%%%%%%%%%%%%%%%%%%%%%%%%%%%%%%%%%%%%%%%%%%%%%%%%%%%%%%%%%%%%%%%%

 \subsection{A  Pl\"ucker coordinate formula for $\mathcal F_-$}
For   positive integers $j\leq  m$, we denote by {$\binom{[m]}{j}$}  the set of
subsets $J$ of $[m]$ of cardinality $j$. We denote the complement $J^c_{(m)}=[m]\setminus J$ simply  as $J^c$ whenever $J\subset [m]$ is well understood.
 We always  write $J, J^c$ as increasing sequences, and define $|J|:=\sum_{i\in J}i$.

We consider the Pl\"ucker embedding
\[
\begin{array}{cccc}
\operatorname{Pl}:& P\backslash G &\longrightarrow & \PP^{\binom{n}{n_1}-1}\times\dotsb\times\PP^{\binom{n}{n_r}-1}\\
&Pg &\mapsto &\left([p_{K_1}(g)]_{K_1\in\binom{[n]}{ n_1}},\cdots, [p_{K_r}(g)]_{K_r\in\binom{[n]}{ n_r}}\right)
\end{array}
\]
where the Pl\"ucker coordinate $p_{K_j}(g)$ of  $Pg$ %, for $K_j\in {[n]\choose n_j}$,
 is  the determinant  of the $n_j\times n_j$ sub-matrix of $g$ with first $n_j$ rows and the columns from $K_j$.

   The next proposition is a combination of Propositions 2.2 and 3.9 in   \cite{LSZ} with respect to the quotient $P\!\setminus\!G$, which was also implicitly contained in \cite{KLS}.

   \begin{prop}\label{propanti}
  The projected open Richardson variety  {\upshape $(P\backslash G)^\circ=\mbox{pr}_P(\mathcal{R}^{-}_{{\rm id}, w_0w_P})$} is  the complement of the anti-canonical divisor $-K_{P\backslash G}$ in $P\!\setminus\!G$, where   {\upshape $$  -K_{P\backslash G} \ =\
    \sum_{i\in I} \mbox{pr}_P(\overline{\mathcal{R}}^{{}\,-}_{s_i, w_0w_P})\ +\
   \sum_{i\in I^P}  \mbox{pr}_P(\overline{\mathcal{R}}^{{}\,-}_{{\rm id},   w_0s_i w_P}).$$}
   \end{prop}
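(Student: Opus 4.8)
\textbf{Proof plan for Proposition~\ref{propanti}.}
The plan is to deduce the statement from the general results of \cite{LSZ} (Propositions 2.2 and 3.9 there) and \cite{KLS}, translating them into the specific setup of the quotient $P\backslash G$ and the particular pair $({\rm id}, w_0 w_P)$ that governs our projected open Richardson variety. First I would recall the general fact from \cite{KLS} that for any $v\le w$ in $W^P$-related position, the projected open Richardson variety ${\rm pr}_P(\mathcal R^-_{v,w})$ is the complement of an explicitly described divisor in $P\backslash G$, and that in the top-dimensional case $v={\rm id}$, $w=w_0 w_P$, this complement is precisely the anti-canonical divisor $-K_{P\backslash G}$. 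The content to verify is thus the \emph{shape} of this divisor, namely that it decomposes into the $(n-1+r)$ irreducible components listed: the $n-1$ components ${\rm pr}_P(\overline{\mathcal R}^{\,-}_{s_i, w_0 w_P})$ for $i\in I$, and the $r$ components ${\rm pr}_P(\overline{\mathcal R}^{\,-}_{{\rm id}, w_0 s_i w_P})$ for $i\in I^P$.

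The key step is to identify the boundary divisors of the open Richardson variety $\mathcal R^-_{{\rm id}, w_0 w_P}$ inside $\overline{\mathcal R}^-_{{\rm id}, w_0 w_P}$, and then track which of these survive (i.e.\ remain divisorial and irreducible) after applying ${\rm pr}_P$. The boundary of an open Richardson variety $\mathcal R^-_{v,w}$ is a union of smaller Richardson varieties $\overline{\mathcal R}^-_{v',w}$ with $v'\gtrdot v$ (covering relations up from $v$) and $\overline{\mathcal R}^-_{v,w'}$ with $w'\lessdot w$ (covering relations down from $w$). For $v={\rm id}$ the covers $v'\gtrdot {\rm id}$ are exactly the simple reflections $s_i$, $i\in I$, contributing the $n-1$ divisors ${\rm pr}_P(\overline{\mathcal R}^-_{s_i,w_0 w_P})$; each is distinct and codimension one under ${\rm pr}_P$ because $\mathcal R^-_{s_i,w_0w_P}$ already maps finitely onto its image (the fibers of ${\rm pr}_P$ on the relevant projected Richardson varieties are single points, by \cite{KLS} Lemma 3.1 / the isomorphism statements used for $(P\backslash G)^\circ$). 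For the covers $w'\lessdot w_0 w_P$ in $W^P$, one uses that the projection ${\rm pr}_P$ collapses a boundary divisor precisely when $w'$ fails to be of minimal length in $w' W_P$, so only those $w'= w_0 s_i w_P$ with $i\in I^P$ give genuine codimension-one components downstairs, yielding the remaining $r$ divisors. Finally I would verify the anti-canonical class identity by computing the class of this sum of $(n-1+r)$ Schubert-type divisors in ${\rm Pic}(P\backslash G)$ and checking it equals $-K_{P\backslash G}$, e.g.\ via the known formula for $-K$ of a partial flag variety in terms of the fundamental weights $\omega_i$, $i\in I^P$, together with the standard expressions for the classes $[{\rm pr}_P(\overline{\mathcal R}^-_{s_i,w_0w_P})]$ and $[{\rm pr}_P(\overline{\mathcal R}^-_{{\rm id},w_0 s_i w_P})]$ in the Schubert basis; alternatively one simply invokes the computation already carried out in \cite{LSZ}.

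The main obstacle I anticipate is the bookkeeping in the second key step: correctly determining which boundary Richardson divisors of $\mathcal R^-_{{\rm id}, w_0 w_P}$ remain irreducible divisors after the projection ${\rm pr}_P$, and ruling out that two different boundary strata upstairs map onto the same component downstairs (or that some stratum drops codimension by more than expected). This is a combinatorial argument about the Bruhat order on $W$ relative to the parabolic $W_P$, and about the fiber dimensions of ${\rm pr}_P$ restricted to closures of Richardson cells; it is exactly the content abstracted in \cite[Prop.~3.9]{LSZ}, so in the write-up I would state the needed input from \cite{LSZ} precisely and then specialize, rather than reprove the projection-geometry from scratch. The anti-canonical class bookkeeping in the last step is routine once the components are correctly identified, since each divisor has pole order one (this matches property (3) of Theorem~\ref{intro:Fminus}, whose denominators are exactly these defining equations).
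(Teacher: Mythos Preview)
Your proposal is correct and takes essentially the same approach as the paper: the paper does not give an independent proof of Proposition~\ref{propanti} but simply states that it is a combination of Propositions 2.2 and 3.9 of \cite{LSZ} (for the quotient $P\backslash G$), with the content also implicitly contained in \cite{KLS}. Your plan to invoke these references and specialize to $(v,w)=({\rm id}, w_0 w_P)$ is exactly this, and your additional sketch of the boundary-divisor bookkeeping is a reasonable unpacking of what those cited results establish.
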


\begin{defn}\label{divisoreqn}
  For any homogeneous polynomial $p$ in Pl\"ucker coordinates, we denote  $\mathcal{V}(p):=\{Pg\in  P\backslash G \mid p([p_{K_1}]_{K_1},\cdots, [p_{K_r}]_{K_r})(Pq)=0\}$ and define
 $$ D_k :=\begin{cases}
    \mathcal{V}(p_{[k]}),&\mbox{ if } k\in \{n_1, \cdots, n_r\},\\
    \mathcal{V}(p_{[n]\setminus [k+1, n-n_1+k]}), & \mbox{ if } 1\leq k<n_1,\\
    \mathcal{V}(p_{[k-n_r+1, k]}), & \mbox{ if } n_r<k\leq n-1,\\
    \mathcal{V}(
     \sum\limits_{J\in\binom{[\min\{k, \hat k\}]}{k-n_j}} (-1)^{|J|} p_{[k]\smallsetminus J}\cdot p_{J\cup [\hat k+1,n]}), &  \begin{array}{l} \!\mbox{if }  n_j<k<n_{j+1} \mbox{ with } j\in[r{-}1]\\
                                                                      \mbox{where}\quad \hat k:= n-n_{j+1}+k-n_j
                                                                                                       \end{array},\\
     \mathcal{V}(p_{[n-n_{k-n+1}, n]}), & \mbox{ if } k\in \{n, \cdots, n-1+r\}.
  \end{cases}
  $$
%Note that    the longest permutation $w_0\in W=S_n$ is given by $w_0(i)=n+1-i$ for $i\in [n]$.  Define $w_0\cdot \mathcal{V}(p):=\{V_{\bullet}\in F\ell_{\ndot}\mid g([p_{w_0(K_1)}]_{K_1},\cdots, [p_{w_0(K_r)}]_{K_r})(V_\bullet)=0\}$.
   \end{defn}

The following proposition from  \cite{LSZ}, provides explicit equations for the irreducible components of the the anti-canonical divisor $-K_{P\!\setminus\!G}$ in terms of Pl\"ucker coordinates.
\begin{prop}[{\cite[Theorem 4.1]{LSZ}}] We have
{\upshape $$D_k=\begin{cases}
    \mbox{pr}_P(\overline{\mathcal{R}}^{ -}_{s_k, w_0w_P}),&\mbox{if } 1\leq k\leq n-1,\\
    \mbox{pr}_P(\overline{\mathcal{R}}^{ -}_{{\rm id}, {w_0s_{n_{k-n+1}} w_P}})&\mbox{if } n\leq k\leq n-1+r,
\end{cases}$$}
and $\sum_{k=1}^{n-1+r}D_k$ is an anti-canonical divisor in $P\backslash G$, denoted as  $-K_{P\backslash G}$.
  \end{prop}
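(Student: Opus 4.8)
The statement to prove is Theorem 4.1 of \cite{LSZ}: that the Pl\"ucker divisors $D_k$ of Definition~\ref{divisoreqn} coincide with the projected Richardson divisors $\mathrm{pr}_P(\overline{\mathcal R}^{-}_{s_k,w_0w_P})$ for $1\le k\le n-1$ and $\mathrm{pr}_P(\overline{\mathcal R}^{-}_{\mathrm{id},w_0 s_{n_{k-n+1}}w_P})$ for $n\le k\le n-1+r$, and that their sum is anti-canonical. Since this is cited as a known result, the plan is to indicate the proof strategy rather than reprove \cite{LSZ} in full. The key structural input is Proposition~\ref{propanti}, which already identifies $-K_{P\backslash G}$ as the sum of those $n-1+r$ projected Richardson divisors; so the content is purely the \emph{identification} of each geometric divisor with the vanishing locus of the explicit Pl\"ucker polynomial appearing in Definition~\ref{divisoreqn}.

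First I would recall that each projected Richardson divisor $\mathrm{pr}_P(\overline{\mathcal R}^{-}_{v,w})$ is an irreducible subvariety of codimension one in $P\backslash G$ (for the relevant $(v,w)$, where $\ell(w)-\ell(v)=\dim(P\backslash G)-1$), hence it is cut out inside $P\backslash G$ by a single irreducible homogeneous equation in the multi-Pl\"ucker coordinates, unique up to scalar. So the proof reduces to showing, for each $k$, that the polynomial $p_{[k]}$, $p_{[n]\setminus[k+1,n-n_1+k]}$, $p_{[k-n_r+1,k]}$, the quadratic alternating sum, or $p_{[n-n_{k-n+1},n]}$ respectively, (a) is irreducible as a section of the relevant very ample line bundle on $P\backslash G$ — or at least has the correct irreducible divisor as its reduced zero locus — and (b) actually vanishes on the corresponding Richardson divisor. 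For (b) one uses the standard fact that a Pl\"ucker coordinate $p_J$ vanishes on a Schubert variety $X_w$ (in the appropriate convention) precisely according to a Bruhat-order comparison between $J$ and $w$; combined with the description $\mathcal R^-_{v,w}=(B_-\backslash B_-\dot v\inv B_+)\cap(B_-\backslash B_-\dot w\inv B_-)$ and the explicit Weyl group elements $s_k$, $w_0 w_P$, $w_0 s_{n_j}w_P$, one checks the claimed vanishing by a direct combinatorial computation with one-line notations. The quadratic case $n_j<k<n_{j+1}$ is the interesting one: there the relevant Richardson divisor is \emph{not} cut out by a single Pl\"ucker coordinate, and one must show that the alternating sum $\sum_J (-1)^{|J|}p_{[k]\setminus J}\,p_{J\cup[\hat k+1,n]}$ (with $\hat k=n-n_{j+1}+k-n_j$) is the right quadratic generator. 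I would verify this by interpreting that sum as (the expansion of) a generalized Pl\"ucker/flag minor identity — a Laplace-type or straightening-law expansion showing the sum equals a single minor of a $k\times k$ submatrix of $g$ built from the first $k$ rows and a column set determined by $k$ and $\hat k$ — and then applying the same Bruhat vanishing criterion to that minor on the Schubert variety $X_{w_0 w_P}$ together with the opposite Schubert variety $X^{s_k}$.

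Once each $D_k$ is matched with the corresponding projected Richardson divisor, the final assertion that $\sum_{k=1}^{n-1+r} D_k = -K_{P\backslash G}$ is immediate from Proposition~\ref{propanti}: the right-hand side there is exactly $\sum_{i\in I}\mathrm{pr}_P(\overline{\mathcal R}^{-}_{s_i,w_0 w_P}) + \sum_{i\in I^P}\mathrm{pr}_P(\overline{\mathcal R}^{-}_{\mathrm{id},w_0 s_i w_P})$, and the two ranges $1\le k\le n-1$ and $n\le k\le n-1+r$ of our indexing biject with $I=[n-1]$ and $I^P=\{n_1,\dots,n_r\}$ respectively via $k\mapsto k$ and $k\mapsto n_{k-n+1}$. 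The main obstacle, as flagged, is the quadratic case: establishing that the explicit alternating bilinear expression is precisely the (reduced, irreducible) defining equation of $\mathrm{pr}_P(\overline{\mathcal R}^{-}_{s_k,w_0 w_P})$ — neither a proper multiple of it nor a polynomial with extra components — which is where the combinatorics of Schubert/Richardson varieties in two adjacent Grassmannian projections genuinely interacts, and where I would lean most heavily on the straightening-law identity and a dimension/irreducibility count. All of the linear cases, by contrast, are routine once the Bruhat vanishing dictionary is set up.
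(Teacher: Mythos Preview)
The paper does not prove this proposition at all: it is stated as a direct citation of \cite[Theorem~4.1]{LSZ}, with no proof or proof sketch given (the only trace is an \verb|\iffalse...\fi| block containing a placeholder remark, which is commented out). So there is nothing in the paper to compare your proposal against.

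Your sketch is a reasonable outline of how one might approach the result in \cite{LSZ}: reduce via Proposition~\ref{propanti} to identifying each irreducible component with the zero locus of the stated Pl\"ucker polynomial, handle the linear cases via Bruhat-order vanishing criteria for Pl\"ucker coordinates on Schubert varieties, and isolate the quadratic case $n_j<k<n_{j+1}$ as the substantive step. That said, your description of the quadratic case remains a plan rather than an argument: you assert that the alternating sum should be interpretable as ``a single minor of a $k\times k$ submatrix'' via a Laplace/straightening identity, but you do not say which minor, and in fact the whole point of this case is that the divisor is \emph{not} cut out by a single Pl\"ucker minor of $P\backslash G$. The actual mechanism in \cite{LSZ} (and implicitly in \cite{KLS}) is more specific than a generic straightening argument, so if you were genuinely asked to supply a proof here you would need to either reproduce that computation or find an independent characterization of the defining equation. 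For the purposes of this paper, however, simply citing \cite{LSZ} is what the authors do, and that is appropriate.
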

\iffalse\begin{proof}
  \textcolor{red}{To add a few explanations on $w_0\cdot (-K_{\Flndot})=-K_{\Flndot}$.}
\end{proof}
\fi
We will give a Pl\"ucker coordinate expansion of $\mathcal F_-$ where each summand has a pole of order $1$ along a (unique) irreducible component $D_k$ of  $-K_{P\backslash G}$, giving rise to a bijection between divisors $D_k$ and summands of $\mathcal F_-$.

 For any $n\times m$ matrix $A\in M_{n\times m}(\mathbb{C})$, we let $\Delta_K^J(A)$ denote the minor with row set $J$ and column set $K$, whenever the sub-matrix is a square matrix.  We will need the following generalization  in \cite{GAE} of the famous  Cramer's rule in linear algebra.

\begin{lemma}[Generalized Cramer's rule] \label{Cramer}
   Let $A\in GL_n(\mathbb{C})$ and $X, Y\in M_{n\times m}(\mathbb{C})$  such that $AX=Y$.  For any $J\in {[n]\choose l}$ and $K\in {[m]\choose l}$ where $l\leq \min\{n, m\}$, we have
    $$\Delta^J_K(X)={\det(A_Y(J, K))\over \det A}$$
    where $A_Y(J, K)$ denotes the matrix  constructed from $A$ by order-preserving replacing the column set $J$ of $A$ by the column set $K$ of $Y$.
\end{lemma}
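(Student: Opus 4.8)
\textbf{Proof plan for the Generalized Cramer's rule (Lemma~\ref{Cramer}).}

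The plan is to reduce the general statement to the classical Cramer's rule by a Cauchy--Binet style computation. First I would observe that it suffices to prove the case $m=l$, i.e.\ $X,Y\in M_{n\times l}(\mathbb C)$ with $K=[l]$ simply the full column set; the general case follows by replacing $X$ and $Y$ with their submatrices on the column set $K$, since $AX=Y$ implies $A\,X_{\bullet K}=Y_{\bullet K}$ and the quantities $\Delta^J_K(X)$, $\det(A_Y(J,K))$ only see these columns. So from now on $X=(x_1\mid\dotsb\mid x_l)$ and $Y=(y_1\mid\dotsb\mid y_l)$ are $n\times l$ matrices with $Ax_t=y_t$ for each $t$, and I must show $\Delta^J_{[l]}(X)=\det(A_Y(J))/\det A$, where $A_Y(J)$ is $A$ with its columns indexed by $J$ replaced (in order) by $y_1,\dotsc,y_l$.

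Next I would expand $\det(A_Y(J))$ using multilinearity of the determinant in the $l$ replaced columns. Writing $J=\{j_1<\dotsb<j_l\}$ and $y_t=\sum_{i=1}^n x_{it}\,(A e_i)$ — this is precisely the relation $y_t = A x_t$ — the column $y_t$ sitting in slot $j_t$ of $A_Y(J)$ gets expanded as a sum over $i$. Multilinearity then gives
\[
\det(A_Y(J)) \;=\; \sum_{(i_1,\dotsc,i_l)} x_{i_1 1}x_{i_2 2}\cdots x_{i_l l}\;\det\!\big(A \text{ with column } j_t \text{ replaced by } Ae_{i_t}\big).
\]
The inner determinant vanishes unless $i_1,\dotsc,i_l$ are distinct and, moreover, the multiset $\{i_1,\dotsc,i_l\}$ together with the unchanged columns $[n]\setminus J$ forms all of $[n]$ — otherwise two columns of the resulting matrix coincide up to the fixed columns of $A$, or more precisely the matrix has a repeated $Ae_i$ or an $Ae_i$ with $i\notin J$ equal to an existing column. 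Hence only tuples $(i_1,\dotsc,i_l)$ that are permutations of $J$ survive. For such a tuple, say $i_t = j_{\sigma(t)}$ for $\sigma\in S_l$, the matrix ``$A$ with column $j_t$ replaced by $Ae_{j_{\sigma(t)}}$'' is obtained from $A$ by permuting the columns $J$ according to $\sigma$, so its determinant equals $\operatorname{sgn}(\sigma)\det A$. Collecting terms,
\[
\det(A_Y(J)) \;=\; \det A \cdot \sum_{\sigma\in S_l}\operatorname{sgn}(\sigma)\, x_{j_{\sigma(1)}1}\cdots x_{j_{\sigma(l)}l} \;=\; \det A\cdot \Delta^J_{[l]}(X),
\]
the last equality being the Leibniz formula for the minor of $X$ on rows $J$. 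Dividing by $\det A\neq 0$ gives the claim.

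The only genuinely delicate point is the combinatorial bookkeeping in the multilinear expansion: verifying that exactly the tuples $(i_1,\dotsc,i_l)$ which are permutations of $J$ contribute, and that the sign of the surviving determinant is exactly $\operatorname{sgn}(\sigma)$ under the identification $i_t=j_{\sigma(t)}$. This is where one must be careful that the columns outside $J$ are left in their original positions, so the relevant permutation acts only on the $l$ columns in $J$ and the sign comes out cleanly; a clean way to organize this is to first move (by a fixed column permutation, contributing an overall sign that cancels between numerator and the Leibniz expansion) the columns $J$ to positions $1,\dotsc,l$, reducing to the case $J=[l]$, where the statement is visibly the classical block form of Cramer's rule applied columnwise. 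Everything else is routine.
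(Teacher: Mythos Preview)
Your proof is correct. The paper does not actually prove this lemma; it is stated as a known result with a citation to \cite{GAE}, and is immediately applied (with $X=A^{-1}$) to deduce Jacobi's theorem. So there is no ``paper's own proof'' to compare against.

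Your argument is a clean, self-contained derivation: the reduction to $m=l$, $K=[l]$ is sound since $AX=Y$ restricts columnwise, and the multilinear expansion of $\det(A_Y(J))$ via $y_t=\sum_i x_{it}\,a_i$ is exactly right. The only surviving tuples $(i_1,\dotsc,i_l)$ are permutations of $J$ because any $i_t\notin J$ would duplicate an unchanged column $a_{i_t}$ of $A$, and any repetition among the $i_t$'s would duplicate a replaced column; invertibility of $A$ ensures such duplications kill the determinant. The sign computation is also correct: the induced permutation of $[n]$ fixes $[n]\setminus J$ pointwise and acts as $\sigma$ on $J$, so its sign is $\operatorname{sgn}(\sigma)$, and the Leibniz sum recovers $\Delta^J_{[l]}(X)$. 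Your final remark about moving the $J$-columns to the front is a reasonable alternative bookkeeping device, though unnecessary given the direct argument already works.
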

The special case of $X=A^{-1}$ in the generalized Cramer's rule yields Jacobi Theorem for the minors of an inverse matrix immediately as follows.
\begin{cor}[Jacobi Theorem]\label{Jacobithm}
   Let $A\in GL_n(\mathbb{C})$. For any $J, K\in {[n]\choose l}$ where $l\in [n]$, we write $J^c=[n]\setminus J$ and  $K^c=[n]\setminus K $ in  increasing sequences. We have
     $$\Delta^J_K(A^{-1})={(-1)^{|J|+|K|}\over \det A}\Delta^{K^c}_{J_c}(A).$$
\end{cor}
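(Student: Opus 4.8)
The statement to be proved is the Jacobi Theorem (Corollary \ref{Jacobithm}), which expresses a minor of an inverse matrix $A^{-1}$ in terms of a complementary minor of $A$.

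\medskip

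The plan is to derive Corollary \ref{Jacobithm} as a direct consequence of the Generalized Cramer's rule in Lemma \ref{Cramer}, specialized to the case $X = A^{-1}$, $Y = \mathrm{Id}_n$, $m = n$. First I would set $A \in GL_n(\mathbb{C})$ and write $AX = Y$ with $X = A^{-1}$ and $Y = I_n$ the identity matrix. Then for any $J, K \in \binom{[n]}{l}$, Lemma \ref{Cramer} gives
\[
\Delta^J_K(A^{-1}) = \frac{\det\big(A_{I_n}(J,K)\big)}{\det A},
\]
where $A_{I_n}(J,K)$ is obtained from $A$ by replacing the columns indexed by $J$ (in order) with the columns of the identity matrix indexed by $K$ (in order). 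The main work is then a determinant computation: identifying $\det\big(A_{I_n}(J,K)\big)$ with $(-1)^{|J|+|K|}\Delta^{K^c}_{J^c}(A)$.

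\medskip

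For the key determinant computation, I would expand $\det\big(A_{I_n}(J,K)\big)$ along the columns coming from the identity matrix. These are columns $e_{k_1}, \dots, e_{k_l}$ of $I_n$ placed in the positions indexed by $J = \{j_1 < \dots < j_l\}$. Laplace expansion along these $l$ columns picks out exactly one nonzero term: the one that uses rows $K = \{k_1 < \dots < k_l\}$ from these columns (since $e_{k_s}$ has its only nonzero entry in row $k_s$). This contributes the complementary minor of $A$ on rows $[n]\setminus K = K^c$ and columns $[n]\setminus J = J^c$, namely $\Delta^{K^c}_{J^c}(A)$, together with a sign. The sign is $(-1)^{\sum_s j_s + \sum_s k_s} = (-1)^{|J| + |K|}$, coming from the Laplace expansion formula (the sum of row indices $k_s$ and column indices $j_s$ of the chosen submatrix), after checking that the identity-column entries themselves contribute a factor $+1$ (the minor of $I_l$ sitting on rows $k_s \leftrightarrow$ columns $j_s$ is $1$). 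Combining, $\det\big(A_{I_n}(J,K)\big) = (-1)^{|J|+|K|}\Delta^{K^c}_{J^c}(A)$, and dividing by $\det A$ yields the claimed formula.

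\medskip

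I expect the only subtle point — and hence the main obstacle, though it is minor — to be bookkeeping the sign correctly in the Laplace expansion: one must be careful that $|J|$ in the paper's notation means $\sum_{i \in J} i$ (as defined just before Lemma \ref{Cramer}), not the cardinality, and that the order-preserving replacement of columns is what makes the Laplace sign come out as exactly $(-1)^{|J|+|K|}$ with no extra reshuffling sign. Everything else is a routine application of the already-established Lemma \ref{Cramer} together with the standard Laplace/cofactor expansion. No deeper ideas are needed.
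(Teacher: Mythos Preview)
Your proposal is correct and follows exactly the approach indicated in the paper: the paper presents this corollary as an immediate consequence of Lemma~\ref{Cramer} with $X=A^{-1}$ and $Y=I_n$, and you have simply spelled out the Laplace-expansion sign computation that the paper leaves implicit.
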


Using   Lemma~\ref{shape-lemma}, we have the next key proposition.
\begin{prop}\label{uvprop} Let $z\in \mathcal Z$. We define $b,u$ and $v$ as in Definition~\ref{uv}, so that  $b=u \dot w_0\inv\dot w_P v=uz\inv$. Then
   \begin{align*}
        u_{i, i+1}&={\Delta^{[i]}_{[i-1]\cup \{i+1\}}(z) \over \Delta^{[i]}_{[i]}(z)}, \quad \mbox{ for any }\, i\in [n-1];\\
       v_{i, i+1}&=\begin{cases}
        {\Delta^{[n_{j}]}_{\{n-n_{j+1}+1\}\cup [n-n_{j}+1,n]\setminus \{n-n_{j-1}\}}(z) \over \Delta^{[n_{j}]}_{[n-n_{j}+1,n]}(z)},& \mbox{ if } i=n_j \mbox{ with } j\in [r],\\
          0,&\mbox{ otherwise}.
      \end{cases}
   \end{align*}
\end{prop}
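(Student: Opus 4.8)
The plan is to compute the above-diagonal entries $u_{i,i+1}$ and $v_{i,i+1}$ directly from the defining relation $b = u\dot w_0\inv\dot w_P v = u z\inv$, by extracting suitable minors. The key observation is that the factor $u$ is upper-unitriangular, so its above-diagonal entries are isolated by leading principal-type minors of $b$, and then the relation $b = uz\inv$ converts these into minors of $z$ via the Cauchy--Binet formula together with the Jacobi Theorem (Corollary~\ref{Jacobithm}).

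First I would treat the $u$-entries. Since $b=uz\inv$ and $u\in U_+$, the leading $i\times i$ principal submatrix of $b$ equals the product of the leading $i$ rows of $u$ (which, being unitriangular, contribute only entries in columns $[i]$) with the leading $i$ columns of $z\inv$. Concretely, $\Delta^{[i]}_{[i]}(b) = \Delta^{[i]}_{[i]}(u)\,\Delta^{[i]}_{[i]}(z\inv) = \Delta^{[i]}_{[i]}(z\inv)$, and similarly $\Delta^{[i]}_{[i-1]\cup\{i+1\}}(b)$ picks out $\Delta^{[i]}_{[i-1]\cup\{i+1\}}(z\inv)$ after noting the only nontrivial contribution of the unitriangular $u$ is via its $(i,i+1)$ entry: expanding, $\Delta^{[i]}_{[i-1]\cup\{i+1\}}(b) = u_{i,i+1}\,\Delta^{[i]}_{[i]}(z\inv) + \Delta^{[i]}_{[i-1]\cup\{i+1\}}(z\inv)$. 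Actually it is cleaner to argue that the product of the first $i$ rows of $u$ with any $i$ columns of $z\inv$ gives $\Delta^{[i]}_{K}(b)$, and since rows $1,\dots,i$ of $u$ have support in columns $1,\dots,i$ except for a single off-diagonal contribution from $u_{i,i+1}$ in row $i$, one reads off $u_{i,i+1}$ as a ratio of two minors of $z\inv$. Then apply Corollary~\ref{Jacobithm} to rewrite $\Delta^{[i]}_{[i]}(z\inv)$ and $\Delta^{[i]}_{[i-1]\cup\{i+1\}}(z\inv)$ in terms of complementary minors of $z$; the $\det z$ factors and signs $(-1)^{|J|+|K|}$ cancel between numerator and denominator, and one must check the complementary column/row sets match the claimed $\Delta^{[i]}_{[i]}(z)$ and $\Delta^{[i]}_{[i-1]\cup\{i+1\}}(z)$. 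Here Lemma~\ref{shape-lemma} is used to guarantee the relevant minors of $z$ are the ``right'' ones (the antidiagonal-block shape controls which complementary minors are nonzero).

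Next, for the $v$-entries: by Lemma~\ref{l:vanishing} we already know $v_{i,i+1}=0$ unless $i=n_j$ for some $j$, so only the cases $i\in I^P$ need attention. Here I would use $z = v\inv\dot w_P\inv\dot w_0$, i.e. $v = \dot w_P\inv\dot w_0 z\inv$, together with the explicit formula for $\dot w_P\inv\dot w_0$ from \eqref{e:wPw0} in the proof of Lemma~\ref{shape-lemma}. Applying $v$ to the standard basis vector $\delta_{n_j+1}^t$ and using \eqref{e:w0wP} gives $v\inv\cdot\delta_{n_j+1}^t = (-1)^{n_j} z\cdot\delta_{n-n_{j+1}+1}^t$ (this is exactly \eqref{e:vinv}), so the single nonzero above-diagonal entry $v_{n_j,n_j+1}$ is expressed through a $1\times 1$ minor of $z$; to match the stated formula as a ratio of $n_j\times n_j$ minors, I would instead compute $\Delta^{[n_j]}_{K}(v)$ for suitable $K$ by the same principal-minor trick as for $u$ (using that $v\in U_+^P$ has above-diagonal support only at $(n_j,n_j+1)$), and then feed in $v = \dot w_P\inv\dot w_0 z\inv$, Cauchy--Binet, and Jacobi's theorem again; the sparse shape of $\dot w_P\inv\dot w_0$ collapses the Cauchy--Binet sum to a single term, yielding the displayed minors of $z$ with column set $\{n-n_{j+1}+1\}\cup[n-n_j+1,n]\setminus\{n-n_{j-1}\}$ in the numerator and $[n-n_j+1,n]$ in the denominator.

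The main obstacle is bookkeeping the index sets and signs: one must carefully track which rows and columns survive when multiplying by the unitriangular factors and by the sparse permutation-type matrix $\dot w_P\inv\dot w_0$, verify that the complementary sets produced by Jacobi's theorem are precisely those in the statement, and confirm all $\det(z)$ factors and sign prefactors cancel in the ratios. None of this is conceptually deep, but it is the place where an error is easy to make; Lemma~\ref{shape-lemma} and formulas \eqref{e:wPw0}, \eqref{e:w0wP} are the essential inputs that make the Cauchy--Binet sums collapse and pin down the surviving minors.
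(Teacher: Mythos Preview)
Your plan has the right shape, but the Cauchy--Binet collapse you rely on for $u$ is stated for the wrong triangularity. You write that ``rows $1,\ldots,i$ of $u$ have support in columns $1,\ldots,i$ except for a single off-diagonal contribution from $u_{i,i+1}$''; this is the support pattern of a \emph{lower}-triangular matrix, not an upper one. For $u\in U_+$, row $k$ is supported in columns $\ge k$, so in the Cauchy--Binet expansion $\Delta^{[i]}_K(b)=\sum_L\Delta^{[i]}_L(u)\,\Delta^L_K(z\inv)$ infinitely many $L$ with $l_k\ge k$ contribute and nothing collapses. Even granting a collapse, applying Jacobi to $\Delta^{[i]}_{[i]}(z\inv)$ produces the complementary $(n-i)\times(n-i)$ minor $\Delta^{[i+1,n]}_{[i+1,n]}(z)$, not $\Delta^{[i]}_{[i]}(z)$; these are unrelated in general (e.g.\ already for $n=3$ the trailing principal minors of a matrix of shape~\eqref{Eq:OlocalCoords} can vanish). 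The fix is to put the lower-triangular factor on the \emph{row} side: from $b=uz\inv$ one has $u=bz$ with $b\in B_-$, and now rows $[i]$ of $b$ are genuinely supported in columns $[i]$, so Cauchy--Binet collapses to the single term $L=[i]$ and gives $\Delta^{[i]}_K(u)=\Delta^{[i]}_{[i]}(b)\,\Delta^{[i]}_K(z)$ directly in terms of the correct $i\times i$ minors of $z$, with no Jacobi step needed. The paper's argument is exactly this, phrased via the generalized Cramer rule applied to $z\hat u^{(m)}=\hat b^{(m)}$.

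For $v$ your route also lands on the wrong-size minors: pushing $\Delta^{[n_j]}_K(v)$ through $v=\dot w_P\inv\dot w_0\,z\inv$ and Jacobi yields an $(n-n_j)\times(n-n_j)$ minor of $z$ with column set $[n-n_j]$, not an $n_j\times n_j$ minor with row set $[n_j]$ as in the statement. The paper avoids this entirely: it takes the explicit entry formula $v_{n_j,n_j+1}=(-1)^{n_j+1}z_{n_j,\,n-n_{j+1}+1}$ from Lemma~\ref{shape-lemma}, and then uses the anti-diagonal identity blocks in the top-right of $z$ (the shape~\eqref{Eq:OlocalCoords}) to rewrite this single entry as the claimed ratio of $n_j\times n_j$ minors, tracking the signs $(-1)^{n_j+1}(-1)^{a_j-1}(-1)^{n_{j-1}}=1$.
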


\begin{proof}
   Since $u\in U_+$ and $b\in B_-$,  we have $\u:=u^{-1}\in U_+$ and   $\b:=b^{-1}\in B_-$. For $m=n-i$, we let $\u^{(m)}$ (resp. $\b^{(m)})$ be the $n\times m$ matrix obtained by taking the last $m$ columns of $\u$ (resp. $\b$). Since $b=uz^{-1}$, we have   $z\u^{(m)}=\b^{(m)}$.  By using Lemma~\ref{Cramer} and noting   $\b\in B_-$ and $\det z=1$, we have
     \begin{align*}
        \u_{n-m, n-m+1}&=\Delta_{[m]}^{\{n-m\}\cup [n-m+2, n]}(\u^{(m)})\\
         &={\det(z_{\b^{(m)}}(\{n-m\}\cup [n-m+2, n], [m]))}\\
         &={-}({\prod_{j=n-m+1}^n\b_{jj}})\Delta^{[i]}_{[i-1]\cup \{i+1\}}(z),
     \end{align*}
        \begin{align*}
        1&=\Delta_{[m]}^{[n-m+1, n]}(\u^{(m)})
          = (\prod_{j=n-m+1}^n\b_{jj})\Delta^{[i]}_{[i]}(z).
     \end{align*}
  Thus for $i\in [n-1]$, we have
     $$u_{i, i+1}=-\u_{i, i+1}=-{\u_{i, i+1}\over 1}={\Delta^{[i]}_{[i-1]\cup \{i+1\}}(z) \over \Delta^{[i]}_{[i]}(z)}.$$

      By Lemma~\ref{l:vanishing}, we have $v_{i,i+1}=0$ if $i\neq \{n_1, \cdots, n_r\}$; We recall that by Lemma~\ref{shape-lemma}, $z$ is   of the form \eqref{Eq:OlocalCoords} and therefore for $j\in [r]$ we have
      \begin{align*}
        v_{n_j,n_j+1}&=(-1)^{n_{j}+1} z_{n_{j},n-n_{j+1}+1}\\
         &=(-1)^{n_{j}+1} {\Delta^{\{n_{j}\}}_{\{n-n_{j+1}+1\}} (z) }\\
         &= (-1)^{n_{j}+1} (-1)^{a_{j}-1} (-1)^{n_{j-1}}
         {\Delta^{[n_{j}]}_{\{n-n_{j+1}+1\}\cup [n-n_{j}+1,n]\setminus \{n-n_{j-1}\}}(z) \over \Delta^{[n_{j}]}_{[n-n_{j}+1,n]}(z)}\\
        %&= (-1)^{2n_{j}}{\Delta^{[n_{j}]}_{\{n-n_{(j+1)}+1\}\cup (n-n_{j},n]\setminus \{n-n_{(j-1)}\}}(z) \over \Delta^{[n_{j}]}_{(n-n_{j},n]}(z)}\\
        &={\Delta^{[n_{j}]}_{\{n-n_{j+1}+1\}\cup [n-n_{j}+1,n]\setminus \{n-n_{j-1}\}}(z) \over \Delta^{[n_{j}]}_{[n-n_{j}+1,n]}(z)}.
     \end{align*}
\end{proof}

\begin{thm}\label{fminus} Let $(q_{n_1}, \cdots, q_{n_r})$ denote the coordinates of $(\mathbb{C}^*)^r=\prod\limits_{i\in I^P} \mathbb{C}^*_q $.
 The superpotential   $\mathcal F_-: (P\backslash G)^\circ \times (\mathbb{C}^*)^r \longrightarrow \mathbb{C}$ is given by the explicit formula
    \begin{align*}
       \mathcal F_-=& \sum_{i=1}^{n_1-1}{\frac{p_{[i-1]\cup \{i+1\}\cup [n-n_1+i+1, n]}}{ p_{[i]\cup [n-n_1+i+1, n]}}} +\sum_{j=1}^{r-1}\sum_{i=n_j+1}^{n_{j+1}-1} S_i^{(j)}+\sum_{i=n_r+1}^{n-1}{p_{ [i-n_r+1, i+1]\setminus\{i\} }\over p_{ [i-n_r+1, i]}}\\
       &\,\, +\sum_{j=1}^r {p_{[n_j-1]\cup \{n_j+1\}}\over p_{[n_j]}}+ \sum_{j=1}^r q_{n_j}   {p_{\{n-n_{j+1}+1\}\cup [n-n_{j}+1,n]\setminus \{n-n_{j-1}\}}\over p_{[n-n_j+1, n]}},
    \end{align*}
where
 $$S_i^{(j)}={\sum_{J\in\binom{[\min\{i+1, \hat i\}]\smallsetminus \{i\} }{i-n_j}} \epsilon(J)(-1)^{|J|} p_{[i-1]\cup\{i+1\}\smallsetminus J}\cdot p_{J\cup [\hat i+1,n]}\over \sum\nolimits_{J\in\binom{[\min\{i, \hat i\}]}{i-n_j}} (-1)^{|J|} p_{[i]\smallsetminus J}\cdot p_{J\cup [\hat i+1,n]}}$$
  with  $\hat i= n-n_{j+1}+i-n_j$ and $\epsilon(J)=\begin{cases}
     1,&\mbox{if } i+1\notin J,\\
     -1,&\mbox{if } i+1\in J.
  \end{cases}$
\end{thm}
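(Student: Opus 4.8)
\textbf{Proof strategy for Theorem~\ref{fminus}.}

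The plan is to combine Proposition~\ref{uvprop}, which expresses the summands $u_{i,i+1}$ and $v_{i,i+1}$ of $\mathcal F_-$ as ratios of minors $\Delta^J_K(z)$ of a matrix $z\in\mathcal Z$ having the explicit anti-block-diagonal shape \eqref{Eq:OlocalCoords}, with a translation of those minors into honest Pl\"ucker coordinates $p_{K_j}$ on $P\backslash G$. The starting point is formula \eqref{e:F-viauv-simple}, $\mathcal F_-(Pz,\mathbf q)=\sum_{i\in I^P} q_i v_{i,i+1}+\sum_{i=1}^{n-1}u_{i,i+1}$, and the main task is a bookkeeping computation: for each index $i$, rewrite the minor ratio from Proposition~\ref{uvprop} in terms of the $p_{K_j}(z)$. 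Recall $p_{K_j}(z)=\Delta^{[n_j]}_{K_j}(z)$, the minor of $z$ on its first $n_j$ rows and columns $K_j$; so the immediate issue is that the minors appearing in Proposition~\ref{uvprop} have row set $[i]$ for arbitrary $i$, not of the form $[n_j]$.

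First I would handle the minors with row set $[i]$ for general $i$. If $n_j<i\le n_{j+1}$ (allowing $i=n_{j+1}$), the shape \eqref{Eq:OlocalCoords} forces a block of the form $(-1)^{n_j}I_{a_{j+1}}$ sitting in rows $[n_j+1,n_{j+1}]$ and columns $[n-n_{j+1}+1,n-n_j]$, with zeros below and to the right. Laplace-expanding $\Delta^{[i]}_K(z)$ along the rows $[n_j+1,i]$ shows it is nonzero only when $K$ contains the columns $[n-i+1+n_j,n-n_j]\setminus$(appropriately), and then it equals $\pm\Delta^{[n_j]}_{K'}(z)=\pm p_{K'}(z)$ where $K'$ is $K$ with those forced columns removed. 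Carrying this reduction through for the three ``easy'' ranges ($1\le i<n_1$, $i=n_j$, $n_r<i\le n-1$) and tracking the signs via Lemma~\ref{shape-lemma} produces exactly the single-Pl\"ucker-coordinate summands in the statement; the forced appended column-blocks $[n-n_1+i+1,n]$, $[n-n_j+1,n]$, etc., are precisely what appears there. For the quantum terms $v_{n_j,n_j+1}$ the minors already have row set $[n_j]$, so these translate directly to $q_{n_j}\,p_{\{n-n_{j+1}+1\}\cup[n-n_j+1,n]\setminus\{n-n_{j-1}\}}/p_{[n-n_j+1,n]}$.

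The genuine work, and the step I expect to be the main obstacle, is the middle range $n_j<i<n_{j+1}$, producing $S_i^{(j)}$ with its \emph{quadratic} numerator and denominator. Here the row set $[i]$ straddles the block boundary, so a single Laplace expansion no longer collapses the minor to one Pl\"ucker coordinate; instead, expanding $\Delta^{[i]}_{[i]}(z)$ along its last $i-n_j$ rows (which lie in the band where the $(-1)^{n_j}I_{a_{j+1}}$ block and the zero region meet) yields a \emph{sum} over subsets $J$ of the columns that can be chosen, each term a product of two minors: one with row set $[n_j]$ (a genuine Pl\"ucker coordinate $p_{[i]\setminus J}$ after removing forced columns) and one coming from the identity block contributing $p_{J\cup[\hat i+1,n]}$ with $\hat i=n-n_{j+1}+i-n_j$. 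Getting the index set $\binom{[\min\{i,\hat i\}]}{i-n_j}$ exactly right, and verifying the sign $(-1)^{|J|}$ (and the extra $\epsilon(J)$ in the numerator, coming from whether the displaced column $i+1$ lands inside or outside $J$) is the delicate part; I would do this by carefully applying the Cauchy--Binet / Laplace formula to the block-decomposed matrix \eqref{Eq:OlocalCoords}, or alternatively by invoking the generalized Cramer's rule (Lemma~\ref{Cramer}) and Jacobi's theorem (Corollary~\ref{Jacobithm}) to convert the minors of $z$ that involve the ``wrong'' rows into minors involving only $[n_j]$. Either way, matching this against the defining equation of $D_k$ in Definition~\ref{divisoreqn} for the middle range (which has exactly the same quadratic shape) confirms the claim in Theorem~\ref{intro:Fminus}(2)--(3) that each summand has a simple pole along a unique $D_k$, and identifies denominators with the $D_k$-equations of \cite{LSZ}. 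A final sanity check is to confirm on a small example, e.g.\ $F\ell_{2,4;7}$ of Example~\ref{ex:F247}, that the formula reproduces the stated superpotential.
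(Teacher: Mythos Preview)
Your proposal is correct and follows essentially the same route as the paper's proof: start from \eqref{e:F-viauv-simple}, invoke Proposition~\ref{uvprop} to write each $u_{i,i+1}$ and $v_{i,i+1}$ as a ratio of minors of $z$, and then use the explicit block shape \eqref{Eq:OlocalCoords} to convert those minors into Pl\"ucker coordinates---directly when $i\in I^P$, by padding with the $I_{a_1}$ block when $i<n_1$, by Laplace expansion along the last $i-n_r$ rows when $i>n_r$, and by Laplace expansion along the last $i-n_j$ rows in the middle range to obtain the quadratic $S_i^{(j)}$. The paper does exactly this Laplace expansion for the middle case (not the Cramer/Jacobi alternative you also mention), and the sign and index bookkeeping, including the $\epsilon(J)$ term, falls out precisely as you anticipate.
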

\
\begin{proof}
  We let $\b, b, u, v$ be defined as in Definition~\ref{uv} for given $z\in\mathcal Z$.  By Lemma~\ref{l:vanishing}, we have
  \begin{align*}\mathcal F_-(Pz,\mathbf q)=   \sum\limits_{i\in I^P} q_i v_{i,i+1}+ \sum\limits_{i=1}^{n-1}u_{i,i+1},
 \end{align*}
  Since by Proposition~\ref{uvprop} $u_{i, i+1}$ and $v_{i, i+1}$ are quotients of  minors of $z$, it suffices to interpret those minors  by   the Pl\"ucker coordinates. Recall that for subsets $K\in \binom{[n]}{n_j}$, $p_K(z)=\Delta_K^{[n_j]}(z)$ is the determinant of the submatrix of $z$ with first $n_j$ rows and columns from $K$. Therefore if $i\in \{n_1, \cdots, n_r\}$, this is already done for
 both  $u_{i, i+1}$ and $v_{i, i+1}$,  as given in the last two sums of the expression of $\mathcal F_-$.
  Recall that $z$ is of the form ~\eqref{Eq:OlocalCoords}. For $i<n_1$ then both $\Delta^{[i]}_{[i-1]\cup \{i+1\}}(z)=\Delta^{[n_1]}_{[i-1]\cup \{i+1\}\cup [n-n_1+i+1, n]}(z)$ and $\Delta^{[i]}_{[i]}(z)=\Delta^{[n_1]}_{[i]\cup [n-n_1+i+1, n]}(z)$ hold. Then we have
   $$u_{i, i+1}=\frac{p_{[i-1]\cup \{i+1\}\cup [n-n_1+i+1, n]}}{ p_{[i]\cup [n-n_1+i+1, n]}}.$$
 Next we consider the case $i>n_{r}$. Let  $0_{\mu, \nu}$ denote the zero matrix with $\mu$ rows and $\nu$ columns. then the last $(i-n_r)$ rows   for minors
 $\Delta^{[i]}_{[i-1]\cup \{i+1\}}(z)$ and $\Delta^{[i]}_{[i]}(z)$ are both   given by $\Big((-1)^{n_r}I_{i-n_r}, 0_{i-n_r, n-i+n_r}\Big)_{(i-n_r)\times i}$. The  Laplace expansion on the last $(i-n_r)$ rows leads to the third sum in the expression of $\mathcal F_-$ immediately.

 It remains to discuss $u_{i, i+1}$ for the case  $n_j<i<n_{j+1}$  for some $j\in [r-1]$.
     %we have been taking the inhomogeneous coordinates  $z_J={p_J\over p_{[n-n_j+1,n]}}$ for $J\in {[n] \choose n_j}$.
 Denote $k=i-n_j$. The first $i$ rows of $z$  is given by
 \[ %begin{equation}\label{Eq:localCoords}
  \left(\begin{matrix}
      *  &  *   &      *     &\dotsc &   *   &   *  & I_{a_1}\\
      *  &  *   &      *     &\dotsc &   *   & (-1)^{n_1}I_{a_2}&   0   \vspace{-4pt}   \\
   \vdots&\vdots&   \vdots   &\iddots&\iddots&   0  &   0   \\
      *  &  *   &      *     & (-1)^{n_{j-1}}I_{a_j}&   0   &   0  &   0   \\
      *  & (-1)^{n_{j}}I_{k}&0_{k,a_{j+1}-k}&   0   &\dotsb &   0  &   0
   \end{matrix}\right)_{i\times n}\ .
 \] %end{equation}
Here the first column
block  has size $n-n_{j+1}$.
Using Laplace expansion on the last $k$ rows, we obtain
\[
   \Delta^{[i]}_{[i-1]\cup \{i+1\}}(z) =\ \sum_{J\in\binom{[i-1]\cup \{i+1\}}{k}}
 (-1)^{|[i-n_j+1, i]|+|J|} \epsilon(J)\Delta^{[n_j]}_{[i-1]\cup\{i+1\}\smallsetminus J}(z) \cdot z_J\ ,
\]
where {$z_J$} is the determinant of the submatrix with columns from $J$ and last $k$ rows. Since the last $k$ row is
$\begin{pmatrix}* & (-1)^{n_{k-1}}I_k& 0_{k,a_{j+1}-k}&   0  & \dotsb & 0\end{pmatrix}$.
Its last nonzero column is in position $\hat i:=n{-}n_{j+1}{+}k$, so we may assume that $J\subset[l']\setminus \{i\}$ where $l'=\min\{i+1, \hat i\}$,
as otherwise $z_J=0$ for $J$ occurring in the above sum.
Similarly and more easily, we set $l=\min\{i, \hat i\}$ and have
\[
   \Delta^{[i]}_{[i]}(z) =\ \sum_{\tilde J\in\binom{[l] }{k}}
 (-1)^{|[i-n_j+1, i]|+|\tilde J|}  \Delta^{[n_j]}_{[i ] \smallsetminus \tilde J}(z) \cdot z_{\tilde J}.
\]
 Since  $z$ is of  the specified  form as above,
we have $z_J = \varepsilon p_{J\cup [n-n_{j+1}+k+1,n]}$ and $z_{\tilde J} = \varepsilon p_{\tilde J\cup [n-n_{j+1}+k+1,n]}$, in which $\varepsilon=\pm 1$ depends only on $\{n_1, \cdots, n_j\}$.
Hence, we have $u_{i,i+1}= {\Delta^{[i]}_{[i-1]\cup \{i+1\}}(z) \over \Delta^{[i]}_{[i]}(z)}=S_i^{(j)}$ and the proof is complete.
\iffalse
Now we consider the general case $$f_-(P_-z, w_0(\mathbf{q}(t)))=\mathcal F_{\operatorname {Lie}}\circ \Upsilon_2(u\dot w_P\dot w_0\inv v, \mathbf{q}(t))=\mathcal F_{\operatorname {Lie}}(u\dot w_P\dot w_0\inv v \hat t)=\mathcal F_{\operatorname {Lie}}(u t\dot w_P\dot w_0\inv \tilde v).$$
  Here  $\hat t= \dot w_0\dot w_P\inv t \dot w_P\dot w_0\inv$ is obtained  by reversing the diagonal of $t$; $\tilde v= \hat t\inv v \hat t$.
  Hence,
   $$\mathcal F_{\operatorname {Lie}}(u t\dot w_P\dot w_0\inv \tilde v)=\sum_{i}u_{i, i+1}+\sum_{i\in I^P}v_{i, i+1} {t_{n-i}\over t_{n+1-i}}=\sum_{i}u_{i, i+1}+\sum_{i\in I^P}v_{i, i+1} q_{i}.$$
 \fi
\end{proof}

\begin{example}\label{ex:FullFlagF-} When $I^P=I$, we have that   $P\backslash G$ is a complete flag variety. In this case, there is no $S_i^{(j)}$-term, and the superpotential $\mathcal F_-$ is simply given by $$ \mathcal F_-=\sum_{i=1}^{n-1}{p_{[i-1]\cup\{i+1\}\over p_{[i]}}}+\sum_{i=1}^{n-1}q_i{p_{[n-i, n]\setminus\{n-i+1\}}\over p_{[n-i+1, n]}}.$$
\end{example}

\begin{example}\label{ex:F-Gr} When $I^P=\{k\}$,  we have that $P\backslash G$  is the complex Grasssmannian $Gr(k, n)$. In this case,  there are no $S_i^{(j)}$-terms, and the superpotential $\mathcal F_-$ is simply given by $$ \mathcal F_-=\sum_{i=1}^{k-1}{\frac{p_{[i-1]\cup \{i+1\}\cup [n-k+i+1, n]}}{ p_{[i]\cup [n-k+i+1, n]}}} +\sum_{i=k+1}^{n-1}{p_{[i-k+1, i+1]\setminus \{i\}}\over p_{[i-k+1,i]}}+
{p_{[k-1]\cup\{k+1\}}\over p_{[k]}}+ q_k{p_{[n-k, n]\setminus\{n-k+1\}}\over p_{[n-k+1, n]}}.$$
\end{example}

\section{Quantum cohomology of partial flag varieties}
The main result of this section will be to  show that the image of the superpotential in the Jacobi ring  agrees with the the first Chern class under the known isomorphism with quantum cohomology (Section~\ref{s:peterson}). This result is Theorem~\ref{firstchernclass}.

On the A-side, we consider the small quantum cohomology ring of $X=G^\vee/P^\vee$, denoted by $QH^*(X)$. It is an associative and commutative algebra over $\mathbb{C}[q_{n_1},\cdots,q_{n_r}]$ with a basis given by the Schubert classes $\sigma_v$, where $q_{n_j}$ are formal variables. $$QH^*(X)=\mathbb{C}[q_{n_1},\cdots,q_{n_r}]\otimes H^*(X,\mathbb{Z})$$
The structure constants are defined through the $3$-point, genus-zero Gromov-Witten invariants of $X$. There is also an enumerative meaning of these constants (see e.g. \cite{FulPand}).
Recall that the number $r$ occurs in the isomorphism $G^\vee/P^\vee\overset{\cong}{\longrightarrow} \Flndot =\mathbb{F}\ell(n; n_r, \cdots, n_1)$.

A permutation $w\in S_n$ is an element in $W^{P}$ if and only if $w(n_j+1)<\cdots <w(n_{j+1})$ for all $0\leq j\leq r$. In particular, if $w=s_{n_j-i+1}\cdots s_{n_j}$, $1\leq j\leq r$, $1\leq i\leq n_j$, then $\sigma_w$ is called a special Schubert class. The following is a special case of the quantum Pieri rule in \cite{Fontanine}.
\begin{prop}[Ciocan-Fontanine]\label{Monkrule}
  Let $w$ be a Grassmannian permutation with $w(1)<\cdots<w(m)$ and $w(m+1)<\cdots<w(n)$  for some $m=n_j$, $1\leq j\leq r$. Then in $QH^*(X)$, we have
\begin{align*}
  \sigma_w \cdot \sigma_{s_{n_j}}=\sum_{\substack {a\leq n_j<b,\\ \ell(wt_{ab})=\ell(w)+1}}\sigma_{wt_{ab}}+ \sum\limits_{\ell(w\tau)=\ell(w)-\ell(\tau)} q_{n_j}\sigma_{w\tau},
\end{align*}
where $t_{ab}$ is the transposition interchanging $a$ and $b$, $\tau:=s_{n_j}\cdot s_{n_j+1}\times\cdots\times s_{n_{j+1}-1}\cdot s_{n_j-1}\cdot s_{n_j-2}\times \cdots \times s_{n_{j-1}+1}$.
\end{prop}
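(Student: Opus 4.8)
This is the quantum Chevalley--Monk formula for $X=\Flndot$ in the special case of multiplication by a Schubert divisor, so the plan is to derive it from the general quantum Chevalley formula for $QH^*(G^\vee/P^\vee)$ of Fulton--Woodward (see also \cite{Buch,Miha}), equivalently by specialising the quantum Pieri rule of \cite{Fontanine} to a single box, and then to carry out the type $A$ combinatorics that the hypothesis on $w$ makes manageable. In the general formula, when multiplying $\sigma_w$ by $\sigma_{s_{n_j}}$ the coefficient attached to a reflection $t_{ab}$ (with $a<b$) equals $1$ if $a\le n_j<b$ and $0$ otherwise; hence only transpositions crossing the wall $n_j$ contribute, each once, and the product splits into a classical sum over such $t_{ab}$ with $\ell(wt_{ab})=\ell(w)+1$, together with a quantum sum over those $t_{ab}$ satisfying a degree condition, the $t_{ab}$-term then carrying an effective curve class $\beta_{ab}$ and the Schubert class of the minimal-length representative of $wt_{ab}W_P$.

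For the classical part I would first prove a stability lemma: if $w$ is increasing on $[1,m]$ and on $[m+1,n]$ with $m=n_j$, and $a\le m<b$ with $\ell(wt_{ab})=\ell(w)+1$, then $w(a)<w(b)$ and no $w(c)$ with $a<c<b$ lies strictly between $w(a)$ and $w(b)$; it follows that $wt_{ab}$ is again increasing on $[1,m]$ and on $[m+1,n]$, so $wt_{ab}\in W^P$ and equals its own minimal coset representative. This identifies the classical part of $\sigma_w\cdot\sigma_{s_{n_j}}$ with $\sum_{a\le n_j<b,\ \ell(wt_{ab})=\ell(w)+1}\sigma_{wt_{ab}}$, exactly the first sum in the statement. (Alternatively one invokes the classical Monk formula in $H^*(\Flndot)$ and uses the stability lemma only to see that each classical term is indexed by an element of $W^P$.)

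The quantum part carries the real content, and I expect it to be the main obstacle. Writing $a$ in the block $(n_{j-1},n_j]$ and $b$ in the block $(n_l,n_{l+1}]$ (so $l\ge j$), one computes that $\beta_{ab}$ has $q$-monomial $q_{n_j}q_{n_{j+1}}\cdots q_{n_l}$ and anticanonical degree $(n_{l+1}-n_j)+(n_l-n_{j-1})$. Since the statement asserts a correction involving $q_{n_j}$ only, I would show that, for $w$ increasing on $[1,n_j]$ and on $[n_j+1,n]$, the degree condition $\ell(\overline{wt_{ab}})=\ell(w)+1-\big((n_{l+1}-n_j)+(n_l-n_{j-1})\big)$ can hold only when $l=j$, i.e.\ $n_{j-1}<a\le n_j<b\le n_{j+1}$; there it reads $\ell(\overline{wt_{ab}})=\ell(w)+1-(a_j+a_{j+1})$, and one then checks that at most one pair $(a,b)$ satisfies it, that the representative $\overline{wt_{ab}}$ it produces equals $w\tau$ for the explicit $\tau=s_{n_j}s_{n_j+1}\cdots s_{n_{j+1}-1}\,s_{n_j-1}s_{n_j-2}\cdots s_{n_{j-1}+1}$ with $\ell(\tau)=a_j+a_{j+1}-1$, and that $w\tau$ lies in $W^P$ with the degree condition becoming precisely $\ell(w\tau)=\ell(w)-\ell(\tau)$. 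The length computations are routine once one follows the one-line notation of $w$ through the swap $t_{ab}$ and the passage back to $W^P$; the genuinely delicate points I anticipate are the exclusion of the cases $l>j$, which uses the Grassmannian shape of $w$ and not merely a crude length bound, and the explicit identification of the reduced word $\tau$ with the coset representative. As an independent check for the quantum part one can instead use the Gromov--Witten divisor axiom to reduce the quantum corrections to two--point invariants of $X$, and then use the projection from $\Flndot$ to a Grassmannian together with the known two--point invariants of Grassmannians; this shows directly that only $\sigma_{w\tau}$ in $q$-degree $q_{n_j}$ can survive, after which the same combinatorial identification of $w\tau$ is still needed.
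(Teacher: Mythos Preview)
The paper does not give its own proof of this proposition: it is stated as a citation, introduced by ``The following is a special case of the quantum Pieri rule in \cite{Fontanine}'' and attributed to Ciocan-Fontanine. So there is nothing to compare against on the paper's side beyond the reference.

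Your outline is a perfectly reasonable way to \emph{re}-derive the formula from the general quantum Chevalley formula (Fulton--Woodward, \cite{Buch,Miha}), and the structure you describe---computing the coefficient $(\omega_{n_j},\alpha^\vee)$ to see only transpositions across the wall $n_j$ contribute, then using the Grassmannian shape of $w$ to argue that $wt_{ab}\in W^P$ in the classical part and that only $l=j$ survives in the quantum part---is correct in spirit. The identification of the single quantum term with $q_{n_j}\sigma_{w\tau}$ for the specific $\tau$ with $\ell(\tau)=a_j+a_{j+1}-1$ is exactly the content of specialising Ciocan-Fontanine's quantum Pieri rule to a single box, which is how the paper frames the result. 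If you actually carry this out, the only step to be careful with is the exclusion of $l>j$: you should not merely bound lengths but use that $w$, being Grassmannian with descent only at $n_j$, satisfies $w(n_j+1)<\cdots<w(n)$, so in particular $w(n_{j+1})<w(n_{j+1}+1)$, which blocks the relevant length drop needed for a $q_{n_j}q_{n_{j+1}}$ term.
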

\begin{remark}
  Since $w$ is a Grassmannian permutation, there is at most one quantum term in the expansion of the product $ \sigma_w \cdot \sigma_{s_{n_j}}$. Note that the condition $\ell(w\tau)=\ell(w)-\ell(\tau)$ is equivalent to $w(n_j)>w(n_{j+1}), w(n_j+1)<w(n_{j-1}+1)$.
\end{remark}
\subsection{Peterson isomorphism}\label{s:peterson}
In this section we state three theorems of D. Peterson, of which the proofs may be found in \cite[Section 4]{Rie03}.
\begin{defn}\label{d:Gmi}
  For $1\leq i\leq m<n$, we define a rational function $G_i^m$ on $G/B_-$ as follows:
  $$G_i^m(gB_-):=\frac{\Delta_{[m+1,n]}^{\{m-i+1\}\cup [m+2,n]}(g)}{\Delta_{[m+1,n]}^{[m+1,n]}(g)}.$$
\end{defn}
\begin{defn}[Peterson variety] Let $\mathcal Y$ denote the (type $A$) Peterson variety, which is the projective subvariety of $G/B_-$ cut out by the relation 
\begin{equation}\label{e:PetersonRelation}
g\inv f g\in \mathfrak b_-\oplus \left(\bigoplus\limits_{i\in I}\mathfrak g_{\alpha_i}\right)=\left\{ \begin{pmatrix}
* &*&0&\dots&0 \\
* &\ddots&\ddots&\ddots&\vdots \\
\vdots &\ddots&\ddots&\ddots&0 \\
\vdots&\ddots&\ddots&\ddots& *\\
* &\dots &\dots&*&*
\end{pmatrix}\right\},
\end{equation}
where $g$ represents $gB_-$ and $f$ is the principal nilpotent
\[
f=\begin{pmatrix}
0 &&& \\
1 &0&& \\
 &\ddots&\ddots&\\
 &&1&0
\end{pmatrix}.\]
%The matrices on the right-hand side of \eqref{e:PetersonRelation}  almost-lower-triangular matrices, they are zero above the first super-diagonal.
We set
\[\mathcal Y_{P}:=\mathcal Y\cap B_+\dot w_{P} B_-/B_-,
\] and refer to this intersection as the  Peterson variety associated to the parabolic subgroup $P$.
\end{defn}
\begin{thm}[D. Peterson] \label{petersonA}
  Let $\mathcal{Y}_{P}$ be the Peterson variety associated to the parabolic subgroup $P$. There is a unique isomorphism
  \begin{align*}
    \mathcal{O}(\mathcal{Y}_{P}) &\overset{\sim}\longrightarrow QH^*(G^\vee/P^\vee),\\
    G_i^{n_j} &\mapsto (-1)^{i}\sigma_{s_{n_j-i+1}\cdots s_{n_j}}
  \end{align*}
  where $1\leq j\leq r$, $1\leq i\leq n_j$ and $  G_i^{n_j}$ is as constructed in Definition~\ref{d:Gmi}.
  \end{thm}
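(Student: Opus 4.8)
\textbf{Proof proposal for Theorem~\ref{petersonA} (Peterson's isomorphism).}

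The plan is to follow the strategy of \cite{Rie03}, constructing the isomorphism by matching the two sides with a common presentation via explicit relations, rather than by writing down a map directly. First I would record that $\mathcal{Y}_P$ is an affine variety: since $\mathcal{Y}_P = \mathcal{Y}\cap B_+\dot w_P B_-/B_-$ sits inside the Bruhat cell $B_+\dot w_P B_-/B_-\cong \mathbb{A}^{\ell(w_P)}$ (or rather its complement is cut out by a Schubert divisor), one can use the coordinates on this cell and the defining Peterson relation \eqref{e:PetersonRelation} to describe $\mathcal{O}(\mathcal{Y}_P)$ by generators and relations. The generators that matter are the functions $G_i^{n_j}$ from Definition~\ref{d:Gmi}; the first genuine task is to show these generate $\mathcal{O}(\mathcal{Y}_P)$ as a $\mathbb{C}$-algebra. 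Here the key point is that the Peterson relation forces $g^{-1}fg$ to be upper-Hessenberg, and writing out the entries of $g^{-1}fg$ in the super-diagonal strip and in the strictly upper-triangular part yields exactly the polynomial relations among the $\Delta^{\bullet}_{\bullet}(g)$-ratios that cut out a finite-dimensional affine scheme with coordinate ring spanned by monomials in the $G_i^{n_j}$.

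Second, I would set up the presentation of $QH^*(G^\vee/P^\vee)$ by generators and relations. By \cite{Fontanine} (and \cite{FominGelfandPostnikov} in the complete-flag case), the quantum cohomology ring is generated over $\mathbb{C}[q_{n_1},\dots,q_{n_r}]$ by the special Schubert classes $\sigma_{s_{n_j-i+1}\cdots s_{n_j}}$, subject to quantum analogues of the classical relations among elementary/complete symmetric functions. The quantum Pieri rule (our Proposition~\ref{Monkrule}) gives the multiplication of a Grassmannian class by $\sigma_{s_{n_j}}$, and iterating it pins down the ring structure on the generators. The content of the theorem is then an identification of two quotient rings, each presented by explicit (Laurent-polynomial) relations: the relations cutting out $\mathcal{Y}_P$ in the Bruhat cell on the one hand, and the quantum Schubert relations on the other.

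Third, the heart of the argument is to verify that the assignment $G_i^{n_j}\mapsto (-1)^i\sigma_{s_{n_j-i+1}\cdots s_{n_j}}$ (together with the appropriate identification of the quantum parameters $q_{n_j}$ with the natural functions on $\mathcal{Y}_P$ given by lowest-corner minors, as in the definition of $\mathcal{Z}$ and \eqref{e:qmap}) carries one set of relations to the other. Concretely, one computes how the Peterson condition $g^{-1}fg\in\mathfrak{b}_-\oplus\bigoplus_i\mathfrak{g}_{\alpha_i}$ translates into identities among the $G_i^m$; I expect these identities to reproduce precisely the quantum Pieri/Monk relations of Proposition~\ref{Monkrule} after applying the sign-twisted substitution. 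Once the ring map is shown to be well-defined and surjective (surjectivity is automatic since the $G_i^{n_j}$ generate and the special classes generate), injectivity follows from a dimension count: both $\mathcal{O}(\mathcal{Y}_P)$ and $QH^*(G^\vee/P^\vee)$ are free of rank $|W^P|$ over the appropriate base (for $\mathcal{Y}_P$ this uses that $\mathcal{Y}_P$ is a finite flat family over the torus of $q$-parameters with fibers of length $|W^P|$; for $QH^*$ it is the Schubert basis), so a surjection between them is an isomorphism. Uniqueness of the isomorphism is then clear because the $G_i^{n_j}$ generate the source.

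The main obstacle, I expect, is the combinatorial bookkeeping in the third step: carefully identifying the Hessenberg-shape entries of $g^{-1}fg$ with the correct minor ratios $G_i^m$ and tracking all the signs $(-1)^i$, so that the resulting identities line up with Proposition~\ref{Monkrule} including the quantum corrections. In particular, matching the \emph{quantum} terms — showing that the lowest-corner minors of $g$ that appear when the Peterson relation ``wraps around'' in the Hessenberg matrix are exactly the $q_{n_j}$ — is the delicate point, and it is where the specific form of $f$ as the principal nilpotent and the block structure associated to $P$ (the subsets $I_P$ versus $I^P$) must be used. The rest — affineness of $\mathcal{Y}_P$, the rank count, and formal deduction of the isomorphism — is comparatively routine given \cite{Rie03}.
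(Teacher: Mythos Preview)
The paper does not give its own proof of this theorem: it is stated as a result of D.~Peterson, with the blanket remark at the start of Section~\ref{s:peterson} that ``the proofs may be found in \cite[Section 4]{Rie03}.'' So there is nothing in the paper to compare your proposal against; you are essentially sketching what you expect to find in \cite{Rie03}.

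Your outline is broadly in that spirit, but two points deserve comment. First, a minor slip: the Bruhat cell $B_+\dot w_P B_-/B_-\subset G/B_-$ has dimension $\ell(w_0)-\ell(w_P)=\ell(w^P)$, not $\ell(w_P)$. Second, and more substantively, the step you call ``comparatively routine'' --- the dimension/flatness count showing that a surjection must be an isomorphism --- is in fact where the real work lies. Knowing a priori that $\mathcal{O}(\mathcal{Y}_P)$ is free of rank $|W^P|$ over the polynomial ring in the $q$-parameters (equivalently, that $\mathcal{Y}_P$ is a reduced complete intersection of the expected dimension) is not automatic from the Peterson relation alone; in \cite{Rie03} this is handled by first treating the Borel case via Kostant's description of the stabiliser of $f$ by unipotent Toeplitz matrices (cf.\ Theorem~\ref{t:Toeplitz} here) together with the quantum Schubert polynomial presentation of $QH^*(\mathbb{F}\ell_n)$, and then deducing the general parabolic case by the restriction argument recorded here as Proposition~\ref{prop:extension}. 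Your proposal to match the Peterson relations directly against the quantum Pieri relations for arbitrary $P$ is plausible in principle, but would require an independent argument for the size of $\mathcal{O}(\mathcal{Y}_P)$ that you have not supplied.
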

  \begin{remark}
     {The isomorphism we are using differs from the one used in \cite{Rie03} by signs.}
  \end{remark}
  \begin{remark}
    The rational function $G_i^m$ is a regular function on the Schubert cell $B_+\dot w_{P} B_-/B_-$ if $m\in I^{P}$.
  \end{remark}
  More generally, for a Grassmannian permutation $w$, with $w(1)<\cdots<w(m)$ and $w(m+1)<\cdots<w(n)$ for some $1<m<n$, we can define a rational function $G_w$ on $G/B_-$ as follows: 
  $$G_w(gB_-):=\frac{\Delta_{[m+1,n]}^{\{w(m+1),\cdots,w(n)\}}(g)}{\Delta_{[m+1,n]}^{[m+1,n]}(g)}.$$
  We use $G^{\{w(m+1),\cdots,w(n)\}}(gB_-):=G_w(gB_-)$ for short. This will not lead to any misunderstanding since our $n$ is fixed throughout the paper.
  Then we have the following result, by \cite[Prop 11.3]{Rie03}.
 {  \begin{prop}
    If $w$ is a Grassmannian permutation with descent $m$ and $m\in I^{P}$, then the isomorphism in Theorem~\ref{petersonA} sends $G_w$ to $(-1)^{\ell(w)}\sigma_w$.
  \end{prop}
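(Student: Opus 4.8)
The plan is to reduce the statement about a general Grassmannian permutation $w$ with descent $m\in I^P$ to the case of the special Schubert classes $G_i^m$ already covered by Theorem~\ref{petersonA}, exploiting the fact that the functions $G_w$ (and hence, under the isomorphism, the classes $\sigma_w$) for fixed descent $m$ are governed by the Grassmannian $Gr(n-m,n)$ sitting inside the flag variety. First I would observe that a Grassmannian permutation $w$ with descent $m$ is determined by the subset $\{w(m+1),\dots,w(n)\}\in\binom{[n]}{n-m}$, and that the special permutations $s_{m-i+1}\cdots s_{m}$ correspond precisely to the ``single row'' or ``single hook'' subsets; thus I want to express $G_w$, for general such $w$, in terms of the $G_i^m$ via the classical Jacobi--Trudi / Giambelli type determinantal identity among the minors $\Delta_{[m+1,n]}^{K}(g)$. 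Concretely, the numerators $\Delta_{[m+1,n]}^{K}(g)$ with varying $K$ and fixed row set $[m+1,n]$ are exactly the Plücker coordinates of the point of $Gr(n-m,n)$ determined by the last $n-m$ rows of $g$, so a Plücker-coordinate determinantal identity (of the form expressing an arbitrary Plücker coordinate as a polynomial in the ``special'' ones after dividing by $\Delta_{[m+1,n]}^{[m+1,n]}(g)$) gives the needed relation purely inside $\mathcal O(\mathcal Y_P)$.

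Next I would transport this identity through the isomorphism of Theorem~\ref{petersonA}. Since that isomorphism sends $G_i^{n_j}$ to $(-1)^i\sigma_{s_{n_j-i+1}\cdots s_{n_j}}$, the determinantal expression for $G_w$ becomes the corresponding determinantal expression in the special Schubert classes in $QH^*(G^\vee/P^\vee)$. At this point I invoke the classical Giambelli formula for Schubert classes in a Grassmannian (and its known validity for the relevant special Schubert classes inside the partial flag variety, following the quantum=classical philosophy at the level of these particular products, or simply the classical cohomology statement since $G_w$ and the $G_i^m$ are all regular on the Schubert cell and the identity among them is a polynomial identity, not involving $q$): the Schubert class $\sigma_w$ for a Grassmannian permutation $w$ with descent $m$ equals the very same determinant of special Schubert classes $\sigma_{s_{m-i+1}\cdots s_m}$. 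Matching signs — the $(-1)^{\ell(w)}$ arises because each special class carries a $(-1)^i$ and the degrees in the Giambelli determinant add up to $\ell(w)$ — one concludes that the isomorphism sends $G_w$ to $(-1)^{\ell(w)}\sigma_w$.

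A cleaner alternative, which I would actually prefer to carry out, avoids Giambelli altogether: both $G_w$ and $(-1)^{\ell(w)}\sigma_w$ can be characterized by how they multiply against the special classes. Using the quantum Pieri/Chevalley rule of Proposition~\ref{Monkrule} (or its classical shadow), one checks that the functions $\{G_w\}_w$, restricted to $\mathcal Y_P$, satisfy exactly the same recursion in the variable $G_1^m$ (equivalently $\sigma_{s_m}$) as the Schubert classes do, starting from the base cases $G_i^m\mapsto(-1)^i\sigma_{s_{m-i+1}\cdots s_m}$ of Theorem~\ref{petersonA}. Because multiplication by $\sigma_{s_m}$ has a triangular, invertible action on the span of these classes (graded by $\ell(w)$), the recursion determines all the images uniquely, forcing $G_w\mapsto(-1)^{\ell(w)}\sigma_w$. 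The main obstacle, in either approach, is bookkeeping the signs correctly and verifying the determinantal/recursive identity among the minors $\Delta_{[m+1,n]}^K(g)$ holds as an identity of regular functions on the Peterson stratum $\mathcal Y_P$ (equivalently on the Schubert cell $B_+\dot w_P B_-/B_-$), rather than merely on a dense open set; this is handled by noting that $\Delta_{[m+1,n]}^{[m+1,n]}(g)$ is nowhere vanishing on that cell, so all the $G_w$ are genuinely regular there and the identity, being polynomial in the Plücker coordinates of $Gr(n-m,n)$, extends by continuity/Zariski density from the open cell to all of $\mathcal Y_P$. I expect the sign computation — reconciling the $(-1)^{\ell(w)}$ with the accumulated $(-1)^i$ factors and the sign conventions in $\dot w_P$, $\dot w_0$ — to be the only genuinely delicate point, and I would pin it down by checking the two extreme special cases (a single $G_i^m$, and the longest Grassmannian permutation with descent $m$) explicitly.
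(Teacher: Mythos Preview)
The paper does not supply a proof of this proposition; it simply attributes the result to \cite[Prop.~11.3]{Rie03}. So there is no in-paper argument to compare against.

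Your proposal, however, contains a genuine gap. You assert that the $G_w$ satisfy a Jacobi--Trudi style determinantal identity in the special $G_i^m$ ``as a Pl\"ucker-coordinate identity'', i.e.\ as regular functions on the Schubert cell. This is false: for generic $g$ the normalised minors $G_w$ (with $w$ ranging over Grassmannian permutations of descent $m$) give algebraically independent coordinates on the big cell of $Gr(n-m,n)$, whereas there are only $m$ special functions $G_1^m,\dots,G_m^m$. Already for $m=2$, $n=4$ one cannot write $p_{13}/p_{34}$ as a polynomial in $p_{24}/p_{34}$ and $p_{14}/p_{34}$. What is true---and what the argument in \cite{Rie03} actually uses---is that the identity holds \emph{after restriction to the Peterson variety}, because by Theorem~\ref{t:Toeplitz} the points of $\mathcal X_P$ are (translates of) lower-triangular Toeplitz matrices, and the relevant minors of a Toeplitz matrix are Schur polynomials in its entries, for which Jacobi--Trudi is the classical symmetric-function identity. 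Your parenthetical (``the identity among them is a polynomial identity, not involving $q$'') conflates an identity on the full cell with one on $\mathcal Y_P$; only the latter exists, and recognising \emph{why} it holds there is the actual content of the proposition. Your alternative Chevalley-recursion approach has the same issue in disguise: the Pieri-type relation $G_1^m\cdot G_w=\sum G_{w'}$ is again not a universal minor identity but holds only on the Peterson stratum, for the same Toeplitz reason.
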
}
% {We may also write  $\sigma_{\{w(1),\cdots,w(m)\}}$ instead of $\sigma_w$ if $w$ is a Grassmannian permutation.}
\iffalse
Let $H\subseteq B_-$ be the subgroup of lower-triangular unipotent Toeplitz matrices, namely, $$H:=\left\{x=\left(
                                                                                                       \begin{array}{ccccc}
                                                                                                         1 & & & &  \\
                                                                                                         a_1 &1 &  && \\
                                                                                                         a_2 & a_1 & \ddots &&  \\
                                                                                                         \vdots &  & \ddots & 1 &  \\
                                                                                                         a_{n-1} & \cdots & a_2 & a_1 &1\\
                                                                                                       \end{array}
                                                                                                     \right) \Bigg|a_1,a_2,\cdots,a_{n-1}\in \mathbb{C}
\right\}$$
Let $H_{Q_-}:=H\cap B_+\dot w_{Q_-}\dot w_0 B_+$ and
\fi

\begin{thm}[D. Peterson]\label{petersonB}
    Let $\mathcal{X}_{P}:=\mathcal{Y}_{P}\cap (B_-\dot w_0 B_-/B_-)$. Then the map in Theorem~\ref{petersonA} induces an isomorphism between $ \mathcal{O}(\mathcal{X}_{P})$ and $QH^*(G^\vee/P^\vee)[q_{n_1}^{-1},\cdots,q_{n_r}^{-1}]$.
\end{thm}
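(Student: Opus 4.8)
\textbf{Proof strategy for Theorem~\ref{petersonB}.}

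The plan is to deduce the statement from Theorem~\ref{petersonA} by localizing at the quantum parameters and showing that this localization corresponds, on the geometric side, to the open affine piece $\mathcal Y_P\cap (B_-\dot w_0 B_-/B_-)$. First I would recall how the quantum parameters $q_{n_j}$ are realized inside $\mathcal O(\mathcal Y_P)$ under the isomorphism of Theorem~\ref{petersonA}. Concretely, each $q_{n_j}$ corresponds to a regular function on $\mathcal Y_P$ — it can be written as a ratio of principal minors $\Delta^{[m]}_{[m]}(g)$-type expressions, or more precisely as a monomial in the restrictions of the homogeneous coordinates cut out by the Peterson relation. The key point is that $\mathcal Y_P\subset B_+\dot w_P B_-/B_-$ is irreducible, and on it the function corresponding to $q_{n_j}$ is not identically zero; hence inverting all the $q_{n_j}$ corresponds to restricting to the complement of their vanishing loci, an open dense subset.

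The heart of the argument is to identify this open dense subset with $\mathcal X_P=\mathcal Y_P\cap(B_-\dot w_0 B_-/B_-)$, i.e.\ to show that the locus in $\mathcal Y_P$ where all the minors $\Delta^{[m]}_{[m]}(g)$ for $m\in I^P$ (equivalently, all principal minors relevant to the $q_{n_j}$) are non-vanishing is exactly the intersection with the opposite big cell $B_-\dot w_0 B_-/B_-$. The inclusion that the opposite big cell lies in this non-vanishing locus is the easy direction: on $B_-\dot w_0 B_-/B_-$ one has representatives $g$ for which the relevant trailing minors $\Delta^{[m+1,n]}_{[m+1,n]}(g)$ are units, which forces the $q_{n_j}$ to be invertible. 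For the reverse inclusion I would argue via the Bruhat stratification of $\mathcal Y_P$: the Peterson variety decomposes, by Peterson's theory (and as used in \cite{Rie03}), into strata indexed by parabolic data, and a point of $\mathcal Y_P$ fails to lie in the big cell precisely when it lies on some smaller Schubert variety, on which one of the relevant minors — hence one of the $q_{n_j}$ — vanishes. This is exactly the content behind the statement in the remark following Theorem~\ref{petersonA} that the $G_i^m$ become regular on the whole Schubert cell when $m\in I^P$; dually, inverting the $q$'s kills precisely the boundary strata.

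With that identification in hand, the theorem follows formally: restriction of functions gives a map $\mathcal O(\mathcal Y_P)\to \mathcal O(\mathcal X_P)$ which, since $\mathcal X_P$ is the principal open subset of the affine-type variety $\mathcal Y_P$ (or of its relevant affine chart) obtained by inverting the $q_{n_j}$, realizes $\mathcal O(\mathcal X_P)$ as the localization $\mathcal O(\mathcal Y_P)[q_{n_1}^{-1},\dots,q_{n_r}^{-1}]$; composing with the isomorphism of Theorem~\ref{petersonA} and noting that localization commutes with the isomorphism gives $\mathcal O(\mathcal X_P)\cong QH^*(G^\vee/P^\vee)[q_{n_1}^{-1},\dots,q_{n_r}^{-1}]$. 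The main obstacle I anticipate is the reverse inclusion in the middle step — pinning down that the $q_{n_j}$-vanishing locus in $\mathcal Y_P$ is \emph{exactly} the complement of the opposite big cell, not merely contained in it — which requires genuinely using the geometry of the Peterson variety (its stratification and the precise form of the Peterson relation \eqref{e:PetersonRelation}) rather than formal algebra; this is presumably where one cites the detailed analysis in \cite[Section~4]{Rie03}.
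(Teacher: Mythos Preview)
The paper does not actually prove this theorem; it attributes it to D.~Peterson and states at the beginning of Section~\ref{s:peterson} that ``the proofs may be found in \cite[Section~4]{Rie03}.'' So there is no proof in the paper to compare your proposal against.

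That said, your outline is essentially the right shape and matches the argument one finds in \cite{Rie03}. The one point where you are vague is exactly the one you flag: identifying the non-vanishing locus of the $q_{n_j}$ inside $\mathcal Y_P$ with the intersection with $B_-\dot w_0 B_-/B_-$. In the actual argument this is handled not by an abstract stratification argument but by an explicit computation: on $\mathcal Y_P\subset B_+\dot w_P B_-/B_-$ one chooses representatives in the Schubert cell, and the quantum parameters $q_{n_j}$ turn out to be (up to sign) products of the lower-right principal minors $\Delta^{[m+1,n]}_{[m+1,n]}$ evaluated on these representatives. The opposite big cell $B_-\dot w_0 B_-/B_-$ is characterized precisely by the non-vanishing of all such trailing principal minors, so the two conditions coincide on the nose rather than via a stratification argument. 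Your instinct to cite \cite[Section~4]{Rie03} for this step is correct; that is where the explicit minor computation lives.
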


We may also refer to $\mathcal X_P$ as Peterson variety. We recall that $\mathcal X_P$ can be described using Toeplitz matrices using an idea going back to B.~Kostant.
\begin{thm}[D. Peterson]\label{t:Toeplitz}
Consider the following variety of lower-triangular Toeplitz matrices given by
$$X_P:=\left\{ b_-=
  \begin{pmatrix}
    x_1 &  &  &    \\
    x_2 & x_1 &  &  \\
    \vdots& \ddots  &\ddots&  \\
    x_n & \cdots& x_2 & x_1
  \end{pmatrix}\mid b_-\in B_+\dot w_P\dot w_0B_+
\right\}.$$
The map $X_P\to\mathcal X_P$ sending $b_-$ to $b_-\dot w_0 B_-$ is an isomorphism.
\end{thm}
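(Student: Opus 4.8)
\emph{Plan of proof.} The structural fact underlying the theorem is that the lower\nobreakdash-triangular Toeplitz matrices are precisely the invertible elements of the commutative subalgebra $\mathbb C[f]=\{\sum_{k=0}^{n-1}x_{k+1}f^{k}\}\subset\mathfrak{gl}(n,\mathbb C)$, i.e.\ (among lower\nobreakdash-triangular matrices) the centralizer of the principal nilpotent $f$. This makes the Peterson relation almost automatic once one conjugates by $\dot w_0$. The plan is first to check that $\Phi\colon b_-\mapsto b_-\dot w_0B_-$ carries $X_P$ into $\mathcal X_P$, and then to produce an explicit inverse morphism by writing down the Toeplitz representative of an arbitrary point of $\mathcal X_P$.

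\emph{Well-definedness.} Let $b_-\in X_P$. Since $b_-\in B_+\dot w_P\dot w_0B_+\subset GL_n(\mathbb C)$, its diagonal entry $x_1$ is nonzero, so all leading principal minors $x_1^{k}$ are nonzero and $b_-\in B_-\subseteq B_-B_+$; as $\dot w_0\inv B_+\dot w_0=B_-$ we obtain $b_-\dot w_0\in B_-B_+\dot w_0=B_-\dot w_0(\dot w_0\inv B_+\dot w_0)=B_-\dot w_0B_-$, hence $\Phi(b_-)\in B_-\dot w_0B_-/B_-$. Because $b_-$ is a polynomial in $f$ it commutes with $f$, so $(b_-\dot w_0)\inv f(b_-\dot w_0)=\dot w_0\inv f\dot w_0$; a direct computation with $\dot w_0=\operatorname{antidiag}\{1,-1,\dots,(-1)^{n-1}\}$ gives $\dot w_0\inv f\dot w_0=-\sum_{i\in I}E_{i,i+1}\in\bigoplus_{i\in I}\mathfrak g_{\alpha_i}$, so $\Phi(b_-)\in\mathcal Y$. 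Finally, using $B_-\dot w_0\inv=\dot w_0\inv B_+$ together with $B_+\dot w_P\dot w_0\inv B_+=B_+\dot w_P\dot w_0B_+$, the condition $b_-\in B_+\dot w_P\dot w_0B_+$ is equivalent to $b_-\dot w_0B_-\in B_+\dot w_PB_-/B_-$. Intersecting the three conditions gives $\Phi(b_-)\in\mathcal Y\cap B_+\dot w_PB_-/B_-\cap B_-\dot w_0B_-/B_-=\mathcal X_P$.

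\emph{The inverse.} Every point of $B_-\dot w_0B_-/B_-$ has a unique representative of the form $u\dot w_0$ with $u\in U_-$, since $B_-\dot w_0B_-/B_-\cong B_-/(B_-\cap\dot w_0B_-\dot w_0\inv)=B_-/T\cong U_-$. Given $gB_-=u\dot w_0B_-\in\mathcal X_P$, a Toeplitz representative $b_-$ with $b_-\dot w_0B_-=u\dot w_0B_-$ must satisfy $u\inv b_-\in\dot w_0B_-\dot w_0\inv=B_+$; but $u\inv b_-$ is lower triangular, so $u\inv b_-\in T$, and since $b_-$ has constant diagonal this forces $u\inv b_-$ to be a scalar matrix, so after the harmless normalisation $x_1=1$ we get $b_-=u$. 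Thus the candidate inverse of $\Phi$ is $u\dot w_0B_-\mapsto u$, and two things remain: (a) such $u$ is automatically a Toeplitz matrix; (b) then $u\in X_P$. For (b), running the Bruhat-cell translations of the previous paragraph backwards, $gB_-\in B_+\dot w_PB_-/B_-$ forces $u\in B_+\dot w_P\dot w_0B_+$, so once (a) holds we have $u\in X_P$. For (a), apply $\operatorname{Ad}(\dot w_0)$ to the Peterson relation: since $\operatorname{Ad}(\dot w_0)\mathfrak b_-=\mathfrak b_+$ and $\operatorname{Ad}(\dot w_0)\bigoplus_{i\in I}\mathfrak g_{\alpha_i}=\bigoplus_{i\in I}\mathfrak g_{-\alpha_i}$, the relation $g\inv fg\in\mathfrak b_-\oplus\bigoplus_{i\in I}\mathfrak g_{\alpha_i}$ becomes $u\inv fu\in\mathfrak b_+\oplus\bigoplus_{i\in I}\mathfrak g_{-\alpha_i}$. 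But $u\in U_-$ forces $u\inv fu\in\mathfrak u_-$, and $\mathfrak u_-\cap\bigl(\mathfrak b_+\oplus\bigoplus_{i\in I}\mathfrak g_{-\alpha_i}\bigr)=\bigoplus_{i\in I}\mathbb C E_{i+1,i}$; hence $u\inv fu=\sum_{i\in I}c_iE_{i+1,i}$ for some scalars $c_i$.

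\emph{The crux, and conclusion.} It remains to prove the elementary statement: if $u\in U_-$ and $u\inv fu$ is supported on the first subdiagonal, then $u$ is a unipotent Toeplitz matrix and $u\inv fu=f$. I would prove this by rewriting the relation as $fu=u\bigl(\sum_{i\in I}c_iE_{i+1,i}\bigr)$ and comparing columns: applying both sides to $e_1,e_2,\dots$ in turn and matching coefficients yields, by induction on $k$, that $c_k=1$ and $u_{j,k}=u_{j+1,k+1}$ for all $j$, i.e.\ that the columns of $u$ are successive downward shifts of one another, which is exactly the Toeplitz condition. (Equivalently, this is the classical fact going back to Kostant that the centralizer of the regular nilpotent $f$ in $\mathfrak{gl}(n,\mathbb C)$ is $\mathbb C[f]$.) Combining everything, $\mathcal X_P\to X_P$, $u\dot w_0B_-\mapsto u$, is a well-defined morphism --- its components are the entries of $u$, regular on $B_-\dot w_0B_-/B_-$ via the chart $U_-\overset\sim\to B_-\dot w_0B_-/B_-$ --- and it is a two-sided inverse of the morphism $\Phi$, so $\Phi$ is an isomorphism. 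The only non-formal ingredient is the crux statement of this paragraph; everything else is bookkeeping with Bruhat cells, the explicit matrix $\dot w_0$, and the action of $w_0$ on roots.
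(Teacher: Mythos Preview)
The paper does not supply its own proof of this theorem; it is one of three results attributed to D.~Peterson in Section~\ref{s:peterson}, with proofs referred to \cite[Section~4]{Rie03}. So there is no in-paper argument to compare against directly.

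Your approach is the standard one and the individual steps are sound. Lower-triangular Toeplitz matrices are exactly the invertible elements of $\mathbb C[f]$, so they commute with $f$ and the Peterson relation for $b_-\dot w_0$ follows immediately; conversely, for $u\dot w_0B_-\in\mathcal X_P$ with $u\in U_-$, conjugating the Peterson relation by $\dot w_0$ forces $u^{-1}fu$ onto the first subdiagonal, and your column-shift induction correctly yields $c_k=1$ and the Toeplitz condition $u_{i-1,j}=u_{i,j+1}$. The Bruhat-cell translations are also correct.

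There is, however, a genuine gap hiding behind your phrase ``after the harmless normalisation $x_1=1$''. As literally written, $X_P$ is stable under $b_-\mapsto cb_-$ for every $c\in\mathbb C^*$: scalar matrices lie in $B_+$, so the condition $b_-\in B_+\dot w_P\dot w_0B_+$ is scalar-invariant. But $b_-$ and $cb_-$ have the same image in $G/B_-$. Concretely, for $n=2$ and $P=B_-$ one computes $\dim X_{B_-}=2$ while $\dim\mathcal X_{B_-}=1$, so the map in the statement cannot be an isomorphism as written; its fibres are exactly the scalar orbits. Your candidate inverse $u\dot w_0B_-\mapsto u$ lands in the \emph{unipotent} Toeplitz matrices, and the identity $\Phi(b_-)=\Phi(x_1^{-1}b_-)$ shows that your two maps are not mutually inverse on $X_P$ as defined. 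The intended statement (consistent with \cite{Rie03} and with how $X_P$ is used in the proof of Theorem~\ref{critpointthm}, where the normalisation $t_n=1$ built into $\mathcal T^{W_P}$ removes precisely this scalar freedom) fixes a slice through the scalar action---e.g.\ $x_1=1$---after which your argument becomes a complete proof. You should flag this discrepancy explicitly rather than absorb it into a ``harmless'' choice.
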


\subsection{Critical points of the superpotential}\label{subseccrit}
We have the following isomorphism $\psi_R$ which is a version of an isomorphism from  \cite[Section 4.1]{Rie08},
       \begin{align*}
      &\psi_R: B_-\cap U_+\mathcal T^{W_P}\dot w_P\inv\dot w_0U_+   \overset{\cong}{\longrightarrow} \mathcal{R}^{R^-}_{w_P, w_0}\times \prod_{i\in I^P} \mathbb{C}^*_q; \\
      & b_-=u_1t\dot w_P\inv\dot w_0u_2\mapsto \psi_R(b_-)=   (b_-\dot w_0B_-, (\alpha_{n_i}(t))_{i=1}^{r}).
     \end{align*}
Define $\mathcal{F}_R$ by 
\[
\mathcal{F}_R:=\mathcal F_{\operatorname {Lie}}\circ \psi_R^{-1}: \mathcal{R}^{R^-}_{w_P, w_0}\times \prod_{i\in I^P} \mathbb{C}^*_q\longrightarrow \mathbb{C}.
\]
We now consider the ring
\begin{equation}\label{e:Jac}
Jac(\mathcal F_R):=\mathcal O\left(\mathcal{R}^{R^-}_{w_P, w_0}\times \prod_{i\in I^P} \mathbb{C}^*_q\right)/(\partial_{\mathcal{R}^{R^-}_{w_P, w_0}} \mathcal{F}_R),
\end{equation}
 where we are taking partial derivatives of $\mathcal F_R$ in the $\mathcal{R}^{R^-}_{w_P, w_0}$ directions, and which we refer to  as the (fiberwise) Jacobi ring of $\mathcal{F}_R$. This ring describes the critical points of $\mathcal{F}_R$ along the fibres of the projection $\operatorname{pr}_2:\mathcal{R}^{R^-}_{w_P, w_0}\times \prod_{i\in I^P} \mathbb{C}^*_q\to \prod_{i\in I^P} \mathbb{C}^*_q$.

The next theorem shows that the Jacobi ring of $\mathcal{F}_R$ is isomorphic to the coordinate ring of the Peterson variety $\mathcal X_P$. Namely, Consider the subvariety of $\mathcal{R}^{R^-}_{w_P, w_0}\times \prod_{i\in I^P} \mathbb{C}^*_q$ cut out by the ideal $(\partial_{\mathcal{R}^{R^-}_{w_P, w_0}} \mathcal{F}_R)$ of partial derivatives of $\mathcal{F}_R$ along $\mathcal{R}^{R^-}_{w_P, w_0}$. We denote the corresponding subvariety in $Z_P=B_-\cap U_+\mathcal T^{W_P}\dot w_P\inv\dot w_0U_+ $ by $Z_P^{crit}$. The theorem stated below is a direct translation of \cite[Theorem 4.1]{Rie08} with our notation.

\begin{thm}\label{critpointthm}
We have that $Z_P^{crit}=X_P$. Moreover, the  subvariety 
\[
\psi_R(Z_P^{crit})\subset \mathcal{R}^{R^-}_{w_P, w_0}\times \prod_{i\in I^P} \mathbb{C}^*_q,
\]
which is defined by the ideal $(\partial_{\mathcal{R}^{R^-}_{w_P, w_0}} \mathcal{F}_R)$  is isomorphic to $\mathcal{X}_{P}$ via the restriction of the first projection $\operatorname{pr}_1:\mathcal{R}^{R^-}_{w_P, w_0}\times \prod_{i\in I^P} \mathbb{C}^*_q \rightarrow \mathcal{R}^{R^-}_{w_P, w_0}$.

\begin{proof} We have the following result proved in \cite{Rie08} that we state for the parabolic subgroup $Q:=\dot w_0 P\dot w_0\inv$. Let us set $\tilde{\mathcal T}^{W_Q}:= \dot w_0\inv {\mathcal T}^{W_P}\dot w_0$. We define
\[
\begin{array}{cccc}
\tilde{\mathcal F}_Q: &\tilde Z_Q=B_-\cap U_+\tilde{\mathcal T}^{W_Q}\dot w_Q\dot w_0\inv U_+& \to&\mathbb C,\\
& \tilde b_-=u_1\tilde t\dot w_Q\dot w_0\inv u_2 &\mapsto & \sum_i e_i^*(u_1)+\sum_i e_i^*(u_2),
\end{array}
\]
and its restrictions
\[
\tilde{\mathcal F}_{Q,\tilde t}: \tilde Z_{Q,\tilde t}=B_-\cap U_+\tilde t\dot w_Q\dot w_0\inv U_+\to\mathbb C.
\]
Then it is shown in  \cite{Rie08} that the critical points of $\tilde{\mathcal F}_{Q,\tilde t}$ lie in $X_Q$. Namely,
\[ \tilde Z_{Q,\tilde t}^{crit}:=\{\tilde b_-\in B_-\cap U_+\tilde t\dot w_Q\dot w_0\inv U_+\mid \partial \tilde{\mathcal F}_{Q,\tilde t}(\tilde b)=0\} =X_Q\cap  \tilde Z_{Q,\tilde t},\]
where $X_Q$ is as in Theorem~\ref{t:Toeplitz}. Moreover the fiberwise critical point variety $ \tilde Z_{Q}^{crit}$ (union over all fibers $\tilde Z^{crit}_{Q,\tilde t}$) agrees with $X_Q$.

We can now compare $\tilde{\mathcal F}_Q$ with our superpotential $\mathcal F_{\operatorname{Lie}}$. We have that
\begin{equation*}
\mathcal F_{\operatorname{Lie}}(b_-=u_1t\dot w_P\inv\dot w_0 u_2)=\tilde{\mathcal F}_Q(b_-\inv = u_2\inv \tilde t\dot w_Q\dot w_0\inv u_1\inv )
\end{equation*}
where $\tilde t=\dot w_0\inv\dot w_P t\inv \dot w_P\inv\dot w_0=\dot w_0\inv t\inv \dot w_0$. Therefore,  $b_-$ is a critical point of the analogous restriction ${\mathcal F}_{\operatorname{Lie},t}$ if and only if $b_-\inv$ is a critical point of $\tilde{\mathcal F}_{Q,\tilde t}$ and we have that the critical point variety $  Z_{P}^{crit}$ of ${\mathcal F}_{\operatorname{Lie}}$ is equal to the inverse of $X_Q$. Finally, the inverse of $X_Q$ is precisely $X_P$ (using that the inverse of a Toeplitz matrix is again Toeplitz). It follows that $\psi_R(Z^{crit}_{\operatorname{Lie}})$ projects to $\mathcal X_P$, thanks to Theorem~\ref{t:Toeplitz}
 \end{proof}

\begin{cor}\label{jacobiringisom}
  The fiberwise Jacobi ring $Jac(\mathcal F_R)$ of the superpotential $\mathcal{F}_R$ is isomorphic to the quantum cohomology ring $QH^*(X)[q_{n_1}^{-1},\cdots,q_{n_r}^{-1}]$.
\end{cor}
\begin{proof} The Jacobi ring is related to $QH^*(X)[q_{n_1}^{-1},\cdots,q_{n_r}^{-1}]$ by
\begin{align*}
Jac(\mathcal F_R)&
\overset{\sim}\longrightarrow \mathcal{O}(\mathcal{X}_{P})\quad\quad\quad\quad\quad\quad\quad\quad\quad\quad\quad\quad (\mbox{Theorem~\ref{critpointthm}})\\
  &\overset{\sim}\longrightarrow QH^*(G^\vee/P^\vee)[q_{n_1}^{-1},\cdots,q_{n_r}^{-1}] \quad\quad\quad (\mbox{Theorem~\ref{petersonB}}).
  \end{align*}
\end{proof}
\end{thm}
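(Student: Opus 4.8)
The plan is to assemble the isomorphism from the two ingredients already established in the excerpt. First I would unwind the definition of the Jacobi ring $Jac(\mathcal F_R)$ from \eqref{e:Jac}. By construction, the ideal $(\partial_{\mathcal{R}^{R^-}_{w_P, w_0}} \mathcal F_R)$ cuts out exactly the fiberwise critical locus of $\mathcal F_R$ inside $\mathcal{R}^{R^-}_{w_P, w_0}\times \prod_{i\in I^P}\mathbb{C}^*_q$, which by Theorem~\ref{critpointthm} is the subvariety $\psi_R(Z_P^{crit})$, and the first projection $\operatorname{pr}_1$ restricts to an isomorphism of this subvariety with $\mathcal X_P$. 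The point here that requires a word of care is that the scheme-theoretic quotient $Jac(\mathcal F_R)$ should be identified with the coordinate ring of $\mathcal X_P$ as a \emph{reduced} scheme; this is where one invokes that the critical scheme of $\mathcal F_R$ along the fibres is reduced, equivalently that the superpotential has nondegenerate critical points in the appropriate sense, which is part of what is proved in \cite{Rie08}. Granting this, one obtains the ring isomorphism $Jac(\mathcal F_R)\overset{\sim}\to \mathcal{O}(\mathcal X_P)$.

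Next I would simply compose with Peterson's isomorphism. Theorem~\ref{petersonB} gives an isomorphism $\mathcal{O}(\mathcal X_P)\overset{\sim}\to QH^*(G^\vee/P^\vee)[q_{n_1}^{-1},\cdots,q_{n_r}^{-1}]$. Composing the two displayed arrows yields the desired isomorphism
\[
Jac(\mathcal F_R)\ \overset{\sim}\longrightarrow\ \mathcal{O}(\mathcal X_P)\ \overset{\sim}\longrightarrow\ QH^*(X)[q_{n_1}^{-1},\cdots,q_{n_r}^{-1}],
\]
where $X=G^\vee/P^\vee=\Flndot$. One should check that both maps are $\mathbb{C}[q_{n_1}^{\pm 1},\cdots,q_{n_r}^{\pm 1}]$-algebra homomorphisms: on the Jacobi ring side the quantum parameters are literally the coordinates on the torus factor $\prod_{i\in I^P}\mathbb{C}^*_q$ and are preserved by $\operatorname{pr}_1$-pullback followed by Peterson's map (this compatibility of the $q_i$ with the torus coordinates is built into the constructions of $\psi_R$ and of $\mathcal X_P$ via Toeplitz matrices in Theorem~\ref{t:Toeplitz}), so the composite respects the grading by quantum degree.

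The main obstacle I anticipate is not the abstract composition but pinning down the reducedness/multiplicity statement that makes $Jac(\mathcal F_R)$—a quotient by an ideal of partial derivatives—actually isomorphic to $\mathcal{O}(\mathcal X_P)$ rather than merely having the same support. In the Grassmannian case this is handled in \cite{MaRi}; in general it follows from the fact, established in \cite{Rie08}, that the fiberwise critical points of $\mathcal F_{\operatorname{Lie}}$ (equivalently of $\mathcal F_R$) are cut out with multiplicity one, together with the known semisimplicity of $QH^*(\Flndot)$ for generic $\mathbf q$ which forces $\dim_{\mathbb{C}} Jac(\mathcal F_R)\otimes_{\mathbb{C}[q^{\pm}]}\mathbb{C}(q) = \dim H^*(X)$, matching $\dim \mathcal{O}(\mathcal X_P)$ fiberwise. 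Once these dimension counts are aligned the surjection $Jac(\mathcal F_R)\twoheadrightarrow \mathcal{O}(\mathcal X_P)$ forced by Theorem~\ref{critpointthm} is an isomorphism. The remaining steps are then purely formal.
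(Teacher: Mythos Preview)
Your proposal is correct and follows the same two-step composition as the paper: first $Jac(\mathcal F_R)\cong\mathcal O(\mathcal X_P)$ via Theorem~\ref{critpointthm}, then $\mathcal O(\mathcal X_P)\cong QH^*(X)[q_{n_1}^{-1},\dots,q_{n_r}^{-1}]$ via Theorem~\ref{petersonB}. The paper's proof is in fact just these two arrows with no further commentary; the scheme-theoretic identification you worry about (that the Jacobian ideal cuts out $\mathcal X_P$ with the correct scheme structure, not merely set-theoretically) is already packaged into the statement of Theorem~\ref{critpointthm}, which asserts that the subvariety \emph{defined by the ideal} $(\partial_{\mathcal R^{R^-}_{w_P,w_0}}\mathcal F_R)$ is isomorphic to $\mathcal X_P$, so your additional reducedness and dimension-count discussion, while careful, is not needed at this point.
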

We have the following corollary of Theorem~\ref{critpointthm} that we record for use later on. Suppose $\mathbf q\in\prod_{i\in I^P} \mathbb{C}^*_q $. Define
\begin{equation}\label{e:Fq}
{\mathcal F}_{\mathbf q}:B_-\cap U_+\dot w_P\inv\dot w_0U_+\to \mathbb C
\end{equation}
by $\mathcal F_{\mathbf q}(\hat b)=\mathcal F_{\operatorname {Lie}}\circ\gamma\inv(\hat b,\mathbf q)$.
This is precisely the function from \eqref{e:FLieviagamma}, but now with quantum parameters fixed.
\begin{cor}\label{critpoint}
If $\hat b$ is a critical point of $\mathcal F_{\mathbf q}$, then $t \hat b=\gamma\inv(\hat b,\mathbf q)$ is a Toeplitz matrix.
 \end{cor}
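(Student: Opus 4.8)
The plan is to read this off from Theorem~\ref{critpointthm}, which already identifies the fiberwise critical locus $Z_P^{crit}$ of $\mathcal F_{\operatorname{Lie}}$ inside $Z_P$ with the Toeplitz variety $X_P$ of Theorem~\ref{t:Toeplitz}. The only thing that then needs checking is that a critical point of $\mathcal F_{\mathbf q}$ gives rise to a point of $Z_P^{crit}$.

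First I would unwind the definitions. By construction $\mathcal F_{\mathbf q}=\mathcal F_{\operatorname{Lie}}\circ\gamma\inv(\,\cdot\,,\mathbf q)$, and $\gamma\inv(\,\cdot\,,\mathbf q)$ is an isomorphism from $\mathcal B=B_-\cap U_+\dot w_P\inv\dot w_0U_+$ onto the fiber $\gamma\inv(\mathcal B\times\{\mathbf q\})\subset Z_P$, i.e.\ onto the slice on which the torus part is the unique $t\in\mathcal T^{W_P}$ with $\mathbf q(t)=\mathbf q$; on this slice it sends $\hat b$ to $t\hat b$. Since critical points are preserved under isomorphisms, $\hat b$ is a critical point of $\mathcal F_{\mathbf q}$ if and only if $t\hat b$ is a critical point of the restriction of $\mathcal F_{\operatorname{Lie}}$ to that slice, i.e.\ $t\hat b$ lies in the fiberwise critical locus of $\mathcal F_{\operatorname{Lie}}$ on $Z_P$ over $\mathbf q$.

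Next I would observe that this fiberwise critical locus coincides with $Z_P^{crit}$. Indeed, both $\gamma$ and $\psi_R$ are isomorphisms from $Z_P$ onto a product with $\prod_{i\in I^P}\mathbb{C}^*_q$ that intertwine the projections to $\prod_{i\in I^P}\mathbb{C}^*_q$ --- in each case the $\mathbb{C}^*_q$-component of the image of $b_-=u_1t\dot w_P\inv\dot w_0u_2$ is $\mathbf q(t)=(\alpha_{n_i}(t))_{i}$ --- and $\mathcal F_R\circ\psi_R=\mathcal F_{\operatorname{Lie}}=(\mathcal F_{\operatorname{Lie}}\circ\gamma\inv)\circ\gamma$ as functions on $Z_P$. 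Hence the fiberwise critical locus of $\mathcal F_{\operatorname{Lie}}$ on $Z_P$ is intrinsic, and its image under $\psi_R$ is exactly the zero locus of $(\partial_{\mathcal{R}^{R^-}_{w_P, w_0}}\mathcal F_R)$, which is $Z_P^{crit}$ by definition. Combining with Theorem~\ref{critpointthm} we conclude $t\hat b\in Z_P^{crit}=X_P$, and $X_P$ consists of lower-triangular Toeplitz matrices, which is the assertion. The one place that requires care is the bookkeeping in this last step --- verifying that passing from the $\psi_R$-coordinates used to define $Z_P^{crit}$ to the $\gamma$-coordinates in which $\mathcal F_{\mathbf q}$ is written does not alter the fiberwise critical set --- but this is immediate once one notes that both parametrisations are fibered over the same $\mathbf q$-torus via $t\mapsto\mathbf q(t)$.
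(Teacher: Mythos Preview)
Your argument is correct and follows essentially the same approach as the paper's proof: both reduce to the identification $Z_P^{crit}=X_P$ from Theorem~\ref{critpointthm} after observing that $\hat b\mapsto t\hat b$ carries critical points of $\mathcal F_{\mathbf q}$ to fiberwise critical points of $\mathcal F_{\operatorname{Lie}}$. Your write-up is in fact a bit more careful than the paper's, since you explicitly justify why the $\gamma$- and $\psi_R$-fibrations over $\prod_{i\in I^P}\mathbb C^*_q$ have the same fibers (both project via $t\mapsto\mathbf q(t)$), whereas the paper leaves this compatibility implicit.
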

\begin{proof}
In the notation from the proof of Theorem~\ref{critpointthm}, we have $\hat b$ is a critical point of $\mathcal F_{\mathbf q}$ if and only if $t\hat b$ is a critical point of $\mathcal F_{\operatorname{Lie},t}$. But the critical points of $\mathcal F_{\operatorname{Lie},t}$ lie in $Z^{crit}_{P}$, which equals to $X_P$ by Theorem~\ref{critpointthm}. Therefore $t\hat b$ is a Toeplitz matrix.
\end{proof}
\subsection{Image of first Chern class}
 The following theorem is the main result in this section, the proof of which is in the end of this subsection.
\begin{thm}\label{firstchernclass}
Let $\theta$ be the isomorphism $Jac(\mathcal{F}_R)\overset{\sim}\longrightarrow QH^*(X)[q_{n_1}^{-1},\cdots,q_{n_r}^{-1}]$ in Corollary~\ref{jacobiringisom}. Let $[\mathcal{F}_R]$ be the class of superpotential $\mathcal{F}_R$ in the Jacobi  ring. Then we have
  $$\theta([\mathcal{F}_R])= {c_1(X)}.$$
\end{thm}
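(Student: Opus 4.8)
The plan is to identify the class $[\mathcal F_R]$ with $c_1(X)$ by exploiting the functoriality of Peterson's isomorphism and the combinatorial formula for $\mathcal F_-$ in Theorem~\ref{fminus}. First I would transport everything to the Peterson variety: under the isomorphism $\mathcal O(\mathcal X_P)\overset\sim\to QH^*(X)[\mathbf q^{-1}]$ from Theorem~\ref{petersonB}, the first Chern class $c_1(X)=\sum_{i\in I^P}\sigma_{s_i}$ corresponds to $-\sum_{i\in I^P}G_1^{n_i}$ (using the sign in Theorem~\ref{petersonA} with $i=1$). So the statement reduces to showing that $[\mathcal F_R]$, as a function on $\mathcal X_P$ via $\operatorname{pr}_1\circ\psi_R$ restricted to the critical locus, equals $-\sum_{i\in I^P}G_1^{n_i}$ pointwise on $\mathcal X_P$. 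Since the $G_i^m$ are rational functions given by explicit minor quotients (Definition~\ref{d:Gmi}), and $\mathcal X_P$ is carved out inside $\mathcal R^{R^-}_{w_P,w_0}$ by the critical-point equations $\partial_{\mathcal R^{R^-}_{w_P,w_0}}\mathcal F_R=0$, this becomes a concrete identity of rational functions that must be checked on the critical locus.

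Second, I would use the Toeplitz description. By Corollary~\ref{critpoint} (and Theorem~\ref{t:Toeplitz}), at a critical point the relevant group element $t\hat b$ is a lower-triangular unipotent Toeplitz matrix $b_-$ with entries $x_1=1,x_2,\dots,x_n$. So I should evaluate both sides of the desired identity on a Toeplitz matrix. On the left, $\mathcal F_R$ is computed via the formula $\mathcal F_{\operatorname{Lie}}\circ\gamma^{-1}(\hat b,\mathbf q)=\sum_{i\in I^P}q_i v_{i,i+1}+\sum_{i=1}^{n-1}u_{i,i+1}$ from \eqref{e:FLieviagamma}–\eqref{e:F-viauv-simple}, with the Pl\"ucker-coordinate expressions of $u_{i,i+1},v_{i,i+1}$ from Theorem~\ref{fminus}. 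Here the key simplification is that the Pl\"ucker coordinates $p_K$ of a Toeplitz matrix $b_-$ (or rather of the corresponding point $Pz$) are particularly rigid: many minors $\Delta^{[m]}_K(z)$ collapse because of the Toeplitz/antidiagonal block shape of $z$ in \eqref{Eq:OlocalCoords}, and one can express $G_1^{n_i}$ in the same minors. The heart of the argument is then a term-by-term matching: each of the $(n-1+r)$ summands of $\mathcal F_-$ (equivalently $\mathcal F_R$) should, on the critical locus, collapse or combine so that the total equals $\sum_{i\in I^P}\sigma_{s_i}$ under $\Theta$. This is exactly where Lemma~\ref{uinthemiddle} enters: the "middle" terms $u_{i,i+1}$ for $n_j<i<n_{j+1}$ (the $S_i^{(j)}$) are the delicate ones, and the lemma — proved in Section~5 via the quantum Schubert calculus identities of Theorem~\ref{keyIdentity} — is what lets us replace each such term by the appropriate Schubert class. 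The other terms ($i\in I^P$, or $i<n_1$, or $i>n_r$) should match more directly to $G_i^{n_j}$-type functions and hence to special Schubert classes via Theorem~\ref{petersonA}, after which the quantum Chevalley–Monk rule (Proposition~\ref{Monkrule}) repackages the telescoping sum into $c_1(X)=\sum_{i\in I^P}\sigma_{s_i}$.

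Concretely the step order is: (1) translate the claim through $\theta$ into an identity of rational functions on $\mathcal X_P$, rewriting $c_1(X)$ as $-\sum_{i\in I^P}G_1^{n_i}$; (2) reduce to evaluating on Toeplitz matrices via Corollary~\ref{critpoint} and Theorem~\ref{t:Toeplitz}; (3) match the "outer" and "quantum" summands of $\mathcal F_-$ with explicit $G_w$-functions using the minor identities in the proof of Proposition~\ref{uvprop} and the Jacobi Theorem (Corollary~\ref{Jacobithm}); (4) invoke Lemma~\ref{uinthemiddle} to handle the $S_i^{(j)}$ terms; (5) assemble the pieces and apply the quantum Chevalley–Monk formula to conclude $\theta([\mathcal F_R])=c_1(X)$. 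The main obstacle I anticipate is step (4), i.e. controlling the $S_i^{(j)}$ middle terms: these are genuine quadratic-over-quadratic rational functions in Pl\"ucker coordinates with no obvious Schubert-class interpretation, and identifying their classes is precisely the content that forces the new quantum relations $\sum_J(-1)^{|J|}\sigma_{w_J}\sigma_{[1,n_j+d]\setminus J}=0$ of Theorem~\ref{keyIdentity}. I would therefore isolate this as Lemma~\ref{uinthemiddle} and prove Theorem~\ref{firstchernclass} modulo it, deferring the quantum-Schubert-calculus input to Section~5, exactly as the paper's outline indicates.
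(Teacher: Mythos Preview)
Your overall strategy matches the paper's: translate to the Peterson variety via $\theta$, reduce to Toeplitz matrices at critical points, identify each summand of $\mathcal F_-$ with an explicit $G_1^{n_j}$-term, invoke Lemma~\ref{uinthemiddle} for the hard terms, and sum. However, there is a genuine error that would make your argument fail as stated.

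You write $c_1(X)=\sum_{i\in I^P}\sigma_{s_i}$, but this is incorrect for a general partial flag variety: the first Chern class of $\Flndot$ is
\[
c_1(X)=\sum_{j=1}^r(n_{j+1}-n_{j-1})\,\sigma_{s_{n_j}},
\]
with nontrivial coefficients. This matters, because the $(n-1+r)$ summands of $\mathcal F_-$ do not each contribute a distinct $\sigma_{s_{n_j}}$; rather, many of them map to the \emph{same} $\sigma_{s_{n_j}}$, and the paper's lemmas are precisely organized to make this count come out right. Concretely: on the critical locus one shows $q_{n_j}v_{n_j,n_j+1}\mapsto\sigma_{s_{n_j}}$ (Lemma~\ref{vinchernclass}); $u_{i,i+1}\mapsto\sigma_{s_{n_r}}$ for all $1\le i\le n-n_r$; $u_{n-n_j,n-n_j+1}\mapsto\sigma_{s_{n_j}}$; $u_{i,i+1}\mapsto\sigma_{s_{n_1}}$ for $n-n_1<i\le n-1$; and $u_{i,i+1}\mapsto\sigma_{s_{n_j}}+\sigma_{s_{n_{j+1}}}$ for $n-n_{j+1}<i<n-n_j$ (Lemma~\ref{uinthemiddle}). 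Summing these gives exactly $\sum_j(n_{j+1}-n_{j-1})\sigma_{s_{n_j}}$. With your incorrect formula for $c_1$ you would be trying to match $n-1+r$ summands against $r$ classes with unit multiplicity, which cannot work.

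A related imprecision: you describe the ``middle'' terms handled by Lemma~\ref{uinthemiddle} as $u_{i,i+1}$ for $n_j<i<n_{j+1}$ (the $S_i^{(j)}$ range in Theorem~\ref{fminus}). In the paper the lemmas about critical-point values of $u_{i,i+1}$ are organized by the complementary partition $1\le i\le n-n_r$, $i=n-n_j$, $n-n_{j+1}<i<n-n_j$, and $n-n_1<i\le n-1$; Lemma~\ref{uinthemiddle} is the third of these. Finally, the quantum Chevalley--Monk rule is not used to ``repackage a telescoping sum'' at the end; it is used inside the lemma for $n-n_1<i\le n-1$ to identify the ratio $G^{\{d\}\cup[d+2,i]}/G^{[d+1,i]}$ with $G_1^{n_1}$. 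The final assembly is a straight count.
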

The proof of Theorem~\ref{firstchernclass} will occupy the rest of the paper.

Fix $\mathbf q\in \prod_{i\in I^P}\mathbb C^*_q$ and recall the map $\mathcal F_{\mathbf q}: B_-\cap U_+\dot w_P^{-1}\dot w_0U_+ \to \mathbb C $ defined in~\eqref{e:Fq}.
Let us consider $\hat{b}\in B_-\cap U_+\dot w_P^{-1}\dot w_0U_+ $ and let $z$, $u$, $v$ and $b$ be as in Definition~\ref{uv}. We then have
\begin{equation}\label{e:btwoways}
b=u\dot w_0\inv\dot w_P v=uz\inv.
\end{equation}
Also recall that $b_-=t\b=tb\inv$, with $t\in \mathcal T^{W_P}$ corresponding to $\mathbf q$ via \eqref{e:qmap}.
The following lemmas are key ingredients in the proof of Theorem~\ref{firstchernclass}.
%and choose $(u,y)$ be as in Corollary~\ref{c:corwelldef} and set $z=\dot w_0 y^{-1}\dot w_P\inv$, $v:=\dot w_0y \dot w_0\inv $. Then
\begin{lemma}\label{vinchernclass} Let $n_j\in I^P$.  Then
  $v_{n_j,n_j+1}=-\frac{t_{n_j+1}}{t_{n_j}} G_1^{n_j}(b_- \dot w_0 B_-)$.
\end{lemma}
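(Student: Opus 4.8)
\textbf{Proof proposal for Lemma~\ref{vinchernclass}.}

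The plan is to match the two explicit formulas we have at our disposal: the formula for $v_{n_j,n_j+1}$ in terms of minors of $z$ from Proposition~\ref{uvprop}, and the definition of $G_1^{n_j}$ as a ratio of minors on $G/B_-$ from Definition~\ref{d:Gmi}. The bridge between them is the relation $b_-\dot w_0 = t\hat b\dot w_0 = t b\inv \dot w_0$ together with $b = uz\inv$ (so $b\inv = z u\inv$), which lets us pass from minors of $z$ to minors of a representative of the point $b_-\dot w_0 B_-\in G/B_-$.

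First I would write $g := b_-\dot w_0 = t\, z\, u\inv\, \dot w_0$ as a matrix representative of the $B_-$-coset $b_-\dot w_0 B_-$, and compute the two minors appearing in $G_1^{n_j}(gB_-)=\Delta_{[n_j+1,n]}^{\{n_j\}\cup[n_j+2,n]}(g)\big/\Delta_{[n_j+1,n]}^{[n_j+1,n]}(g)$. Since $u\inv\in U_+$ and $\dot w_0$ is an anti-diagonal signed permutation matrix, the right factor $u\inv\dot w_0$ is upper-anti-triangular in a controlled way; similarly left-multiplication by the diagonal matrix $t$ only rescales rows. The key point is that the minors in $G_1^{n_j}$ use the \emph{last} $n-n_j$ rows, and on those rows the effect of $u\inv$ is trivial modulo columns that get killed, so these minors of $g$ reduce — up to an explicit power of $(-1)$ and an explicit ratio of $t_i$'s coming from the rows $n_j+1,\dots,n$ of $t$ — to the minors $\Delta^{[n_j]}_{K}(z)$ (via Jacobi's theorem, Corollary~\ref{Jacobithm}, turning a complementary minor of $z\inv$-type data into a leading minor of $z$) for $K=[n-n_j+1,n]$ in the denominator and $K=\{n-n_{j+1}+1\}\cup[n-n_j+1,n]\setminus\{n-n_{j-1}\}$ in the numerator. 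These are exactly the two minors of $z$ that appear in the Proposition~\ref{uvprop} formula for $v_{n_j,n_j+1}$.

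Then I would collect the scalar prefactors. The diagonal matrix $t$ contributes $\prod_{i=n_j+1}^{n} t_i$ to each of the two minors in the $G_1^{n_j}$ ratio \emph{except} that the numerator minor uses row $n_j$ (via the perturbed row set $\{n_j\}\cup[n_j+2,n]$) in place of row $n_j+1$, so the ratio of $t$-factors collapses to $t_{n_j}/t_{n_j+1}$, i.e. the ratio $\frac{t_{n_j}}{t_{n_j+1}}=\alpha_{n_j}(t)\inv$ up to the sign we are tracking; the signs from $\dot w_0$, from Jacobi's theorem, and from the form \eqref{Eq:OlocalCoords} of $z$ combine to the single overall minus sign. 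Putting this together yields $v_{n_j,n_j+1}=-\frac{t_{n_j+1}}{t_{n_j}}G_1^{n_j}(b_-\dot w_0 B_-)$.

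The main obstacle I anticipate is bookkeeping of signs: there are sign contributions from $\dot w_0$ and $\dot w_P\inv\dot w_0$ (already computed in the proof of Lemma~\ref{shape-lemma}), from the reindexing of column sets when passing between $z$ and $z\inv$ via Corollary~\ref{Jacobithm}, and from the shuffle that moves the column $\{n-n_{j+1}+1\}$ into increasing position. The cleanest way to control these is probably to not expand everything at once but to verify the claimed identity on a generic point using the structure of $z$ from Lemma~\ref{shape-lemma}, i.e. exploit that $z$ has an anti-block-diagonal identity pattern so that most of the relevant minors are, up to sign, equal to honest Pl\"ucker coordinates $p_K(z)$, reducing the computation to a finite, transparent sign check rather than a general Laplace expansion. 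A secondary (but routine) point is checking that $G_1^{n_j}$ is well-defined at the point $b_-\dot w_0 B_-$, i.e. that the denominator minor $\Delta^{[n_j+1,n]}_{[n_j+1,n]}(g)$ is nonzero — this follows since $b_-\in B_-$ forces $b_-\dot w_0\in B_-\dot w_0$, which lies in the big cell relative to the last $n-n_j$ rows.
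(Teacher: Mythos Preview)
There is a genuine gap in your use of Jacobi's theorem. After stripping off $\dot w_0$, $u\inv$, and $t$ from $g=tzu\inv\dot w_0$, you correctly arrive at the ratio
\[
\frac{\Delta_{[1,n-n_j]}^{\{n_j\}\cup[n_j+2,n]}(z)}{\Delta_{[1,n-n_j]}^{[n_j+1,n]}(z)},
\]
with \emph{bottom} row sets. You then want to match this with the Proposition~\ref{uvprop} formula, which uses the \emph{top} row set $[n_j]$ of $z$. But Corollary~\ref{Jacobithm} relates minors of a matrix to complementary minors of its \emph{inverse}, not to complementary minors of the same matrix. Applying Jacobi to the displayed minors of $z$ gives minors of $z\inv$, and there is no further step in your sketch converting those back to minors of $z$; the phrase ``$z\inv$-type data'' does not bridge this.

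The paper bypasses this by never going through Proposition~\ref{uvprop} or $z$ at all. It writes $v_{n_j,n_j+1}$ directly as a ratio of minors of $v$, passes to $\dot w_0\inv\dot w_P v=u\inv b$ (which only rescales the ratio since the row permutation is the same in numerator and denominator), then uses that $u\inv\in U_+$ leaves last-row minors of $b$ unchanged. One insertion of $t\inv$ and a single application of Jacobi to $bt\inv=(tb\inv)\inv=b_-\inv$ lands directly on $G_1^{n_j}(b_-\dot w_0 B_-)$. Your route can in fact be rescued in essentially this way: Jacobi on your $z$-minors produces minors of $z\inv=\dot w_0\inv\dot w_P v$, and peeling off the signed permutation $\dot w_0\inv\dot w_P$ gives minors of $v$ with row set $[n_j]$. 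But then you have reproduced the paper's argument and Proposition~\ref{uvprop} plays no role. The detour through $z$ adds bookkeeping without payoff.
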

\begin{proof}
  Since $u,v\in U_+$ and $\dot w_0\inv \dot w_P=\mbox{antidiag}\{(-1)^{n_r}I_{a_{r+1}}, \cdots, (-1)^{n_1}I_{a_2},I_{a_{1}}\}$, we have
  \begin{align*}
    v_{n_j,n_j+1}&=\frac{\Delta_{[n_j-1]\cup\{n_j+1\}}^{[n_j]}( v)}{\Delta_{[n_j]}^{[n_j]}(v)}\\
    &=\frac{\Delta_{[n_j-1]\cup\{n_j+1\}}^{[n-n_j+1,n]}(\dot w_0\inv\dot w_P v)}{\Delta_{[n_j]}^{[n-n_j+1,n]}(\dot w_0\inv\dot w_P v)}\\
    &=\frac{\Delta_{[n_j-1]\cup\{n_j+1\}}^{[n-n_j+1,n]}(b)}{\Delta_{[n_j]}^{[n-n_j+1,n]}(b)}
  \end{align*}
   Note that for the diagonal entries of $t$ we have that $ t_{n_j+1}=t_{n_j+2}=\cdots= t_{n_{j+1}}$, where $0\leq j\leq r$. Therefore, we have
  \begin{align*}
     v_{n_j,n_j+1}&=\frac{\Delta_{[n_j-1]\cup\{n_j+1\}}^{[n-n_j+1,n]}(b)}{\Delta_{[n_j]}^{[n-n_j+1,n]}(b)}\\
    &= \frac{t_{n_j+1}}{ t_{n_j}}\frac{\Delta_{[n_j-1]\cup\{n_j+1\}}^{[n-n_j+1,n]}(bt\inv)}{\Delta_{[n_j]}^{[n-n_j+1,n]}(bt\inv)}\\
    &= -\frac{ t_{n_j+1}}{t_{n_j}}\frac{\Delta_{[n-n_j]}^{[n_j]\cup [n_j+2,n]}(tb\inv)}{\Delta_{[n-n_j]}^{[n_j+1,n]}(tb\inv)}\\
    &=-\frac{ t_{n_j+1}}{ t_{n_j}} G_1^{n_j}(b_- \dot w_0 B_-).
  \end{align*}
  where in the second to last equality one needs to apply Corollary~\ref{Jacobithm}.
\end{proof}
The situation for $u_{i,i+1}$ is slightly more complicated.
\begin{lemma} {Let $1\leq i< n-n_r$.} Suppose $\hat{b}$ is a critical point of $\mathcal F_{\mathbf q}$, then  $u_{i,i+1}=u_{n-n_r,n-n_r+1}$.
\end{lemma}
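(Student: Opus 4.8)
The plan is to exploit the Toeplitz structure of critical points provided by Corollary~\ref{critpoint}. If $\hat b$ is a critical point of $\mathcal F_{\mathbf q}$, then $b_-=t\hat b=\gamma\inv(\hat b,\mathbf q)$ is a lower-triangular Toeplitz matrix, and hence so is its inverse $\hat b\inv = b = u\dot w_0\inv\dot w_P v = u z\inv$. The key idea is that $u_{i,i+1}$ can be written as a ratio of minors that, for a Toeplitz matrix, are translation-invariant in an appropriate range. Concretely, by Proposition~\ref{uvprop} we have $u_{i,i+1}={\Delta^{[i]}_{[i-1]\cup\{i+1\}}(z)}/{\Delta^{[i]}_{[i]}(z)}$. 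First I would re-express this ratio in terms of minors of $b=uz\inv$ rather than $z$: since $u\in U_+$ is upper unitriangular, multiplying on the left by $u\inv$ does not change the top $i\times i$ minors that use row set $[i]$ in the relevant way; more precisely, using $b\inv = z u\inv$ and the generalized Cramer's rule (Lemma~\ref{Cramer}) together with Jacobi's theorem (Corollary~\ref{Jacobithm}), one rewrites $u_{i,i+1}$ as a ratio of two complementary minors of $b$, something of the shape $\Delta^{[i+1,n]}_{K}(b)/\Delta^{[i+1,n]}_{[i+1,n]}(b)$ for a suitable column set $K$ differing from $[i+1,n]$ in one entry.

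The main computation is then to identify exactly which minors appear and to check they are of ``Toeplitz-translation'' type. Because $z$ has the block anti-triangular shape \eqref{Eq:OlocalCoords} with the identity blocks $(-1)^{n_r}I_{a_{r+1}}$ occupying the bottom-left $a_{r+1}\times a_{r+1} = (n-n_r)\times(n-n_r)$ corner, the minors $\Delta^{[i]}_{[i-1]\cup\{i+1\}}(z)$ and $\Delta^{[i]}_{[i]}(z)$ for $i < n-n_r$ only see this identity block in their last rows, exactly as in the $i>n_r$ computation in the proof of Theorem~\ref{fminus}; after Laplace expansion on those rows one is reduced to an honest minor of the Toeplitz matrix $b$ (or of a fixed top-left sub-block of it). The upshot I expect is a formula
\[
u_{i,i+1} = \frac{\Delta^{[i+1,\,n-n_r]}_{[i,\,n-n_r-1]}(b)}{\Delta^{[i+1,\,n-n_r]}_{[i+1,\,n-n_r]}(b)}
\]
or its analogue, expressed via the Toeplitz entries $x_1,x_2,\ldots$. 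Then I would invoke the defining property of a lower-triangular Toeplitz matrix: any minor with row set $[a,c]$ and column set $[a-1,c-1]$ (i.e.\ column indices shifted by one relative to the rows, within the Toeplitz band) depends only on the difference pattern, hence is independent of the simultaneous shift $a\mapsto a+1$, $c\mapsto c+1$ as long as both the numerator and denominator minors stay inside the Toeplitz region $[1,n]$. For $1\le i< n-n_r$ both minors in numerator and denominator stay within this region, so $u_{i,i+1}$ is independent of $i$ in this range, giving $u_{i,i+1}=u_{n-n_r,\,n-n_r+1}$.

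The hard part will be bookkeeping the signs and the precise column/row index sets through the two changes of basis: from $z$ to $b=uz\inv$ via Cramer/Jacobi, and then the Laplace expansion that strips off the bottom identity block. One must be careful that the minor of $b$ that survives has its row set and column set shifted in lockstep (so that Toeplitz invariance genuinely applies) and that the range $i < n-n_r$ is exactly what guarantees the shifted minors do not run off the edge of the matrix; the boundary case $i=n-n_r$ is handled separately and furnishes the common value. Once the formula $u_{i,i+1}=\Delta^{[i+1,n-n_r]}_{[i,n-n_r-1]}(b)/\Delta^{[i+1,n-n_r]}_{[i+1,n-n_r]}(b)$ is pinned down, the translation-invariance of Toeplitz minors finishes the argument immediately.
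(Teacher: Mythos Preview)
Your high-level idea --- exploit the Toeplitz structure of $t\hat b$ at a critical point --- is exactly right, and it is also what the paper uses. However, the execution you outline has two genuine gaps.

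First, starting from Proposition~\ref{uvprop} and trying to convert the ratio $\Delta^{[i]}_{[i-1]\cup\{i+1\}}(z)/\Delta^{[i]}_{[i]}(z)$ into minors of $b$ via $b\inv=zu\inv$ does not work. Cramer's rule applied to $zu\inv=b\inv$ expresses minors of $u\inv$ in terms of $z$ and $b\inv$, not the other way around; if instead you use $z=b\inv u$ and Cauchy--Binet, you get $\Delta^{[i]}_{[i-1]\cup\{i+1\}}(z)=\Delta^{[i]}_{[i]}(b\inv)\,u_{i,i+1}+\Delta^{[i]}_{[i-1]\cup\{i+1\}}(b\inv)$, which collapses to the tautology $u_{i,i+1}=u_{i,i+1}$ since $b\inv$ is lower triangular. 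Your proposed formula $u_{i,i+1}=\Delta^{[i+1,n-n_r]}_{[i,n-n_r-1]}(b)/\Delta^{[i+1,n-n_r]}_{[i+1,n-n_r]}(b)$ is in fact false: already for $Gr(1,4)$ one computes the numerator to be zero while $u_{1,2}\ne 0$. Moreover, in your shift-invariance step the size $n-n_r-i$ of the minors varies with $i$, so even granting the formula the Toeplitz argument would not yield a constant.

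Second, your claim that the $[i]$-minors of $z$ ``see the identity block in their last rows'' mimics the $i>n_r$ part of Theorem~\ref{fminus}, but here the range is $1\le i<n-n_r$, and for $i\le n_r$ the rows $[i]$ of $z$ lie entirely in the star region of \eqref{Eq:OlocalCoords}; there is no identity block to expand along.

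The paper's route avoids both problems by not going through Proposition~\ref{uvprop} at all. It writes $u_{i,i+1}=-(u\inv)_{i,i+1}$ and applies Cramer directly to $zu\inv=b\inv$ to obtain $(u\inv)_{i,i+1}=\det z_{b\inv}(\{i\},\{i+1\})$, a full $n\times n$ determinant. Laplace expansion on the last $n-n_r$ rows of $z$ (which vanish outside columns $[n-n_r]$) then shows this determinant depends on the replaced column only through the single entry $(b\inv)_{n_r+i,\,i+1}$. Since $t_{n_r+1}=\cdots=t_n$ and $tb\inv$ is Toeplitz, this entry equals $t_n\inv x_{n_r}$ independently of $i$, giving the result.
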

\begin{proof}
  Since $u\in U_+$, we have $u_{i,i+1}=\Delta_{[n]\backslash \{i\}}^{[n]\backslash \{i+1\}}(u)$. By Collorary~\ref{Jacobithm}, we have that $\Delta_{[n]\backslash \{i\}}^{[n]\backslash \{i+1\}}(u)=-\Delta_{\{i+1\}}^{\{i\}}(u\inv)$. Applying generalized Cramer's rule, see Lemma~\ref{Cramer}, to the equation $b\inv=zu\inv$, we have $\Delta_{\{i+1\}}^{\{i\}}(u\inv)=\det z_{b\inv}(\{i\},\{i+1\})$.
  Now since $\hat{b}$ is a critical point of $\mathcal F_{\mathbf q}$, we have that $t\b=tb\inv$ is a Toeplitz matrix by Corollary~\ref{critpoint}.
  Note that $ t_{n_r+1}= t_{n_r+2}=\cdots= t_{n}$. Therefore for $1\leq i<n-n_r$ we have
  \begin{align*}
  \det z_{b\inv}(\{i\},\{i+1\})&=\det z_{ t\inv (tb\inv)}(\{i\},\{i+1\})\\
  &= t_{n}\inv\det z_{ (tb\inv)}(\{i\},\{i+1\})\\
   &= t_{n}\inv\det z_{ (tb\inv)}(\{n-n_r\},\{n-n_r+1\})\\
    &=\det z_{ b\inv}(\{n-n_r\},\{n-n_r+1\}).
  \end{align*}
  Therefore, we have shown that $u_{i,i+1}=u_{n-n_r,n-n_r+1}$ whenever $b$ is a critical point of $\mathcal F_{\mathbf q}$ for $1\leq i<n-n_r$.

\end{proof}

\begin{lemma}
  Let $i= n-n_j$ for some $1\leq j\leq r$. Suppose $\hat{b}$ is a critical point of $\mathcal F_{\mathbf q}$, then  $u_{i,i+1}=-G_1^{n_j}(b_- \dot w_0 B_-).$
 \end{lemma}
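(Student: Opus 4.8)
The plan is to follow the template of the two preceding lemmas and then reduce, via the critical-point hypothesis, to Lemma~\ref{vinchernclass}. By Lemma~\ref{vinchernclass}, $-G_1^{n_j}(b_-\dot w_0B_-)=\tfrac{t_{n_j}}{t_{n_j+1}}\,v_{n_j,n_j+1}$, so it suffices to prove that at a critical point $u_{n-n_j,n-n_j+1}=\tfrac{t_{n_j}}{t_{n_j+1}}\,v_{n_j,n_j+1}$. To get at $u_{i,i+1}$ I would argue exactly as before: since $u\in U_+$, Jacobi's theorem (Corollary~\ref{Jacobithm}) gives $u_{i,i+1}=-\Delta^{\{i\}}_{\{i+1\}}(u^{-1})$, and applying the generalized Cramer rule (Lemma~\ref{Cramer}) to $zu^{-1}=b^{-1}$, together with $\det z=1$, yields $u_{i,i+1}=-\det\bigl(z_{b^{-1}}(\{i\},\{i+1\})\bigr)$, where $z_{b^{-1}}(\{i\},\{i+1\})$ denotes $z$ with its $i$-th column replaced by the $(i{+}1)$-st column of $b^{-1}$. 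Now specialize to $i=n-n_j$: by Lemma~\ref{shape-lemma} the columns of $z$ in positions $n-n_j+1,\dots,n$ carry the antidiagonal identity blocks $I_{a_1},(-1)^{n_1}I_{a_2},\dots,(-1)^{n_{j-1}}I_{a_j}$, all of them sitting in rows $[1,n_j]$, and these columns vanish identically in rows $>n_j$. Laplace-expanding the determinant above along these $n_j$ columns therefore collapses it, up to an explicit sign depending only on $\{n_1,\dots,n_j\}$, to $\pm\det M'$, where $M'$ is the $(n-n_j)\times(n-n_j)$ matrix on rows $n_j+1,\dots,n$ and columns $1,\dots,n-n_j$ of $z_{b^{-1}}(\{i\},\{i+1\})$; that is, $M'$ is the lower staircase block of $z$ with its last column replaced by the restriction to rows $n_j+1,\dots,n$ of the $(n-n_j+1)$-st column of $b^{-1}$.

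Next I bring in the hypothesis. By Corollary~\ref{critpoint} the matrix $b_-=t\hat b=tb^{-1}$ is lower-triangular Toeplitz, so $b^{-1}=t^{-1}b_-$; hence the column of $b^{-1}$ appearing in $M'$ is the corresponding column of the Toeplitz matrix $b_-$ rescaled row by row by $t^{-1}$, the rescaling being constant on each block $[n_{k-1}+1,n_k]$. Using the Toeplitz identities $(b_-)_{r,c}=(b_-)_{r+1,c+1}$ to translate this column and the surviving identity-block pattern of the lower staircase block by a common shift, and extracting the $t$-scalars, $\det M'$ is seen to equal $\tfrac{t_{n_j}}{t_{n_j+1}}$ times the minor quotient $\Delta^{[n-n_j+1,n]}_{[n_j-1]\cup\{n_j+1\}}(b)\big/\Delta^{[n-n_j+1,n]}_{[n_j]}(b)$, which is precisely the expression identified with $v_{n_j,n_j+1}$ in the proof of Lemma~\ref{vinchernclass}. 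This gives $u_{n-n_j,n-n_j+1}=\tfrac{t_{n_j}}{t_{n_j+1}}\,v_{n_j,n_j+1}$, and combining with $v_{n_j,n_j+1}=-\tfrac{t_{n_j+1}}{t_{n_j}}\,G_1^{n_j}(b_-\dot w_0B_-)$ from Lemma~\ref{vinchernclass} yields $u_{n-n_j,n-n_j+1}=-G_1^{n_j}(b_-\dot w_0B_-)$, as desired.

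The main obstacle is the bookkeeping, not the strategy: keeping the row and column index sets — and above all the signs — under control through the Laplace expansion along the identity blocks of the general staircase matrix $z$, and handling its off-diagonal ($*$) entries, which (unlike in the case $i<n-n_r$ of the previous lemma, where a single cofactor survives) genuinely contribute when $j<r$; here the Toeplitz structure of $b_-$ is exactly what makes these contributions reassemble into the clean quotient above. No new conceptual ingredient beyond Lemma~\ref{shape-lemma}, Lemma~\ref{vinchernclass} and Corollaries~\ref{Jacobithm} and~\ref{critpoint} is needed.
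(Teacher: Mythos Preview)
Your overall strategy---reduce to showing $u_{n-n_j,n-n_j+1}=\tfrac{t_{n_j}}{t_{n_j+1}}v_{n_j,n_j+1}$ at critical points, express $u_{i,i+1}$ via Cramer's rule as $-\det\bigl(z_{b^{-1}}(\{i\},\{i+1\})\bigr)$, and Laplace-expand along the last $n_j$ columns to isolate a lower block $M'$---is sound and leads to the right answer. But the crucial step, simplifying $\det M'$, has a gap. Your $M'$ mixes columns of $z$ (which carry the free $*$-entries of the staircase) with a single column of $b^{-1}$; the ``Toeplitz shift'' you invoke applies only to $b_-=tb^{-1}$, not to $z$, so there is no sense in which the $z$-columns translate along with the $b^{-1}$-column. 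The sentence ``the Toeplitz structure of $b_-$ is exactly what makes these contributions reassemble'' is therefore not an argument.

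What actually makes the $*$-entries disappear is much simpler and has nothing to do with the critical-point hypothesis: since $b^{-1}=zu^{-1}$ with $u^{-1}$ upper unipotent, the first $k$ columns of $z$ and of $b^{-1}$ span the same subspace for every $k$. Hence in $\det M'$ you may replace the $z$-columns $[1,i-1]$ by the corresponding columns of $b^{-1}$ without changing the determinant, giving $\det M'=\Delta^{[n_j+1,n]}_{[1,i-1]\cup\{i+1\}}(b^{-1})$. Dividing by $\Delta^{[n_j+1,n]}_{[1,i]}(b^{-1})$ (which equals $\Delta^{[n_j+1,n]}_{[1,i]}(z)=\pm1$ by the same argument) and applying Jacobi then yields the ratio $\Delta^{\{n-n_j\}\cup[n-n_j+2,n]}_{[n_j]}(b)\big/\Delta^{[n-n_j+1,n]}_{[n_j]}(b)$. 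This is exactly the quantity the paper obtains directly---without passing through $z$ at all---by writing $u_{i,i+1}$ as a minor ratio of $u$ on the last $n_j$ columns, and observing that right-multiplication by $\dot w_0^{-1}\dot w_P$ moves those to the first $n_j$ columns while right-multiplication by the unipotent $v$ preserves first-$n_j$-column minors. From here the paper converts to $bt^{-1}$, uses that $bt^{-1}$ is Toeplitz at a critical point to shift the row set from $\{n-n_j\}\cup[n-n_j+2,n]$ to $[n-n_j+1,n]$ and the column set from $[n_j]$ to $[n_j-1]\cup\{n_j+1\}$, and concludes as in Lemma~\ref{vinchernclass}. So the Toeplitz condition is used only at the very end, on minors of $b$, not to tame the $z$-entries.
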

\begin{proof}
 Since $u,v\in U_+$ and $\dot w_0\inv \dot w_P=\mbox{antidiag}\{(-1)^{n_r}I_{a_{r+1}}, \cdots, (-1)^{n_1}I_{a_2},I_{a_{1}}\}$, we have
  \begin{align*}
    u_{n-n_j,n-n_j+1}&=\frac{\Delta_{[n-n_j+1,n]}^{\{n-n_j\}\cup [n-n_j+2,n]}(u)}{\Delta_{[n-n_j+1,n]}^{[n-n_j+1,n]}(u)}\\
    &=\frac{\Delta_{[n_j]}^{\{n-n_j\}\cup [n-n_j+2,n]}(u\dot w_0\inv\dot w_P)}{\Delta_{[n_j]}^{[n-n_j+1,n]}(u\dot w_0\inv \dot w_P)}\\
    &=\frac{\Delta_{[n_j]}^{\{n-n_j\}\cup [n-n_j+2,n]}(b)}{\Delta_{[n_j]}^{[n-n_j+1,n]}(b)}\\
    &=\frac{\Delta_{[n_j]}^{\{n-n_j\}\cup [n-n_j+2,n]}(bt\inv)}{\Delta_{[n_j]}^{[n-n_j+1,n]}(bt\inv)}.
  \end{align*}
   Now since $\hat{b}$ is a critical point of $\mathcal F_{\mathbf q}$, we have that $t\b=tb\inv$ is a Toeplitz matrix by Corollary~\ref{critpoint}. So that its inverse $bt\inv$ is also a Toeplitz matrix. Therefore, we have
   \begin{align*}
     \frac{\Delta_{[n_j]}^{\{n-n_j\}\cup [n-n_j+2,n]}(bt\inv)}{\Delta_{[n_j]}^{[n-n_j+1,n]}(bt\inv)} &=\frac{\Delta_{[n_j-1]\cup \{n_j+1\}}^{[n-n_j+1,n]}(bt\inv)}{\Delta_{[n_j]}^{[n-n_j+1,n]}(bt\inv)}.
   \end{align*}
   Then it follows as in the proof of Lemma~\ref{vinchernclass} that $ u_{n-n_j,n-n_j+1}=-G_1^{n_j}(b_- \dot w_0 B_-)$.

\end{proof}

\begin{lemma}
Let $n-n_1<i\leq n-1$. Suppose $\b$ is a critical point of $\mathcal F_\mathbf q$, then
  $u_{i,i+1}=-G^{n_1}_1(b_-\dot w_0B_-).$
\end{lemma}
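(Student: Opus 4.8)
The plan is to reduce to the already-established case $i=n-n_1$ of the previous lemma, by showing that all entries $u_{i,i+1}$ with $n-n_1\le i\le n-1$ coincide once $\hat b$ is a critical point — in the same spirit in which the range $1\le i<n-n_r$ was handled.

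First I would use the identity $u=bz$, which is immediate from $\hat b=zu\inv$ and $b=\hat b\inv$ in Definition~\ref{uv}. By Lemma~\ref{shape-lemma} the matrix $z$ has the normal form \eqref{Eq:OlocalCoords}; in particular its last $a_1=n_1$ columns (columns $n-n_1+1,\dots,n$) are exactly the first $n_1$ standard coordinate column vectors, in order — this is the top‑right block $I_{a_1}$. Hence for $n-n_1<i\le n-1$ the $(i+1)$-st column of $z$ is the $(i+1-n+n_1)$-st coordinate vector, so that
\[
u_{i,i+1}=(bz)_{i,i+1}=b_{\,i,\,i+1-n+n_1};
\]
in particular, taking $i=n-n_1$, one has $u_{n-n_1,n-n_1+1}=b_{n-n_1,1}$.

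Next I would invoke Corollary~\ref{critpoint}: since $\hat b$ is a critical point of $\mathcal F_{\mathbf q}$, the matrix $b_-=t\hat b=tb\inv$ is lower-triangular Toeplitz, hence so is its inverse $b_-\inv=bt\inv$. Thus $b=b_-\inv t$ and $b_{jk}=(b_-\inv)_{jk}\,t_k$, where $(b_-\inv)_{jk}$ depends only on $j-k$. The two entries above lie on the same diagonal of $b_-\inv$, the one with $j-k=n-n_1-1$, so they differ only by the factor $t_{i+1-n+n_1}/t_1$. But $n-n_1<i\le n-1$ forces $2\le i+1-n+n_1\le n_1$, and since $n_1=\min I^P$ the torus element $t\in\mathcal T^{W_P}$ is constant on $\{1,\dots,n_1\}$, whence $t_{i+1-n+n_1}=t_1$. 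Therefore $u_{i,i+1}=u_{n-n_1,n-n_1+1}$, and the right-hand side equals $-G_1^{n_1}(b_-\dot w_0B_-)$ by the case $j=1$ of the preceding lemma.

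No serious obstacle is expected here; the points needing a little care are reading the block $I_{a_1}$ correctly off the normal form of $z$, the equalities $t_1=\dots=t_{n_1}$ coming from $W_P$-invariance (valid precisely because $n_1$ is the smallest element of $I^P$, so that $1,\dots,n_1-1\in I_P$), and the elementary fact that the inverse of an invertible lower-triangular Toeplitz matrix is again lower-triangular Toeplitz.
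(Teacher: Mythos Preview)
Your argument is correct and is genuinely different from the paper's. The paper computes $u_{i,i+1}$ as a ratio of minors of $z$, uses the block structure of $z$ to rewrite this as $-\Delta_{[n-n_1]}^{\{d\}\cup[d+2,i]}(b\inv)/\Delta_{[n-n_1]}^{[d+1,i]}(b\inv)$ with $d=i-(n-n_1)$, passes to $tb\inv=b_-$ using $t_1=\cdots=t_{n_1}$, and then interprets the result as a ratio $-G^{\{d\}\cup[d+2,i]}/G^{[d+1,i]}$ on the Peterson variety; the final identification with $-G_1^{n_1}$ is obtained only after invoking the quantum Pieri rule (Proposition~\ref{Monkrule}) through the isomorphism of Corollary~\ref{jacobiringisom}. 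Your route instead reads off $u_{i,i+1}=b_{i,\,i+1-n+n_1}$ directly from the identity block $I_{a_1}$ in $z$, then uses the Toeplitz structure of $bt\inv$ and $t_1=\cdots=t_{n_1}$ to reduce to the already-proved case $i=n-n_1$. This is cleaner and entirely elementary, avoiding any appeal to quantum Schubert calculus; the paper's minor-based approach, on the other hand, serves as a warm-up for Lemma~\ref{uinthemiddle}, where the analogous ratios genuinely require nontrivial quantum identities and no such shortcut is available.
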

\begin{proof}
By Proposition~\ref{uvprop}, we have   $u_{i, i+1}={\Delta^{[i]}_{[i-1]\cup \{i+1\}}(z) \over \Delta^{[i]}_{[i]}(z)}$. Since $z$ is of the form (\ref{Eq:OlocalCoords}),
 if we set $d=i-(n-n_1)$, then $d<\min\{i,n_1\}$ and we have
 \begin{align*}
  {\Delta^{[i]}_{[i-1]\cup \{i+1\}}(z) \over \Delta^{[i]}_{[i]}(z)}&=-\frac{\Delta_{[i-d]}^{\{d\}\cup\{d+2,i]\}}(z)}{\Delta_{[i-d]}^{[d+1,i]}(z)}\\
  &=-\frac{\Delta_{[i-d]}^{\{d\}\cup\{d+2,i]\}}(b\inv)}{\Delta_{[i-d]}^{[d+1,i]}(b\inv)}.
  \end{align*}
 \iffalse
 By Corollary~\ref{Jacobithm}, we have \begin{align*}
\frac{\Delta_{[i-d]}^{\{d\}\cup\{d+2,i]\}}(b\inv)}{\Delta_{[i-d]}^{[d+1,i]}(b\inv)}
&=-\frac{\Delta_{[1,d-1]\cup\{d+1\}\cup[i+1,n]}^{[i-d+1,n]}(b)}{\Delta_{[1,d]\cup[i+1,n]}^{[i-d+1,n]}(b)}.
 \end{align*}
 Therefore, we have $u_{i, i+1}=\frac{\Delta_{[1,d-1]\cup\{d+1\}\cup[i+1,n]}^{[i-d+1,n]}(b)}{\Delta_{[1,d]\cup[i+1,n]}^{[i-d+1,n]}(b)}.$
 Recall that $b_-=t\b=tb\inv$. Note that $ t_{1}= t_{2}=\cdots= t_{n_1}$ and $1\leq d<d+1\leq n_1$. So we have
 \begin{align*}
\frac{\Delta_{[1,d-1]\cup\{d+1\}\cup[i+1,n]}^{[i-d+1,n]}(b)}{\Delta_{[1,d]\cup[i+1,n]}^{[i-d+1,n]}(b)}
&=\frac{\Delta_{[1,d-1]\cup\{d+1\}\cup[i+1,n]}^{[i-d+1,n]}(bt\inv)}{\Delta_{[1,d]\cup[i+1,n]}^{[i-d+1,n]}(bt\inv)}\\
&=-\frac{\Delta^{\{d\}\cup[d+2,i]}_{[n-n_1]}(t\b)}{\Delta^{[d+1,i]}_{[n-n_1]}(t\b)}\\
&=-\frac{G^{\{d\}\cup [d+2,i]}(b_-\dot w_0 B_-)}{G^{[d+1,i]}(b_- \dot w_0 B_-)}\\
&=-G^{n_1}_1(b_-\dot w_0B_-).
 \end{align*}
\fi
Recall that $b_-=t\b=tb\inv$. Note that $ t_{1}= t_{2}=\cdots= t_{n_1}$ and $1\leq d<d+1\leq n_1$. So we have
 \begin{align*}
   u_{i, i+1}&=-\frac{\Delta_{[i-d]}^{\{d\}\cup\{d+2,i]\}}(b\inv)}{\Delta_{[i-d]}^{[d+1,i]}(b\inv)}\\
   &=-\frac{\Delta^{\{d\}\cup[d+2,i]}_{[n-n_1]}(tb\inv)}{\Delta^{[d+1,i]}_{[n-n_1]}(tb\inv)}\\
&=-\frac{G^{\{d\}\cup [d+2,i]}(b_-\dot w_0 B_-)}{G^{[d+1,i]}(b_- \dot w_0 B_-)}\\
&=-G^{n_1}_1(b_-\dot w_0B_-).
 \end{align*}
The last equality follows from  Proposition~\ref{Monkrule} and the isomorphism in Corollary~\ref{jacobiringisom}.
\end{proof}

\begin{lemma}\label{uinthemiddle}  Suppose $n-n_{j+1}< i< n-n_j$ for some $1\leq j\leq r-1$. Suppose $\b$ is a critical point of $\mathcal F_\mathbf q$, then
  \[
  u_{i,i+1}=-(G^{n_j}_1(b_-\dot w_0 B_-)+G^{n_{j+1}}_1(b_-\dot w_0 B_-)).
  \]
\end{lemma}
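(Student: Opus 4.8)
The plan is to follow the template of the three preceding lemmas, the new feature being that the intermediate position $n-n_{j+1}<i<n-n_j$ forces a \emph{two}-term answer, and ultimately to reduce the statement to a quantum Schubert calculus identity. First I would start from Proposition~\ref{uvprop}, giving $u_{i,i+1}=\Delta^{[i]}_{[i-1]\cup\{i+1\}}(z)/\Delta^{[i]}_{[i]}(z)$, and invoke the explicit block-anti-diagonal shape of $z$ from Lemma~\ref{shape-lemma} to run the same Laplace expansion as in the computation of the $S_i^{(j')}$ in the proof of Theorem~\ref{fminus}. The point is that, since $i$ lies strictly between $n-n_{j+1}$ and $n-n_j$, the distinguished column $i+1$ falls in the gap between the two ranges of identity-pivot columns coming from the two identity blocks of $z$ of heights $n_j$ and $n_{j+1}$; hence the Laplace expansion of the numerator produces a sum of exactly two subminors of $z$ with prefix column sets, whereas the denominator produces a single such subminor. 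Equivalently, one reads off $u_{i,i+1}=S_i^{(j')}$, a ratio of two quadratic Pl\"ucker polynomials in $p(z)$.

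Next I would use that $\hat b$ is a critical point. By Corollary~\ref{critpoint} the matrix $b_-=t\hat b=tb^{-1}$ is lower-triangular Toeplitz; substituting $b^{-1}=t^{-1}b_-$ and using that the diagonal entries of $t$ are constant on the parabolic blocks $[n_p+1,n_{p+1}]$, the $t$-factors cancel in the ratio. Then, exactly as in the proof of Lemma~\ref{vinchernclass} and the subsequent lemmas, I would apply the Jacobi theorem (Corollary~\ref{Jacobithm}) to convert minors of $b^{-1}$ into minors of $b_-$ and reverse columns by $\dot w_0$ so as to land on prefixes of the form $[m+1,n]$, so that the two numerator contributions divided by the denominator become two ratios of the functions $G^{C}$ from the discussion around Definition~\ref{d:Gmi}, evaluated at $b_-\dot w_0 B_-$. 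The target is that these two ratios equal $-G_1^{n_j}(b_-\dot w_0 B_-)$ and $-G_1^{n_{j+1}}(b_-\dot w_0 B_-)$ respectively, which would complete the proof.

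To establish those two equalities I would pass, via Peterson's isomorphism $\mathcal O(\mathcal Y_P)\cong QH^*(X)$ (Theorem~\ref{petersonA}, Corollary~\ref{jacobiringisom}), to an identity in $QH^*(\Flndot)$: each $G^{C}$ becomes, up to sign, a Schubert class, and after clearing denominators and applying the quantum Chevalley--Monk formula (Proposition~\ref{Monkrule}) the two identities combine into a single relation of the form
\[
\sum_{J}(-1)^{|J|}\,\sigma_{w_J}\,\sigma_{[1,n_j+d]\smallsetminus J}=0
\]
in the quantum cohomology of $\Flndot$, which is Theorem~\ref{keyIdentity}. This last relation I would then prove by lifting to the complete flag variety $\mathbb{F}\ell_n$, using Peterson's extension property (Proposition~\ref{prop:extension}) to transfer it from $QH^*(\Flndot)$ to $QH^*(\mathbb{F}\ell_n)$, where it reduces to a Pl\"ucker-type quadratic relation among the relevant quantum Schubert expressions.

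The bookkeeping in the first step (signs in the Laplace expansion, and checking that the numerator yields exactly two prefix subminors and no more) is routine but delicate. The real obstacle is the last step: proving the quantum Schubert identity. Its classical ($\mathbf q=0$) specialization is an ordinary Pl\"ucker relation in $H^*(\Flndot)$, but the quantum corrections genuinely interact, and it is precisely to control them that one needs the detour through the complete flag variety together with Peterson's extension property.
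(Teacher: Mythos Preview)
Your overall strategy---reduce to a quantum Schubert calculus identity via Peterson's isomorphism, then pass through $QH^*(\mathbb{F}\ell_n)$---matches the paper's. But your description of the Laplace expansion step is not right: neither the numerator nor the denominator of $u_{i,i+1}$ is a ``single subminor'' or ``two subminors''. Both are sums, over subsets $J$ of size $d=i-(n-n_{j+1})$, of products of an $n_j$-minor and an $n_{j+1}$-minor of $z$ (this is exactly the $S_i^{(j)}$-term). The two-term answer $G_1^{n_j}+G_1^{n_{j+1}}$ does not come from splitting the numerator into two pieces; rather, after clearing denominators the claim becomes
\[
(\text{numerator})=(\text{denominator})\cdot(\sigma_{s_{n_j}}+\sigma_{s_{n_{j+1}}})
\]
in $QH^*(X)$, and only after applying Proposition~\ref{Monkrule} to both factors and cancelling many terms does one arrive at the identity of Theorem~\ref{keyIdentity}. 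Also, Peterson's extension property goes the other way: one proves the identity first in $QH^*(\mathbb{F}\ell_n)$ and then restricts down to $QH^*(\Flndot)$, not the reverse.

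More seriously, two essential ingredients are missing from your plan. First, the identity in $QH^*(\mathbb{F}\ell_n)$ is \emph{not} a ``Pl\"ucker-type quadratic relation''. The mechanism is that each $w_J$ is $321$-avoiding, so its quantum Schubert polynomial has a determinantal expression in quantum complete homogeneous polynomials (a quantum Billey--Jockusch--Stanley formula, Theorem~\ref{321equalityquantum}); the identity then reduces to showing certain auxiliary matrices $A_R$ have a repeated row or a zero row, hence determinant zero. Second, that determinantal argument only works when $n_j+n_{j+1}\le n$, since only then is the index set $\Xi$ all of $\binom{[n_j+d]}{d}$ (needed for the Laplace expansion of $\det A_R$). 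For $n_j+n_{j+1}\ge n$ the paper does not prove the identity directly at all: it instead applies the group involution $\tau(g)=\dot w_0(g^{-1})^T\dot w_0^{-1}$, which swaps $P$ with the parabolic $Q$ having $I^Q=n-I^P$, to deduce Lemma~\ref{uinthemiddle} in this range from the already-established case for $Q$. Both the $321$-avoiding/determinantal argument and the involution-based case split are indispensable and absent from your proposal.
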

We leave the proof of this  lemma to the next section.
Now we are ready to prove Theorem~\ref{firstchernclass} assuming the above lemmas.
\begin{proof}[Proof of Theorem \ref{firstchernclass}]Let $\hat{b}\in B_-\cap U_+\dot w_P^{-1}\dot w_0U_+ $ and let $z$, $u$, $v$ and $b$ be as in Definition~\ref{uv}. Also recall that $b_-=t\b$ and $q_{n_j}=\frac{t_{n_j}}{t_{n_j+1}}$. By Equation~\ref{e:F-viauv-simple}, we have $\mathcal F_R(b_-\dot{w_0}B_-,\mathbf q)=\mathcal F_{\operatorname{Lie}}(b_-)=\mathcal F_-(Pz,\mathbf q)=   \sum\limits_{i\in I^P} q_i v_{i,i+1}+ \sum\limits_{i=1}^{n-1}u_{i,i+1}.$  Suppose that $\b$ is a critical point of $\mathcal F_\mathbf q$. Then by Corollary~\ref{jacobiringisom} and the above lemmas, we have
\begin{align*}
  \theta([\mathcal F_{R}])=\sum_{j=1}^r (n_{j+1}-n_{j-1})\sigma_{s_{n_j}}=c_1(X)\in QH^*(X).
\end{align*}
\end{proof}

\section{Proof of Lemma~\ref{uinthemiddle} and Quantum Schubert calculus}
The goal of this section is to prove  the most difficult of the lemmas, which is Lemma~\ref{uinthemiddle}.

\subsection{Equivalence of Lemma~\ref{uinthemiddle} and an identity in quantum Schubert calculus}

We first prove that Lemma~\ref{uinthemiddle} is equivalent to the identity in quantum Schubert calculus stated in the following theorem.

\begin{defn}\label{d:wJ} Recall that we are considering $n-n_{j+1}<i<n-n_j$ for some $1\leq j\leq r-1$, and let $d:=i-(n-n_{j+1})$.  We set
 \[\Xi:=\left\{J\in \binom{[i]}{d}\mid J\cap [n_j+d+1,n]= \emptyset\right\},\]
 and define Weyl group elements $w_J\in W^P$ for certain $J\in \Xi$ as follows.
 For $J=\{\j_1<\j_2<\cdots <\j_d\}\in \Xi$, let $\{x_1<x_2<\cdots<x_{i-d}\}:=[i]\backslash J$.
\begin{enumerate}
  \item If $n_j\geq d$, then $w_J$ is the following permutation
\begin{align*}
 & \{w(1)<\cdots<w(n_j)\}=\{\j_1<\j_2<\cdots<\j_d<i+1<i+2<\cdots<i+n_j-d\}\\
& \{w(n_j+1)<\cdots<w(n_{j+1})\}=\{x_1<i+n_j-d+1<i+n_j-d+2<\cdots<n-1\}\\
& \{w(n_{j+1}+1)<\cdots<w(n_{j+2})\}=\{x_2<\cdots<x_{n_{j+2}-n_{j+1}}<n\}\\
&\{w(n_{j+2}+1)<\cdots<w(n)\}=\{x_{n_{j+2}-n_{j+1}+1}<\cdots<x_{i-d}\}
\end{align*}
  \item If $n_j <d$, and $x_1< \j_{n_j+1}$ then $w_J$ is defined by
\begin{align*}
  &\{w(1)<\cdots<w(n_j)\}=\{\j_1<\cdots<\j_{n_j}\}\\
  &\{w(n_j+1)<\cdots<w(n_{j+1})\}=\{x_1<\j_{n_j+1}<\cdots<\j_d<i+1<\cdots<n-1\}\\
  & \{w(n_{j+1}+1)<\cdots<w(n_{j+2})\}=\{x_2<\cdots<x_{n_{j+2}-n_{j+1}}<n\}\\
  &\{w(n_{j+2}+1)<\cdots<w(n)\}=\{x_{n_{j+2}-n_{j+1}+1}<\cdots<x_{i-d}\}.
\end{align*}
\end{enumerate}
\end{defn}

\begin{example} Suppose $n_1=2,n_2=4,n=7$. Let $j=1$ and $i=4$ (which indeed satisfies $n-n_{j+1}<i<n-n_j$). Then $d=1$ and 
\[\Xi=\left\{J\in  \binom{[4]}{1}\mid J\cap [4,7]= \emptyset\right\}=\{\{1\},\{2\},\{3\}\}.
\]
Since $n_j=2\ge d=1$ we have a Weyl group element $w_J$ for each $J\in\Xi$. Suppose $J=\{\j_1\}$ and $[4]\setminus J=\{x_1,x_2,x_3\}$. Then the definition of $w_J$ is
\[
w_J(1)=\j_1,\quad w_J(2)=5,\quad w_J(3)=x_1,\quad w_J(4)=6,\quad w_J(5)=x_2,\quad w_J(6)=x_3,\quad w_J(7)= 7. 
\]
For our three choices of $J$ this gives the following three Weyl group elements,
\[
w_{\{1\}}=1526347,\quad w_{\{2\}}=2516347,\quad w_{\{3\}}=3516247,
\]
with descents at $2$ and $4$, so in $W^P$. 
\end{example}
We can now state our theorem. Note that since $w_J\in W^P$, we have an associated Schubert class $\sigma_{w_J}$. If a permutation $w\in W^P$ is Grassmannian, so that it is determined by the values $w(1)<\dotsc<w(m)$ up to $m=n_k$ for some $k$, then we may write $\sigma_{\{w(1),\cdots,w(m)\}}$ for $\sigma_w$. Recall that we set $|J|:=\sum_{i\in J}i$.
\begin{thm}  \label{keyIdentity}
Consider $X=G^\vee/P^\vee$ and fix $i$ such that $n-n_{j+1}<i<n-n_j$ for some $1\le j\le r-1$. Let $d:=i-(n-n_{j+1})$. For each $w_J$ defined above we consider $\sigma_{w_J}\in QH^*(X)$, and we set $\sigma_{w_J}:=0$ for $J\in \Xi$ where $w_J$ is not defined. Then the identity
\begin{align}\label{e:keyIdentity}
  \sum\limits_{J\in \Xi}(-1)^{|J|} \sigma_{w_J}\sigma_{[1,n_j+d]\backslash J}=0
\end{align}
 holds in $QH^*(X)$.
\end{thm}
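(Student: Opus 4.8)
The plan is to reduce the identity \eqref{e:keyIdentity} in $QH^*(\Flndot)$ to an analogous identity in the quantum cohomology of the \emph{complete} flag variety $\mathbb{F}\ell_n$, and then to prove the latter by a direct Pl\"ucker-type manipulation on the Peterson variety side. The reason to pass to $\mathbb{F}\ell_n$ is Peterson's extension property (Proposition~\ref{prop:extension}): the quantum cohomology $QH^*(\Flndot)$ sits inside $QH^*(\mathbb{F}\ell_n)$ in a way compatible with the Peterson presentation, so an identity among the $G_w$ functions on the big Peterson variety $\mathcal Y$ in $G/B_-$ specialises to the desired identity on $\mathcal Y_P$. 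Concretely, each Schubert class $\sigma_{w_J}$ and $\sigma_{[1,n_j+d]\setminus J}$ appearing in \eqref{e:keyIdentity} corresponds under Theorem~\ref{petersonA} (and the Grassmannian-permutation extension after it) to a rational function $G_{w_J}$, respectively $G_{[1,n_j+d]\setminus J}$, on $G/B_-$ of the form $\Delta^{S}_{[m+1,n]}(g)/\Delta^{[m+1,n]}_{[m+1,n]}(g)$. So it suffices to show that the alternating sum of products of numerators
\[
\sum_{J\in\Xi}(-1)^{|J|}\,\Delta^{S_1(J)}_{[n_j+1,n]}(g)\cdot \Delta^{S_2(J)}_{[n_j+d+1,n]}(g)
\]
vanishes identically, \emph{after} multiplying through by the common denominator, for $g$ in the relevant Peterson stratum; the $J$ for which $w_J$ is undefined contribute nothing because the corresponding minor is identically zero.

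The first step is to make the column sets explicit. Unwinding Definition~\ref{d:wJ}, $G_{w_J}$ is (up to the denominator $\Delta^{[n_j+1,n]}_{[n_j+1,n]}$) a minor of $g$ whose column set is $\{x_1\}\cup[\text{large block}]$ together with the second row-block's columns $\{x_2,\dots\}$ — i.e. essentially the complement $[i]\setminus J$ padded with a fixed interval — while $G_{[1,n_j+d]\setminus J}$ is the minor on rows $[n_j+d+1,n]$ and columns $\{[1,n_j+d]\setminus J\}\cup[i+1,n]$. The key observation I expect to drive the proof is that, for $g$ on the Peterson variety (equivalently, $g$ Toeplitz up to the torus twist, via Theorem~\ref{t:Toeplitz} and Corollary~\ref{critpoint}), a minor $\Delta^{[m+1,n]}_{K}(g)$ depends only on the multiset of "gaps" in $K$, so many of the minors coincide and the alternating sum collapses. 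More structurally: the sum $\sum_{J}(-1)^{|J|}\Delta^{S_1(J)}_{\bullet}\Delta^{S_2(J)}_{\bullet}$ is exactly the shape of a \emph{Pl\"ucker / Sylvester-type (generalized Laplace) identity} — expanding a single larger minor along a block of $d$ columns chosen from $[n_j+d]$ — and the claim is that this larger minor vanishes because of the block-structure of $z$ in \eqref{Eq:OlocalCoords} (or equivalently because it has a repeated-or-out-of-range column pattern forced by $n-n_{j+1}<i<n-n_j$). So the second step is to write down that ambient minor and check it is zero by the shape of $z$; the third step is to verify that the combinatorial sign $\epsilon(J)$ from Definition~\ref{d:wJ}/Theorem~\ref{fminus} is precisely the Laplace-expansion sign $(-1)^{|J|}$ times the $(-1)^{\ell(w_J)}$ coming from Theorem~\ref{petersonA}, so that no stray signs survive. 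Finally, one transports the resulting numerator identity back through the denominators (all equal, being powers of $\Delta^{[n_j+1,n]}_{[n_j+1,n]}$ and $\Delta^{[n_j+d+1,n]}_{[n_j+d+1,n]}$, which are invertible on the open stratum) and then uses Peterson's extension from $\mathbb{F}\ell_n$ to $\Flndot$ to conclude \eqref{e:keyIdentity}.

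The main obstacle will be the bookkeeping in Step~2–3: correctly identifying which larger minor the alternating sum is a Laplace expansion of, and matching the two non-trivial cases of Definition~\ref{d:wJ} ($n_j\ge d$ versus $n_j<d$, with the extra condition $x_1<\j_{n_j+1}$) to the two regimes of that expansion — the case split there is exactly the reason the definition of $w_J$ is piecewise, and the $J$ with $w_J$ undefined are exactly the terms a naive Laplace expansion would include but which vanish. A secondary subtlety is checking that the reduction to $\mathbb{F}\ell_n$ is legitimate: one must ensure the $w_J$, which are built to lie in $W^P$, also have well-defined lifts whose associated $G_{w_J}$ are regular on the relevant cell, so that Peterson's extension property (Proposition~\ref{prop:extension}) genuinely applies term by term. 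Once the correct ambient determinant is identified, its vanishing should be immediate from \eqref{Eq:OlocalCoords}, and converting \eqref{e:keyIdentity} into the simpler form \eqref{ex:iden} via the quantum Chevalley--Monk formula (Proposition~\ref{Monkrule}) is then routine.
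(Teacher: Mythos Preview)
Your reduction to $QH^*(\mathbb{F}\ell_n)$ via Proposition~\ref{prop:extension} is the right opening move and matches the paper. However, the core of your argument has a genuine gap: you assume that $\sigma_{w_J}$ corresponds on the Peterson side to a single ratio of minors $G_{w_J}=\Delta^{S_1(J)}_{[n_j+1,n]}(g)/\Delta^{[n_j+1,n]}_{[n_j+1,n]}(g)$. That formula is only available for \emph{Grassmannian} permutations (see the proposition following Theorem~\ref{petersonA}), and $w_J$ is not Grassmannian. By construction, $w_J$ has descents at $n_j$, $n_{j+1}$, and (generically) $n_{j+2}$, so there is no single-minor expression to feed into a Sylvester/Laplace identity. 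Your Step~2 therefore cannot get started: there is no ``ambient minor'' of $g$ (or of $z$) whose Laplace expansion along $d$ columns produces the sum \eqref{e:keyIdentity}. Relatedly, the appeal to ``the block structure of $z$ in \eqref{Eq:OlocalCoords}'' conflates the B-model variety $\mathcal Z$ with the Peterson variety $\mathcal X_P$; these are different objects and the block shape of $z$ says nothing about vanishing of minors of a Toeplitz matrix.

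What the paper does instead is to observe (Lemma~\ref{321permutation}) that each $w_J$ is \emph{$321$-avoiding}, and then use the quantum Jacobi--Trudi formula (Theorem~\ref{321equalityquantum}) to write $\sigma^B_{w_J}$ as a determinant $\det(H_{\lambda_r-\mu_s-r+s}(X_{\phi_r}))$ in $QH^*(\mathbb{F}\ell_n)$. The Laplace expansion you were hoping for is then performed on \emph{this} determinant, not on a minor of $g$: expanding along the first $d$ columns gives cofactors $M_R$ independent of $J$, and the residual $d\times d$ pieces combine with the (genuinely Grassmannian) factor $\sigma^B_{[1,n_j+d]\setminus J}$ into a single $(n_j+d)\times(n_j+d)$ determinant $\det A_R$. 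This $A_R$ has either two equal rows or a zero row, so vanishes. That argument, however, uses $\Xi=\binom{[n_j+d]}{d}$, which requires $n_j+n_{j+1}\le n$; the complementary case $n_j+n_{j+1}\ge n$ is not handled directly but is deduced from the first via the involution $\tau(g)=\dot w_0(g^{-1})^T\dot w_0^{-1}$, which swaps $P$ with the parabolic $Q$ having $I^Q=n-I^P$ (Lemmas~\ref{uinthemiddle-specialcase}--\ref{uinthemiddle-otherspecialcase}). Your proposal does not address this dichotomy at all.
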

\begin{remark}
 The quantum product
 $\sigma_{J\cup [i+1, n]}\cdot \sigma_{s_{n_{j+1}}}$ consists of at most one quantum part, say $q_{n_{j+1}} \sigma_{w_J}$. The above definition of $w_J$ is an explicit description of such a class.  
\end{remark}

\begin{lemma}\label{equivalence} The formula for $u_{i,i+1}$ in Lemma~\ref{uinthemiddle} is equivalent to the corresponding identity in Theorem~\ref{keyIdentity}.
\end{lemma}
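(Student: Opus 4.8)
The plan is to transport the statement of Lemma~\ref{uinthemiddle} across the isomorphism $\theta\colon Jac(\mathcal F_R)\overset{\sim}{\longrightarrow}QH^*(X)[q_{n_1}^{-1},\cdots,q_{n_r}^{-1}]$ of Corollary~\ref{jacobiringisom}, and then to recognise the resulting relation in quantum cohomology as \eqref{e:keyIdentity}, using a Laplace expansion of two minors of $z$ together with the quantum Chevalley--Monk rule.

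First I would rephrase both statements as identities in $Jac(\mathcal F_R)$. By Proposition~\ref{uvprop} we have $u_{i,i+1}=\Delta^{[i]}_{[i-1]\cup\{i+1\}}(z)\big/\Delta^{[i]}_{[i]}(z)$, and the leading minor $\Delta^{[i]}_{[i]}(z)$ is nowhere vanishing on $\mathcal Z$ (the remark after Lemma~\ref{shape-lemma}), hence its class is a unit in $Jac(\mathcal F_R)$. Since $\theta$ sends the class of $G_1^{n_k}(b_-\dot w_0 B_-)$ to $-\sigma_{s_{n_k}}$ (Theorem~\ref{petersonA} via Theorem~\ref{petersonB}), applying $\theta$ shows that Lemma~\ref{uinthemiddle} is equivalent to
\[
\theta\bigl(\,[\Delta^{[i]}_{[i-1]\cup\{i+1\}}(z)]\,\bigr)\ +\ \bigl(\sigma_{s_{n_j}}+\sigma_{s_{n_{j+1}}}\bigr)\cdot\theta\bigl(\,[\Delta^{[i]}_{[i]}(z)]\,\bigr)\ =\ 0
\]
in $QH^*(X)[q_{n_1}^{-1},\cdots,q_{n_r}^{-1}]$; it remains to compute the two images and to identify this relation.

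To compute them I would Laplace-expand $\Delta^{[i]}_{[i]}(z)$ and $\Delta^{[i]}_{[i-1]\cup\{i+1\}}(z)$ along the identity block $(-1)^{n_j}I_{a_{j+1}}$ of the staircase form of $z$ (Lemma~\ref{shape-lemma}), as in the proof of Theorem~\ref{fminus}; this writes each minor as an alternating sum $\sum_{J\in\Xi}(-1)^{|J|}\,p_{A_J}(z)\,p_{J\cup[i+1,n]}(z)$ over the index set $\Xi$ of Definition~\ref{d:wJ}, with $A_J=[1,n_j+d]\setminus J$ (of size $n_j$) for the denominator, $|J\cup[i+1,n]|=n_{j+1}$, and a sign twist $\epsilon(J)$ together with shifted column data for the numerator. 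On the critical locus $b_-=t\hat b$ is a Toeplitz matrix (Corollary~\ref{critpoint}); combining this with $z=t^{-1}b_-u$ ($u\in U_+$), Jacobi's theorem (Corollary~\ref{Jacobithm}) and the coincidences $t_{n_l+1}=\cdots=t_{n_{l+1}}$ among the entries of $t$ --- exactly the manipulations used in the proofs of Lemma~\ref{vinchernclass} and the lemmas preceding Lemma~\ref{uinthemiddle} --- each $p_{A_J}(z)$ becomes, up to a common monomial in the $q$'s, a value of a Grassmannian rational function $G$ of descent $n_j$, and each $p_{J\cup[i+1,n]}(z)$ one of descent $n_{j+1}$. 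Applying the Peterson isomorphism (Theorem~\ref{petersonA} and its Grassmannian-permutation refinement) then yields, up to a common unit $\lambda$,
\[
\theta\bigl(\,[\Delta^{[i]}_{[i]}(z)]\,\bigr)\ =\ \lambda\sum_{J\in\Xi}(-1)^{|J|}\,\sigma_{[1,n_j+d]\setminus J}\cdot\sigma_{J\cup[i+1,n]},
\]
and an analogous formula, with the same $\lambda$, for $\theta\bigl([\Delta^{[i]}_{[i-1]\cup\{i+1\}}(z)]\bigr)$.

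Substituting these into the displayed relation, the quantum content is carried by the products $\sigma_{J\cup[i+1,n]}\cdot\sigma_{s_{n_{j+1}}}$: by the quantum Chevalley--Monk formula (Proposition~\ref{Monkrule}) each such product has exactly one quantum term $q_{n_{j+1}}\sigma_{w_J}$, and reading off the permutation there reproduces verbatim the description of $w_J$ in Definition~\ref{d:wJ}, the two branches $n_j\ge d$ and $n_j<d$ being the two possible shapes of this term (with $\sigma_{w_J}=0$ when no quantum term occurs). All the classical contributions --- from the classical parts of $\sigma_{J\cup[i+1,n]}\cdot\sigma_{s_{n_{j+1}}}$, from $\sigma_{s_{n_j}}\cdot\theta([\Delta^{[i]}_{[i]}(z)])$, and from $\theta([\Delta^{[i]}_{[i-1]\cup\{i+1\}}(z)])$ --- cancel, as one checks from the recursive structure of the Laplace expansion together with classical Pieri--Chevalley; what remains is $\lambda\, q_{n_{j+1}}\sum_{J\in\Xi}(-1)^{|J|}\sigma_{w_J}\,\sigma_{[1,n_j+d]\setminus J}=0$. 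As $\lambda$, $q_{n_{j+1}}$ and $\theta([\Delta^{[i]}_{[i]}(z)])$ are all invertible in $QH^*(X)[q^{-1}]$, and since $QH^*(X)\hookrightarrow QH^*(X)[q^{-1}]$, this relation is equivalent to \eqref{e:keyIdentity} in $QH^*(X)$; and because every intermediate step is an equivalence, the lemma follows. The main difficulty is in the middle step: executing the two Laplace expansions in the staircase coordinates with exact control of all signs ($(-1)^{|J|}$, $(-1)^{\ell(w_J)}$, those from $\dot w_P\inv\dot w_0$ and from Jacobi's theorem), and verifying the cancellation of the classical remainder, so that only the clean alternating sum of quantum terms survives.
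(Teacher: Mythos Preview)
Your proposal is essentially correct and follows the same route as the paper: Laplace-expand the two minors of $z$, convert to $G$-functions on the Peterson variety using $b^{-1}=zu^{-1}$, the Toeplitz property of $tb^{-1}$, and the block structure of $t$, then apply the Peterson isomorphism and the quantum Chevalley--Monk rule, after which the classical terms cancel and only the $q_{n_{j+1}}$-terms survive. A few points where the paper is more precise than your sketch: (i) the paper Laplace-expands along the first $n-n_{j+1}$ \emph{columns}, obtaining minors $\Delta_{[n-n_{j+1}]}^{[i]\setminus J}(z)\cdot\Delta_{[n-n_{j+1}+1,i]}^{J}(z)$ which are \emph{not} Pl\"ucker coordinates --- they are first enlarged using the staircase blocks and only become $G^{[i]\setminus J}$ and $G^{J\cup[n_j+d+1,n]}$ after passing to $b_-$; (ii) the numerator and denominator expansions run over \emph{different} index sets, $A=\{J:J\cap(\{n_j+d\}\cup[n_j+d+2,n])=\emptyset\}$ and $\Xi$, with a sign $\eta(J)$ on the numerator, so the intermediate identity in $QH^*(X)$ is \eqref{e:QHXidentity} rather than a single sum over~$\Xi$; (iii) the cancellation of classical terms, which you gesture at, is carried out explicitly in the paper by splitting $\sigma_{J\cup[i+1,n]}\cdot\sigma_{s_{n_{j+1}}}=C_1(J)+C_2(J)+q_{n_{j+1}}\sigma_{w_J}$ and $\sigma_{[1,n_j+d]\setminus J}\cdot\sigma_{s_{n_j}}=D_1(J)+D_2(J)$ and matching $C_2$ against the $n_j+d+1\in J$ part of $A$, $D_2$ against the $n_j+d+1\notin J$ part, and $C_1+D_1$ to zero via the reindexing $J\leftrightarrow J_s^\pm$.
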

\begin{proof}

We use determinantal identities to rewrite the $u_{i, i+1}$ in Lemma~\ref{uinthemiddle}. By Proposition \ref{uvprop}, we have   $u_{i, i+1}={\Delta^{[i]}_{[i-1]\cup \{i+1\}}(z) \over \Delta^{[i]}_{[i]}(z)}$. Using Laplace expansion on the first $n-n_{j+1}$ columns, we have
\begin{align*}
  {\Delta^{[i]}_{[i-1]\cup \{i+1\}}(z) \over \Delta^{[i]}_{[i]}(z)}
  &=\frac{\sum\limits_{J\in \binom{[i]}{d}}(-1)^{|J|}\Delta_{[n-n_{j+1}]}^{[i]\backslash J}(z)\cdot\Delta_{[n-n_{j+1}+1,i-1]\cup\{i+1\}}^{J}(z)}{\sum\limits_{J\in \binom{[i]}{d}}(-1)^{|J|}\Delta_{[n-n_{j+1}]}^{[i]\backslash J}(z)\cdot\Delta_{[n-n_{j+1}+1,i]}^{J}(z)}
\end{align*}
where $d:=i-(n-n_{j+1})$. Since $z$ is of the form (\ref{Eq:OlocalCoords}), we have that the determinant $\Delta_{[n-n_{j+1}+1,i-1]\cup\{i+1\}}^{J}(z)$  vanishes if $J\cap (\{n_j+d\}\cup [n_j+d+2,n])\neq \emptyset$,
 and $\Delta_{[n-n_{j+1}+1,i]}^{J}(z)$ vanishes if $J\cap [n_j+d+1,n]\neq \emptyset$.  If we set $$A:=\{J\in \binom{[i]}{d}|J\cap (\{n_j+d\}\cup [n_j+d+2,n])= \emptyset\}$$ and $\Xi=\{J\in \binom{[i]}{d}|J\cap [n_j+d+1,n]= \emptyset\}$  as already defined, then we have
 \begin{align*}
  &\frac{\sum\limits_{J\in \binom{[i]}{d}}(-1)^{|J|}\Delta_{[n-n_{j+1}]}^{[i]\backslash J}(z)\cdot\Delta_{[n-n_{j+1}+1,i-1]\cup\{i+1\}}^{J}(z)}{\sum\limits_{J\in \binom{[i]}{d}}(-1)^{|J|}\Delta_{[n-n_{j+1}]}^{[i]\backslash J}(z)\cdot\Delta_{[n-n_{j+1}+1,i]}^{J}(z)}\\
  =&-\frac{\sum\limits_{J\in A}\eta(J)(-1)^{|J|}\Delta_{[n-n_{j+1}]}^{[i]\backslash J}(z)\cdot\Delta_{[n-n_j]}^{J\cup\{n_j+d\}\cup [n_j+d+2,n]}(z)}{\sum\limits_{J\in \Xi}(-1)^{|J|}\Delta_{[n-n_{j+1}]}^{[i]\backslash J}(z)\cdot\Delta_{[n-n_j]}^{J\cup [n_j+d+1,n]}(z)}
\end{align*}
in which $\eta(J)$ is the function $$\eta(J)=\begin{cases}
     1,&\mbox{if } n_j+d+1\notin J,\\
     -1,&\mbox{if } n_j+d+1\in J.
  \end{cases}$$
Since $b\inv =zu\inv$ and $u\in U_+$, we have
\begin{align*}
  &\frac{\sum\limits_{J\in A}\eta(J)(-1)^{|J|}\Delta_{[n-n_{j+1}]}^{[i]\backslash J}(z)\cdot\Delta_{[n-n_j]}^{J\cup\{n_j+d\}\cup [n_j+d+2,n]}(z)}{\sum\limits_{J\in \Xi}(-1)^{|J|}\Delta_{[n-n_{j+1}]}^{[i]\backslash J}(z)\cdot\Delta_{[n-n_j]}^{J\cup [n_j+d+1,n]}(z)}\\
  =&\frac{\sum\limits_{J\in A}\eta(J)(-1)^{|J|}\Delta_{[n-n_{j+1}]}^{[i]\backslash J}(b\inv)\cdot\Delta_{[n-n_j]}^{J\cup\{n_j+d\}\cup [n_j+d+2,n]}(b\inv)}{\sum\limits_{J\in \Xi}(-1)^{|J|}\Delta_{[n-n_{j+1}]}^{[i]\backslash J}(b\inv)\cdot\Delta_{[n-n_j]}^{J\cup [n_j+d+1,n]}(b\inv)}
\end{align*}
We recall that $b_-=t\b=tb\inv$.
Note that $ t_{n_j+1}= t_{n_j+2}=\cdots= t_{n_{j+1}}$ for all $0\leq j\leq r$. Since $n_j<n_j+d<n_j+d+1\leq n_{j+1}$, we have
\begin{align*}
  u_{i,i+1}&=-\frac{\sum\limits_{J\in A}\eta(J)(-1)^{|J|}\Delta_{[n-n_{j+1}]}^{[i]\backslash J}(b\inv)\cdot\Delta_{[n-n_j]}^{J\cup\{n_j+d\}\cup [n_j+d+2,n]}(b\inv)}{\sum\limits_{J\in \Xi}(-1)^{|J|}\Delta_{[n-n_{j+1}]}^{[i]\backslash J}(b\inv)\cdot\Delta_{[n-n_j]}^{J\cup [n_j+d+1,n]}(b\inv)}\\
&=-\frac{\sum\limits_{J\in A}\eta(J)(-1)^{|J|}\Delta_{[n-n_{j+1}]}^{[i]\backslash J}(tb\inv)\cdot\Delta_{[n-n_j]}^{J\cup\{n_j+d\}\cup [n_j+d+2,n]}(tb\inv)}{\sum\limits_{J\in \Xi}(-1)^{|J|}\Delta_{[n-n_{j+1}]}^{[i]\backslash J}(tb\inv)\cdot\Delta_{[n-n_j]}^{J\cup [n_j+d+1,n]}(tb\inv)}\\
&=-\frac{\sum\limits_{J\in A}\eta(J)(-1)^{|J|}G^{[i]\backslash J}(b_-\dot w_0 B_-)\cdot G^{J\cup\{n_j+d\}\cup [n_j+d+2,n]}(b_-\dot w_0 B_-)}{\sum\limits_{J\in \Xi}(-1)^{|J|} G^{[i]\backslash J}(b_-\dot w_0 B_-)\cdot G^{J\cup [n_j+d+1,n]}(b_-\dot w_0 B_-)}
\end{align*}
Applying the isomorphism in Corollary~\ref{jacobiringisom}, we see that the formula for $u_{i,i+1}$ in Lemma~\ref{uinthemiddle} is equivalent to the following identity in $QH^*(X).$
\begin{align}
&\sum\limits_{J\in \Xi}(-1)^{|J|}(\sigma_{J\cup [i+1,n]}\cdot \sigma_{s_{n_{j+1}}}) \cdot \sigma_{[1,n_j+d]\backslash J}+\sum\limits_{J\in \Xi}(-1)^{|J|}\sigma_{J\cup [i+1,n]}\cdot (\sigma_{[1,n_j+d]\backslash J}\cdot \sigma_{s_{n_j}} )\nonumber \\
&\hskip 5cm=\sum\limits_{J\in A}\eta(J)(-1)^{|J|}\sigma _{J\cup [i+1,n]}\cdot \sigma_{([1,n_j+d-1]\cup \{n_j+d+1\}) \backslash J}.\label{e:QHXidentity}
\end{align}
 It remains to show that this is exactly the identity in Theorem~\ref{keyIdentity}.

Assume that $J=\{\j_1<\j_2<\cdots <\j_d\}$, then by Proposition~\ref{Monkrule},
\begin{align*}
 \sigma_{J\cup [i+1,n]}\cdot \sigma_{s_{n_{j+1}}}=\sum\limits_{1\leq s\leq d}\sigma_{\{\j_1,\cdots, \j_{s-1},\j_s+1, \j_{s+1},\cdots, \j_d,i+1,\cdots,n\}}+q_{n_{j+1}}\sigma_{w_J}.
\end{align*}
Here, we set $\sigma_{\{\j_1,\cdots, \j_{s-1},\j_s+1, \j_{s+1},\cdots, \j_d,i,\cdots,n\}}:=0$ if either $s=d$ and $ \j_d=i$ or $\j_s+1=\j_{s+1}$ holds.
We divide the above sum into two parts as follows
\begin{align*}
 & C_1(J):=\sum\limits_{1\leq s\leq d-1}\sigma_{\{\j_1,\cdots, \j_{s-1},\j_s+1, \j_{s+1},\cdots, \j_d,i+1,\cdots,n\}}\\
  &C_2(J):=\sigma_{\{\j_1,\cdots, \j_{d-1}, \j_{d}+1,i+1,\cdots,n\}}
\end{align*}
Similarly, we have
\begin{align*}
\sigma_{[1,n_j+d]\backslash J}\cdot \sigma_{s_{n_j}}&=\sum\limits_{1\leq j\leq d} \sigma_{[1,n_j+d]\backslash \{\j_1, \cdots, \j_{s-1},\j_s-1,\j_{s+1},\cdots,\j_d\}} +D_2(J)\\
&=D_1(J)+D_2(J)
\end{align*}
Here, $D_1(J)$ is defined as
\begin{align*}
  D_1(J):=\sum\limits_{1\leq j\leq d} \sigma_{[1,n_j+d]\backslash \{\j_1, \cdots, \j_{s-1},\j_s-1,\j_{s+1},\cdots,\j_d\}}
\end{align*}
where we set $\sigma_{[1,n_j+d]\backslash \{\j_1, \cdots, \j_{s-1},\j_s-1,\j_{s+1},\cdots,\j_d\}}:=0$ if either  $s=1$ and $\j_1=1$ or  $\j_s-1=\j_{s-1}$ holds.
And $D_2(J)$ is defined as follows
 $$D_2(J):=\begin{cases}
                \sigma_{[1,n_j+d-1]\cup\{n_j+d+1\}\backslash J}, & \mbox{if } n_j+d\notin J, \\
                0, & \mbox{if } n_j+d\in J.
             \end{cases}$$
Note that since $n_{j+1}>n_j+d$, we have $w(n_{j+1})>w(n_j)$ and therefore there are no quantum terms in the product $\sigma_{[1,n_j+d]\backslash J}\cdot \sigma_{s_{n_j}}$ by the remark after Proposition~\ref{Monkrule}.\\
If $n_j+d\in J$, namely, $\j_d=n_j+d$, then directly from the definition of $A$ and $\Xi$ we have
\begin{align*}
  \sum\limits_{J\in \Xi\atop n_j+d\in J}(-1)^{|J|}C_2(J)  \cdot \sigma_{[1,n_j+d]\backslash J}=\sum\limits_{J\in A\atop n_j+d+1\in J}\eta(J)(-1)^{|J|}\sigma _{J\cup [i+1,n]}\cdot \sigma_{([1,n_j+d-1]\cup \{n_j+d+1\}) \backslash J}.
\end{align*}
If $n_j+d\notin J$, then  we have
\begin{align*}
  \sum\limits_{J\in \Xi \atop n_j+d\notin J}(-1)^{|J|}\sigma_{J\cup [i+1,n]}\cdot D_2(J)= \sum\limits_{J\in A \atop n_j+d+1\notin J}\eta(J)(-1)^{|J|}\sigma _{J\cup [i+1,n]}\cdot \sigma_{([1,n_j+d-1]\cup \{n_j+d+1\}) \backslash J}.
\end{align*}
Moreover, for $J=\{\j_1<\j_2<\cdots <\j_d\}$, we denote $J_s^+:=\{\j_1,\cdots, \j_{s-1},\j_s+1, \j_{s+1},\cdots, \j_d\}$ and $J_s^-:=\{\j_1, \cdots, \j_{s-1},\j_s-1,\j_{s+1},\cdots,\j_d\}$.
Then \begin{align*}
  \sum\limits_{J\in \Xi}(-1)^{|J|}\sigma_{J\cup [i+1,n]}\cdot D_1(J)=\sum\limits_{J\in \Xi}\sum_{s=1}^d(-1)^{|J|}\sigma_{J\cup [i+1,n]}\cdot \sigma_{[1,n_j+d]\backslash J_s^-}.
\end{align*}
Since $\sigma_{[1,n_j+d]\backslash J_s^-}\neq 0$ only if $J_s^-\in J$, in which case $(J_s^-)_s^+=J\in \Xi$, we have
\begin{align*}
 \sum\limits_{J\in \Xi}\sum_{s=1}^d(-1)^{|J|}\sigma_{J\cup [i+1,n]}\cdot \sigma_{[1,n_j+d]\backslash J_s^-}
=&\sum\limits_{J\in \Xi}\sum_{s=1}^{d-1}(-1)^{|J|+1}\sigma_{J_s^+\cup [i+1,n]}\cdot \sigma_{[1,n_j+d]\backslash J}+\\
&\sum\limits_{J\in \Xi \atop n_j+d\notin J}(-1)^{|J|+1}\sigma_{J_d^+\cup [i+1,n]}\cdot \sigma_{[1,n_j+d]\backslash J}.
\end{align*}
Therefore, we have
\begin{multline*}
  \sum\limits_{J\in \Xi }(-1)^{|J|} C_1(J)\cdot\sigma_{[1,n_j+d]\backslash J} + \sum\limits_{J\in \Xi \atop n_j+d\notin J}(-1)^{|J|}C_2(J)  \cdot \sigma_{[1,n_j+d]\backslash J}  +\sum\limits_{J\in \Xi}(-1)^{|J|}\sigma_{J\cup [i+1,n]}\cdot D_1(J)=0.
\end{multline*}
We therefore see that the identity~\eqref{e:QHXidentity} is equivalent to the identity
\begin{align*}
  \sum\limits_{J\in \Xi}(-1)^{|J|} \sigma_{w_J}\sigma_{[1,n_j+d]\backslash J}=0
\end{align*}
in Theorem~\ref{keyIdentity}. Hence, the statement follows.
\end{proof}

The structure of the proof of Lemma~\ref{uinthemiddle} and Theorem~\ref{keyIdentity} is now the following. We will first prove Theorem~\ref{keyIdentity} in the special case where $n_j+n_{j+1}\le n$. It then follows that Lemma~\ref{uinthemiddle} holds whenever $n_j+n_{j+1}\le n$, because of Lemma~\ref{equivalence}. Next, we introduce a symmetry on the domain of the superpotential $\mathcal F_R$ that allows us to deduce the statement of Lemma~\ref{uinthemiddle} for $n_j+n_{j+1}\ge n$ from the one for $n_j+n_{j+1}\le n$. Finally, we obtain Theorem~\ref{keyIdentity} for $n_j+n_{j+1}\le n$, since this proposition and Lemma~\ref{uinthemiddle} are equivalent in every case. This strategy also shows an interaction between mirror symmetry and quantum Schubert calculus. 

\subsection{A special case of Theorem~\ref{keyIdentity}}
In this section we prove Theorem~\ref{keyIdentity} in the case where $n_j+n_{j+1}\le n$. To do this we first prove a  version of the identity \eqref{e:keyIdentity} in the quantum cohomology of the complete flag variety $\mathbb{F}\ell_n$  in the following key lemma.
\begin{lemma}\label{keyidentityinFLn}
Assume that $n_j+n_{j+1}\le n$.  With notations as in Theorem~\ref{keyIdentity}, the following identity, obtained simply by replacing the Schubert classes in \eqref{e:keyIdentity} with corresponding ones for $\mathbb{F}\ell_n$, holds in the quantum cohomology ring $QH^*(\mathbb{F}\ell_n)$.
   \begin{align*}
  \sum\limits_{J\in \Xi}(-1)^{|J|} \sigma^B_{w_J}\sigma^B_{[1,n_j+d]\backslash J}=0.
\end{align*}
 \end{lemma}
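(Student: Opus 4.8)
The plan is to reduce the statement to a determinantal identity by passing to the Peterson-variety model of $QH^*(\mathbb{F}\ell_n)$. Observe first that the hypothesis $n_j+n_{j+1}\le n$ is equivalent to $n_j+d\le i$; writing $m:=n_j+d$, this forces $\Xi=\binom{[m]}{d}$, with each complement $[m]\setminus J$ an honest $n_j$-element subset of $[n]$, so that $\sigma^B_{[1,m]\setminus J}$ is a Grassmannian class of descent $n_j$. Since $QH^*(\mathbb{F}\ell_n)$ is a free $\mathbb C[q]$-module it embeds in its localization, so it suffices to prove the identity in $QH^*(\mathbb{F}\ell_n)[q^{-1}]$, which by Theorem~\ref{petersonB} (applied with $P=B_-$) together with Theorem~\ref{t:Toeplitz} is the coordinate ring of the variety $X_{B_-}$ of lower-triangular Toeplitz matrices; under this isomorphism each Grassmannian Schubert class $\sigma^B_K$ becomes, up to an explicit sign, the ratio of minors $G_K$ of Definition~\ref{d:Gmi}, and each divisor class $\sigma^B_{s_k}$ becomes $-G_1^k$.

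The obstruction to applying this directly is that the classes $\sigma^B_{w_J}$ are not Grassmannian. I would remove them exactly as in the proof of Lemma~\ref{equivalence}: using the quantum Chevalley--Monk rule to rewrite $\sigma^B_{w_J}$ in terms of the Grassmannian class $\sigma^B_{J\cup[i+1,n]}$ times a divisor class (minus classical corrections), and then performing the same purely combinatorial cancellation of the classical Chevalley--Monk terms carried out in that proof, the identity \eqref{e:keyIdentity} for $\mathbb{F}\ell_n$ is transformed into the equivalent identity \eqref{e:QHXidentity} for $\mathbb{F}\ell_n$, in which every Schubert class occurring is either Grassmannian or a divisor class. (Alternatively one can expand $\sigma^B_{w_J}$ directly as a quantum Schubert polynomial, which on $X_{B_-}$ becomes a polynomial in the functions $G_1^k$.)

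It then remains to verify \eqref{e:QHXidentity} for $\mathbb{F}\ell_n$ as an identity of explicit rational functions on $X_{B_-}$. Every term there is a product of ratios of minors of $g$ taken along the two fixed bottom row sets $[n_j+1,n]$ and $[n_{j+1}+1,n]$, with column sets depending on $J$; after clearing the nonvanishing denominators and a common divisor-class factor one obtains a polynomial identity among these minors. Since $[n_{j+1}+1,n]\subset[n_j+1,n]$ this is an incidence relation, and I expect it to follow from a Laplace expansion combined with the generalized Cramer rule (Lemma~\ref{Cramer}) and the Jacobi theorem (Corollary~\ref{Jacobithm}) used to align the signs $(-1)^{|J|}$, $\eta(J)$, $\epsilon(J)$; the hypothesis $n_j+n_{j+1}\le n$ is precisely what makes the expansion range over a complete set of columns, so that the sum is a genuine (vanishing) quadratic Pl\"ucker relation rather than a truncation of one. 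The main obstacle is exactly this last step---identifying the correct Pl\"ucker relation and checking that it holds identically on $G/B_-$, or, should part of the required vanishing hold only on the Peterson locus, extracting it from the Toeplitz characterization of $X_{B_-}$ as in the proof of Theorem~\ref{critpointthm}. Once this dictionary is in place, the earlier steps are formal.
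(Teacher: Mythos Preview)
Your main reduction has a genuine gap. The equivalence in Lemma~\ref{equivalence} is obtained by applying Proposition~\ref{Monkrule} in $QH^*(X)$ for the \emph{partial} flag variety: there the only quantum term in $\sigma_{J\cup[i+1,n]}\cdot\sigma_{s_{n_{j+1}}}$ is $q_{n_{j+1}}\sigma_{w_J}$, and indeed the very definition of $w_J$ (Definition~\ref{d:wJ}) uses the neighbouring marks $n_j,n_{j+2}$ of $P$. In $QH^*(\mathbb{F}\ell_n)$ the quantum Monk formula of \cite{FominGelfandPostnikov} gives completely different quantum terms, namely sums of monomials $q_a\cdots q_{b-1}\,\sigma^B_{wt_{ab}}$, and $\sigma^B_{J\cup[i+1,n]}\cdot\sigma^B_{s_{n_{j+1}}}-(\text{classical part})$ is \emph{not} $q_{n_{j+1}}\sigma^B_{w_J}$. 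So running the cancellation of Lemma~\ref{equivalence} in the complete-flag ring does not reduce the statement to a version of \eqref{e:QHXidentity}; and even if it did, you yourself concede that the Pl\"ucker-relation verification on $X_{B_-}$ remains unresolved.

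Your parenthetical alternative---expanding $\sigma^B_{w_J}$ as a quantum Schubert polynomial---is in fact the paper's route, but you are missing the structural observation that makes it tractable: each $w_J$ is $321$-avoiding (Lemma~\ref{321permutation}), hence $\sigma^B_{w_J}=\det\bigl(H_{\lambda_r-\mu_s-r+s}(X_{\phi_r})\bigr)$ is a determinant in quantum complete homogeneous polynomials (Theorem~\ref{321equalityquantum}, a quantum version of \cite{BJS}). Only the first $d$ columns of this determinant depend on $J$; Laplace-expanding along them separates a $J$-independent cofactor $M_R$ from a $d\times d$ minor. Summing the latter over $J\in\Xi$ against $\sigma^B_{[1,n_j+d]\setminus J}$ (itself a determinant in the $H$'s) reassembles, for each $R$, into a single $(n_j+d)\times(n_j+d)$ determinant $\det A_R$, which vanishes because $A_R$ has either a repeated row or a zero row. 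The hypothesis $n_j+n_{j+1}\le n$ is used exactly where you guessed: it forces $\Xi=\binom{[n_j+d]}{d}$, so that the Laplace expansion of $\det A_R$ runs over the full index set and genuinely equals that sum.
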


To prove the above lemma, we need some preparation. Recall that a permutation $w$ is called \textit{$321-$avoiding} if there does not exist $i<j<k$ such that $w(i)>w(j)>w(k)$. The key observation in the proof of Lemma~\ref{keyidentityinFLn} is the following lemma, that the permutations $w_J$ arising above are all $321-$avoiding. 
\begin{lemma}\label{321permutation}
 The permutations $w_J$ constructed in Definition~\ref{d:wJ} are $321-$avoiding.
\end{lemma}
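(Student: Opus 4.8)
The plan is to verify the $321$-avoiding property directly from the case-by-case definition of $w_J$ in Definition~\ref{d:wJ}, by exhibiting the one-line notation of $w_J$ as a concatenation of increasing runs and then checking that no value that has been ``displaced downward'' to a later position can create a decreasing subsequence of length three. Recall that a permutation fails to be $321$-avoiding exactly when there are positions $p_1<p_2<p_3$ with $w(p_1)>w(p_2)>w(p_3)$; equivalently, it suffices to show there do not exist two positions $p<p'$ such that both $w(p)>w(p')$ and the ``descent block'' structure allows a third, strictly intermediate value later on. Since $w_J$ restricted to each of the four intervals $[1,n_j]$, $[n_j+1,n_{j+1}]$, $[n_{j+1}+1,n_{j+2}]$, $[n_{j+2}+1,n]$ is strictly increasing by construction, any $321$-pattern must use positions drawn from three (hence all three of at least two consecutive) of these blocks, or two positions in adjacent blocks together with one more; so the verification reduces to understanding the finitely many ``descents'' between consecutive blocks.

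Concretely, in case (1) (when $n_j\geq d$) the values in block $[1,n_j]$ are $\{\j_1<\dots<\j_d<i+1<\dots<i+n_j-d\}$, and the values in block $[n_j+1,n_{j+1}]$ begin with $x_1$ (the smallest element of $[i]\setminus J$) followed by $\{i+n_j-d+1,\dots,n-1\}$. The only values that are ``small'' and appear late are the $x_\ell$'s, distributed as the leading entries of blocks $2,3$ and all of block $4$. I would argue: any descent $w_J(p)>w_J(p')$ with $p<p'$ must have $w_J(p')\in\{x_1,x_2,\dots\}\subseteq[i]$, since every value $>i$ sits in an increasing pattern once one accounts for where the $x_\ell$ land; and since $x_1<x_2<\dots<x_{i-d}$ occupy positions in strictly increasing order of block index, two of them cannot themselves form a descent. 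Hence a $321$-pattern would need $w_J(p_1)>w_J(p_2)>w_J(p_3)$ with $w_J(p_3)$ equal to some $x_\ell$ and $w_J(p_2)$ strictly between $x_\ell$ and $w_J(p_1)$, forcing $w_J(p_2)$ to also be some $x_{\ell'}$ with $\ell'<\ell$ appearing after $p_1$ — but then $w_J(p_1)>x_{\ell'}$ with $x_{\ell'}$ at a later position than $p_1$, and tracing through the block positions shows $p_1$ must lie in block $1$ with $w_J(p_1)=\j_s$ for some $s$; one then checks $\j_s < x_{\ell'}$ is impossible only when $x_{\ell'}$ precedes $\j_s$, contradiction. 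The analogous bookkeeping handles case (2) (when $n_j<d$ and $x_1<\j_{n_j+1}$), where now $J$ is split across blocks $1$ and $2$ and the single small displaced value in block $2$ is again $x_1$.

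The main obstacle I anticipate is purely organizational rather than conceptual: the definition of $w_J$ has two cases, each with four interval-blocks, and within block $2$ of case (1) there is a genuine descent ($x_1$ followed by values $\geq i+n_j-d+1$, then the next block starts again with the smaller $x_2$), so one must carefully confirm that these ``resets'' to smaller values never stack up to length three. I would manage this by the clean reformulation: a permutation is $321$-avoiding iff it can be written as a union of \emph{two} increasing subsequences (this is a standard equivalent characterization, via the Erd\H{o}s–Szekeres / pattern-avoidance dictionary), and then simply exhibit the two increasing subsequences explicitly — for instance, in case (1), one subsequence consisting of $J\cup\{i+1,\dots,n-1\}\cup\{n\}$ in their positions and the other consisting of $\{x_1<x_2<\dots<x_{i-d}\}$ in their positions, after checking each of these two sets does occur left-to-right in increasing order of value within $w_J$. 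Verifying that last point is a short finite check block-by-block, and it sidesteps the need to chase individual $321$-patterns.

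Finally, once Lemma~\ref{321permutation} is in hand, the intended use (in the proof of Lemma~\ref{keyidentityinFLn}) is that for $321$-avoiding $w$ the Schubert class $\sigma^B_w$ in $QH^*(\mathbb{F}\ell_n)$ has particularly controlled quantum multiplication — one can compute products with the classes $\sigma^B_{[1,m]\setminus J}$ via the combinatorics of such permutations — so I would flag that the present lemma is exactly the structural input that lets the subsequent identity be reduced to a classical (or near-classical) computation; but that reduction is the content of the next lemma, not this one.
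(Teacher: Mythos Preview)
Your second approach—exhibiting $w_J$ as a union of two increasing subsequences—is correct and is a genuinely cleaner route than the paper's proof. In both cases of Definition~\ref{d:wJ} the values $\j_1<\cdots<\j_d<i+1<\cdots<n-1<n$ occur in increasing position order (using $\j_d\le i$ since $J\subseteq[i]$), and the values $x_1<x_2<\cdots<x_{i-d}$ likewise occur in increasing position order; these two sequences partition $[n]$, so the longest decreasing subsequence has length at most $2$ and $w_J$ is $321$-avoiding. The paper instead argues directly by contradiction: assuming $w_J(i_0)>w_J(j_0)>w_J(k_0)$ with $i_0<j_0<k_0$, it pins down which block $j_0$ lies in (necessarily $j_0=n_j+1$ with $w_J(j_0)=x_1$, or $j_0=n_{j+2}$ with $w_J(j_0)=n$) and derives a contradiction in each case. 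Your method buys a one-line verification once the two subsequences are named; the paper's buys a self-contained argument that does not invoke the Erd\H{o}s--Szekeres/Dilworth characterization.

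Your first, block-by-block sketch is in the same spirit as the paper's argument, and the key claim ``any descent $w_J(p)>w_J(p')$ forces $w_J(p')\in\{x_1,\dots,x_{i-d}\}$'' is correct and is essentially what drives the paper's proof. However, your write-up of the contradiction is garbled: from $w_J(p_2)=x_{\ell'}>x_\ell=w_J(p_3)$ you should conclude $\ell'>\ell$ (not $\ell'<\ell$), and then the contradiction is immediate because the $x_m$'s appear in position order increasing with $m$, so $p_2>p_3$. The detour through ``$p_1$ must lie in block~1 with $w_J(p_1)=\j_s$'' is unnecessary and does not lead anywhere as written. If you keep the first approach, fix the index inequality and stop there; or, better, just present the two-subsequence argument.
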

\begin{proof}
 We will argue by contradiction. Consider the case $n_j\geq d$ first. Suppose that there exists $i_0<j_0<k_0$ satisfying $w_J(i_0)>w_J(j_0)>w_J(k_0)$. Since $i_0<j_0$ and $w_J(i_0)>w_J(j_0)$, we must have $n_j+1\leq j_0$. If we assume that  $j_0\leq n_{j+1}$, then we must have $j_0=n_j+1$ since $w_J(i_0)>w_J(j_0)$. So we have $w_J(j_0)= x_1$. However, this is in contradiction with $j_0<k_0$ and $w_J(j_0)>w_J(k_0)$.
 Therefore we must have $j_0> n_{j+1}$. Since $j_0<k_0$ and $w_J(j_0)>w_J(k_0)$, we have $j_0=n_{j+2}$ and $w_J(j_0)=n$. But this is in contradiction with $w_J(i_0)>w_J(j_0)$. In conclusion, for the case $n_j\geq d$, $w_J$ is a $321-$avoiding permutation. The case $n_j<d$ can be proved similarly.
\end{proof}
\begin{remark} \label{r:321avoidingexample} For example,  the identity in Lemma~\ref{keyidentityinFLn} coming from $\mathbb{F}\ell(7;4,2)$, where $n_j=2,n_{j+1}=4$ and $i=4, d=1$, is
  \begin{align*}
    \sigma^B_{1526347}\cdot \sigma^B_{2314567}-\sigma^B_{2516347}\cdot\sigma^B_{1324567}+\sigma^B_{3516247}\cdot\sigma^B_{1234567}=0,
  \end{align*}
and it involves only $321$-avoiding permutations.
\end{remark}

\begin{defn}
 Let $w\in S_n$ be a permutation, then the code of $w$ is defined as $$c(w)=(c_1,c_2,\cdots, c_n)$$
 where $c_i:=\sharp \{j|i<j, w(j)<w(i)\}$.
\end{defn}
\begin{defn}
  Let $w$ be a $321-$avoiding permutation with code $c(w)=(c_1,\cdots,c_n)$. The flag of the partition is defined as $\phi(w)=\{j_1<j_2<\cdots<j_l\}:=\{j|c_j>0\}$. We define a skew partition $\lambda / \mu$ by embedding it into $\mathbb{Z}\times \mathbb{Z}$ as follows:
  \begin{align*}
    &\lambda_k-\mu_k=c_{j_k}\\
   & \lambda/\mu =\{(k,h): 1\leq k\leq l , k-j_k-c_{j_k}+1\leq h\leq k-j_k\}
  \end{align*}
\end{defn}
\begin{example} We continue with the example from Remark~\ref{r:321avoidingexample}.
\begin{enumerate}
  \item  If $w=1526347$, then the code of $w$ is $c(w)=(0,3,0,2,0,0,0)$. And we have $\{j_1<j_2\}=\{2<4\}$ with $l=2$. Then we have
\begin{align*}
  & k=1,  1-2-3+1\leq h \leq 1-2\\
   & k=2, 2-4-2+1\leq h\leq 2-4
\end{align*}
Therefore, the skew partition is $\ydiagram{3,2}$ with $\lambda=(3,2)$ and $\mu=(0,0)$.
  \item If $w=2516347$, then the code of $w$ is $c(w)=(1,3,0,2,0,0,0)$ with flag $\phi(w)=\{1,2,4\}$. Then
  \begin{align*}
    & k=1, 1-1-1+1\leq h\leq 1-1\\
    &k=2, 2-2-3+1\leq h\leq 2-2\\
    &k=3,3-4-2+1\leq h\leq 3-4
  \end{align*}
  Therefore, the skew partition is $\lambda/\mu=\ydiagram{2+1,3,2}$ with $\lambda=(3,3,2)$ and $\mu=(2,0,0)$.
  \item If $w=3516247$, then the code of $w$ is $c(w)=(2,3,0,2,0,0,0)$ with flag $\phi(w)=\{1,2,4\}$. Then
  \begin{align*}
    &k=1, 1-1-2+1\leq h\leq 1-1\\
    &k=2, 2-2-3+1\leq h\leq 2-2\\
    &k=3, 3-4-2+2+1\leq h\leq 3-4
  \end{align*}
  Therefore, the skew partition is $\lambda/\mu =\ydiagram{1+2,3,2}$ with $\lambda=(3,3,2)$ and $\mu=(1,0,0)$.
\end{enumerate}

\end{example}
In \cite{BJS}, Schubert polynomial $\mathfrak{S}_w$ is explicitly written  down in a determinantal formula for a $321-$avoiding permutation as follows.
\begin{thm}[Corollary 2.3 of \cite{BJS}]\label{321equality}
  Let $w$ be a $321-$avoiding permutation with flag $\phi(w)=(\phi_1<\cdots<\phi_k)$ and skew partition $\lambda /\mu$. Let $X_i=(x_1,x_2,\cdots,x_{i})$. Then we have
  $$\mathfrak{S}_w=\det (h_{\lambda_i-\mu_j-i+j}(X_{\phi_i}))_{1\leq i,j\leq k}$$
  where $h_r(X_i)$ is the complete homogeneous symmetric polynomial  of degree $r$ in   variables $X_i$.
\end{thm}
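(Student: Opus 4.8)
The plan is to realise $\mathfrak S_w$, for a $321$-avoiding permutation $w$, as a flagged skew Schur polynomial, and then apply the flagged Jacobi--Trudi identity. I would begin from the Billey--Jockusch--Stanley combinatorial formula for Schubert polynomials: writing $R(w)$ for the set of reduced words of $w$ and, for $\mathbf a=(a_1,\dots,a_\ell)\in R(w)$, calling a weakly increasing tuple $\mathbf b=(b_1\le\dots\le b_\ell)$ of positive integers a \emph{compatible sequence} when $b_t\le a_t$ for all $t$ and $b_t<b_{t+1}$ whenever $a_t\le a_{t+1}$, one has $\mathfrak S_w=\sum_{\mathbf a\in R(w)}\sum_{\mathbf b}x_{b_1}\cdots x_{b_\ell}$. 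I would take this as known (it is proved in \cite{BJS}, and can be reproved by checking that the right-hand side obeys the divided-difference recursion $\partial_i\mathfrak S_w=\mathfrak S_{ws_i}$ for $\ell(ws_i)<\ell(w)$ together with the initial value $\mathfrak S_{w_0}=x_1^{n-1}\cdots x_{n-1}$).

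Next I would exploit the heap structure of $321$-avoiding permutations. Such $w$ are fully commutative, so $R(w)$ is a single commutation class, encoded by a heap poset; reading off from the explicit formulas for the code $c(w)$, the flag $\phi(w)=(\phi_1<\dots<\phi_k)$ and the skew shape $\lambda/\mu$ in the statement, I would check that, after shearing, this heap is the cell-poset of $\lambda/\mu$ under the componentwise order on positions $(k,h)$, with all cells on a common diagonal recording a single Coxeter generator (the generator index decreasing by one at each step to the right and increasing by one at each step down). Reduced words $\mathbf a\in R(w)$ are then exactly the standard fillings of $\lambda/\mu$, and the generator carried by the $t$-th letter of $\mathbf a$ depends only on which cell holds $t$. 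Repackaging the double sum accordingly, a compatible pair $(\mathbf a,\mathbf b)$ becomes a filling of $\lambda/\mu$ by positive integers that is weakly increasing along rows and strictly increasing down columns, subject to the bound $b_t\le a_t$, which translates precisely into: an entry in row $i$ is at most $\phi_i$. Hence
\[
\mathfrak S_w=\sum_{T}x^{T}=s_{\lambda/\mu}(X_{\phi_1},\dots,X_{\phi_k}),
\]
the sum being over flagged column-strict tableaux $T$ of shape $\lambda/\mu$ with row-$i$ entries in $\{1,\dots,\phi_i\}$.

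Finally, I would identify this flagged skew Schur polynomial with the stated determinant via the flagged skew Jacobi--Trudi identity $s_{\lambda/\mu}(X_{\phi_1},\dots,X_{\phi_k})=\det\big(h_{\lambda_i-\mu_j-i+j}(X_{\phi_i})\big)_{1\le i,j\le k}$. This may be cited, or proved by a Lindström--Gessel--Viennot argument: flagged tableaux of shape $\lambda/\mu$ with flag $\phi$ biject with families of non-intersecting lattice paths, the $i$-th path running between lattice points determined by $\mu_i-i$ and $\lambda_i-i$ and confined to horizontal coordinate at most $\phi_i$; since $\phi_1<\dots<\phi_k$, in the Lindström determinant of the single-path weights $h_{\lambda_i-\mu_j-i+j}(X_{\phi_i})$ only the identity term admits non-intersecting families, which gives the identity. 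Composing the two steps yields $\mathfrak S_w=\det\big(h_{\lambda_i-\mu_j-i+j}(X_{\phi_i})\big)_{1\le i,j\le k}$, as claimed.

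The main obstacle is the bookkeeping in the middle step: verifying, with the correct conventions, that the compatible-sequence inequalities on $\mathbf b$ become exactly the row-weak / column-strict monotonicity of the tableau, and that the bound $b_t\le a_t$ produces exactly the flag constraint $\phi_i$ on row $i$. This is a finite but delicate check concerning which elementary moves in a reduced word of $w$ are ``ascents'' versus ``descents'' of the heap, and concerning the generator carried by each cell; the explicit descriptions of $c(w)$, $\phi(w)$ and $\lambda/\mu$ in the statement make it tractable, and it can be sanity-checked on the permutations $w_J$ of Remark~\ref{r:321avoidingexample} (for instance $w=1526347$, where the formula returns $h_3(X_2)h_2(X_4)-h_4(X_2)h_1(X_4)$, a polynomial of degree $5=\ell(w)$ with leading monomial $x_2^3x_4^2=x^{c(w)}$).
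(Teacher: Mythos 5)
The paper does not prove this statement at all: it is quoted verbatim as Corollary 2.3 of \cite{BJS}, so there is no in-paper argument to compare against. Your proposal is a correct reconstruction of the standard proof from that reference — the compatible-sequence formula for $\mathfrak S_w$, the identification of $R(w)$ with linear extensions of the heap of the fully commutative permutation $w$ turning the double sum into a flagged skew Schur function $s_{\lambda/\mu}(X_{\phi_1},\dots,X_{\phi_k})$, and then Wachs's flagged Jacobi--Trudi determinant via Lindstr\"om--Gessel--Viennot — so it takes essentially the same route as the cited source.
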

In \cite{Kirillov}, A.N. Kirillov defines quantum Schubert polynomials and conjectures that the quantum version of the above determinantal formula holds as well,  see~\cite[Conjecture~1]{Kirillov}. We will verify this conjecture in the quantum cohomology ring of the complete flag variety $\mathbb{F}\ell_n$ using the work of \cite{FominGelfandPostnikov}.  {This should have been known to the experts (see e.g. \cite[formula (6)]{ChKa}).  Since we are not aware of this formula appearing in form of a theorem, we state it here as Theorem~\ref{321equalityquantum} and provide a detailed argument for completeness.}

\begin{defn}
  Let $G_k$ be the matrix
  $$\left(
    \begin{array}{ccccc}
      x_1 & q_1 & 0 & \cdots & 0 \\
     -1&x_2 &q_2 & \cdots & 0 \\
      0 & -1 & x_3 & \cdots & 0 \\
      \vdots & \vdots & \vdots &\ddots &\vdots \\
      0 &0& 0 & \cdots&x_k \\
    \end{array}
  \right)$$
  The quantum elementary polynomial $E_i^k$ is defined by the following formula
  $$\det(1+\lambda G_k)=\sum_{i=0}^k E_i^k \lambda^i$$
  And we set $E_i^k=0$ if $i<0$ or $i>k$.
\end{defn}
By setting $q_1=q_2=\cdots=q_{k-1}=0$, $E_i^k$ recovers the ordinary elementary symmetric polynomial $e_i^k=e_i^k(x_1, \cdots, x_k)$. Let $e_{i_1\cdots i_m}:=e_{i_1}^1\cdots e_{i_m}^m$ be standard elementary monomial. The following lemma is a classical result, and can be found in \cite{Macdonald}.
\begin{lemma}\label{schubertpoly}
Let $I_n$ be the ideal in  $\mathbb{Z}[x_1,\cdots,x_n]$ generated by $e_1^n,\cdots,e_n^n$, then each of the following forms a $\mathbb{Z}-$basis in $\mathbb{Z}[x_1,\cdots,x_n]/I_n$:
\begin{enumerate}
  \item the standard elementary monomials $e_{i_1\cdots i_{n-1}}$, with $0\leq i_k\leq k$;
  \item the Schubert polynomials $\mathfrak{S}_w$ for $w\in S_n$.
\end{enumerate}
Moreover, each of these families spans the same vector space $L_n\subset \mathbb{Z}[x_1,\cdots,x_n]$ which is complementary to $I_n$.
\end{lemma}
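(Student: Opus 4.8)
The plan is to prove the two basis statements essentially independently, after recording the one classical structural input they share. Write $P_n=\mathbb{Z}[x_1,\dots,x_n]$, let $\Lambda_n\subset P_n$ be the ring of symmetric polynomials and $\Lambda_n^+$ its ideal of positive-degree elements, so that $I_n=\Lambda_n^+P_n$. The first step is to show that $P_n$ is a \emph{free} $\Lambda_n$-module with basis the standard elementary monomials $e_I=e^1_{i_1}\cdots e^{n-1}_{i_{n-1}}$, $0\le i_k\le k$. I would do this by induction on $n$. Writing $P_n=P_{n-1}[x_n]$, the identities $e^n_j=e^{n-1}_j+x_ne^{n-1}_{j-1}$ give $\Lambda_{n-1}[x_n]=\Lambda_n[x_n]$; since $x_n$ is a root of the monic degree-$n$ polynomial $\prod_{j=1}^n(t-x_j)\in\Lambda_n[t]$, which is its minimal polynomial over $\mathrm{Frac}(\Lambda_n)$, and $\Lambda_n$ is integrally closed, $\Lambda_n[x_n]$ is free over $\Lambda_n$ with basis $1,x_n,\dots,x_n^{n-1}$; and the relations $e^{n-1}_m=\sum_{i=0}^m(-1)^ix_n^ie^n_{m-i}$ are triangular with $\pm1$ on the diagonal, so $\{e^{n-1}_m\}_{m=0}^{n-1}$ is again a $\Lambda_n$-basis of $\Lambda_n[x_n]$. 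Tensoring the inductive hypothesis (that $\{e^1_{i_1}\cdots e^{n-2}_{i_{n-2}}\}$ is a $\Lambda_{n-1}$-basis of $P_{n-1}$) with this yields the assertion for $n$. Reducing modulo $I_n$ and using $\Lambda_n=\mathbb{Z}\oplus\Lambda_n^+$ now shows both that the images of the $e_I$ form a $\mathbb{Z}$-basis of $P_n/I_n$ and that the $\mathbb{Z}$-span $L_n:=\bigoplus_I\mathbb{Z}e_I$ is a $\mathbb{Z}$-module complement of $I_n$ in $P_n$; this is part~(1).

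For the Schubert-polynomial statement I would run the same type of argument once more with the staircase monomials $x^a=x_1^{a_1}\cdots x_{n-1}^{a_{n-1}}$, $0\le a_k\le n-k$, in place of the $e_I$ (the classical Artin-type basis, obtained from the fact that $x_k$ is integral of degree $n-k+1$ over $\Lambda_n[x_1,\dots,x_{k-1}]$), and call the resulting $\mathbb{Z}$-span $L_n'$, so $P_n=L_n'\oplus I_n$. Then I would record two facts about the Schubert polynomials $\mathfrak{S}_w$, $w\in S_n$: (a) every monomial of $\mathfrak{S}_w$ has all exponents bounded coordinatewise by the staircase $(n-1,n-2,\dots,1,0)$; and (b) the lexicographically largest monomial of $\mathfrak{S}_w$ (with $x_1>x_2>\cdots$) is $x^{c(w)}$, $c(w)$ the code of $w$. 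Fact~(a) holds because $\mathfrak{S}_{w_0}=x_1^{n-1}\cdots x_{n-1}$ lies in $L_n'$ and each divided difference $\partial_i$ preserves $L_n'$ (immediate from $\partial_i(x_i^ax_{i+1}^b)=0$ when $a=b$ and $=\pm(x_i^{a-1}x_{i+1}^b+\cdots+x_i^bx_{i+1}^{a-1})$ when $a\neq b$), while every $\mathfrak{S}_w$ arises from $\mathfrak{S}_{w_0}$ by a sequence of $\partial_i$; fact~(b) is the standard triangularity of Schubert polynomials (see \cite{Macdonald}). Since $w\mapsto c(w)$ is a bijection from $S_n$ onto the set of staircase exponent vectors, (a) and (b) say precisely that the transition matrix expressing $\{\mathfrak{S}_w\}$ in the staircase monomials is unipotent-triangular, hence unimodular over $\mathbb{Z}$; so $\{\mathfrak{S}_w\}$ is also a $\mathbb{Z}$-basis of $L_n'$, and its image is a $\mathbb{Z}$-basis of $P_n/I_n$. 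This is part~(2).

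It remains to identify the two spans, $L_n=L_n'$. The same counting used in part~(1) shows each $e_I$ already lies in $L_n'$: in $e^1_{i_1}\cdots e^{n-1}_{i_{n-1}}$ the variable $x_k$ occurs (with exponent at most $1$) only in the factors $e^m_{i_m}$ with $m\ge k$, of which there are $n-k$, so every monomial of $e_I$ has $x_k$-degree $\le n-k$. Hence $L_n\subseteq L_n'$. Applying the projection $P_n\twoheadrightarrow P_n/I_n$, which is injective on $L_n'$, together with the modular law $L_n'=L_n'\cap(L_n+I_n)=L_n+(L_n'\cap I_n)$, forces $L_n'\cap I_n=0$ and therefore $L_n'=L_n$, as claimed. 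The one point that takes genuine care, as opposed to bookkeeping, is keeping all comparisons \emph{integral}: this is why the triangular change-of-basis matrices ($e^{n-1}_m\leftrightarrow x_n^m$ and $\mathfrak{S}_w\leftrightarrow x^{c(w)}$) must be checked to have unit diagonal, and why $L_n=L_n'$ is deduced from the inclusion $L_n\subseteq L_n'$ plus a splitting argument rather than from a rank count alone. Everything else — the freeness of $P_n$ over $\Lambda_n$ and the triangularity of Schubert polynomials — is classical, so the work is entirely in arranging these inputs in the order above.
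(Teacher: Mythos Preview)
The paper does not actually prove this lemma: it is stated as a classical result with a reference to Macdonald's \emph{Notes on Schubert polynomials}, and no argument is given. Your proposal therefore cannot be compared to a ``paper's proof'' in the usual sense; rather, you are supplying the details that the paper outsources to the literature.

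Your argument is correct and is essentially the standard one found in Macdonald. The inductive proof that $P_n$ is free over $\Lambda_n$ with basis the $e_I$ is right; the one place to be slightly more explicit is the step ``$\Lambda_n[x_n]$ is free over $\Lambda_n$ with basis $1,x_n,\dots,x_n^{n-1}$'': you do not really need integrally closedness here, only that the surjection $\Lambda_n[t]/(\prod_j(t-x_j))\twoheadrightarrow \Lambda_n[x_n]$ has torsion kernel (it becomes an isomorphism over $\mathrm{Frac}(\Lambda_n)$ since the generic polynomial is irreducible) inside a free module, hence is injective. The Schubert side via staircase monomials, the facts (a) and (b), and the unitriangularity conclusion are all standard and correctly stated. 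Your final identification $L_n=L_n'$ via the degree bound on $x_k$ in $e_I$ and the modular law is clean and correct. In short: the paper cites, you prove, and your proof is the classical one.
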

Therefore, any Schubert polynomial $\mathfrak{S}_w$ is uniquely a linear combination of standard elementary monomials with integer coefficients.
In \cite{FominGelfandPostnikov}, the quantum Schubert polynomial is defined as the linear combination of the quantum elementary monomials $E_{i_1\cdots i_m}:=E_{i_1}^1\cdots E_{i_m}^m$ with the same coefficients. Namely, we have
\begin{defn}\label{defnofquantumSchubert}
  The quantum Schubert polynomial $\mathfrak{S}_w^q$ for a permutation $w\in S_n$ is defined as
  $$\mathfrak{S}_w^q=\sum\alpha_{i_1\cdots i_{n-1}}E_{i_1\cdots i_{n-1}}$$
  where the coefficients $\alpha_{i_1\cdots i_{n-1}}$ are the same as the coefficients found in the classical expansion $\mathfrak{S}_w=\sum\alpha_{i_1\cdots i_{n-1}}e_{i_1\cdots i_{n-1}}$.
\end{defn}
We recall the quantum analogue of Lemma~\ref{schubertpoly} proved in \cite{FominGelfandPostnikov}.
\begin{lemma}\label{schubertpolyquantum}
Let $I_n^q$ be the ideal in  $\mathbb{Z}[q_1,\cdots,q_{n-1}][x_1,\cdots,x_n]$ generated by $E_1^n,\cdots,E_n^n$, then each of the following determines a $\mathbb{Z}[q]-$basis in $\mathbb{Z}[q,x]/I_n^q$:
\begin{enumerate}
  \item the quantum standard elementary monomials $E_{i_1\cdots i_{n-1}}$, with $0\leq i_k\leq k$;
  \item the quantum Schubert polynomials $\mathfrak{S}_w^q$ for $w\in S_n$.
\end{enumerate}
Moreover, each of these families spans the same vector space $L_n^q\subset \mathbb{Z}[q,x]$ which is complementary to $I_n^q$.
\end{lemma}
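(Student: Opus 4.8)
The plan is to deduce Lemma~\ref{schubertpolyquantum} from its classical counterpart Lemma~\ref{schubertpoly} by a graded flatness argument over $\mathbb{Z}[q]:=\mathbb{Z}[q_1,\dots,q_{n-1}]$. Equip $R:=\mathbb{Z}[q][x_1,\dots,x_n]$ with the grading $\deg x_i=1$, $\deg q_i=2$; then each $E_i^k$ is homogeneous of degree $i$, its highest-$x$-degree part is the classical $e_i^k=e_i(x_1,\dots,x_k)$, and reduction modulo $(q_1,\dots,q_{n-1})$ sends $E_i^k\mapsto e_i^k$. In particular $R/(q)\cong\mathbb{Z}[x]$, the ideal $I_n^q$ maps onto $I_n$, and $M/(q)M\cong\mathbb{Z}[x]/I_n$ where $M:=R/I_n^q$. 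Working in a monomial order in which the $x$-variables dominate the $q$-variables, the leading monomial of $E_i^n$ equals that of $e_i^n$ and has leading coefficient $1$, a unit of $\mathbb{Z}[q]$; the division algorithm over the coefficient ring $\mathbb{Z}[q]$ therefore expresses every element of $M$ as a $\mathbb{Z}[q]$-combination of the finitely many standard monomials of $\mathbb{Z}[x]/I_n$, so $M$ is a finitely generated graded $\mathbb{Z}[q]$-module.

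First I would prove part (1). The images in $M/(q)M\cong\mathbb{Z}[x]/I_n$ of the quantum standard elementary monomials $E_{i_1\cdots i_{n-1}}$ ($0\le i_k\le k$) are exactly the classical standard elementary monomials $e_{i_1\cdots i_{n-1}}$, which span $\mathbb{Z}[x]/I_n$ by Lemma~\ref{schubertpoly}; by graded Nakayama over $\mathbb{Z}[q]$ (a plain induction on degree, not requiring $\mathbb{Z}[q]_0$ to be a field) the $E_{i_1\cdots i_{n-1}}$ generate $M$ over $\mathbb{Z}[q]$. Next I claim $E_1^n,\dots,E_n^n$ is a regular sequence in $R$: the concatenation $q_1,\dots,q_{n-1},E_1^n,\dots,E_n^n$ is regular because $R/(q)=\mathbb{Z}[x]$ and there $e_1^n,\dots,e_n^n$ is a regular sequence (classical, and also forced by Lemma~\ref{schubertpoly}, which makes $\mathbb{Z}[x]/I_n$ finite free over $\mathbb{Z}$), and permuting a regular sequence of homogeneous elements of positive degree leaves it regular; the permuted sequence $E_1^n,\dots,E_n^n,q_1,\dots,q_{n-1}$ then shows simultaneously that $E_1^n,\dots,E_n^n$ is regular in $R$ and that $q_1,\dots,q_{n-1}$ is a regular sequence on $M$. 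Hence $\operatorname{Tor}^{\mathbb{Z}[q]}_i(M,\mathbb{Z}[q]/(q))=0$ for $i>0$. Now let $\phi\colon\mathbb{Z}[q]^{\oplus n!}\twoheadrightarrow M$ be the graded surjection furnished by the quantum standard elementary monomials; tensoring $0\to\ker\phi\to\mathbb{Z}[q]^{\oplus n!}\to M\to 0$ with $\mathbb{Z}[q]/(q)$, the vanishing of $\operatorname{Tor}_1$ together with the fact that $\mathbb{Z}^{\oplus n!}\to M/(q)M=\mathbb{Z}[x]/I_n$ is an isomorphism (again Lemma~\ref{schubertpoly}) forces $\ker\phi/(q)\ker\phi=0$, whence $\ker\phi=0$ by graded Nakayama. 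So $\phi$ is an isomorphism and $M$ is $\mathbb{Z}[q]$-free of rank $n!$ on the quantum standard elementary monomials.

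For part (2), Definition~\ref{defnofquantumSchubert} gives $\mathfrak{S}_w^q=\sum\alpha^w_{i_1\cdots i_{n-1}}E_{i_1\cdots i_{n-1}}$ with the same integer coefficients $\alpha^w_{i_1\cdots i_{n-1}}$ that occur in $\mathfrak{S}_w=\sum\alpha^w_{i_1\cdots i_{n-1}}e_{i_1\cdots i_{n-1}}$; by Lemma~\ref{schubertpoly} the two classical families are $\mathbb{Z}$-bases of the same rank-$n!$ free module $L_n$, so the coefficient matrix $A=(\alpha^w_{i_1\cdots i_{n-1}})$ lies in $GL_{n!}(\mathbb{Z})$, and applying $A$ to the basis from part (1) shows $\{\mathfrak{S}_w^q\}_{w\in S_n}$ is also a $\mathbb{Z}[q]$-basis of $M$; setting $L_n^q$ to be the common $\mathbb{Z}[q]$-span of the two families, the isomorphism $L_n^q\xrightarrow{\ \sim\ }M$ just obtained is precisely the direct sum decomposition $R=L_n^q\oplus I_n^q$. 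The step I expect to be the main obstacle is the regular-sequence property over the non-field base $\mathbb{Z}$, and the permutability of regular sequences invoked there (which is cleanest over a local ring or a field); a safe route is to first run the entire flatness argument over $\mathbb{Q}$, obtaining $\mathbb{Q}[q]$-freeness, and then recover the $\mathbb{Z}$-structural statements from the integrality already built into Lemma~\ref{schubertpoly}—the matrices $A$ are integral and unimodular and the standard monomials have unit leading coefficients, so no denominators arise.
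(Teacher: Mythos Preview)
The paper does not prove this lemma; it simply records it as a result of Fomin--Gelfand--Postnikov \cite{FominGelfandPostnikov}. Your proposal, by contrast, supplies a self-contained argument via graded flatness, and it is essentially correct and close in spirit to the original proof in \cite{FominGelfandPostnikov}. The core steps---graded Nakayama to get generation, the regular-sequence/Koszul argument to get $\operatorname{Tor}$-vanishing and hence freeness, and the observation that the integer transition matrix between $\{\mathfrak S_w\}$ and $\{e_{i_1\cdots i_{n-1}}\}$ lies in $GL_{n!}(\mathbb Z)$---are all sound, and your remark that graded Nakayama needs only boundedness below (not a field in degree $0$) handles the $\mathbb Z$-coefficient issue cleanly.

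One small correction: your preliminary claim that the division algorithm by $E_1^n,\dots,E_n^n$ leaves remainders supported on the finitely many standard monomials of $\mathbb Z[x]/I_n$ is not justified as written, because $e_1^n,\dots,e_n^n$ (and hence $E_1^n,\dots,E_n^n$) are not a Gr\"obner basis for $I_n$ in a lex order, so the set of monomials not divisible by any $\mathrm{LM}(E_i^n)$ is infinite. Fortunately this step is unnecessary: the graded Nakayama argument you give immediately afterward already yields finite generation (indeed generation by the $n!$ quantum standard elementary monomials) without any a~priori finiteness of $M$, since $M$ is nonnegatively graded. You can simply delete the division-algorithm sentence. The permutability of homogeneous positive-degree regular sequences you flag as a potential obstacle is standard for Noetherian $\mathbb N$-graded rings (the usual local proof goes through verbatim with graded Nakayama), so your main argument stands over $\mathbb Z$ without needing the detour through $\mathbb Q$.
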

One of the main results in \cite{FominGelfandPostnikov} is the following
\begin{thm}[Theorem 1.2 of \cite{FominGelfandPostnikov}]
 The map
 \[\pi: \mathbb{Z}[q_1,\cdots, q_{n-1}][x_1,\cdots,x_n]\longrightarrow QH^*(\mathbb{F}\ell_n)
 \]
 sending $x_1+\cdots+x_i$ to $\sigma^B_{s_i}\in QH^*(\mathbb{F}\ell_n)$ is a surjective ring homomorphism with kernel $I_n^q$ generated by $E_1^n,\cdots,E_n^n$. Under the induced isomorphism $\mathbb{Z}[q,x]/I_n^q \cong QH^*(\mathbb{F}\ell_n)$, the coset of the quantum Schubert polynomial $\mathfrak{S}_w^q$ is sent to the corresponding quantum Schubert class $\sigma^B_w$.
 \end{thm}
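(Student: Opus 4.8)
\emph{Proof idea.} This is Theorem~1.2 of \cite{FominGelfandPostnikov}, so in the present paper it is simply cited; were one to reprove it, the plan would be as follows. The statement has two logically independent parts: (a) that $\pi$ is surjective with kernel exactly $I_n^q$, giving the presentation $QH^*(\mathbb{F}\ell_n)\cong\mathbb{Z}[q,x]/I_n^q$; and (b) that under this presentation the coset of $\mathfrak{S}_w^q$ equals the Schubert class $\sigma^B_w$.

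For part (a): $\pi$ is surjective because $QH^*(\mathbb{F}\ell_n)$ is free over $\mathbb{Z}[q]$ with $QH^*(\mathbb{F}\ell_n)/(q_1,\dots,q_{n-1})\cong H^*(\mathbb{F}\ell_n,\mathbb{Z})$, so assigning the $q_i$ their natural positive degrees and running a degree induction (graded Nakayama) shows $QH^*(\mathbb{F}\ell_n)$ is generated over $\mathbb{Z}[q]$ by any lifts of ring generators of $H^*(\mathbb{F}\ell_n,\mathbb{Z})$, in particular by the divisor classes $\sigma^B_{s_1},\dots,\sigma^B_{s_{n-1}}$, which are the $\pi$-images of $x_1,\, x_1+x_2,\,\dots$. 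One then checks $E_1^n,\dots,E_n^n\in\ker\pi$, i.e. that the tridiagonal-determinant relations defining $I_n^q$ vanish in $QH^*(\mathbb{F}\ell_n)$; this is the presentation of $QH^*(\mathbb{F}\ell_n)$ due to Givental and Kim, and can alternatively be verified by expanding each $E_i^n$ as a polynomial in the divisor classes and applying the quantum Chevalley--Monk rule (the full-flag case of Proposition~\ref{Monkrule}) repeatedly. Hence $\pi$ descends to a surjection $\bar\pi:\mathbb{Z}[q,x]/I_n^q\twoheadrightarrow QH^*(\mathbb{F}\ell_n)$. Finally, by Lemma~\ref{schubertpolyquantum} the source is free over $\mathbb{Z}[q]$ of rank $1\cdot 2\cdots n=n!$, while the target is free over $\mathbb{Z}[q]$ of rank $|S_n|=n!$; a surjection between finitely generated free modules of the same rank over a commutative ring is an isomorphism, so $\bar\pi$ is an isomorphism and $\ker\pi=I_n^q$.

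For part (b): induct on $\ell(w)$, with the trivial base case $\mathfrak{S}^q_{\mathrm{id}}=1\mapsto\sigma^B_{\mathrm{id}}$. The essential ingredient is the \emph{quantum Monk formula for quantum Schubert polynomials}: for every $r$ and every $v\in S_n$ one has, modulo $I_n^q$,
\[
(x_1+\dots+x_r)\cdot\mathfrak{S}^q_v\ \equiv\ \sum\mathfrak{S}^q_{vt_{ab}}\ +\ \sum q_a q_{a+1}\cdots q_{b-1}\,\mathfrak{S}^q_{vt_{ab}},
\]
the two sums ranging over exactly the index sets of the classical and quantum terms, respectively, of the quantum Chevalley--Monk rule for $\sigma^B_{s_r}\cdot\sigma^B_v$. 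Granting this, $\bar\pi$ intertwines multiplication by $x_1+\dots+x_r$ on the $\mathfrak{S}^q$-basis with multiplication by $\sigma^B_{s_r}$ on the Schubert basis, with identical structure constants; since the $q=0$ specialisation of $\bar\pi$ is the classical Borel isomorphism, under which $\mathfrak{S}_w=\mathfrak{S}^q_w|_{q=0}$ corresponds to $\sigma_w$, this initial condition together with the quantum Chevalley--Monk recursion forces $\bar\pi(\mathfrak{S}^q_w)=\sigma^B_w$ for all $w$ by the length induction.

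\textbf{Main obstacle.} The two genuinely hard points are showing $E_i^n\in\ker\pi$ (equivalently, the Givental--Kim computation of $QH^*(\mathbb{F}\ell_n)$) and, above all, the quantum Monk formula for quantum Schubert polynomials, which requires controlling how the quantisation map $e_{i_1\cdots i_{n-1}}\mapsto E_{i_1\cdots i_{n-1}}$ interacts with multiplication by $x_1+\dots+x_r$ modulo the straightening relations $E_i^n\equiv 0$. Both are carried out in detail in \cite{FominGelfandPostnikov}, and for the present paper only the statement is needed.
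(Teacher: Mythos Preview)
Your proposal is correct: the paper does not prove this theorem but simply cites it from \cite{FominGelfandPostnikov}, exactly as you state in your first sentence. Your supplementary sketch of how the result is established in \cite{FominGelfandPostnikov}---via the Givental--Kim presentation for part (a) and the quantum Monk formula for quantum Schubert polynomials for part (b)---accurately reflects the strategy of that reference, though none of it is needed for the present paper.
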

 Now we are ready to prove the quantum version of the determinantal formula for a $321-$avoiding permutation.
 \begin{defn}
  We call $H_l^k:=\det(E_{j-i+1}^{k+l-1})_{1\leq i,j\leq l}$ the quantum complete homogeneous polynomial in $k$ variables of degree $l$.  Set $H_{i_1,\cdots,i_{n-1}}:=H^1_{i_1}\cdots H^{n-1}_{i_{n-1}}$.
 \end{defn}
\begin{remark}
    $H_{i_k}^k \in I_n^q$ if $i_k> n-k$.
   \end{remark}

 \begin{thm}\label{321equalityquantum}
    Let $w$ be a $321-$avoiding permutation with flag $\phi(w)=(\phi_1<\cdots<\phi_k)$ and skew partition $\lambda /\mu$. Let $X_i=(x_1,x_2,\cdots,x_{i})$. Then in $\mathbb{Z}[q,x]/I_n^q$ we have
  $$\mathfrak{S}_w^q=\det (H_{\lambda_i-\mu_j-i+j}(X_{\phi_i}))_{1\leq i,j\leq k}.$$
 \end{thm}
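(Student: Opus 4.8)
The plan is to deduce the quantum determinantal formula from its classical counterpart (Theorem~\ref{321equality}) through the quantization machinery of \cite{FominGelfandPostnikov}. Recall from Definition~\ref{defnofquantumSchubert} that $\mathfrak S_w^q$ is, by construction, the image of $\mathfrak S_w$ under the $\mathbb Z$-linear quantization map $\mathcal Q$ that expands a polynomial in the basis of standard elementary monomials $e_{i_1\cdots i_{n-1}}$ (reducing modulo $I_n$, by Lemma~\ref{schubertpoly}) and then replaces each $e_i^m$ by $E_i^m$; by Theorem~1.2 of \cite{FominGelfandPostnikov}, $\mathcal Q(\mathfrak S_w)=\mathfrak S_w^q$ is the distinguished representative in $L_n^q$ of the Schubert class $\sigma^B_w\in QH^*(\mathbb F\ell_n)=\mathbb Z[q,x]/I_n^q$. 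It therefore suffices to show that the polynomial $\det\bigl(H_{\lambda_i-\mu_j-i+j}(X_{\phi_i})\bigr)_{1\le i,j\le k}$ represents $\sigma^B_w$, i.e.\ is congruent to $\mathfrak S_w^q$ modulo $I_n^q$.

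First I would establish the one-row case: modulo $I_n^q$, the quantization $\mathcal Q\bigl(h_l(X_k)\bigr)$ equals $H_l^k=\det\bigl(E^{k+l-1}_{b-a+1}\bigr)_{1\le a,b\le l}$. Both sides are explicit polynomials in the $E^m_i$, so this is a purely algebraic statement; I would prove it by induction (on $l$, or on $n-k$) using the quantum Whitney recursion
\[
\det(1+\lambda G_m)=\det(1+\lambda G_{m-1})\,(1+\lambda x_m)+\lambda^2 q_{m-1}\det(1+\lambda G_{m-2}),
\]
equivalently $E^m_i=E^{m-1}_i+x_mE^{m-1}_{i-1}+q_{m-1}E^{m-2}_{i-2}$ (which follows at once from the definition of the $E^m_i$ by cofactor expansion of $\det(1+\lambda G_m)$), together with the vanishing $E^n_i\equiv 0$ and $H^k_i\equiv 0$ for $i>n-k$ in $\mathbb Z[q,x]/I_n^q$. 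Next I would expand the classical determinant of Theorem~\ref{321equality} by the Leibniz rule, $\mathfrak S_w=\sum_{\sigma\in S_k}\operatorname{sgn}(\sigma)\prod_{i=1}^{k}h_{\lambda_i-\mu_{\sigma(i)}-i+\sigma(i)}(X_{\phi_i})$, and quantize each summand separately: since $\phi_1<\dots<\phi_k$, such a summand is an ordered product of complete symmetric polynomials in the nested blocks $X_{\phi_1}\subseteq\dots\subseteq X_{\phi_k}$, and one straightens it into standard monomials one block at a time, starting from the smallest block, invoking at each stage the one-row identity and the quantum straightening rules of \cite{FominGelfandPostnikov}. Summing the resulting congruences over $\sigma$ with signs then gives $\mathfrak S_w^q=\mathcal Q(\mathfrak S_w)\equiv\det\bigl(H_{\lambda_i-\mu_j-i+j}(X_{\phi_i})\bigr)\pmod{I_n^q}$, which is the assertion.

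A parallel route, avoiding explicit tracking of $\mathcal Q$, is to work inside $QH^*(\mathbb F\ell_n)$ via the ring surjection $\pi$ of \cite[Theorem~1.2]{FominGelfandPostnikov}: since $\pi$ is a ring homomorphism, $\pi$ of any determinant of polynomials in the $E^m_i$ is the determinant, computed with the quantum product, of the classes $\pi(E^m_i)$. One then needs a quantum Jacobi--Trudi identity identifying $\det\bigl(\pi(E^{k+l-1}_{b-a+1})\bigr)$ with the Schubert class of the one-row Grassmannian permutation attached to $h_l(X_k)$, followed by a flagged-skew quantum Giambelli identity assembling these row-classes into $\sigma^B_w$ for $321$-avoiding $w$; both can be proved by iterating the quantum Monk--Pieri rule (Proposition~\ref{Monkrule}) and matching the classical Jacobi--Trudi expansion after the specialization $q\mapsto 0$, where $\pi$ becomes the classical presentation of $H^*(\mathbb F\ell_n)$.

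The hard part is the bookkeeping in the first route: the quantization map is only additive, not multiplicative, so the entries of the Jacobi--Trudi matrix cannot be quantized independently, and the superscript shift $k\mapsto k+l-1$ in the definition of $H^k_l$ is exactly the accumulated $q$-correction produced by straightening. Making the one-row quantum identity precise and carrying out the "peel off the nested blocks in the correct order" step rigorously --- in particular controlling which partial products already lie in $I_n^q$ --- is where the real work lies; everything else is formal, given Theorem~\ref{321equality}, the properties of $\mathcal Q$, and Theorem~1.2 of \cite{FominGelfandPostnikov}.
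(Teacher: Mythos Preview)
Your two routes are both honest attempts, but they miss the one idea that makes the paper's proof short: the involution $\omega$ of \cite{FominGelfandPostnikov} on $\mathbb Z[q,x]$, given by $\omega(x_k)=-x_{n+1-k}$ and $\omega(q_k)=q_{n-k}$. This involution preserves $I_n^q$ and satisfies
\[
\omega(E_{i_1\cdots i_{n-1}})=H_{i_{n-1}\cdots i_1},\qquad \omega(\mathfrak S_w^q)=\mathfrak S_{w_0ww_0}^q.
\]
Applying $\omega$ to the desired identity turns it into
\[
\mathfrak S_{w_0ww_0}^q=\det\bigl(E_{\lambda_i-\mu_j-i+j}(X_{n-\phi_i})\bigr)\quad\text{in }\mathbb Z[q,x]/I_n^q.
\]
Now the right-hand side is a \emph{polynomial expression in the symbols $E_i^m$}, identical letter-for-letter to the classical expression $\det\bigl(e_{\lambda_i-\mu_j-i+j}(X_{n-\phi_i})\bigr)$. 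Its Leibniz expansion is a $\mathbb Z$-linear combination of monomials $E_{i_1\cdots i_{n-1}}$ with exactly the same coefficients as the classical expansion in the $e_{i_1\cdots i_{n-1}}$; the nonzero ones are automatically standard since $E_i^m=0$ for $i>m$. By Definition~\ref{defnofquantumSchubert} and Lemma~\ref{schubertpolyquantum}, matching these coefficients with those of $\mathfrak S_{w_0ww_0}^q$ is precisely the classical statement, which follows from Theorem~\ref{321equality} (applied via the classical $\omega$). No quantum straightening, no tracking of $q$-corrections, is needed.

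Your first route attacks the $H$-side directly, and the difficulty you correctly identify --- that $\mathcal Q$ is not multiplicative and the superscript shift $k\mapsto k+l-1$ in $H_l^k$ encodes genuine $q$-corrections --- is exactly what the involution trick avoids. The ``peel off nested blocks'' step you sketch is never actually carried out, and there is no evident mechanism guaranteeing the corrections assemble into the claimed determinant. Your second route is circular as stated: the ``flagged-skew quantum Giambelli identity'' you would need \emph{is} the theorem. So the proposal is not wrong in spirit, but as written it is a plan with its central step unproved; the paper's argument via $\omega$ is the missing idea that collapses that step to a triviality.
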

\begin{proof}
   We consider the involution $\omega$ of $\mathbb{Z}[q_1,\cdots,q_{n-1}][x_1,\cdots,x_n]$ defined by $\omega(x_k)=-x_{n+1-k}$ and $\omega(q_k)=q_{n-k}$, for $1\leq k\leq n$. According to \cite{FominGelfandPostnikov}, $I_n^q$ is an invariant subspace for the involution $\omega$.  Therefore    $\omega$ induces an automorphism on $\mathbb{Z}[q,x]/I_n^q$. Moreover, we have
   \begin{align*}
     \omega(E_{i_1\cdots i_{n-1}})=H_{i_{n-1}\cdots i_1};\quad \omega(H_{i_1\cdots i_{n-1}})=E_{i_{n-1}\cdots i_1}; \quad \omega(\mathfrak{S}_w^q)=\mathfrak{S}_{w_0ww_0}^q.
   \end{align*}
Therefore it suffices to show
 $$\mathfrak{S}_{w_0ww_0}^q=\det (E_{\lambda_i-\mu_j-i+j}(X_{n-\phi_i}))_{1\leq i,j\leq k}$$
Note that the right hand side of the equality is a linear combination of quantum standard elementary monomials by the definition of determinants. Then by Lemma~\ref{schubertpolyquantum} it suffices to show that the coefficient of any standard elementary monomial $E_{i_1,\cdots,i_{n-1}}$ with $0\leq i_k\leq k$ on the right hand side is the same as in the definition of the quantum Schubert polynomial. But by Definition~\ref{defnofquantumSchubert}, it suffices to show this in the classical case.
However, by applying involution to Theorem~\ref{321equality}, we have the following equality in $\mathbb{Z}[x_1,\cdots,x_n]/I_n$
$$\mathfrak{S}_{w_0ww_0}=\det (e_{\lambda_i-\mu_j-i+j}(X_{n-\phi_i}))_{1\leq i,j\leq k}$$
Since the right hand side is a linear combination of $e_{i_1,\cdots,i_{n-1}}$ and the standard elementary monomials $e_{i_1,\cdots,i_{n-1}}$ with $0\leq i_k\leq k$ span a vector space complementary to $I_n$, by discarding the other monomials in the expansion of the determinant, we get the formula
$\mathfrak{S}_{w_0ww_0}=\sum_{0\leq i_k\leq k} \alpha_{i_1\cdots i_{n-1}}e_{i_1\cdots i_{n-1}}$ as wanted.
 \end{proof}
 \begin{remark}
In the proof we used the involution $\omega$, therefore we are only able to prove the identity  $\mathfrak{S}_w^q=\det (H_{\lambda_i-\mu_j-i+j}(X_{\phi_i}))_{1\leq i,j\leq k}$ in the quotient ring $\mathbb{Z}[q,x]/I_n^q$. However, the original conjectural identity in \cite{Kirillov} is stated in the ring $\mathbb{Z}[q,x]$.
 \end{remark}
 We now use this theorem %Lemma~\ref{321equalityquantum}
 to prove Lemma~\ref{keyidentityinFLn}.

\begin{proof}[Proof of Lemma~\ref{keyidentityinFLn}]
 Using the isomorphism $\mathbb{Z}[q,x]/I_n^q \cong QH^*(\mathbb{F}\ell_n)$, we may identify $\mathfrak{S}_w^q$ with $\sigma^B_w$, and treat $H_r(X_i)$ as an element in $QH^*(\mathbb{F}\ell_n)$. Also we use $\times$ for the multiplication. Since $w_J$ is a $321-$avoiding permutation by Lemma~\ref{321permutation}, we are able to apply Theorem~\ref{321equalityquantum}. The proof is divided into two cases: $n_j\geq d$ and $n_j< d$.
 \begin{enumerate}
   \item Consider the case $n_j\geq d$ first. Then for $J=\{\j_1<\cdots <\j_d\}\in \Xi=\{J\in \binom{[i]}{d}|J\cap [n_j+d+1,n]= \emptyset\}$, let $\{x_1<x_2<\cdots<x_{i-d}\}:=[i]\backslash J$, $w_J$ is the following permutation
 \begin{align*}
 & \{w(1)<\cdots<w(n_j)\}=\{\j_1<\j_2<\cdots<\j_d<i+1<i+2<\cdots<i+n_j-d\}\\
& \{w(n_j+1)<\cdots<w(n_{j+1})\}=\{x_1<i+n_j-d+1<i+n_j-d+2<\cdots<n-1\}\\
& \{w(n_{j+1}+1)<\cdots<w(n_{j+2})\}=\{x_2<\cdots<x_{n_{j+2}-n_{j+1}}<n\}\\
&\{w(n_{j+2}+1)<\cdots<w(n)\}=\{x_{n_{j+2}-n_{j+1}+1}<\cdots<x_{i-d}\}.
\end{align*}
The code of $w_J$ is $c(w_J)=(\j_1-1,\j_2-2,\cdots,\j_d-d,i-d,i-d,\cdots,i-d, {0}, i-d-1,\cdots,i-d-1,0,\cdots,0, {n-n_{j+2}},0,\cdots,0)$ with flag $\phi(w_J)=(1,2,\cdots,n_j,n_j+2,n_j+3,\cdots,,n_{j+1},n_{j+2})$. Then it determines a skew partition $\lambda/\mu$, where
\begin{align*}
  &\lambda=(i-d,\cdots,i-d,i-d-1,\cdots,i-d-1,n-n_{j+2}) \mbox{ with } n_j \mbox{ many }  i-d \\
  &\quad \quad \mbox{and } n_{j+1}-n_j-1 \mbox{ many } i-d-1;\\
   &\mu=(i-d-(\j_1-1),i-d-(\j_2-2),\cdots,i-d-(\j_d-d),0,\cdots,0).
\end{align*}

Then by Theorem~\ref{321equalityquantum}, we have $\sigma^B_{w_J}=\det (H_{\lambda_r-\mu_s-r+s}(X_{\phi_r}))_{1\leq r,s\leq n_{j+1}}$. {Here we do assume $n_{j+2}<n$, the case $n_{j+2}=n$ can be dealt with similarly.} We are going to use Laplace expansion on the first $d$ columns of this determinant. Let $R=(r_1<\cdots<r_d)\in \binom{[n_{j+1}]}{d}$ be row index for the expansion, and denote $M_R$ for the cofactor (with sign) obtained by removing the first $d$ columns and rows indexed by $R$. Then  Laplace expansion says that $$\det (H_{\lambda_r-\mu_s-r+s}(X_{\phi_r}))_{1\leq r,s\leq n_{j+1}}=\sum \limits_R M_R\times\det(H_{\lambda_r-\mu_s-r+s}(X_{\phi_r}))_{1\leq s\leq d, r\in R}.$$
We observe that $M_R$ is independent of $J$ since it  involves only the last $n_{j+1}-d$ columns of $\det (H_{\lambda_r-\mu_s-r+s}(X_{\phi_r}))_{1\leq r,s\leq n_{j+1}}$ and only the first $d$ items of $\mu$ depend on $J$. Therefore, we have
\begin{align*}
  & \sum\limits_{J\in \Xi}(-1)^{|J|} \sigma^B_{w_J}\sigma^B_{[1,n_j+d]\backslash J}\\
   =&\sum\limits_{J\in \Xi} \sum \limits_R  (-1)^{|J|}M_R\times\det(H_{\lambda_r-\mu_s-r+s}(X_{\phi_r}))_{1\leq s\leq d, r\in R}\times \sigma^B_{[1,n_j+d]\backslash J}\\
   =&\sum \limits_R M_R\sum\limits_{J\in \Xi}  (-1)^{|J|}\det(H_{\lambda_r-\mu_s-r+s}(X_{\phi_r}))_{1\leq s\leq d, r\in R}\times \sigma^B_{[1,n_j+d]\backslash J}.
\end{align*}
The Schubert class $\sigma^B_{[1,n_j+d]\backslash J}$ is indexed by a Grassmannian permutation, which in particular is a $321-$avoiding permutation.
Let $\alpha=(\alpha_1,\cdots,\alpha_{n_j})$ be the corresponding partition such that $J\cup \{\alpha_1+n_j,\cdots,\alpha_{n_j}+1\}=[1,n_j+d]$. Then we have
\begin{align*}
  \sigma^B_{[1,n_j+d]\backslash J}=\det (H_{\alpha_a-a+b}(X_{n_j+1-b}))_{1\leq a,b\leq n_j}.
\end{align*}
We will construct an $(n_j+d)\times (n_j+d)$ matrix $A_R$.  We define the first $d$ row vectors of $A_R$ to be $$(H_{\lambda_r-r-i+d+1}(X_{\phi_r}), H_{\lambda_r-r-i+d+2}(X_{\phi_r}),\cdots,H_{\lambda_r-r-i+d+n_j+d}(X_{\phi_r}))$$
where $r$ runs through $R=(r_1< \cdots <r_d)$. And we define the last $n_j$ row vectors of $A_R$ to be
$$(H_{1-n_j-1+b}(X_{n_j+1-b}),H_{2-n_j-1+b}(X_{n_j+1-b}),\cdots,H_{n_j+d-n_j-1+b}(X_{n_j+1-b}))$$
where $b$ run through $[1,n_j]$.

 Next we show that $\det A_R=0$. We will prove this by showing that either $A_R$ contains two identical row vectors or $A_R$ contains a zero row vector. We observe that $\lambda_r-r-i+d+\phi_r=0$, therefore, in order to show that $A_R$ contains two identical row vectors it suffices to prove that $\phi_r=n_j+1-b$ for some $r\in R$ and $b\in[1,n_j]$, namely, $R\cap [1,n_j]\neq \emptyset$. Now suppose we have the opposite, namely $R\cap [1,n_j]=\emptyset$. Then we have $r_1\geq n_j+1$ and $r_d\geq n_j+d$. So we have $\lambda_{r_d}-r_d-i+d=-\phi_{r_d}<-(n_j+d)$. Therefore the $d^{th}$ row of $A_R$ is a zero vector since $H_m(X):=0$ for $m<0$. In conclusion, we have $\det A_R=0$.

  Note that $\lambda_r$ is independent of $J\in \Xi$ and $\mu_s=i-d-(\j_s-s)$, so $\lambda_r-\mu_s-r+s=\lambda_r-r-i+d+\j_s$. Also note that $\alpha_a-a+b=\alpha_a+(n_j+1-a)-n_j-1+b$, while $\alpha_a+(n_i+1-a)$ lies in the complement of $J\subseteq [1,n_j+d]$.   { By our assumption that $n_j+n_{j+1}\le n$, we have    $i\geq n_j+d=n_j+i-(n-n_{j+1})$}. Therefore we have $\Xi=\binom{[n_j+d]}{d}$. Then by taking the Laplace expansion on the first $d$ rows of $A_R$, we see that
$$\det A_R=\sum\limits_{J\in \Xi}  (-1)^{|J|}\det(H_{\lambda_r-\mu_s-r+s}(X_{\phi_r}))_{1\leq s\leq d, r\in R}\times \sigma^B_{[1,n_j+d]\backslash J}.$$
Therefore, under the assumption $n_j+n_{j+1}\leq n$, we have
\begin{align*}
  & \sum\limits_{J\in \Xi}(-1)^{|J|} \sigma^B_{w_J}\sigma^B_{[1,n_j+d]\backslash J}\\
      =&\sum \limits_R M_R\sum\limits_{J\in \Xi}  (-1)^{|J|}\det(H_{\lambda_r-\mu_s-r+s}(X_{\phi_r}))_{1\leq s\leq d, r\in R}\times \sigma^B_{[1,n_j+d]\backslash J}\\
      =&\sum \limits_R M_R \det A_R\\
      =&0.
\end{align*}
\item For the case $n_j<d$, the proof is similar.
   Let $J=\{\j_1<\cdots<\j_d\}\in \Xi=\{J\in \binom{[i]}{d}|J\cap [n_j+d+1,n]=\emptyset\}$. Let $\{x_1<\cdots<x_{i-d}\}=[i]\backslash J$. We consider those $J$ with $x_1<\j_{n_j+1}$ only. Then $w_J$ is defined as
   \begin{align*}
  &\{w(1)<\cdots<w(n_j)\}=\{\j_1<\cdots<\j_{n_j}\}\\
  &\{w(n_j+1)<\cdots<w(n_{j+1})\}=\{x_1<\j_{n_j+1}<\cdots<\j_d<i+1<\cdots<n-1\}\\
  & \{w(n_{j+1}+1)<\cdots<w(n_{j+2})\}=\{x_2<\cdots<x_{n_{j+2}-n_{j+1}}<n\}\\
  &\{w(n_{j+2}+1)<\cdots<w(n)\}=\{x_{n_{j+2}-n_{j+1}+1}<\cdots<x_{i-d}\}.
\end{align*}
The code of $w_J$ is $c(w_J)=(\j_1-1,\j_2-2,\cdots,\j_{n_j}-n_j, {0},\j_{n_j+1}-n_j-2,\j_{n_j+2}-n_j-3,\cdots,\j_{d}-d-1,i-d-1,i-d-1,\cdots,i-d-1,0,\cdots,0, {n-n_{j+2}},0,\cdots,0)$
with flag $\phi(w_J)=(1,2,\cdots,n_j,n_j+2,n_j+3,\cdots,n_{j+1},n_{j+2})$. Then it determines a skew partition $\lambda/\mu$, where
\begin{align*}
   &\lambda=(i-d,\cdots,i-d,i-d-1,\cdots,i-d-1,n-n_{j+2}) \mbox{ with } n_j \mbox{ many }  i-d \\
  &\quad \quad \mbox{and } n_{j+1}-n_j-1 \mbox{ many } i-d-1;\\
   &\mu=(i-d-(\j_1-1),i-d-(\j_2-2),\cdots,i-d-(\j_{d}-d),0,\cdots,0).
\end{align*}
We notice that the flag $\phi(w_J)$ and the skew partition $\lambda/\mu$ are the same as the case $n_j\geq d$. Therefore, the rest of the proof is similar to the case $n_j\geq d$.
 \end{enumerate}
\end{proof}
\begin{example}
We demonstrate the idea of the above proof in the following identity.
  \begin{align*}
    \sigma^B_{1526347}\cdot \sigma^B_{2314567}-\sigma^B_{2516347}\cdot\sigma^B_{1324567}+\sigma^B_{3516247}\cdot\sigma^B_{1234567}=0.
  \end{align*}
  Applying the determinantal formula, we see that
  \begin{align*}
    &\sigma^B_{1526347}=\det \left(
                          \begin{array}{cc}
                            H_3(X_2) & H_4(X_2) \\
                            H_1(X_4) & H_2(X_4) \\
                          \end{array}
                        \right), \qquad\qquad\qquad\sigma^B_{2314567}=\det \left(
                          \begin{array}{cc}
                            H_1(X_2) & H_2(X_1) \\
                            H_0(X_2) & H_1(X_1) \\
                          \end{array}
                        \right),\\
   & \sigma^B_{2516347}=\det\left(
                       \begin{array}{ccc}
                         H_1(X_1) & H_4(X_1) & H_5(X_1) \\
                         H_0(X_2) & H_3(X_2) & H_4(X_2) \\
                         H_{-2}(X_4) & H_1(X_4) & H_2(X_4) \\
                       \end{array}
                     \right), \quad \sigma^B_{1324567}=\det \left(
                          \begin{array}{cc}
                            H_1(X_2) & H_2(X_1) \\
                            H_{-1}(X_2) & H_0(X_1) \\
                          \end{array}
                        \right),\\
   &\sigma^B_{3516247}=\det\left(
                       \begin{array}{ccc}
                         H_2(X_1) & H_4(X_1) & H_5(X_1) \\
                         H_1(X_2) & H_3(X_2) & H_4(X_2) \\
                         H_{-1}(X_4) & H_1(X_4) & H_2(X_4) \\
                       \end{array}
                     \right), \quad \sigma^B_{1234567}=\det \left(
                          \begin{array}{cc}
                            H_0(X_2) & H_1(X_1) \\
                            H_{-1}(X_2) & H_0(X_1) \\
                          \end{array}
                        \right).
  \end{align*}
  We write
  \begin{align*}
      &\sigma^B_{1526347}=\det \left(
                          \begin{array}{cc}
                            H_3(X_2) & H_4(X_2) \\
                            H_1(X_4) & H_2(X_4) \\
                          \end{array}
                        \right)=\det \left(
                       \begin{array}{ccc}
                         1 & H_4(X_1) & H_5(X_1) \\
                         0 & H_3(X_2) & H_4(X_2) \\
                         0 & H_1(X_4) & H_2(X_4) \\
                       \end{array}
                     \right).
  \end{align*}
Notice that the last two columns of these $3\times 3$ matrix are the same, so it suffices to prove that
\begin{fontsize}{9pt}{12pt}
\begin{align*}
  &1\times \det\left(
                          \begin{array}{cc}
                            H_1(X_2) & H_2(X_1) \\
                            H_0(X_2) & H_1(X_1) \\
                          \end{array}
                        \right)- H_1(X_1) \det \left(
                          \begin{array}{cc}
                            H_1(X_2) & H_2(X_1) \\
                            H_{-1}(X_2) & H_0(X_1) \\
                          \end{array}
                        \right)+ H_2(X_1)\det \left(
                          \begin{array}{cc}
                            H_0(X_2) & H_1(X_1) \\
                            H_{-1}(X_2) & H_0(X_1) \\
                          \end{array}
                        \right)=0,\\
  & -H_0(X_2)\det \left(
                          \begin{array}{cc}
                            H_1(X_2) & H_2(X_1) \\
                            H_{-1}(X_2) & H_0(X_1) \\
                          \end{array}
                        \right)+ H_1(X_2)\det \left(
                          \begin{array}{cc}
                            H_0(X_2) & H_1(X_1) \\
                            H_{-1}(X_2) & H_0(X_1) \\
                          \end{array}
                        \right)=0.
\end{align*}
\end{fontsize}
These follow from the Laplace expansion of the following identities respectively.
\begin{align*}
 & \det \left(
         \begin{array}{ccc}
           H_2(X_1) & H_1(X_2) & H_2(X_1) \\
           H_1(X_1)& H_0(X_2) & H_1(X_1) \\
           1=H_0(X_1) & H_{-1}(X_2) & H_0(X_1) \\
         \end{array}
       \right)=0,\\
&\det \left(
         \begin{array}{ccc}
           H_1(X_2) & H_1(X_2) & H_2(X_1) \\
           H_0(X_2)& H_0(X_2) & H_1(X_1) \\
          0=H_{-1}(X_2)& H_{-1}(X_2) & H_0(X_1) \\
         \end{array}
       \right)=0.
\end{align*}
\end{example}

It remains in this section to deduce the identity~\eqref{e:keyIdentity} also in the partial flag variety setting. We use the following result due to Dale Peterson.

%   Proposition \ref{keyIdentity} is a statement for the partial flag variety $X=\mathbb{F}\ell(n; n_r, \cdots, n_1)$. In Lemma~\ref{keyidentityinFLn} we are only using the notation $w_J$ and consider $\sigma_{w_J}$ as a class in $QH^*(\mathbb{F}\ell_n)$. Note that Theorem~\ref{keyIdentity} does not follow directly from Lemma~\ref{keyidentityinFLn} since we do not have the inclusion map $QH^*(X)\longrightarrow QH^*(\mathbb{F}\ell_n)$ for quantum cohomology. However, we will later show how we use Lemma~\ref{keyidentityinFLn} to establish the equality in $QH^*(X)$.

\begin{prop}[Proposition 11.1 in \cite{Rie03}]\label{prop:extension}
Let $w\in W$ and let $\sigma_w^B$ be the corresponding quantum Schubert class regarded as a function on the Peterson variety $\mathcal Y_{B_-}$ for the complete flag variety. Let $\widetilde\sigma_w^B$ be the rational function on the closure $\mathcal Y=\overline{\mathcal Y}_{B_-}$ that agrees with $\sigma_w^B$ on $\mathcal Y_B$. If $w\in W^P$, then $\widetilde{\sigma}_w$ restricts to a regular function on $\overline{\mathcal Y}_{P}\subset \mathcal Y$, and this restriction represents the quantum Schubert class $\sigma_w^P\in QH^*(G^\vee/P^\vee)$ associated to $w$.
\end{prop}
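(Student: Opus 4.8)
The plan is to reduce the statement to the behaviour of the explicit rational functions $G_i^m$ and $G_w$ from Definition~\ref{d:Gmi}, using the Peterson isomorphism of Theorem~\ref{petersonA} in two guises (for $B_-$ and for $P$) together with a quantum Giambelli formula. First I would apply Theorem~\ref{petersonA} and the proposition following it to the minimal parabolic $B_-$: here $I^{B_-}=I$, $\dot w_{B_-}=\mathrm{id}$, and, as in the statement being proved, $\mathcal Y_{B_-}$ is the dense stratum of $\mathcal Y$, so one obtains $\mathcal O(\mathcal Y_{B_-})\cong QH^*(\mathbb{F}\ell_n)$ sending $G_w\mapsto(-1)^{\ell(w)}\sigma_w^B$ for every Grassmannian permutation $w$, in particular $G_1^k\mapsto-\sigma^B_{s_k}$. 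Since each $G_w$ is a ratio of minors defined on all of $G/B_-$, the rational extension $\widetilde\sigma_w^B$ of a Grassmannian class is literally the restriction to $\mathcal Y$ of $(-1)^{\ell(w)}G_w$, and for a general class one first writes $\sigma_w^B$ through a Giambelli expression in Grassmannian classes.

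Next I would bring in the hypothesis $w\in W^P$. For such $w$ the descents all lie in $I^P$, so the classical Schubert polynomial $\mathfrak{S}_w$ is symmetric in $x_{n_{j-1}+1},\dots,x_{n_j}$ within each block; hence $\sigma_w^P$ — and equally $\sigma_w^B$ — is a universal polynomial $P_w$ in the Grassmannian Schubert classes with descents in $I^P$ only, with classical Giambelli for partial flags making $P_w$ explicit. The quantum refinement I need is that $\sigma_w^P\in QH^*(G^\vee/P^\vee)$ is computed by the corresponding $q$-deformed formula in the quantum special classes indexed by $I^P$, and that the same formula computes $\sigma_w^B$ in $QH^*(\mathbb{F}\ell_n)$; this is Ciocan-Fontanine's quantum Giambelli \cite{Fontanine}, in the quantum Schubert polynomial formalism of \cite{FominGelfandPostnikov}. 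Translating through the $B_-$-Peterson isomorphism then exhibits $\widetilde\sigma_w^B$ as $P_w$ evaluated on the functions $(-1)^{\ell(w')}G_{w'}$ with $w'$ Grassmannian of descent in $I^P$ (and on the parameters $q_{n_j}$). By the remark after Theorem~\ref{petersonA}, each such $G_{w'}$ is regular on the Schubert cell $B_+\dot w_P B_-/B_-$, and one checks that it extends regularly over the whole of $\overline{\mathcal Y}_P$ using the Toeplitz coordinates of Theorem~\ref{t:Toeplitz}; so $\widetilde\sigma_w^B$ restricts to a regular function on $\overline{\mathcal Y}_P$. Finally, feeding the same $G_{w'}$ into Theorem~\ref{petersonA} for $P$ itself sends $P_w$ to $P_w(\{(-1)^{\ell(w')}\sigma^P_{w'}\},\{q_{n_j}\})=\sigma_w^P$, which identifies $\widetilde\sigma_w^B|_{\overline{\mathcal Y}_P}$ with $\sigma_w^P$.

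The hard part will be the compatibility used in the middle step: showing that, for $w\in W^P$, the quantum Giambelli expression for $\sigma_w^B$ only involves — once restricted to the closed stratum $\overline{\mathcal Y}_P$ — the ratios of minors $G_{w'}$ attached to Grassmannian permutations with descent in $I^P$; equivalently, that the ``extra'' generators $G_1^k$ and quantum parameters $q_k$ with $k\notin I^P$ either do not occur or cancel after this restriction. I would prove this either by invoking Ciocan-Fontanine's presentation of $QH^*(G^\vee/P^\vee)$ and comparing quantum Schubert polynomials, or more intrinsically by analysing the orders of the poles of the $G_1^k$ with $k\notin I^P$ along the boundary divisor of $\mathcal Y_{B_-}$ containing $\overline{\mathcal Y}_P$ and checking that in the Giambelli combination of a permutation of $W^P$ these poles cancel — a statement reflecting the degeneration of the quantum cohomology of $\mathbb{F}\ell_n$ to that of $G^\vee/P^\vee$ along this stratum. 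Once that vanishing is secured, both the regularity of $\widetilde\sigma_w^B$ on $\overline{\mathcal Y}_P$ and its identification with $\sigma_w^P$ drop out formally from the two Peterson isomorphisms.
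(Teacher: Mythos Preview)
The paper does not give its own proof of this proposition; it is quoted as Proposition~11.1 of \cite{Rie03} and used as a black box. So there is no in-paper argument to compare your outline against.

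That said, your outline has a genuine gap precisely at the step you yourself flag as ``the hard part,'' and the difficulty is more serious than you acknowledge. The assertion in your second paragraph --- that the partial-flag quantum Giambelli polynomial for $\sigma_w^P$ also computes $\sigma_w^B$ in $QH^*(\mathbb F\ell_n)$ using only special classes with descent in $I^P$ and the parameters $q_{n_j}$ --- is false as stated. Already for $n=3$, $I^P=\{2\}$ and $w=s_1s_2\in W^P$ one has $\sigma_w^P=(\sigma_{s_2}^P)^2$ in $QH^*(Gr(2,3))$ but $\sigma_w^B=(\sigma_{s_2}^B)^2-q_2$ in $QH^*(\mathbb F\ell_3)$. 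More fundamentally, the full-flag quantum parameters $q_k$ and the partial-flag parameters $q_{n_j}^P$ are \emph{different} functions on the Peterson variety (the latter being certain monomials in the former, as in the Peterson--Woodward comparison), so there is no term-by-term matching of the two Giambelli formulas to appeal to.

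What is actually needed is an understanding of how each full-flag $q_k$ and each $G_1^k$ with $k\notin I^P$ behaves along $\overline{\mathcal Y}_P$ --- which extend regularly, which vanish, and how they reassemble into the partial-flag data --- and this is exactly the geometric content of the proposition. Your two proposed remedies do not escape this circularity: ``invoking Ciocan-Fontanine's presentation and comparing quantum Schubert polynomials'' is a restatement of the Peterson--Woodward comparison, of the same depth as what you are trying to prove; and the pole-order analysis is only a direction, not an argument. In short, the proposal identifies the right objects but defers the entire substance of the proof to an unproved compatibility statement.
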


This proposition implies that any identity in quantum Schubert calculus for the complete flag variety $G^\vee/B^\vee=\mathbb{F}\ell_n$ involving only Schubert classes of the form $\sigma^B_w$ for $w\in W^P$ and without quantum parameters, holds also in $QH^*(G^\vee/P^\vee)$ with $\sigma_w^B$  replaced by $\sigma_w^P$. As a consequence we have the following corollary; namely we obtain  Theorem~\ref{keyIdentity}  in the case $n_j+n_{j+1}\leq n$.

\begin{cor}\label{c:prop-specialcase}
Let $n-n_{j+1}<i<n-n_j$ for some $1\leq j\leq r-1$ and $d:=i-(n-n_{j+1})$. Let $\Xi$ and $w_J$ be as defined in Definition~\ref{d:wJ}. Assume that $n_j+n_{j+1}\le n$. Set $\sigma_{w_J}:=0$ if $w_J$ is not defined.
Then the following identity holds in $QH^*(X)$,
\begin{align*}
  \sum\limits_{J\in \Xi}(-1)^{|J|} \sigma_{w_J}\sigma_{[1,n_j+d]\backslash J}=0.
\end{align*}
\end{cor}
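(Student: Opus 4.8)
The plan is to reduce the statement entirely to the complete flag variety case, which is already in hand.

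\medskip

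\textbf{Step 1: Reduce to an identity in $QH^*(\mathbb F\ell_n)$.} First I would observe that every Weyl group element appearing in the identity lies in $W^P$. This is immediate for the $w_J$, since their construction in Definition~\ref{d:wJ} explicitly arranges the values $w(n_j+1)<\dots<w(n_{j+1})$ to be increasing within each block between consecutive elements of $I^P$ (one checks the four listed blocks of values are each increasing), and similarly the permutations $[1,n_j+d]\setminus J$ are Grassmannian with descent at $n_j\in I^P$, hence in $W^P$. Therefore both factors $\sigma_{w_J}$ and $\sigma_{[1,n_j+d]\setminus J}$ are quantum Schubert classes indexed by elements of $W^P$, and moreover the identity \eqref{e:keyIdentity} contains no explicit quantum parameters $q_{n_\ell}$.

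\medskip

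\textbf{Step 2: Apply Peterson's extension property.} By Proposition~\ref{prop:extension}, each $\sigma_w^B$ with $w\in W^P$ restricts from the Peterson variety $\mathcal Y_{B_-}$ to a regular function on $\overline{\mathcal Y}_P$ that represents $\sigma_w^P \in QH^*(G^\vee/P^\vee)$. Consequently, any polynomial identity among the functions $\sigma_w^B$, $w\in W^P$, that holds identically on $\mathcal Y_{B_-}$ (equivalently in $QH^*(\mathbb F\ell_n)$, taking into account that the relation involves no quantum parameters so that it is a genuine identity of functions on the affine Peterson variety for $\mathbb F\ell_n$) descends to the corresponding identity among the $\sigma_w^P$ in $QH^*(G^\vee/P^\vee)$. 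Thus it suffices to prove the identity
\[
\sum_{J\in\Xi}(-1)^{|J|}\,\sigma^B_{w_J}\,\sigma^B_{[1,n_j+d]\setminus J}=0
\]
in $QH^*(\mathbb F\ell_n)$, under the running hypothesis $n_j+n_{j+1}\le n$.

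\medskip

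\textbf{Step 3: Invoke the complete-flag case.} But this last identity is precisely Lemma~\ref{keyidentityinFLn}, which is established using the $321$-avoiding property of the $w_J$ (Lemma~\ref{321permutation}) together with the quantum Billey--Jockusch--Stanley determinantal formula (Theorem~\ref{321equalityquantum}) and a double Laplace expansion argument. Hence the corollary follows at once. The only real point of care — and the step I would flag as the main obstacle — is the bookkeeping in Step~1: verifying that $w_J\in W^P$ in both cases $n_j\ge d$ and $n_j<d$ of Definition~\ref{d:wJ}, and confirming that \eqref{e:keyIdentity} really is quantum-parameter-free (i.e. that the $\sigma_{w_J}$, although arising as the quantum part of a Chevalley product, are here treated as bare Schubert classes with the $q$ stripped off, so that Peterson's extension result genuinely applies). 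Once that is checked, everything else is a direct appeal to Proposition~\ref{prop:extension} and Lemma~\ref{keyidentityinFLn}.
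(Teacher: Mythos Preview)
Your proposal is correct and follows essentially the same approach as the paper: the paper states the corollary as a direct consequence of Proposition~\ref{prop:extension} (Peterson's extension property) applied to the complete-flag identity of Lemma~\ref{keyidentityinFLn}, noting that the identity involves only Schubert classes indexed by $W^P$ and no quantum parameters. Your Steps~1--3 reproduce exactly this logic, with the added care of spelling out why $w_J\in W^P$ and why the identity is quantum-parameter-free.
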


\subsection{Proof of Lemma~\ref{uinthemiddle}}

%We first note that  Lemma~\ref{uinthemiddle} holds when  $n_j+n_{j+1}\le n$.

\begin{lemma}\label{uinthemiddle-specialcase}  Suppose $n-n_{j+1}< i< n-n_j$ for some $1\leq j\leq r-1$. Assume additionally that $n_j+n_{j+1}\le n$. If $\b$ is a critical point of $\mathcal F_\mathbf q$, then
  \[
  u_{i,i+1}=-(G^{n_j}_1(b_-\dot w_0 B_-)+G^{n_{j+1}}_1(b_-\dot w_0 B_-)).
  \]
\end{lemma}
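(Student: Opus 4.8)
The plan is to reduce Lemma~\ref{uinthemiddle-specialcase} to the quantum Schubert calculus identity that we have already established in the restricted range, namely Corollary~\ref{c:prop-specialcase}, via the equivalence provided by Lemma~\ref{equivalence}. The key point is that Lemma~\ref{equivalence} was proved without any hypothesis relating $n_j$ and $n_{j+1}$: it says unconditionally that the asserted formula for $u_{i,i+1}$ in Lemma~\ref{uinthemiddle} is equivalent to the identity
\[
\sum_{J\in\Xi}(-1)^{|J|}\sigma_{w_J}\,\sigma_{[1,n_j+d]\setminus J}=0
\]
in $QH^*(X)$. Therefore, once this identity is known to hold, so does the formula for $u_{i,i+1}$ at every critical point $\b$ of $\mathcal F_{\mathbf q}$.

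First I would recall the setup: fix $i$ with $n-n_{j+1}<i<n-n_j$ for some $1\le j\le r-1$, set $d=i-(n-n_{j+1})$, and let $\Xi$ and the permutations $w_J$ be as in Definition~\ref{d:wJ}. Under the added hypothesis $n_j+n_{j+1}\le n$, Corollary~\ref{c:prop-specialcase} gives precisely the vanishing identity displayed above in $QH^*(X)$. (This corollary itself is obtained from Lemma~\ref{keyidentityinFLn} in the complete flag variety, whose proof uses the determinantal formula for Schubert polynomials of $321$-avoiding permutations together with Peterson's extension property, Proposition~\ref{prop:extension}; all of this is already available to us.) Then I would invoke Lemma~\ref{equivalence}: since the quantum Schubert calculus identity holds, its equivalent statement — the formula $u_{i,i+1}=-\bigl(G_1^{n_j}(b_-\dot w_0 B_-)+G_1^{n_{j+1}}(b_-\dot w_0 B_-)\bigr)$ valid at critical points $\b$ of $\mathcal F_{\mathbf q}$ — follows immediately. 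This completes the proof.

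There is essentially no remaining obstacle here, because the hard analytic and combinatorial work has been front-loaded into Lemma~\ref{equivalence} and Corollary~\ref{c:prop-specialcase}; Lemma~\ref{uinthemiddle-specialcase} is the packaging of those two results. The only things to double-check are bookkeeping: that the range constraint $n-n_{j+1}<i<n-n_j$ translating to $d=i-(n-n_{j+1})\in\{1,\dots,n_{j+1}-n_j-1\}$ is compatible with the index ranges appearing in Definition~\ref{d:wJ}, and that the convention $\sigma_{w_J}:=0$ when $w_J$ is undefined is used consistently on both sides of the equivalence. The genuinely difficult case — where $n_j+n_{j+1}\ge n$ — is deliberately \emph{not} treated in this lemma; it will be handled afterwards via a symmetry of the domain of $\mathcal F_R$ that swaps the two regimes, and only then does one obtain Theorem~\ref{keyIdentity} and Lemma~\ref{uinthemiddle} in full generality.
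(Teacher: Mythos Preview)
Your proposal is correct and matches the paper's own proof exactly: the paper's proof is the single sentence ``The statement is a direct consequence of Corollary~\ref{c:prop-specialcase} combined with Lemma~\ref{equivalence}.'' Your additional remarks about bookkeeping and the strategy for the remaining case $n_j+n_{j+1}\ge n$ are accurate and consistent with the paper's overall plan.
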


\begin{proof}
The statement is  a direct consequence of Corollary~\ref{c:prop-specialcase} combined with Lemma~\ref{equivalence}.
\end{proof}

%To prove the remaining case of Lemma~\ref{uinthemiddle}, namely Lemma~\ref{uinthemiddle-otherspecialcase} below, we apply an involution that we now define.

\begin{defn}
We define a group involution on $G=GL_n(\mathbb C)$ using a combination of inverse, transpose and conjugation by $\dot w_0$,
\[g\mapsto \tau( g):=\dot w_0 (g\inv)^T\dot w_0^{-1}.\]
%%This defines a group automorphism of $G$.
%For any subset $Y$ of $G$ let us write $\tau (Y)$ for the image of $Y$ under the involution $\tau $.
\end{defn}
Let $Q\supseteq B_-$ be the parabolic subgroup with $I^Q=n-I^P=\{n-n_r,\cdots,n-n_1\}$.
It is straightforward to check that our involution has the following properties.
\begin{enumerate}
\item  $\tau (P)=Q$ and $\tau( U_+)=U_+$.
\item $\tau(\dot w_P)=\dot w_Q$.
\item for $x\in U_+$ we have the relationship $\tau(x)_{i,i+1}=x_{n-i,n-i+1}$, for the entries just above the diagonal.
\end{enumerate}

\begin{lemma}\label{uinthemiddle-otherspecialcase}  Suppose $n-n_{j+1}< i< n-n_j$ for some $1\leq j\leq r-1$. Assume additionally that $n_j+n_{j+1}\ge n$. If $\b $ is a critical point of ${\mathcal F}_\mathbf q$, then
  \begin{equation}\label{e:otherspecialcase}
  u_{i,i+1}=-(G^{n_j}_1(b_-\dot w_0 B_-)+G^{n_{j+1}}_1(b_-\dot w_0 B_-)).
  \end{equation}
\end{lemma}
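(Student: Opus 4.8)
The plan is to deduce the case $n_j+n_{j+1}\ge n$ from the already-proven case $n_j+n_{j+1}\le n$ (Lemma~\ref{uinthemiddle-specialcase}) by exploiting the involution $\tau(g)=\dot w_0(g\inv)^T\dot w_0\inv$. The first step is to understand how $\tau$ acts on the superpotential. Given $\hat b\in B_-\cap U_+\dot w_P\inv\dot w_0 U_+$ with decomposition $\hat b=\hat v\dot w_P\inv\dot w_0\,\hat u$ as in Definition~\ref{uv} (where I keep in mind that $z=v\inv\dot w_P\inv\dot w_0$, $\hat b=zu\inv$, so $\hat v=v\inv$, $\hat u=u\inv$), I want to show that $\tau$ maps this set to the analogous set for the parabolic $Q$ with $I^Q=n-I^P$, and that under $\tau$ the roles of $u$ and $v$ are essentially preserved up to the reindexing $i\mapsto n-i$. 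Concretely, using properties (1)--(3) of $\tau$ together with $\tau(\dot w_P)=\dot w_Q$ and $\tau(\dot w_0)=\dot w_0$, one checks that $\tau(\hat b)$ has a decomposition $\tau(\hat b)=\hat v'\dot w_Q\inv\dot w_0\,\hat u'$ with $\hat u'=\dot w_0(\hat v^T)\dot w_0\inv$ (up to the precise bookkeeping), so that $\hat u'_{i,i+1}=\hat v_{n-i,n-i+1}$ and similarly $\hat v'_{i,i+1}=\hat u_{n-i,n-i+1}$. Combined with the quantum-parameter identification (the torus $\mathcal T^{W_P}$ maps to $\mathcal T^{W_Q}$ under $t\mapsto\dot w_0 t\inv\dot w_0\inv$, which is exactly the map appearing in the proof of Theorem~\ref{critpointthm}), this should give $\mathcal F_{\mathbf q}^{P}(\hat b)=\mathcal F_{\mathbf q'}^{Q}(\tau(\hat b))$ for a corresponding $\mathbf q'$, where superscripts indicate the parabolic. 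In particular $\tau$ carries critical points of $\mathcal F_{\mathbf q}^P$ to critical points of $\mathcal F_{\mathbf q'}^Q$.

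The second step is to translate the conclusion. Suppose $\hat b$ is a critical point of $\mathcal F_{\mathbf q}^P$ with $n-n_{j+1}<i<n-n_j$ and $n_j+n_{j+1}\ge n$. Set $\hat b':=\tau(\hat b)$, a critical point of $\mathcal F_{\mathbf q'}^Q$. Writing $I^Q=\{m_1<\dots<m_r\}$ with $m_k=n-n_{r+1-k}$, the index $i':=n-i$ satisfies $n-m_{k+1}<i'<n-m_k$ for the appropriate $k$ (namely $k=r-j$, with $m_k=n-n_{j+1}$, $m_{k+1}=n-n_j$), and the "small" hypothesis $m_k+m_{k+1}=(n-n_{j+1})+(n-n_j)=2n-(n_j+n_{j+1})\le n$ holds precisely because $n_j+n_{j+1}\ge n$. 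So Lemma~\ref{uinthemiddle-specialcase} applies to $\mathcal F^Q$ and gives $u'_{i',i'+1}=-(G^{m_k}_1(b_-'\dot w_0 B_-)+G^{m_{k+1}}_1(b_-'\dot w_0 B_-))$, where $u'$ is the upper-triangular factor attached to $\hat b'$ and $b_-'=t'\hat b'$. Using $u'_{i',i'+1}=v_{i,i+1}$-type relations from step one, plus Lemma~\ref{vinchernclass} (which already computes $v_{n_j,n_j+1}$ in terms of $G^{n_j}_1$) and its analogue, one should be able to match the $G$-functions: $G^{m_k}_1$ and $G^{m_{k+1}}_1$ for $Q$ at the point $b_-'\dot w_0 B_-$ correspond, under $\tau$, to $G^{n_{j+1}}_1$ and $G^{n_j}_1$ for $P$ at $b_-\dot w_0 B_-$. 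Assembling these identifications yields exactly \eqref{e:otherspecialcase}.

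The main obstacle I anticipate is step one: pinning down precisely how $\tau$ interacts with the two (non-unique) factorizations $\hat b = \hat v\dot w_P\inv\dot w_0\hat u$ and with the torus factor, keeping careful track of all the $\dot w_0$-conjugations, transposes, and inverses, and in particular verifying that $\tau$ really does respect the defining conditions of $\mathcal Z$ (the intersection with $B_-B_+$ and the anti-diagonal block shape \eqref{Eq:OlocalCoords}) so that it induces a genuine isomorphism on the domain $B_-\cap U_+\dot w_P\inv\dot w_0 U_+$ and intertwines $\mathcal F_{\mathbf q}^P$ with $\mathcal F_{\mathbf q'}^Q$ up to a harmless relabelling of quantum parameters. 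Once that intertwining is established cleanly, the remaining bookkeeping — matching indices $i\leftrightarrow n-i$, matching $G$-functions via $\tau$, and invoking Lemma~\ref{vinchernclass} and Lemma~\ref{uinthemiddle-specialcase} — is routine. A secondary point to be careful about is that the function $G^m_i$ is defined on $G/B_-$ via lower-right minors (Definition~\ref{d:Gmi}), so I will need the transformation rule for these minors under $g\mapsto\tau(g)$, which should follow from Corollary~\ref{Jacobithm} (Jacobi's theorem on minors of an inverse) together with the transpose.

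Finally, once Lemma~\ref{uinthemiddle-otherspecialcase} is in hand, Lemma~\ref{uinthemiddle} follows immediately by combining it with Lemma~\ref{uinthemiddle-specialcase}, since every pair $(n_j,n_{j+1})$ satisfies $n_j+n_{j+1}\le n$ or $n_j+n_{j+1}\ge n$; and then Theorem~\ref{keyIdentity} in full generality follows from Lemma~\ref{equivalence}. I would present the argument in this order: first the properties of $\tau$ on the domain of $\mathcal F_{\mathbf q}$, then the intertwining $\mathcal F_{\mathbf q}^P\circ\tau\inv=\mathcal F_{\mathbf q'}^Q$ and the correspondence of critical points, then the index and $G$-function dictionary, and finally the short deduction of \eqref{e:otherspecialcase}.
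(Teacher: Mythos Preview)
Your overall strategy is exactly the paper's: use the involution $\tau$ to pass from $P$ to $Q$ with $I^Q=n-I^P$, check that the index $i'=n-i$ now satisfies the ``small'' hypothesis $(n-n_{j+1})+(n-n_j)\le n$, apply Lemma~\ref{uinthemiddle-specialcase} there, and pull the result back via Jacobi's theorem for the $G$-functions. However, your step one contains a bookkeeping error that would send you down an unnecessary detour.

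You claim that $\tau$ swaps the $u$- and $v$-factors, so that $\hat u'_{i,i+1}=\hat v_{n-i,n-i+1}$, and you then plan to invoke Lemma~\ref{vinchernclass}. This is not what happens. Since $\tau$ is a group homomorphism with $\tau(\dot w_P\inv\dot w_0)=\dot w_Q\inv\dot w_0$, applying it to $\hat b=v\inv\dot w_P\inv\dot w_0\,u\inv$ gives
\[
\tau(\hat b)=\tau(v)\inv\,\dot w_Q\inv\dot w_0\,\tau(u)\inv,
\]
and one checks (using $\tau(U_+)=U_+$, $\tau(B_\pm)=B_\pm$, and $v\in U_+^P\Rightarrow\tau(v)\in U_+^Q$) that this is precisely the factorization of Definition~\ref{uv} for $\tau(\hat b)$ relative to $Q$. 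So the $u$-factor for $\tau(\hat b)$ is $\tau(u)$, not something built from $v$, and property~(3) of $\tau$ gives directly $u'_{n-i,n-i+1}=\tau(u)_{n-i,n-i+1}=u_{i,i+1}$. No appeal to Lemma~\ref{vinchernclass} is needed. The remaining step is exactly as you say: show $G^{n-m}_1(\tau(b_-)\dot w_0 B_-)=G^{m}_1(b_-\dot w_0 B_-)$ via Jacobi's theorem (Corollary~\ref{Jacobithm}), and the identity \eqref{e:otherspecialcase} follows. With this correction your outline becomes the paper's proof, and is in fact shorter than what you anticipated.
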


\begin{proof}
Since $\b\in B_-\cap U_+\dot w_P^{-1}\dot w_0U_+ $, we have that $\tau(\b)\in B_-\cap U_+\dot w_Q^{-1}\dot w_0 U_+ $. We can now apply Lemma~\ref{uinthemiddle-specialcase} to $\tau (\b)$, where we must replace $P$ by $Q$. Namely for $\tau (\b)$, Lemma~\ref{uinthemiddle-specialcase} says that, if $n_{j}=n-(n-n_j)< n-i< n-(n-n_{j+1})=n_{j+1}$, and $(n-n_j)+(n-n_{j+1})\le n$ (which is equivalent to our assumptions on $i$), then
\[
  \tau (u)_{n-i,n-i+1}=-(G^{n-n_j}_1(\tau(b_-)\dot w_0 B_-)+G^{n-n_{j+1}}_1(\tau(b_-)\dot w_0 B_-)).
  \]
  Recall that
  \[G_1^m(gB_-):=\frac{\Delta_{[m+1,n]}^{\{m\}\cup [m+2,n]}(g)}{\Delta_{[m+1,n]}^{[m+1,n]}(g)}.
  \]
  Now we deduce that
  \[
  G^{n-m}_1(\tau (b_-)\dot w_0 B_-)=G^{m}_1(b_-\dot w_0 B_-),
  \]
  using Jacobi's theorem. Moreover by property (3) above, we have $ \tau(u)_{n-i,n-i+1}=u_{i,i+1}$. Therefore the identity \eqref{e:otherspecialcase} holds.
\end{proof}

%%We note that with this special case we have concluded the proof of Lemma~\ref{uinthemiddle}.
\bigskip

\begin{proof}[Proof of  Lemma~\ref{uinthemiddle} and Theorem~\ref{keyIdentity}]  Lemma~\ref{uinthemiddle} follows from the combination of Lemmas \ref{uinthemiddle-specialcase} and \ref{uinthemiddle-otherspecialcase}. 
 We showed in Lemma~\ref{equivalence},  that Theorem~\ref{keyIdentity} is true if and only if Lemma~\ref{uinthemiddle} holds. Since Lemma~\ref{uinthemiddle} has now been proved, we are done.
\end{proof}

\section{Appendix}
In this Appendix we give a translation of the Pl\"ucker coordinate formula for the superpotential $\mathcal{F}_-$ using Young diagrams.

For $1\leq k<n$, we consider the set of partitions inside $k\times (n-k)$ rectangle,  $$\mathcal{P}_{k, n}:=\{(\lambda_1, \cdots, \lambda_k)\in \mathbb{Z}^k\mid n-k\geq \lambda_1\geq \lambda_2\geq\cdots\geq \lambda_k\geq 0\}.$$
There is a bijection $${[n]\choose k}\to \mathcal{P}_{k, n};\quad J=(j_1, \cdots, j_k)\mapsto \lambda(J)=(j_k-k, \cdots, j_2-2, j_1-1).$$
Geometrically, we consider the $k\times (n-k)$ rectangle of $k(n-k)$ unit boxes. A positive path of such rectangle is a path starting from the lower left hand corner and moving either upward or to the right along edges, towards the upper right hand corner.
In particular,  a  Pl\"ucker coordinate  $p_{j_1\cdots j_k}$ is naturally viewed as the positive path that moves upwards precisely at the $j_1, j_2, \cdots, j_k$-th steps.
Moreover, the boxes above the positive path $p_J$ form the partition $\lambda(J)$. We therefore use the following notation convention
  $$p_J=p_\lambda=p^{(k)}_{{\rm YD}(\lambda)},$$
 where the superscript $(k)$ is used to indicate that the Young diagram $\mbox{YD}(\lambda)$ of the  partition $\lambda$ is inside $k\times (n-k)$ rectangle.
In particular,
$$p_{[k]}=p_{(0,\cdots, 0)}=p^{(k)}_{\emptyset}.$$

\begin{example} The Young diagrams of the partitions $(4, 4, 4)$ and $(3, 2, 0)$ in $\mathcal{P}_{3,7}$ are given as follows.
  \ytableausetup{mathmode,  boxsize=0.7em}
 $$
\begin{array}{ccc}
 \ydiagram{4,4,4} &  &\ydiagram{3,2,0}\\
  (4,4,4)& & (3,2,0)
\end{array}
$$
The Pl\"ucker coordinate $p_{146}$ for $Gr(3, 7)$  corresponds to
the partition $(3, 2, 0)$.
\end{example}

 By $(m^l, 0^{k-l})$  we mean  the partition $(m, \cdots, m, 0, \cdots, 0)\in \mathcal{P}_{k, n}$ with $l$ copies of $m$.
 %We also simply denote it  as  $m^l$ whenever there is no confusion.
The   Young diagram ${\rm YD}(m^l, 0^{k-l})$ is an $l\times m$ rectangle $\square_{l\times m}$, and  ${\rm YD}(1, 0^{k-1})=\square$.
We call  $(m^l, 0^{k-l})$ a \textit{maximal partition} in $\mathcal{P}_{k, n}$ if $l=k$ or $m=n-k$ holds.
\begin{defn}
  Let $\lambda\in \mathcal{P}_{k, n}$ and $\nu\in \mathcal{P}_{k-a, n-a}$. We define
   %$p^{(k)}_{\square_{k\times m}, {\rm YD}(\lambda)}$ to be  $p^{(k)}_{ {\rm YD}(m^k+\lambda)}$ if $m^k+\lambda \in \mathcal{P}_{k, n}$, or $0$ otherwise.

  $$
      p^{(k)}_{\square_{k\times m}, {\rm YD}(\lambda)}:=\begin{cases}p^{(k)}_{ {\rm YD}(m^k+\lambda)},&\mbox{ if }m^k+\lambda \in \mathcal{P}_{k, n},\\0,&\mbox{ otherwise};
  \end{cases}$$

   $$
      p^{(k)}_{\square_{a\times (n-k)}, {\rm YD}(\nu)}:=\begin{cases}p^{(k)}_{ {\rm YD}((n-k)^a, \nu)},&\mbox{ if }((n-k)^a, \nu) \in \mathcal{P}_{k, n},\\0,&\mbox{ otherwise}.
  \end{cases}$$
\end{defn}

\begin{defn} Let $k<l<n$ and $1\leq m<l-k$. We define
\begin{align*}& L(p^{(k)}_{\square_{k\times m}}\cdot p^{(l)}_{\square_{(l-m)\times (n-l)}})
  :=\sum_{\mu\leq m^k}(-1)^{|\mu|+km}  p^{(k)}_{{\rm YD}(\mu)}\cdot p^{(l)}_{\square_{(l-m)\times (n-l)}, {\rm YD}((m^k/\mu)^c)},
 \end{align*}
where
%Recall that the Young diagrams of the partition $\lambda=(m, \cdots, m), \lambda'=(m+1, m, \cdots, m) \in \mathcal{P}_{k, n}$ are respectively given by $\square_{k\times m}$ and $\square_{k\times m}, \scriptscriptstyle \square$.
  $(m^k/\mu)^c\in \mathcal{P}_{m, m+k}$  denotes  the conjugate of  $(m-\mu_k, \cdots, m-\mu_1)$.
\iffalse
 \begin{align*}& L(p^{(k)}_{\square_{k\times m}}\cdot p^{(l)}_{\square_{(l-m)\times (n-l)}})\\
  :=&\sum_{\mu\leq \lambda}(-1)^{|\lambda/\mu|}  p^{(k)}_{{\rm YD}(\mu)}\cdot p^{(l)}_{\square_{(l-m)\times (n-l)}, {\rm YD}((\lambda/\mu)^c)}
 \end{align*}
 \fi
%&p^{(k)}_{\square_{k\times m}}\cdot p^{(l)}_{\square_{(l-m)\times (n-l)}}
We define
\begin{align*}& L(p^{(k)}_{\square_{k\times m},\scriptscriptstyle\scriptscriptstyle\square}\cdot p^{(l)}_{\square_{(l-m)\times (n-l)}})
  :=\sum_{\mu\leq \lambda^{'}; \mu_1\neq m}(-1)^{|\mu|+km}  p^{(k)}_{{\rm YD}(\mu)}\cdot p^{(l)}_{\square_{(l-m)\times (n-l)}, {\rm YD}((\lambda'/\mu)^c)},
 \end{align*}
\iffalse
 \begin{align*}& L(p^{(k)}_{\square_{k\times m},\scriptscriptstyle\scriptscriptstyle\square}\cdot p^{(l)}_{\square_{(l-m)\times (n-l)}})\\
  :=&\sum_{\mu\leq \lambda^{'}; \mu_1\neq m}(-1)^{|\lambda/\mu|}  p^{(k)}_{{\rm YD}(\mu)}\cdot p^{(l)}_{\square_{(l-m)\times (n-l)}, {\rm YD}((\lambda'/\mu)^c)}
 \end{align*}
 \fi
where $\lambda'=(m+1, m^{k-1})$;  $(\lambda'/\mu)^c\in \mathcal{P}_{m, m+k+1}$ denotes   the conjugate of  { $(m-\mu_k, \cdots, m-\mu_2, 0)$ if $\mu_1=m+1$, or   of $(m-\mu_k, \cdots, m-\mu_1,1)$ if $\mu_1<m$.}
% in which we do take the coefficient  $(-1)^{|\lambda/\mu|} $ other than $(-1)^{|\lambda'/\mu|}$, in particular when $\mu_1=m+1$ we just set $\lambda/\ \mu$ to be $\lambda^{'}/\ \mu$
\end{defn}
%p^{(k)}_{\square_{k\times m},\scriptscriptstyle\scriptscriptstyle\square}\cdot p^{(l)}_{\square_{(l-m)\times (n-l)}}

\begin{thm} In terms of the Pl\"ucker coordinates indexed by Young diagrams, %the superpotential $f_-$ is given by
  \begin{align*}
     \mathcal{F}_-&=\sum_{i=1}^{n_1-1}{p^{(n_1)}_{\square_{i\times (n-n_1)}, \scriptscriptstyle\scriptscriptstyle\square} \over p^{(n_1)}_{\square_{i\times (n-n_1)}}}
       +\sum_{j=1}^{r-1}\sum_{m=1}^{n_{j+1}-n_j-1} {L(p^{(n_j)}_{\square_{n_j\times m},\scriptscriptstyle\scriptscriptstyle\square}\cdot p^{(n_{j+1})}_{\square_{(n_{j+1}-m)\times (n-n_{j+1})}}) \over L(p^{(n_j)}_{\square_{n_j\times m}}\cdot p^{(n_{j+1})}_{\square_{(n_{j+1}-m)\times (n-n_{j+1})}})} \\
       &+\sum_{i=1}^{n-n_r-1}{p^{(n_r)}_{\square_{n_r\times i}, \scriptscriptstyle\scriptscriptstyle\square} \over p^{(n_r)}_{\square_{n_r\times i}}}+\sum_{j=1}^r
        {p^{(n_j)}_{\square}\over p^{(n_j)}_{\emptyset}}+\sum_{j=1}^r
        q_{n_j}{p^{(n_j)}_{\square_{n_j\times (n-n_j)}\setminus q_{n_j}}\over p^{(n_j)}_{\square_{n_j\times (n-n_j)}}}
  \end{align*}
  where $\square_{n_j\times (n-n_j)}\setminus q_{n_j}$ denotes the Young diagram obtained by   removing $n_j-n_{j-1}$ boxes from the last column of $\square_{n_j\times (n-n_j)}$ and removing
    $n_{j+1}-n_j$ boxes from the last row, with the removal of the box at the bottom-right corner double counted.
\end{thm}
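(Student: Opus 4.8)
The statement to prove is the Young-diagram rewriting of the Pl\"ucker coordinate formula for $\mathcal F_-$ given in Theorem~\ref{fminus}. My plan is to proceed term by term, matching each summand in the Young-diagram formula against the corresponding summand in the formula of Theorem~\ref{fminus}, via the bijection ${[n]\choose k}\to \mathcal P_{k,n}$, $J\mapsto \lambda(J)$, together with the product conventions $p^{(k)}_{\square_{k\times m},\mathrm{YD}(\lambda)}$ and $p^{(k)}_{\square_{a\times(n-k)},\mathrm{YD}(\nu)}$ of the appendix's definitions. So the whole proof is a dictionary translation; there is no new geometry involved.

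First I would handle the ``easy'' summands. For the block $\sum_{i=1}^{n_1-1}\frac{p_{[i-1]\cup\{i+1\}\cup[n-n_1+i+1,n]}}{p_{[i]\cup[n-n_1+i+1,n]}}$, I compute $\lambda(J)$ for $J=[i]\cup[n-n_1+i+1,n]$ and for $J'=[i-1]\cup\{i+1\}\cup[n-n_1+i+1,n]$: the first is the $i\times(n-n_1)$ rectangle $\square_{i\times(n-n_1)}$ sitting inside the $n_1\times(n-n_1)$ rectangle, and the second differs by adding a single box $\square$ to the corner, i.e. it is $\square_{i\times(n-n_1)}$ with an extra $\square$; this matches $p^{(n_1)}_{\square_{i\times(n-n_1)},\scriptscriptstyle\square}$. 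The block $\sum_{i=n_r+1}^{n-1}\frac{p_{[i-n_r+1,i+1]\setminus\{i\}}}{p_{[i-n_r+1,i]}}$ is treated identically after reindexing $i\mapsto i-n_r$ and observing $[i-n_r+1,i]$ corresponds to the $n_r\times(i-n_r)$ rectangle written the other way, giving $p^{(n_r)}_{\square_{n_r\times i},\scriptscriptstyle\square}/p^{(n_r)}_{\square_{n_r\times i}}$. The terms $\frac{p_{[n_j-1]\cup\{n_j+1\}}}{p_{[n_j]}}$ give $p^{(n_j)}_{\square}/p^{(n_j)}_{\emptyset}$ since $[n_j]\mapsto\emptyset$ and $[n_j-1]\cup\{n_j+1\}\mapsto(1,0,\dots,0)$. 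For the quantum terms, $[n-n_j+1,n]\mapsto$ full rectangle $\square_{n_j\times(n-n_j)}$, and the set $\{n-n_{j+1}+1\}\cup[n-n_j+1,n]\setminus\{n-n_{j-1}\}$ translates, via careful bookkeeping of which box is removed from the last column (a drop by $a_j=n_j-n_{j-1}$ in the bottom coordinates) and which from the last row (a drop by $a_{j+1}=n_{j+1}-n_j$), to $\square_{n_j\times(n-n_j)}\setminus q_{n_j}$ with the corner box double-counted. Each of these is a direct, if slightly tedious, check.

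The genuinely involved part is the middle block $\sum_{j=1}^{r-1}\sum_{i=n_j+1}^{n_{j+1}-1}S_i^{(j)}$, which must be identified with $\sum_j\sum_{m=1}^{n_{j+1}-n_j-1}\frac{L(p^{(n_j)}_{\square_{n_j\times m},\scriptscriptstyle\square}\cdot p^{(n_{j+1})}_{\square_{(n_{j+1}-m)\times(n-n_{j+1})}})}{L(p^{(n_j)}_{\square_{n_j\times m}}\cdot p^{(n_{j+1})}_{\square_{(n_{j+1}-m)\times(n-n_{j+1})}})}$ under $m=i-n_j$. Here the plan is: (i) show that each sum $\sum_{J\in\binom{[l]}{i-n_j}}(-1)^{|J|}p_{[i]\setminus J}\cdot p_{J\cup[\hat i+1,n]}$ in the denominator of $S_i^{(j)}$ re-expands, after translating $[i]\setminus J$ and $J\cup[\hat i+1,n]$ into Young diagrams, into the sum $\sum_{\mu\le m^{n_j}}(-1)^{|\mu|+n_j m}p^{(n_j)}_{\mathrm{YD}(\mu)}\cdot p^{(n_{j+1})}_{\square_{(n_{j+1}-m)\times(n-n_{j+1})},\mathrm{YD}((m^{n_j}/\mu)^c)}$ defining $L(\cdot)$; the key point is that $[i]\setminus J$ with $J\subseteq[\min\{i,\hat i\}]$ corresponds to a partition $\mu$ contained in $m^{n_j}$ (this is exactly where the constraint $J\cap[n_j+d+1,n]=\emptyset$ and the shape of $z$ in \eqref{Eq:OlocalCoords} enter), and $J\cup[\hat i+1,n]$ corresponds to stacking the rectangle $\square_{(n_{j+1}-m)\times(n-n_{j+1})}$ on top of the conjugate complementary shape $(m^{n_j}/\mu)^c$; (ii) match the signs, verifying that $(-1)^{|J|}$ against the Young-diagram side produces the stated $(-1)^{|\mu|+n_j m}$ up to an overall sign independent of $J$ (which cancels in the ratio); (iii) do the analogous bookkeeping for the numerator, where the extra box $i+1$ (and the sign $\epsilon(J)$) corresponds precisely to the ``$\scriptscriptstyle\square$'' decoration and the shift from $\lambda=m^{n_j}$ to $\lambda'=(m+1,m^{n_j-1})$, with the condition $\mu_1\ne m$ encoding the vanishing $\Delta^{J}_{\cdots\cup\{i+1\}}=0$ when the corner is already filled.

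I expect the sign reconciliation in step (ii)--(iii) to be the main obstacle: one must carefully track the exponent $|J|=\sum_{x\in J}x$ under the bijection $J\mapsto\lambda(J)$ (which shifts by $1+2+\cdots+|J|$), the Laplace-expansion signs $(-1)^{|[i-n_j+1,i]|+|J|}$ appearing in the proof of Theorem~\ref{fminus}, and the signs hidden in the constants $\varepsilon$ and $\epsilon(J)$, and check that everything either cancels between numerator and denominator or reproduces exactly the $(-1)^{|\mu|+km}$ and the conjugation operation $(\,\cdot\,)^c$ in the appendix's $L$-operators. A clean way to organize this is to fix one model case, say $n_j=2$, $n_{j+1}=4$, $n=7$, $i=3$ (so $m=1$), verify the translation there against Example~\ref{ex:F247} (the term $\frac{p_{24}p_{1567}-p_{14}p_{2567}+p_{12}p_{4567}}{p_{23}p_{1567}-p_{13}p_{2567}+p_{12}p_{3567}}$), and then observe that the combinatorial manipulation is uniform in $n_j,n_{j+1},m,n$. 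Once the middle block is matched, assembling all the blocks completes the proof.
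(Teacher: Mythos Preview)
Your proposal is correct and follows essentially the same route as the paper's proof: the paper also declares the non-$S_i^{(j)}$ summands to be ``direct translations to Young diagrams'' and concentrates on the middle block, setting up the bijection $J\mapsto\mu=\alpha(J)$ (where the elements of $J$ record the horizontal steps of the path) and checking that $p_{[i]\setminus J}=p^{(n_j)}_{\mathrm{YD}(\alpha(J))}$ and $p_{J\cup[\hat i+1,n]}=p^{(n_{j+1})}_{\square_{(n_{j+1}-m)\times(n-n_{j+1})},\,\mathrm{YD}((m^{n_j}/\alpha(J))^c)}$, with the range condition $J\subseteq[\min\{i,\hat i\}]$ matching exactly the condition that the stacked diagram fits in the $n_{j+1}\times(n-n_{j+1})$ rectangle; the numerator is handled by an analogous bijection $\alpha'$ into $\{\mu\le\lambda':\mu_1\ne m\}$. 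Your added care about the sign bookkeeping (step~(ii)) is well placed---the paper's proof does not spell this out, and your observation that any overall $J$-independent sign cancels in the ratio is the key shortcut.
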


\begin{proof}
   It suffices to   discuss the $S^{(j)}_{i}$-terms in Theorem~\ref{fminus}. (Other terms therein are direct translations to Young diagrams.)

   For the denominator $L(p^{(n_j)}_{\square_{n_j\times m}}\cdot p^{(n_{j+1})}_{\square_{(n_{j+1}-m)\times (n-n_{j+1})}})$ as above, where $m=i-n_j$, we define a map $\alpha :\{ J| J\in \binom{[i]}{m}\}\rightarrow \{\mu|\mu\leq m^{n_j}\}$ as follows: it sends $J=\{a_1,...,a_m\}$ to the Young diagram $\alpha(J)$ with $a_1,...,a_m$ steps horizontal. It follows directly that $\alpha$ is a bijection. It remains to check the following facts:\\
    \begin{enumerate}
      \item $J\in \binom{[\min\{i,\hat{i}\}]}{m}$ if and only if the join $\square_{(n_{j+1}-m)\times (n-n_{j+1})}, {\rm YD}((m^k/\alpha(J))^c)$ is inside the $n_{j+1}\times (n-n_{j+1})$ rectangle.
   \item For $J\in \binom{[\min\{i,\hat{i}\}]}{m}$, we have
  $ p_{J\cup [\hat i+1,n]}= p^{(n_{j+1})}_{\square_{(n_{j+1}-m)\times (n-n_{j+1})}, {\rm YD}((m^k/\alpha(J))^c)}$ and  $p_{[i]\smallsetminus J}=p^{(n_j)}_{{\rm YD}(\alpha(J))}$. In particular for $J=[m]$, the corresponding product is   the leading term $p^{(n_j)}_{\square_{n_j\times m}}\cdot p^{(n_{j+1})}_{\square_{(n_{j+1}-m)\times (n-n_{j+1})}}$.
    \end{enumerate}   By definition,  $J\in \binom{[\min\{i,\hat{i}\}]}{m}$ if and only if the numbering of the first m vertical steps of the Young diagram $J\cup [\hat i+1,n]$ are $a_1,...,a_m$ and $J\cup [\hat i+1,n]$ is inside the $n_{j+1}\times (n-n_{j+1})$ rectangle. Notice that { ${\rm YD}((m^{n_j}/\alpha(J))^c)$ is the Young diagram $(a_m-m,...,a_1-1)$. Thus when the join $\square_{(n_{j+1}-m)\times (n-n_{j+1})}, {\rm YD}((m^{n_j}/\alpha(J))^c)$ is inside} the $n_{j+1}\times (n-n_{j+1})$ rectangle, the numbering of its first $m$ vertical steps are exactly $a_1,...,a_m$, and hence coincides with the Young diagram of $\alpha(J\cup [\hat i+1,n])$.  Therefore in this case,   the Pl\"ucker coordinates are also identified.

   The arguments for the  numerators  are similar. Let $\lambda^{'}=(m+1,m^{n_j-1})$. Here we define $\alpha^{'} :\{ J| J\in \binom{[i+1]\smallsetminus i}{m}\}\rightarrow \{\mu|\mu\leq \lambda^{'}, \mu_1\neq m \}$ as follows: $\alpha^{'}$ sends $\{a_1,...,a_m\}$ to the (unique) Young diagram $\alpha^{'}(J)$ inside $\lambda^{'}$ with $[i+1]\smallsetminus \{i,a_1,...,a_m\}$ steps vertical and $\mu_1\neq m$. Such  map is  a bijection. Again we can similarly check the following facts:
   \begin{enumerate}
       \item    $J\in \binom{[\min\{i+1,\hat{i}\}]\smallsetminus i\}}{m}$ if and only if the join $\square_{(n_{j+1}-m)\times (n-n_{j+1})}, {\rm YD}((\lambda^{'}/\alpha^{'}(J))^c)$ is inside the $n_{j+1}\times (n-n_{j+1})$ rectangle.\\
       \item For $J\in \binom{[\min\{i+1,\hat{i}\}]\smallsetminus i\}}{m}$, we have $p_{[i-1]\cup \{i+1\} \smallsetminus J}=p^{(n_j)}_{{\rm YD}(\alpha^{'}(J))}$ and\\
   $ p_{J\cup [\hat i+1,n]}=p^{(n_{j+1})}_{\square_{(n_{j+1}-m)\times (n-n_{j+1})}, {\rm YD}((\lambda^{'}/\alpha^{'}(J))^c)}$.
   \end{enumerate}

    When $\alpha^{'}(J)_{1}=m+1$. Let $J=\{a_1,...,a_m\}$. $(\lambda^{'}/\alpha^{'}(J))^c$ is a partition given by the conjugate of $(m-\mu_k,...,m-\mu_2,0)$, and we have the fact that $a_m\neq i+1$ and ${\rm YD}((\lambda^{'}/\alpha^{'}(J))^c)$ is the Young diagram $(a_m-m,...,a_1-1)$. Thus when the join $\square_{(n_{j+1}-m)\times (n-n_{j+1})}, {\rm YD}((\lambda^{'}/\alpha^{'}(J))^c)$ is inside the $n_{j+1}\times (n-n_{j+1})$ rectangle, the numbering of its first $m$ vertical steps are exactly $a_1,...,a_m$ and hence it coincides with the Young diagram of $J\cup [\hat i+1,n]$. Therefore the Pl\"ucker coordinates are also identified. The argument about other parts is similar.
\end{proof}

  \ytableausetup{mathmode,  boxsize=0.3em}
\begin{example}
   For $F\ell_{2, 4; 7}$, we have
   \begin{align*}
     \mathcal{F}_-&={p_{27}\over p_{17}}+{p_{24}p_{1567}-p_{14}p_{2567}+p_{12}p_{4567}\over p_{23}p_{1567}-p_{13}p_{2567}+p_{12}p_{3567}}+{p_{2346}\over p_{2345}}+{p_{3457}\over p_{3456}}
             +{p_{13}\over p_{12}}+{p_{1235}\over p_{1234}}+q_2{p_{46}\over p_{67}}+q_4{p_{1467}\over p_{4567}}\\
        &= {p^{(2)}_{\ydiagram{5,1}} \over p^{(2)}_{\ydiagram{5}}}  + { p^{(2)}_{\ydiagram{2,1}} p^{(4)}_{\ydiagram{3,3,3}}-p^{(2)}_{\ydiagram{2}} p^{(4)}_{\ydiagram{3,3,3,1}}+p^{(2)}_{\emptyset} p^{(4)}_{\ydiagram{3,3,3,3}} \over  p^{(2)}_{\ydiagram{1,1}} p^{(4)}_{\ydiagram{3,3,3}}-p^{(2)}_{\ydiagram{1}} p^{(4)}_{\ydiagram{3,3,3,1}}+p^{(2)}_{\emptyset} p^{(4)}_{\ydiagram{3,3,3,2}}}+ {p^{(4)}_{\ydiagram{2,1,1,1}} \over p^{(4)}_{\ydiagram{1,1,1,1}}}+ {p^{(4)}_{\ydiagram{3,2,2,2}} \over p^{(4)}_{\ydiagram{2,2,2,2}}}+ {p^{(2)}_{\ydiagram{1}} \over p^{(2)}_{\emptyset}}+{p^{(4)}_{\ydiagram{1}} \over p^{(4)}_{\emptyset}}
       +q_2 {p^{(2)}_{\ydiagram{4,3}} \over p^{(2)}_{\ydiagram{5,5}}}+q_4{p^{(4)}_{\ydiagram{3,3,2}} \over p^{(4)}_{\ydiagram{3,3,3,3}}}.
   \end{align*}
 \end{example}

\end{document}